\numberwithin{equation}{section}
\title{Nonuniqueness of Leray--Hopf solutions to the unforced incompressible 3D Navier--Stokes
Equation}
\author{Thomas Hou, Yixuan Wang, Changhe Yang}
\date{\today}
\newcommand{\Beta}{\mathrm{B}}
\newcommand{\Epsilon}{\mathrm{E}}
\newcommand{\assign}{:=}
\newcommand{\backassign}{=:}
\newcommand{\cdummy}{\cdot}
\newcommand{\infixand}{\text{ and }}
\newcommand{\infixor}{\text{ or }}
\newcommand{\mathd}{\mathrm{d}}
\newcommand{\nospace}{}
\newcommand{\nosymbol}{}
\newcommand{\tmem}[1]{{\em #1\/}}
\newcommand{\tmop}[1]{\ensuremath{\operatorname{#1}}}
\newcommand{\tmstrong}[1]{\textbf{#1}}
\newenvironment{enumeratenumeric}{\begin{enumerate}[1.] }{\end{enumerate}}
\newenvironment{itemizedot}{\begin{itemize} }{\end{itemize}}
\newenvironment{proof}{\noindent\textbf{Proof\ }}{\hspace*{\fill}$\Box$\medskip}
\newenvironment{tmparmod}[3]{\begin{list}{}{\setlength{\topsep}{0pt}\setlength{\leftmargin}{#1}\setlength{\rightmargin}{#2}\setlength{\parindent}{#3}\setlength{\listparindent}{\parindent}\setlength{\itemindent}{\parindent}\setlength{\parsep}{\parskip}} \item[]}{\end{list}}
\newcounter{tmcounter}
\newtheorem{definition}{Definition}
\newtheorem{lemma}{Lemma}
\newtheorem{proposition}{Proposition}
{\theorembodyfont{\rmfamily}\newtheorem{remark}{Remark}}
\newtheorem{theorem}{Theorem}
\newtheorem{assumption}{Assumption}
\begin{document}

\maketitle
\begin{abstract}
  The nonuniqueness of Leray--Hopf solutions to the unforced incompressible 3D Navier--Stokes equations is one of the central open problems in mathematical fluid dynamics. In this paper, we provide, to our knowledge, the first rigorous computer-assisted proof demonstrating such nonuniqueness. Inspired by earlier works \cite{jia2015incompressible, albritton2022non,guillod2023numerical}, we construct a Leray--Hopf solution in the self-similar setting and then establish the existence of a second solution by analyzing the stability of the linearized operator around this profile, showing that it corresponds to an unstable perturbation. To achieve this, we develop an innovative numerical method that computes candidate solutions with high precision and propose a framework for rigorously establishing exact solutions in a neighborhood of these candidates, {where the invertibility of the linearized operator is the key}. Inspired by \cite{chen2022stable}, we decompose the linearized operator into a coercive part plus a compact perturbation, which is further approximated by a finite-rank operator up to a small error. The invertibility of the linearized operator restricted to the image of this finite-rank approximation is then rigorously verified {via construction of the inversion on its finite set of basis functions with residues incorporated properly}. This certifies the existence of {a profile and similarly an unstable eigenpair}. Consequently, it yields a second solution--indeed, infinitely many Leray--Hopf solutions. The code is publicly available at: https://github.com/HouGroup2026/3d-navier-stokes-nonuniqueness
\end{abstract}
\section{Introduction}
\label{Sec:introd}

In this paper, we study the three-dimensional incompressible Navier--Stokes equation on the whole space without
forcing:
\begin{equation}
  \left\{\begin{array}{l}
    \partial_t u + u \cdot \nabla u - \Delta u + \nabla p = 0,\\
    \mathrm{div} u = 0,\\
    u (t = 0, x) = u_{\tmop{in}} (x) .
  \end{array}\right. \label{eq:NS eq}
\end{equation}
Leray \cite{leray1934}, and later Hopf \cite{hopf1951} for bounded domains, constructed weak solutions that exist for all time and satisfy energy inequalities. These solutions are termed Leray--Hopf solutions, and Leray's construction was moreover suitable, in the sense that it satisfies local energy inequality.

The main contribution of this paper is to develop a rigorous computer-assisted proof of the nonuniqueness of the
Leray--Hopf solutions to \eqref{eq:NS eq}.
\begin{theorem}[Nonuniqueness of
Leray--Hopf solutions]
  \label{thm:main}There exist infinitely many distinct suitable Leray--Hopf
  solutions to the Navier--Stokes equation \eqref{eq:NS eq} on $\mathbb{R}^3
  \times [0, 1]$ with the same divergence-free initial condition $u_{\tmop{in}} =u_{\tmop{loc}}$ of compact support, with $u_{\tmop{loc}}\in C^\infty(\mathbb{R}^3\setminus\{0\})\cap L^q$ for any $q<3$. The solutions are $ L^s ([0, 1] ; L^q (\mathbb{R}^3)),$ for any $q\geqslant2$, $\frac{3}{q} + \frac{2}{s}
> 1$, {and smooth for positive times}. They just miss the Prodi--Serrin condition that guarantees a unique solution, see \eqref{eq:Serrin}.
\end{theorem}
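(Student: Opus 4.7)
The plan is to follow the self-similar strategy of Jia--Sverak together with the spectral-instability mechanism of Albritton--Brue--Colombo, and to make the key unstable-eigenvalue claim rigorous by computer-assisted analysis. Change to similarity variables $\xi = x/\sqrt{t}$, $\tau = \log t$, $u(x,t) = t^{-1/2} U(\xi,\tau)$, so that \eqref{eq:NS eq} becomes
\begin{equation*}
\partial_\tau U - \tfrac{1}{2}U - \tfrac{1}{2}\xi\cdot\nabla U - \Delta U + U\cdot\nabla U + \nabla P = 0,\qquad \mathrm{div}\,U = 0,
\end{equation*}
and stationary profiles correspond to exact self-similar Leray--Hopf solutions. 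I would first compute a high-precision numerical candidate $\bar U_N$ by Newton iteration in a suitable spectral basis, and then close a contraction-mapping argument in a small ball around $\bar U_N$ using an explicit residual bound together with a controlled inverse of the linearization; this certifies an exact profile $\bar U$ near $\bar U_N$.

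Next I would linearize around $\bar U$,
\begin{equation*}
\mathcal L_{\bar U} w = -\Delta w - \tfrac{1}{2} w - \tfrac{1}{2}\xi\cdot\nabla w + \bar U\cdot\nabla w + w\cdot\nabla \bar U + \nabla \pi,
\end{equation*}
on divergence-free $w$, and show that $\mathcal L_{\bar U}$ has an eigenvalue $\lambda$ with $\mathrm{Re}\,\lambda > 0$. The route is the decomposition advertised in the abstract: write $\mathcal L_{\bar U} = A + K$ with $A$ the coercive heat-plus-transport part (whose resolvent admits explicit weighted bounds) and $K$ the compact perturbation coming from $\bar U\cdot\nabla$ and $\cdot\nabla\bar U$. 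Approximate $K$ by a finite-rank operator $K_N$ with operator-norm error $\varepsilon$; a Schur-complement/Lyapunov--Schmidt reduction then converts the eigenvalue equation for $\mathcal L_{\bar U}$ into a finite determinantal equation $\det M(\lambda) = 0$ perturbed by a term of size $O\bigl(\varepsilon\,\|(A-\lambda)^{-1}\|\bigr)$. Interval arithmetic certifies $M(\lambda)$ at a handful of test points and, via an argument-principle contour in $\{\mathrm{Re}\,\lambda > 0\}$, traps a genuine right-half-plane eigenvalue of $\mathcal L_{\bar U}$.

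Given an unstable eigenpair $(\lambda,\phi)$, I would build a second trajectory of the similarity equation of the form $U(\xi,\tau) = \bar U(\xi) + e^{\lambda \tau}\phi(\xi) + o(e^{\mathrm{Re}\,\lambda\,\tau})$ as $\tau\to -\infty$ by a Duhamel fixed point in a suitably weighted space; in the original variables this yields a Leray--Hopf solution $u_2$ distinct from $\bar u$ but sharing the same scale-invariant initial datum $u_{\mathrm{in}}^{\infty}(x) = |x|^{-1}\bar U(x/|x|)$. Varying the size or phase of the unstable mode, equivalently the emergence time along the one-parameter family of parabolic rescalings, produces infinitely many distinct such solutions. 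To reach the compactly supported datum of the theorem I would truncate $u_{\mathrm{in}}^{\infty}$ outside a large ball; the truncation contributes an $L^2$ correction that is absorbed as a smooth Leray perturbation on $[0,1]$, preserving the nonuniqueness, and the Prodi--Serrin scaling $\frac{3}{q}+\frac{2}{s}>1$ then follows directly from the explicit self-similar decay rates of $\bar u$ and $u_2$.

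The main obstacle is the spectral step: verifying rigorously, in an infinite-dimensional setting carrying the unbounded drift $\tfrac{1}{2}\xi\cdot\nabla$, that $\mathcal L_{\bar U}$ really has a right-half-plane eigenvalue. This is exactly where the coercive-plus-compact decomposition, its finite-rank truncation, and the interval-arithmetic inversion of $M(\lambda)$ become indispensable; by contrast, the profile construction and the unstable-manifold step follow comparatively classical templates.
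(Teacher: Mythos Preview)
Your overall architecture matches the paper: exact self-similar profile via computer-assisted perturbation of a numerical candidate, certified unstable eigenvalue of the linearization, nonlinear trajectory along the unstable direction, and localization of the scale-invariant data. Two substantive points deserve comment.

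First, there is a sign inconsistency: with your definition of $\mathcal L_{\bar U}$ the generator of the linearized flow is $-\mathcal L_{\bar U}$, so an unstable mode corresponds to an eigenvalue of $\mathcal L_{\bar U}$ in the \emph{left} half-plane, not the right. The paper makes the same sign flip deliberately (their system \eqref{eq:NS eq eig} seeks $\widetilde\lambda<0$); you should align your statement accordingly.

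Second, your proposed spectral verification---Schur reduction to a finite determinant and an argument-principle contour---differs from what the paper actually does. The paper never counts eigenvalues in a half-plane; instead it computes an approximate eigenpair $(\overline\lambda,\overline v)$ numerically and then treats the full eigenvalue equation as a second nonlinear system for the correction $(\lambda,v)$, solved by the \emph{same} coercive-plus-finite-rank fixed-point machinery used for the profile (Propositions~\ref{prop:U} and~\ref{prop:u lambda}). This is both simpler and more robust than a contour argument, because it avoids resolvent bounds along a curve in $\mathbb C$ and reuses the already-established invertibility of the linearization on a $25$-dimensional subspace. Your route could in principle be made rigorous, but it would require certified resolvent estimates for an operator with the unbounded drift $\tfrac12\xi\cdot\nabla$, which is precisely the difficulty you flag at the end.

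Finally, the localization step is more delicate than ``absorb the truncation as an $L^2$ perturbation.'' The paper uses a Bogovskii decomposition to keep the localized datum divergence-free, sets $u^{\mathrm{cut}}=-e^{t\Delta}w$, and then runs the Duhamel contraction not on the full semigroup but on its Riesz-projected stable and unstable pieces separately; the unstable coordinates at $\tau=0$ are the free parameters generating infinitely many solutions. Without this stable/unstable splitting the backward-in-$\tau$ integral does not converge, so this structure is essential, not cosmetic.
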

Our work is inspired by the works of Jia and Sverak \cite{jia2014local,jia2015incompressible}, which studied the forward self-similar Navier--Stokes equation and reduced nonuniqueness to spectral assumptions of the self-similar profiles. In \cite{guillod2023numerical}, Guillod and Sverak provided numerical evidence that such spectral assumptions are satisfied for the self-similar solution profiles that they constructed numerically. In \cite{albritton2022non}, Albritton--Brue--Colombo built on this idea of unstable profiles in self-similar variables, but added a singular forcing term so that their background solution does not need to solve the Navier--Stokes equation exactly. In fact, their profile can be any divergence-free function. They instead leveraged the unstable vortex solutions constructed by Vishik \cite{vishik2018instability,vishik2018instability1,albritton2021instability}
that are steady states of the 2D Euler equations, and lifted them to 3D axisymmetric vortex rings. The case of the unforced Navier--Stokes equations is much more challenging since it is almost impossible to find an explicit self-similar Leray--Hopf solution with an unstable eigenvalue. 

In this paper, we adopt a different strategy by developing a novel computational method and an innovative computer-assisted strategy to rigorously verify the existence of a self-similar profile that solves the unforced Navier--Stokes equation exactly with an unstable eigenvalue. We then use the tools developed in \cite{jia2015incompressible} to establish the nonuniqueness of Leray--Hopf solutions, {while reducing the spectral assumption simply to an unstable eigenvalue inspired by \cite{albritton2022non}}.   

\subsection{Leray--Hopf solution and strong-weak uniqueness}\label{subsec:ss}
Two important types of solutions to \eqref{eq:NS eq} are the {\tmstrong{Leray--Hopf solutions}} and
the {\tmstrong{strong solutions}}. We begin with the definition of Leray--Hopf
solutions.  We denote divergence-free function spaces using a subscript $\sigma$, and locally integrable function spaces using a subscript $\tmop{loc}$.

\begin{definition}[Leray--Hopf
solution]{\cite{sohr2012navier}}
  Let $T > 0$, $u_{\tmop{in}} \in L_{\sigma}^2$. A Leray--Hopf solution on
  $\mathbb{R}^3 \times [0, T)$ with initial data $u_{\tmop{in}}$  is a divergence-free vector field
  \[ u \in L^{\infty}_{\tmop{loc}} ([0, T) ; L^2_{\sigma} (\mathbb{R}^3)) \cap
     L^2_{\tmop{loc}} ([0, T) ; H^{1}_{\sigma} (\mathbb{R}^3)) , \] with the following properties:
  \begin{enumeratenumeric}
    \item It holds that for all test functions $v \in C^{\infty}_0 ([0, T) ;
    C^{\infty}_{0, \sigma} (\mathbb{R}^3))$:
    \[ - \int_0^T \langle u, \partial_t v \rangle + \int_0^T \langle \nabla u,
       \nabla v \rangle + \int_0^T \langle u \cdot \nabla u, v \rangle =
       \langle u_{\tmop{in}}, v (0, \cdummy) \rangle . \]
    \item $u$ satisfies the energy inequality:
    \[ \frac{1}{2} \| u (t, \cdummy) \|^2_{L^2} + \int_0^T \| \nabla u \|^2_{L^2} \leqslant
       \frac{1}{2} \| u_{\tmop{in}} \|_{L^2}^2 . \]
  \end{enumeratenumeric}
\end{definition}
\begin{remark}[Suitability]
    Solutions built via Leray's original construction are, moreover, suitable, which means they satisfy the stronger local energy inequality:
    \begin{equation}
    (\partial_t-\Delta)\frac{1}{2}|u|^2+|\nabla u|^2+\mathrm{div}((\frac{1}{2}|u|^2+p)u)\leqslant 0
        \label{suitability}
    \end{equation}
    in a distributional sense. 
    Local energy inequality plays an important role in partial regularity theory on the dimension of the potential singular sets, as in \cite{caffarelli1982partial,lin1998new}.
\end{remark}
A Leray--Hopf solution is weakly continuous in time {\cite{sohr2012navier}} and exists globally. Its uniqueness is not guaranteed.
A strong solution is a Leray--Hopf solution that satisfies the Prodi--Serrin \cite{prodi1959teorema, serrin1963initial}
condition in $L^sL^q$ spaces:
\begin{equation}
  u \in L^s_{\tmop{loc}} ([0, T) ; L^q (\mathbb{R}^3)),\quad 3 < q < \infty,\quad 2 < s < \infty,\quad \frac{3}{q} + \frac{2}{s}
\leqslant 1. \label{eq:Serrin}
\end{equation}
The critical case of $q=3$ was established by Escauriaza--Seregin--Sverak  \cite{sverak2003}.
 Under this condition, the solution exhibits higher regularity \cite{prodi1959teorema, serrin1963initial, sverak2003},
  \[ u \in L^{\infty}_{\tmop{loc}} ([0, T) ; H^{1}_{\sigma} (\mathbb{R}^3))
     \cap L^2_{\tmop{loc}} ([0, T) ; H^{2}_{\sigma} (\mathbb{R}^3)). \]
 A global strong solution ensures the uniqueness of weak solutions. They are locally well-posed. However, the
global-in-time existence of such solutions remains an open question.
 Therefore, we shift focus to studying the nonuniqueness of the
Leray--Hopf solutions arising from blow-up times of strong solutions. In other
words, we consider singular initial data $u_{\tmop{in}}$.

\subsection{Self-similar setting} \label{sec:ss}
Self-similar solutions are thus a special class of solutions of interest that lie precisely at
the boundary of the known well-posedness theory.  A scale-invariant initial condition takes the form:
\begin{equation}
  u_{\tmop{in}} (x) = \frac{1}{| x |} A ( \frac{x}{| x |} ) ,
  \label{eq:self-similar-id-ansatz0}
\end{equation}
with a singularity of $O (| x |^{- 1})$ at the origin.
The authors of {\cite{jia2014local,jia2015incompressible}} seek
solutions of the form:
\begin{equation}
  u (t, x) = \frac{1}{\sqrt{t}} \widetilde{U} ( \frac{x}{\sqrt{t}} ) ,
  \label{eq:self-similar-ansatz0}
\end{equation}
which preserves the scaling structure.  The readers should not confuse this self-similar setting, which starts from singular initial data, with the backward self-similar setting related to finite-time singularities from smooth initial data. Plugging \eqref{eq:self-similar-ansatz0} into the Navier--Stokes equation, we obtain:
\begin{equation}
  \left\{\begin{array}{l}
    - \frac{1}{2}  \widetilde{U} - \frac{1}{2} \xi \cdummy \nabla \widetilde{U} + \Pi
    (\widetilde{U} \cdot \nabla \widetilde{U}) - \Delta \widetilde{U} = 0,\\
    \mathrm{div}  \widetilde{U} = 0,
  \end{array}\right. \label{eq:NS-eq-self-similar0}
\end{equation}
where $\xi = \frac{x}{\sqrt{t}}$ is the similarity variable and $\Pi$ is the Leray projection. We stress that the pressure and the Leray projection formulations are interchangeable. The initial
data, as per  \eqref{eq:self-similar-id-ansatz0}, translates into
the following boundary condition for \eqref{eq:NS-eq-self-similar0}, 
\begin{equation}
    \label{boundary u0}\widetilde{U} (\xi) = \frac{1}{| \xi |} A ( \frac{\xi}{| \xi |} ) +
   o ( \frac{1}{| \xi |} ), \quad| \xi | \to + \infty,
\end{equation} 
where the remainder term in fact decays at the sharper rate $O( \frac{1}{| \xi |^3} )$, as shown in \cite{jia2014local}.

The divergence-free condition induces the following constraint on $A$:
\begin{equation}
  \tmop{div} ( \frac{1}{| x |} A ( \frac{x}{| x |} ) ) =
  0. \label{eq:A constraint}
\end{equation}

In {\cite{jia2014local}}, the authors proved that for any admissible
$A$, there exists a self-similar solution for \eqref{eq:NS eq}. However, whether this solution is unique remains an open question.
\subsection{Eigenvalue of the linearized operator and nonuniqueness}
In {\cite{jia2015incompressible}}, a sufficient condition for nonuniqueness was
proposed. It involves analyzing the eigenvalues of the linearized operator
$\mathcal{L}_{\widetilde{U}}$, defined by:
\begin{equation}
  \mathcal{L}_{\widetilde{U}} v \assign  \frac{1}{2} v + \frac{1}{2} \xi \cdummy
  \nabla v - \Pi (\widetilde{U} \cdot \nabla v + v \cdot \nabla \widetilde{U}) +
  \Delta v. \label{eq:LU def}
\end{equation}

To establish nonuniqueness, a second solution is sought in the form:
\[ \widetilde{u} (t, x) = \frac{1}{\sqrt{t}} \widetilde{U} ( \frac{x}{\sqrt{t}}
   ) + \frac{1}{\sqrt{t}} \phi ( t, \frac{x}{\sqrt{t}} ) . \]
Plugging this ansatz into \eqref{eq:NS-eq-self-similar0} and introducing the rescaled
time $\tau = \log t$,  $\phi$ satisfies:
\begin{equation}
  \left\{\begin{array}{l}
    \partial_{\tau} \phi -\mathcal{L}_{\widetilde{U}} \phi + \Pi (\phi \cdot
    \nabla \phi) = 0,\\
    \mathrm{div} \phi = 0.
  \end{array}\right. \label{eq:NS eq self-similar pert rescale}
\end{equation}
We aim to find a nontrivial solution to \eqref{eq:NS eq self-similar pert
rescale} with initial data
\[ \phi (\tau = - \infty, x) = 0. \]

In {\cite{jia2015incompressible}}, the author considered a series of solutions
parameterized by $\alpha$. For a given parameterized initial data
$A_{\alpha}$, let $\widetilde{U}_{\alpha}$ denote the corresponding solution to
\eqref{eq:NS eq} with the boundary condition $\widetilde{U}_{\alpha} (\xi) \sim
\frac{A_{\alpha} (\xi)}{\xi}$ as $| \xi | \to + \infty$. Suppose that
$L_{\widetilde{U}_{\alpha}}$ has a discrete eigenvalue $\lambda_{\alpha}$
satisfying $\Re (\lambda_{\alpha}) > -\frac{1}{4}$. They showed that if the
trajectory of $\lambda_{\alpha}$ could cross the imaginary axis from negative
to positive as $\alpha$ increases, then, under certain auxiliary assumptions, there exists a nontrivial solution to \eqref{eq:NS eq self-similar pert
rescale} and thus the solution to \eqref{eq:NS eq} is nonunique.

\begin{remark}
  Note that due to the slow decay of $u_{\tmop{in}} = O ( \frac{1}{| x |}
  )$, the solution is not in $L^2$ and thus not a
  Leray--Hopf solution. However, in {\cite{jia2015incompressible}}, the authors
  showed that such solutions can be localized to yield valid Leray--Hopf
  solutions. 
\end{remark}

In {\cite{albritton2022non}}, the authors employed a similar approach to prove the nonuniqueness of Leray--Hopf solutions for the forced Navier--Stokes
equation. The force corresponds to the right-hand side of the first equation of \eqref{eq:NS eq} of any divergence-free $U$, while we have to find a divergence-free profile $U$ that solves the first equation of \eqref{eq:NS-eq-self-similar0} exactly without any external forcing. They assumed that an unstable eigenvalue,
with real part $a > 0$, exists for the linearized operator
$\mathcal{L}_{\widetilde{U}}$ and found the unstable manifold associated with the
most unstable eigenvalue. The perturbation of the solution follows the asymptotics
$\phi = \widetilde{U}^{\tmop{lin}} + O (e^{ 2 a \tau})$. Here,
$\widetilde{U}^{\tmop{lin}}$ solves the linearized equation:
\begin{equation}
    \partial_{\tau} \widetilde{U}^{\tmop{lin}} =\mathcal{L}_{\widetilde{U}} 
   \widetilde{U}^{\tmop{lin}} , \label{main lin}
\end{equation} 
and decays at the rate $O (e^{ a \tau})$, converging to zero as $\tau
\to - \infty$, indicating nonuniqueness.

\paragraph{Verification of eigenvalues.}
Having related the nonuniqueness of the Leray--Hopf solutions to the eigenvalue of the linearized operator in self-similar variables, we now focus on confirming the existence of an unstable eigenvalue. We aim to find a solution to the following system:
\begin{equation}
  \left\{\begin{array}{l}
    - \frac{1}{2}  \widetilde{U} - \frac{1}{2} \xi \cdummy \nabla \widetilde{U} + \Pi
    (\widetilde{U} \cdot \nabla \widetilde{U}) - \Delta \widetilde{U} = 0,\\
    - \frac{1}{2}  \widetilde{v} - \frac{1}{2} \xi \cdummy \nabla \widetilde{v} + \Pi
    (\widetilde{U} \cdot \nabla \widetilde{v} + \widetilde{v} \cdot \nabla \widetilde{U}) -
    \Delta \widetilde{v} = \widetilde\lambda \widetilde{v}, \quad \widetilde\lambda < 0,\\
    \mathrm{div}  \widetilde{U} = 0,\\
    \tmop{div} \widetilde{v} = 0.
  \end{array}\right. \label{eq:NS eq eig}
\end{equation}
Here, $\Pi$ denotes the Leray projection. 
We remark that we flip a sign of the linearized operator in the second equation of \eqref{eq:NS eq eig} due to the convenience of elliptic estimates.
If we can prove that $\mathcal{L}_{\widetilde{U}}$ has an eigenvalue
with a positive real part for some $\widetilde{U}$ satisfying \eqref{eq:NS-eq-self-similar0}, then one can hope to show that the solution to \eqref{eq:NS eq} is nonunique. Through
localization, this leads to the nonuniqueness of the Leray--Hopf solution. We will elaborate on this point in Section \ref{sec:settt}.
Numerical evidence supporting the existence of such profiles was provided in
{\cite{guillod2023numerical}}.
\begin{theorem}
    \label{thm:stationary solu main}
    There
  exists a weak solution $(\widetilde{U},\widetilde{v},\widetilde{\lambda})$ to the system \eqref{eq:NS eq eig} with $\widetilde{U}\in H^1_{\tmop{loc}},\widetilde{v}\in H^1$.
\end{theorem}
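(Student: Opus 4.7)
The plan is to execute the computer-assisted strategy outlined in the abstract: produce a highly accurate numerical candidate $(\widetilde{U}_\ast, \widetilde{v}_\ast, \widetilde{\lambda}_\ast)$ for the coupled system \eqref{eq:NS eq eig}, and then upgrade this numerical object to an exact solution via a quantitative Newton--Kantorovich argument. As a first simplification I would restrict to a symmetric ansatz (for instance axisymmetric without swirl, in the class of self-similar data used in \cite{guillod2023numerical}), reducing \eqref{eq:NS eq eig} to a problem in the meridional plane that is amenable to rigorous interval arithmetic. Because $\widetilde{U}$ only lies in $H^1_{\tmop{loc}}$ due to the $O(|\xi|^{-1})$ far-field tail imposed by \eqref{boundary u0}, I would split $\widetilde{U} = U_{\mathrm{bdy}} + U_0$, where $U_{\mathrm{bdy}}$ is an explicit divergence-free tail matching $A(\xi/|\xi|)/|\xi|$ up to order $O(|\xi|^{-3})$. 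The genuine unknowns $U_0$ and $\widetilde{v}$ then both sit in a (possibly weighted) $H^1(\mathbb{R}^3)$ space, and \eqref{eq:NS eq eig} is recast as the zero set of a smooth map on a Hilbert space.

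I would then assemble the two PDEs together with a normalization condition for the eigenfunction (for example $\langle \widetilde{v}, \widetilde{v}_\ast\rangle = \|\widetilde{v}_\ast\|^2$, which rules out the trivial branch $\widetilde{v}\equiv 0$) into a single map $F(U_0, \widetilde{v}, \widetilde{\lambda})$. A Newton--Kantorovich argument at the numerical zero $(U_{0,\ast}, \widetilde{v}_\ast, \widetilde{\lambda}_\ast)$ requires (i) an explicit residual bound $\|F(U_{0,\ast}, \widetilde{v}_\ast, \widetilde{\lambda}_\ast)\| \leqslant \varepsilon_{\mathrm{res}}$, (ii) an inverse bound $\|DF(U_{0,\ast}, \widetilde{v}_\ast, \widetilde{\lambda}_\ast)^{-1}\|\leqslant M$, and (iii) a local Lipschitz bound $\|DF(X)-DF(Y)\|\leqslant L\|X-Y\|$ on a ball of radius $r$. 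Items (i) and (iii) are comparatively routine once the ansatz and norms are fixed: the residual is an explicit function that can be bounded directly from the Galerkin representation of the candidate using interval arithmetic, and the Lipschitz bound reduces to Sobolev embeddings applied to the quadratic nonlinearity.

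The heart of the argument is (ii). Schematically the Jacobian acts on a perturbation $(u,v,\mu)$ as
\[
DF\begin{pmatrix} u \\ v \\ \mu \end{pmatrix}=\begin{pmatrix} \mathcal{L}_1 u \\ \mathcal{L}_2 v - \mu\,\widetilde{v}_\ast + \mathcal{L}_{12} u \\ \langle v,\widetilde{v}_\ast\rangle \end{pmatrix},
\]
where each $\mathcal{L}_\bullet$ is, modulo the Leray projection, a transport--diffusion operator with coefficients depending on $\widetilde{U}_\ast$ and $\widetilde{v}_\ast$. Following the abstract, I would decompose
\[
DF = \mathcal{C} + \mathcal{K},
\]
with $\mathcal{C}$ collecting the coercive pieces ($-\Delta$, the scaling operator $-\tfrac{1}{2}-\tfrac{1}{2}\xi\cdot\nabla$, and the shift $-\widetilde{\lambda}_\ast$) and $\mathcal{K}$ collecting all terms carrying the coefficients $\widetilde{U}_\ast$ and $\widetilde{v}_\ast$. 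Since $\widetilde{U}_\ast$ and $\widetilde{v}_\ast$ decay at infinity, $\mathcal{K}$ is compact on the chosen weighted space. I would approximate it by a finite-rank $\mathcal{K}_N = P_N \mathcal{K} P_N$ in a fixed basis (weighted Hermite or mapped Chebyshev modes adapted to the symmetry), with an analytically derived bound $\|\mathcal{K}-\mathcal{K}_N\|\leqslant \varepsilon_N$. On the range of $I-P_N$ the coercivity of $\mathcal{C}$ absorbs $\mathcal{K}-\mathcal{K}_N$ once $\varepsilon_N$ is small enough; on the range of $P_N$ invertibility reduces to an $N\times N$ matrix question, settled by a rigorous lower bound on the smallest singular value via interval arithmetic. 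A Schur-complement argument assembles the two pieces into the required $M$.

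The main obstacle is making this compact-plus-finite-rank reduction quantitative at a manageable $N$. The Leray projection $\Pi$ is nonlocal, and $\widetilde{U}_\ast$ decays only like $|\xi|^{-1}$, so finding a weighted Sobolev space in which $\Pi$ is bounded, $\mathcal{K}$ is genuinely compact, and $\mathcal{C}$ remains coercive is a delicate balancing act. On top of this, $\varepsilon_N$ must be sharper than the spectral margin of $\mathcal{C}$ against $\mathcal{K}-\mathcal{K}_N$ on the high-frequency complement, which constrains both the basis and the dimension $N$. Producing a single functional-analytic framework in which the residual bound, the truncation error bound, the Schur-complement invertibility bound, and the Newton--Kantorovich closure condition all close in mutually compatible norms will be the central technical challenge of the proof.
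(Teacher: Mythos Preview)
Your plan is in the right spirit—approximate profile, coercive-plus-compact splitting of the linearization, finite-rank reduction, rigorous closure—but differs from the paper in several consequential choices. The paper uses a Schauder fixed point (exploiting a regularity gain $G:L^2\to H^1$ to get continuity in the weaker $L^4$ topology) rather than Newton--Kantorovich, and treats the two equations \emph{sequentially}: first close for $\widetilde U=\overline U+U$, then for the eigenpair with $\widetilde U$ fixed, representing $\lambda$ explicitly as a functional of $v$ via the orthogonality $\langle v,\overline v\rangle=0$.

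The sharpest difference is the splitting itself. You place every term carrying a $\widetilde U_\ast$ or $\widetilde v_\ast$ coefficient into $\mathcal K$, but the transport piece $\Pi(\overline U\cdot\nabla u)$ is first order and is \emph{not} compact from $H^1$ to $L^2$ merely because the coefficient decays; it is also large in operator norm, so a tail bound $\|\mathcal K-\mathcal K_N\|\leqslant\varepsilon_N$ small enough to beat the coercivity gap would require enormous $N$. The paper's key algebraic observation is that $\langle\overline U\cdot\nabla U,U\rangle=0$ by divergence-freeness, and that the antisymmetric part of $\nabla\overline U$ likewise drops out of the quadratic form; these terms all go into the coercive $L_1$. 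What remains in $L_2$ is only $-\Pi(U\cdot Q)$, where $Q$ is the negative part of $\tfrac12(\nabla\overline U+\nabla\overline U^{T})+\eta_1 I$—a bounded multiplier with \emph{compact support}. This is why the paper can work in plain unweighted $L^2/H^1$ (the Leray-projection-in-weighted-spaces obstacle you flag simply disappears) and close with only $m\approx 25$ modes. Your route could be repaired by moving the skew terms into $\mathcal C$, but as written the compactness claim for $\mathcal K$ is the step that fails.
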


We adopt a computer-assisted framework to verify existence for \eqref{eq:NS eq eig}. The strategy is to
compute a high-accuracy numerical profile $\overline U$ and then construct a perturbation $U$ so that
$\overline U+U$ solves the equation exactly. This requires (i) a certified small residual for $\overline U$
and (ii) an invertibility estimate for the linearized operator around $\overline U$. To obtain (ii), we split
the linearized operator into a coercive part and a compact remainder; the latter is rigorously approximated
by a finite-rank operator with a quantitatively controlled tail. We then verify the spectral
properties needed on this small matrix for the nonlinear estimates, combine them with coercivity in the linear estimates, and close a
fixed-point argument; see Sections~\ref{Sec:anal} and~\ref{sec:analysis}. A similar strategy also applied to $(\widetilde{v},\widetilde{\lambda})$.

A subtle point is the regularity and decay of the exact profile
$\widetilde U:=\overline U+U$. From Proposition~\ref{prop:U} we only know $U\in H^1$ with
$\|U\|_{H^1}\lesssim \varepsilon^U$, which does not by itself identify $\widetilde U$ as a
Leray--Hopf profile. To bridge this, we introduce a smooth radial cutoff  and write
$\widetilde U=U_{\mathrm{near}}+U_{\mathrm{far}}$, where $U_{\mathrm{far}}$ is divergence free, smooth, and
constructed to match the exterior asymptotics of $\overline U$. In these variables, $U_{\mathrm{near}}$ solves the stationary self-similar
Navier--Stokes system with variable coefficients determined by $U_{\mathrm{far}}$; these coefficients are
smooth, $C^k$--bounded, and decay polynomially, and the corresponding forcing is smooth and decays like
$O(|\xi|^{-3})$ by our far-field analysis. 
We therefore combine local elliptic bootstrapping
 using the coercivity of the linearized operator to obtain
$U_{\mathrm{near}}\in H^m$ for all $m$, together with weighted energy bounds
\[
|\xi|\,U_{\mathrm{near}},\quad |\xi|\,\nabla U_{\mathrm{near}},\quad
|\xi|^{2}\,\nabla U_{\mathrm{near}},\quad |\xi|^{2}\,\nabla^{2} U_{\mathrm{near}} \ \in L^{2}.
\]
A dyadic rescaling then yields the quantitative decay
$|U_{\mathrm{near}}(\xi)|=O(|\xi|^{-3/2})$ as $|\xi|\to\infty$; see
Propositions~\ref{prop smooth} and~\ref{prop decay}. Consequently, $\widetilde U$ has the required
regularity and far-field decay, and is a Leray--Hopf self-similar profile.

\paragraph{Outline of the nonuniqueness proof (Section~\ref{Sec2:nonuniqueness}).}
We combine the localization idea of Jia--Sverak \cite{jia2015incompressible} with the fixed-point framework of Jia--Sverak \cite{jia2015incompressible} and Albritton--Brue--Colombo \cite{albritton2022non}. {The key innovation compared to \cite{jia2015incompressible} is that we perform localization and nonuniqueness arguments at the same time, bypassing the sophisticated spectral assumptions therein.} Writing the solution as
\(u=\widetilde u+u^{\mathrm{cut}}+u^{\mathrm{cor}}\), we first \emph{localize the scale-invariant initial data} by a Bogovskii
decomposition \(u_{\mathrm{in}}=u_{\tmop{loc}}+w\), where \(u_{\tmop{loc}}\) is compactly supported and divergence free, and then set
\(u^{\mathrm{cut}}=-e^{t\Delta}w\) to cut off the far-field tail. Unlike \cite{albritton2022non}, our
self-similar profile \(\widetilde U\) is not compactly supported; the new ingredient is to control the
resulting \emph{far-field interaction} via heat smoothing, which makes the associated linear term small when the localization radius \(R\) is large. Passing to similarity variables, we solve for the
correction \(U^{\mathrm{cor}}\) as a fixed point of a contraction Duhamel map generated by the semigroup
\(e^{\tau\mathcal L_{\widetilde U}}\). A key idea is to use a Riesz projection to split the dynamics into a stable part and an unstable part as in \cite{jia2015incompressible}; see also \cite{chen2024vorticity, chen2025vorticity}.  The finitely many unstable modes will serve as free parameters at
\(\tau=0\). Different choices of these unstable coordinates yield distinct solutions on
\((-\infty,0]\), producing infinitely many Leray--Hopf solutions with the same \(L^2\) initial data \(u_{\tmop{loc}}\).

In our construction of the Leray--Hopf solution $\overline U$ and the eigenfunction $\overline v$, 
we impose even symmetry in $z$ for $\overline U$ and odd symmetry in $z$ for $\overline v$. The odd parity of $\overline v$ breaks the base even symmetry of $\overline U$; thus the Leray--Hopf branch bifurcating in the $\overline v$-direction is symmetry-breaking, which was also observed by the previous work  \cite{guillod2023numerical,albritton2022non}.
\subsection{Computer-assisted proofs}\label{subsec:cap}
Our approach
motivates the following assumption for approximate profiles:

\begin{assumption}[Approximate profiles]
  \label{asm:num} The tuple $(\overline{U}, \overline{P}, \overline{v},
  \overline{q}, \overline{\lambda})$ is an approximate solution of \eqref{eq:NS
  eq eig} with small residuals:
\begin{equation*}
    \left\{\begin{array}{l}
      - \frac{1}{2}  \overline{U} - \frac{1}{2} \xi \cdummy \nabla \overline{U}
      + \overline{U} \cdot \nabla \overline{U} - \Delta \overline{U} + \nabla
      \overline{P} = \Epsilon^U,\\
      - \frac{1}{2}  \overline{v} - \frac{1}{2} \xi \cdummy \nabla \overline{v}
      + \overline{U} \cdot \nabla \overline{v} + \overline{v} \cdot \nabla
      \overline{U} + \nabla \overline{q} - \Delta \overline{v} =
      \overline{\lambda}  \overline{v} + \Epsilon^v, \quad \overline{\lambda}
      < 0,\\
      \mathrm{div}  \overline{U} = 0,\\
      \tmop{div} \overline{v} = 0,
    \end{array}\right. \label{eq:NS rescale numerical}
  \end{equation*}
  where $\overline{v}$ is normalized (i.e., $\| \overline{v} \|_{L^2} = 1$), and the
  orthogonality condition $\langle \Epsilon^v, \overline{v} \rangle = 0$ is
  satisfied upon a possible modification of $\overline{\lambda}$. The small residuals $\Epsilon^U$ and $\Epsilon^v$ are bounded by
  \[ \| \Epsilon^U \|_{L^2} \leqslant \varepsilon^U, \| \Epsilon^v \|_{L^2} \leqslant
     \varepsilon^v. \]
  Here $\overline{U},\overline{v}\in
  C^{\infty}_\sigma (\mathbb{R}^3 \backslash \{ 0 \})\cap H^2_{\tmop{loc}}$. In the far field, $\overline{U}$ satisfies the boundary condition \begin{equation}
    \label{boundary u}\overline{U} (\xi)=\frac{1}{| \xi |} A ( \frac{\xi}{| \xi |} ) +
   O( {| \xi |^{-3}} ), \quad| \xi | \to + \infty,
\end{equation} which is consistent with the boundary condition in Section \ref{sec:ss}, for a suitable $A$ with the divergence-free constraint \eqref{eq:A constraint}. Moreover, for any index $j\geqslant 0$, we have \begin{equation}
    \label{higher boundary u}
\nabla^j\overline{U}(\xi)=\nabla^j(\frac{1}{| \xi |} A ( \frac{\xi}{| \xi |} ))+O(|\xi|^{-j-3}).\end{equation} Our choice of the numerical basis guarantees the integrability and decay; see Sections \ref{sec:ext} and \ref{sec:div-free}.
\end{assumption}

Suppose we have constructed a sufficiently accurate approximate solution. The next step is to perform rigorous error analysis to confirm the existence of a
nearby exact solution. This leads to our main theoretical results, summarized in the two propositions below:

\begin{proposition}[Exact self-similar profile]
  \label{prop:U}If Assumption \ref{asm:num} holds, then there exists a $U$
  satisfying the following equation in the distributional sense:
  \begin{equation}
    \left\{\begin{array}{l}
      - \frac{1}{2} U - \frac{1}{2} \xi \cdot \nabla U - \Delta U + \Pi
      (\overline{U} \cdot \nabla U + U \cdot \nabla \overline{U} + U \cdot
      \nabla U) = - \Pi \Epsilon^U,\\
      \tmop{div} U = 0,
    \end{array}\right. \label{eq:U pert}
  \end{equation}so that 
  \begin{equation}
  \label{Decomp-Utilde}
\widetilde{U} = \overline{U} + U
\end{equation}
solves the first and third equations of
  \eqref{eq:NS eq eig}.
  Moreover, the perturbation $U\in H^1$ satisfies:
  \[ \| U \|_{L^2} \lesssim \varepsilon^U . \]

\end{proposition}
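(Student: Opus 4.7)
Write $FU := -\tfrac12 U - \tfrac12\xi\cdot\nabla U - \Delta U + \Pi(\overline U\cdot\nabla U + U\cdot\nabla\overline U)$, so that \eqref{eq:U pert} becomes $FU = -\Pi\Epsilon^U - \Pi(U\cdot\nabla U)$. A direct computation shows $F = -\mathcal{L}_{\overline U}$, the self-similar linearization at the numerical profile in the sense of \eqref{eq:LU def}. Given a quantitative resolvent bound $\|F^{-1}\|_{H^{-1}_\sigma\to H^1_\sigma}\le M$ coming from the computer-assisted analysis, I would set up the Newton-type fixed point
\[ U = \Phi(U) := -F^{-1}\bigl(\Pi\Epsilon^U + \Pi(U\cdot\nabla U)\bigr) \]
and close it by Banach contraction on a small divergence-free $H^1$ ball of radius $\sim \varepsilon^U$.

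\textbf{Inverting the linearization.} Decompose $F = F_0 + K$ with $F_0 U := -\tfrac12 U - \tfrac12\xi\cdot\nabla U - \Delta U$ and $KU := \Pi(\overline U\cdot\nabla U + U\cdot\nabla\overline U)$. Integration by parts using $\mathrm{div}\,U=0$ yields the coercivity identity $\langle F_0 U, U\rangle = \tfrac14\|U\|_{L^2}^2 + \|\nabla U\|_{L^2}^2$, so $F_0:H^1_\sigma\to H^{-1}_\sigma$ is an isomorphism. The decay \eqref{boundary u}--\eqref{higher boundary u} makes the multipliers $\overline U$ and $\nabla\overline U$ vanish at infinity in the appropriate integrability sense, so $K$ is a relatively compact perturbation and $F$ is Fredholm of index zero. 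To make this quantitative, I would follow the strategy outlined in Section~\ref{Sec:anal}: approximate $K$ by a certified finite-rank operator $K_N$ with a rigorously controlled tail $\|K-K_N\|_{H^1\to H^{-1}}\le\delta$, verify by interval arithmetic that the finite matrix $I + F_0^{-1}K_N$ is invertible with a quantified bound on the relevant subspace, and conclude invertibility of the full $F$ by a Neumann perturbation argument, with an explicit constant $M$.

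\textbf{Closing the fixed point; main obstacle.} Sobolev embedding in 3D gives $H^1\hookrightarrow L^4$, so $\|U\otimes U\|_{L^2}\lesssim\|U\|_{H^1}^2$ and hence $\|\Pi(U\cdot\nabla U)\|_{H^{-1}}\lesssim\|U\|_{H^1}^2$. An analogous bound on the difference $U_1\cdot\nabla U_1 - U_2\cdot\nabla U_2$ shows that $\Phi$ stabilizes the ball $\{\|U\|_{H^1}\le 2M\varepsilon^U\}$ and is a strict contraction once $M^2\varepsilon^U$ is sufficiently small, producing a unique fixed point $U\in H^1_\sigma$ with $\|U\|_{L^2}\le\|U\|_{H^1}\lesssim\varepsilon^U$. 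The main obstacle is the quantitative invertibility of $F$ itself: coercivity alone controls only the top-order part $F_0$, whereas the compact piece $K$ driven by $\overline U$ can move eigenvalues substantially, and a single near-zero eigenvalue of $F$ would destroy both the contraction and the final $L^2$ bound. Ruling this out with an explicit constant is precisely the computer-assisted content of Sections~\ref{Sec:anal} and~\ref{sec:analysis}; once $M$ is in hand, the contraction closure is routine.
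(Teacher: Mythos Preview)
Your high-level plan---split the linearization into coercive plus compact, certify invertibility via a finite-rank approximation, then close by a fixed point---matches the paper. But there is a genuine gap. You claim $F_0:H^1_\sigma(\mathbb{R}^3)\to H^{-1}_\sigma(\mathbb{R}^3)$ is an isomorphism, yet the drift $-\tfrac12\,\xi\cdot\nabla U$ does not land in $H^{-1}$ for general $U\in H^1$: pairing against $V\in H^1$ gives $\langle\xi\cdot\nabla U,V\rangle=-3\langle U,V\rangle-\langle U,\xi\cdot\nabla V\rangle$, and the last term is uncontrolled since $\xi$ is unbounded. So neither $F_0$ nor $F$ is bounded $H^1\to H^{-1}$ on the whole space, the bilinear form fails the Lax--Milgram hypothesis, and your $F^{-1}$ is not defined between the spaces you name. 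The paper flags exactly this at the start of Section~\ref{sec:analysis} and circumvents it by truncating to balls $B_R$: there the bilinear form is bounded, Lax--Milgram applies, and---since $H^1(\mathbb{R}^3)\hookrightarrow L^4(\mathbb{R}^3)$ is not compact---the paper uses the \emph{Schauder} fixed-point theorem (continuity in the weaker $L^4$ norm, boundedness in $H^1$, compactness of $H^1(B_R)\hookrightarrow L^4(B_R)$), then passes to the weak limit $R\to\infty$. Your Banach-contraction route could plausibly be grafted onto this truncation scheme, but as written it does not go through on $\mathbb{R}^3$.

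The decomposition also differs in a way that matters for the computer-assisted part. The paper does not put all of $\Pi(\overline U\cdot\nabla U+U\cdot\nabla\overline U)$ into the compact piece. The transport $\overline U\cdot\nabla U$ is antisymmetric in $L^2$ and is kept in the coercive operator $L_1^U$; the matrix $\nabla\overline U$ is further split into antisymmetric, positive-semidefinite, and a \emph{compactly supported} negative-semidefinite part $Q$, so that only $L_2^U U=-\Pi(U\cdot Q)$ is treated as the compact perturbation. This makes $L_2^U$ a zeroth-order multiplier by a bounded, compactly supported matrix---precisely what is needed for the $L^2\to L^2$ bound \eqref{eq:L2 bound} and for the finite-rank approximation of Section~\ref{subsec:finiterank} to be efficient. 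Your $K$, which contains the first-order term $\overline U\cdot\nabla U$ with a coefficient decaying only like $|\xi|^{-1}$, would be far harder to approximate by a low-rank operator with certified small remainder.
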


\begin{proposition}[Exact unstable eigenpair]
  \label{prop:u lambda}If Assumption \ref{asm:num} holds, then there exists
  $(\lambda, v)$ satisfying the following equation in the distributional sense:
  \begin{equation}
    \left\{\begin{array}{l}
      - \frac{1}{2} v - \frac{1}{2} \xi \cdummy \nabla v - \Delta v + \Pi
      (\widetilde{U} \cdot \nabla v + U \cdot \nabla \overline{v} + v \cdot \nabla
      \widetilde{U} + \overline{v} \cdot \nabla U) = \overline{\lambda} v +
      \lambda \overline{v} + \lambda v - \Pi\Epsilon^v,\\
      \tmop{div} v = 0.
    \end{array}\right. \label{eq:u pert}
  \end{equation}
  Here, $\widetilde{U} = \overline{U} + U$ is the solution stated in Proposition \ref{prop:U}. Hence, $(\widetilde{\lambda}, \widetilde{v}) = (\overline{\lambda} + \lambda,
  \overline{v} + v)$ is an eigenpair of the linearized operator
  and solves the second and fourth equations of
  \eqref{eq:NS eq eig}.  
  Additionally,
  $v\in H^1$ satisfies the orthogonality condition $ \langle v, \overline{v} \rangle = 0$, and the following estimate holds for $(v, \lambda)$:
  \[ \| v \|_{L^2} + | \lambda | \lesssim \varepsilon^v . \]
  Therefore, $\widetilde{v}$ is nontrivial, and the eigenvalue $\widetilde{\lambda}$ is strictly
  negative.
\end{proposition}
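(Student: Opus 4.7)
The plan is to set up a Lyapunov--Schmidt / Newton-type fixed point for the eigenpair correction $(\lambda, v)$ around the approximate pair $(\overline{\lambda}, \overline{v})$, invoking the quantitative invertibility of the linearized operator promised by the computer-assisted framework outlined in Section~\ref{Sec:anal}, together with the smallness of the profile correction $U$ from Proposition~\ref{prop:U}.

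First, I would introduce the sign-flipped linearized operator around the approximate profile,
\[
\mathcal{A} w := -\tfrac{1}{2} w - \tfrac{1}{2}\xi\cdot\nabla w - \Delta w + \Pi(\overline{U}\cdot\nabla w + w\cdot\nabla \overline{U}),
\]
and rewrite \eqref{eq:u pert} (after substituting $\widetilde{U}=\overline{U}+U$) in the equivalent form
\[
(\mathcal{A} - \overline{\lambda})\, v \;-\; \lambda\,\overline{v} \;=\; \lambda v \;-\; \Pi\Epsilon^v \;-\; \mathcal{N}(U,v),
\]
where $\mathcal{N}(U,v) := \Pi\bigl(U\cdot\nabla v + v\cdot\nabla U + U\cdot\nabla \overline{v} + \overline{v}\cdot\nabla U\bigr)$ collects exactly the terms produced by the transition from $\overline{U}$ to $\widetilde U$. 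The normalization $\langle v,\overline{v}\rangle = 0$ (compatible with $\langle\Epsilon^v,\overline{v}\rangle = 0$ from Assumption~\ref{asm:num}) is the scalar constraint that will determine $\lambda$.

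Because $(\overline{\lambda},\overline{v})$ is only an approximate eigenpair, $\mathcal{A}-\overline{\lambda}$ has a single small eigenvalue with near-eigenmode close to $\overline{v}$; on the rest of the spectrum it is boundedly invertible, with a quantitative bound certified by the coercive-plus-compact decomposition, the finite-rank approximation of the compact part, and the interval-arithmetic verification described in the introduction. I would therefore perform a Lyapunov--Schmidt splitting with respect to $\mathrm{span}(\overline{v})$ and its $L^{2}$ orthogonal complement: for given $(\lambda,v)$, define the iteration $T(\lambda,v) = (\lambda_{*},v_{*})$ by (i) pairing the equation with $\overline{v}$ to read off the value of $\lambda_{*}$ that makes the right-hand side land in the range of $\mathcal{A}-\overline{\lambda}$ restricted to $\overline{v}^{\perp}$, and (ii) inverting $\mathcal{A}-\overline{\lambda}$ on $\overline{v}^{\perp}$ to produce $v_{*}\in\overline{v}^{\perp}$.

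Quantitative closure then uses $\|U\|_{H^{1}}\lesssim\varepsilon^{U}$ together with Sobolev embedding and the $C^{\infty}$--regularity and decay of $\overline{v}$ from Assumption~\ref{asm:num} to bound each term of $\mathcal{N}(U,v)$ in $H^{-1}$ by $\varepsilon^{U}(\|v\|_{H^{1}}+\|\overline{v}\|_{H^{1}})$, while $\|\Pi\Epsilon^{v}\|_{L^{2}}\leqslant \varepsilon^{v}$ and $\lambda v$ is genuinely quadratic in the unknowns. With the certified inverse bound on $\overline{v}^{\perp}$, one checks that $T$ stabilizes the ball $\{(\lambda,v):|\lambda|+\|v\|_{H^{1}}\leqslant C\varepsilon^{v}\}$ and contracts in it for $\varepsilon^{U},\varepsilon^{v}$ small enough. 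The fixed point gives $(\lambda,v)$ solving \eqref{eq:u pert} with $\|v\|_{L^{2}}+|\lambda|\lesssim\varepsilon^{v}$ and $\langle v,\overline{v}\rangle=0$; then $\widetilde v=\overline{v}+v$ is nontrivial since $\|\overline{v}\|_{L^{2}}=1$, and $\widetilde\lambda=\overline{\lambda}+\lambda<0$ by the strict negativity of $\overline{\lambda}$ and smallness of $\lambda$. The main obstacle, as is typical for this kind of Newton-type certification, is precisely the quantitative invertibility of $\mathcal{A}-\overline{\lambda}$ on $\overline{v}^{\perp}$ with an explicit operator norm; once that spectral estimate is in hand, the remaining steps reduce to perturbative bookkeeping using Proposition~\ref{prop:U} and standard product estimates.
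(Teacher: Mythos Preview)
Your Lyapunov--Schmidt/Newton picture is the right intuition and is broadly aligned with the paper, but the execution differs in several nontrivial ways. In the paper, $\lambda$ is not carried as a second unknown in the fixed point: pairing with $\overline v$ yields the explicit formula $\lambda(v)=\lambda^{\mathrm{lin}}(v)+\lambda^{\mathrm{con}}$, which is substituted back so that the only unknown is $v$ (in $\overline v^{\perp}$), and the sole nonlinearity is $N^v(v)=-\lambda^{\mathrm{lin}}(v)\,v$. The $U$--dependent cross terms $U\cdot\nabla\overline v+\overline v\cdot\nabla U$ are absorbed into a modified residual $\widehat{\mathrm E}^v$ rather than the nonlinearity, and the terms $U\cdot\nabla v+v\cdot\nabla U$ are kept in the linear operator (they only slightly degrade coercivity). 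More importantly, the paper never inverts $\mathcal A-\overline\lambda$ on $\overline v^{\perp}$ with an operator-norm bound as you assume; the certified object is a coercive-plus-finite-rank decomposition $L^v=L_1^v+L_2^v$, $L_2^v\approx L^v_{\mathrm{ap}}$, and the solution is sought as $v=v_1+G^vv_1$ with the fixed point run on $v_1$ alone. The fixed-point theorem used is Schauder (a priori $H^1$ bound plus $L^4$ continuity and $H^1\hookrightarrow L^4$ compactness), not a Banach contraction.

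There is also a technical point you omit: because of the unbounded drift $-\tfrac12\xi\cdot\nabla$, the bilinear form is not bounded on $H^1_\sigma(\mathbb R^3)$, so Lax--Milgram cannot be applied directly. The paper first works on truncated domains $B_R(0)$ (where the form is bounded), obtains a fixed point $v_1^R$ there, and then passes to the limit $R\to\infty$ via weak compactness. Your scheme would need a comparable device to make the map $T$ well defined. None of this is fatal to your outline, but the ``certified inverse bound on $\overline v^{\perp}$'' you rely on is not what the framework actually outputs; closing the argument requires the $v_1$/$G^v v_1$ splitting and the Schauder step rather than a direct contraction.
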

One can immediately see that Theorem \ref{thm:stationary solu main} can be inferred by the two preceding propositions.

 One of the main contributions of this paper is that we
develop an innovative numerical framework to construct a smooth, axisymmetric, divergence-free candidate profile with high precision. A distinguished advantage of this framework is that we can start with any initial guess to search for a candidate unstable eigenmode. We refine our search iteratively by using gradient descent to locate a maximally unstable configuration. Specifically, we first solve the Navier--Stokes equations in a large finite domain using a finite element method, then extend it to the whole space by an analytic change of variables $r =\tan(\beta)$ and interpolate it on an exactly divergence-free spectral basis, which is essential for rigorous computer-assisted verification. We also use an iterative optimization method to further improve the accuracy of the approximate profile  until a sufficiently small residual is reached. An important property of our numerical framework is that our constructed numerical solution $\overline{U}$ or $\overline{v}$ is smooth except at the origin. Moreover, $\overline{U}$ satisfies the far field boundary condition and the decay property to all orders.
For more details,
see Section \ref{Sec:num}.

We would like to point out that our
Navier--Stokes nonuniqueness verification is far lighter than that for the 3D Euler singularity program  \cite{chen2022stable,ChenHou2023b,ChenHou2025}, because viscosity yields genuine coercivity.
In particular, the linearized operators enjoy relatively large coercivity coefficients in $L^2$:
about $0.051$ for the $\overline U$--equation and about $0.164$ for the $\overline v$--equation
(see Section~\ref{Sec:num prev}). This allows us to work in standard $L^2/H^1$ norms without
singular weights and to close the a~posteriori bounds using only the $L^2$ residuals of
$\overline U$ and $\overline v$ together with a few pointwise envelopes of $L^\infty$ estimates. The discrete operators we
assemble are modest (roughly $1200\times1200$), and the finite-rank verification reduces to eigenproblems of small sizes $25\times25$.  Thanks to the strong damping reflected by the
above coercivity and the effective approximation property of our generalized eigenfunctions, retaining $25$ modes suffices to close the estimates via the Schauder fixed point
theorem. 

Moreover, we use a trigonometric tensor-product representation in $(\beta,\theta)$ for $\overline{U}$ and $\overline{v}$ after the change of variables
$r=\tan\beta$ with a relatively small number of modes ($N=600$ in the $\beta$-direction and $N=300$ in the $\theta$-direction). This representation enables us to perform
analytic $L^2$ evaluation of the residuals $E^U$ and $E^v$ and also makes it easy for us to obtain relatively sharp upper bounds for $\|\overline{U}\|_{L^\infty}$ and $\|\overline{v}\|_{L^\infty}$ as well as their gradients. The only
delicate numerical step is to evaluate the $Q$-inner products for the generalized eigenfunctions, which involve Bessel functions along the $\beta$-direction. We need to approximate the inner products using a 8th order quadrature and supply a rigorous $W^{8,\infty}$ error bound on the integrand, see Section \ref{sec7} for more discussion. Altogether, viscous coercivity, low-dimensional spectral checks, and
analytic trigonometric calculus make the present verification substantially simpler than that in the
Euler setting.

We remark that a candidate solution was found in \cite{guillod2023numerical}, but this approximate solution seems to be too
large to apply our computer-assisted proof,  in the sense that the approximation of the associated compact part of the linearized operator by a finite rank operator would require a large number of modes to reduce the residual error small enough for us to close the bootstrap argument, making the computer verification too expensive; see Section \ref{subsec:finiterank} for details.  

\subsection{Organization of the paper}
In summary, our strategy consists of three main steps:
\begin{enumerate}
  \item Construct an accurate approximate solution to the eigenvalue system \eqref{eq:NS eq eig}. 
  
  \item Establish the existence of an exact solution (Theorem
  \ref{thm:stationary solu main}).
  
  \item Prove the nonuniqueness of Leray--Hopf solutions (Theorem
  \ref{thm:main}). 
\end{enumerate}
The rest of the paper is structured as follows.  We first conclude nonuniqueness based on Theorem \ref{thm:stationary solu main} in Section \ref{sec:settt}. Provided that Theorem \ref{thm:stationary solu main} holds, we establish the smoothness of our profile by bootstrapping its regularity using elliptic estimates, even if the numerical profile is nonsmooth by itself only at the origin. We also show that the profile satisfies the boundary conditions necessary for the self-similar setting by establishing the fast decay of the perturbation. Then we adopt a fixed-point approach similar to \cite{albritton2022non} and \cite{jia2015incompressible} to establish nonuniqueness based on the unstable eigenpair, with a cutoff to obtain Leray--Hopf solutions.

In Section \ref{Sec:anal}, we outline the
framework for proving Theorem \ref{thm:stationary solu main}, which serves as a
foundation for the numerical analysis of a broad class of problems.  We believe that our framework is interesting in itself to establish eigenvalue properties rigorously for other problems.
In Section \ref{sec:analysis}, we apply this framework to our specific
system and establish Theorem \ref{thm:stationary solu main}. We demonstrate the construction of a finite-rank operator to
approximate the compact part of the linear operator, verify that this
approximation has a sufficiently small error, and derive the necessary
nonlinear estimates to complete the argument. For both the nonlinear terms and the
approximation error, sharp estimates are essential to tighten the final bound.

Section \ref{Sec:num} discusses the numerical implementation in detail. We
introduce the setting of the finite element method used to obtain an approximate solution, and interpolate it on a spectral basis
to ensure exact divergence-freeness. Section \ref{Sec:numfr} details the numerical construction of the finite-rank approximation and verifies the invertibility of the
linear operator on a finite-dimensional subspace. Finally, we describe the numerical methods that we will use to rigorously verify our numerical computations using interval arithmetic in Section \ref{sec7} and collect some proofs in the Appendix.
\subsection{Literature review} Now we review related literature on nonuniqueness besides the programs of Jia--Sverak and Guillod, and Albritton--Brue--Colombo. We also discuss relevant literature on computer-assisted proofs.
\paragraph{Nonuniqueness.}
One of the earliest examples of nonuniqueness to \eqref{eq:NS eq} dates back to  Ladyzhenskaya \cite{ladyzhenskaya1969example}, in a time-varying domain with force. Similar to the program of Jia, Sverak, and Guillod, Bressan, Murray, and Shen \cite{bressan2021posteriori, bressan2020self} proposed some suggestive numerical evidence for nonuniqueness to the 2D Euler equations without forcing.  In addition to the work using self-similar solutions, there is another important line of work leveraging 
 convex integration, a powerful tool introduced to fluid dynamics that resulted in the proof of the Onsager conjecture by Isett \cite{isett2018proof} and  Buckmaster--De-Lellis--Szekelyhidi--Vicol \cite{ buckmaster2019onsager}. In particular, Buckmaster--Vicol \cite{buckmaster2019nonuniqueness} established that for any $L^2$ initial data, distributional solutions to \eqref{eq:NS eq} are nonunique, and Cheskidov--Luo  \cite{cheskidov2022sharp, cheskidov20232} established nonuniqueness in $ L_t^{2-}L_x^\infty$ and $ L_t^\infty L_x^{2-}$ for dimension $d\geqslant 2$. However, solutions obtained from convex integration schemes are very far from Leray regularity $H^1_x$. Very recently, Coiculescu--Palasek established nonuniqueness in a scaling critical space $BMO^{-1}$ \cite{coiculescu2025non} where global well-posedness for small initial data holds \cite{koch2001well}, using techniques from a dyadic model introduced by Palasek \cite{palasek2024non}. Later on, Cheskidov--Dai--Palasek \cite{cheskidov2025instantaneous} established nonuniqueness in the same space along with instantaneous blowup. 

\paragraph{Computer-assisted proofs.} 
The idea of combining rigorous analysis with numerical candidates has been successfully applied to problems in fluid dynamics, where the inherent complex nature of solutions prohibited analytical candidates: particularly in the search for backward self-similar singularities, culminating in the computer-assisted proof for the 3D incompressible Euler equation with cylindrical boundary by Chen--Hou \cite{chen2022stable, ChenHou2023b,ChenHou2025} based on numerical investigation by Luo--Hou \cite{luo2014potentially, luo2014toward}. We draw inspiration from the works of Chen--Hou \cite{chen2022stable}, Elgindi--Pasqualotto \cite{elgindi2023instability} on the singularity formulation of the Boussinesq equation with only radial nonsmoothness, and Castro--Cordoba--Gomez-Serrano \cite{castro2020global} on the construction of smooth rotating solutions of the inviscid surface quasi-geostrophic equation. We also remark the linear framework of Liu--Nakao--Oishi for eigenvalue verifications \cite{liu2013verified,liu2015framework,liu2022computer}.

Stability analysis, especially linear stability, around the approximate solution is essential to conclude the computer-assisted proof, and we apply a coercive-compact decomposition of the linear operator. The compact part is approximated by a finite-rank operator, whose error, along with the residual of the approximate solution, will need to be verified rigorously by interval arithmetic. Our decomposition is obvious in a regular $H^1$ norm, unlike in \cite{chen2022stable}, where building on results of 1D singularities in fluid models \cite{chen2020finite,chen2022asymptotically}, one needs to use singularly weighted estimates; see also \cite{hou20242,chen2024stability,liu2025finite} for the use of singular weights for singularity beyond self-similarity or full stability.

For numerical computation, we use traditional finite element methods for profile computation and spectral
methods for rigorous verification.
We remark that there has been encouraging recent progress in using Physics-Informed Neural Networks (PINNs) to construct self-similar blowup profiles, including the 2D Boussinesq system with boundary \cite{wang2023asymptotic, wang2025high, wang2025discovery}. However,  a rigorous computer-assisted proof is not yet available to rigorously justify these numerical profiles. 
\subsection{Notations}

We denote 
by $\langle \cdummy, \cdummy
\rangle_{\Omega}$ the $L^2$-inner product over a domain $\Omega \subset \mathbb{R}^3$. The Sobolev norm $\parallel \cdot
\parallel_{W^{k, p}
(\Omega)}$ for functions in the Sobolev space $W^{k, p}
(\Omega)$ is defined by
\[ \| \nosymbol f \|_{W^{k, p}
(\Omega)} \assign ( \sum_{| \alpha | \leqslant
   k} \| \partial^{\alpha} f \|_{L^p (\Omega)}^p )^{\frac{1}{p}}, \]
where $\alpha=(\alpha_1 , \alpha_2 ,
\alpha_3)$ denotes a multi-index and $| \alpha | = \alpha_1 + \alpha_2 +
\alpha_3$ is its order. 
In particular, we write $H^k (\Omega) \assign W^{k, 2} (\Omega)$. 
These notations extend naturally to vector- and matrix-valued functions induced by the Frobenius norm. For example, for an
$m \times n$ matrix function $A (x)$ with entries $A_{i j}$, we define its $L^p$ norm as: 
  \[ \| A \|_{L^p (\Omega)} \assign \| ( \sum_{
       1 \leqslant i \leqslant m, 1 \leqslant j \leqslant n
     } | A_{i j} |^2 )^{\frac{1}{2}} \|_{L^p (\Omega)}. \]
When $\Omega =\mathbb{R}^3$, we can omit $\Omega$.

We use $C$ to denote a generic absolute constant, which may change from line
to line. The notation $A \lesssim B$ indicates that $A \leqslant C B$ for some
constant $C > 0$. Constants such as $K_0, K_1, \ldots$ are fixed throughout
the paper. Constants $\eta_1, \eta_2, \ldots$ are tunable parameters that arise in both the analytical estimates and the computational components of the proof. We use $\varepsilon_1, \varepsilon_2, \ldots$ to denote error
bounds that can be made arbitrarily small. Constants like $M_0, M_1, \ldots$
are abbreviations for long expressions.
To indicate dependence, we use subscripts and superscripts such as $K^U$, though not necessarily functional
dependence.

\section{Nonuniqueness Based on Theorem \ref{thm:stationary solu main}}\label{sec:settt}
In this section, we detail the proof of Theorem \ref{thm:main} based on Theorem \ref{thm:stationary solu main}, addressing the
nonuniqueness of Leray--Hopf solutions. First, we demonstrate the smoothness
of the profile $\widetilde{U}$, along with its far-field decay. In particular, it satisfies the boundary conditions of a self-similar solution.  Then, via an approach inspired by prior works \cite{albritton2022non, jia2015incompressible}, we establish the nonuniqueness of Leray--Hopf weak solutions
starting from homogeneous initial data after cutoff, as in {\cite{jia2015incompressible}}. An important idea is to perform a stable-unstable decomposition of the semigroup and demonstrate that each appropriate unstable mode at $t=1$ corresponds to a distinct solution with the same initial data.

\subsection{Regularity of the profile}
\label{subsec2.1}
We begin by proving the smoothness of the profile and that it satisfies the appropriate boundary condition. Notice that by Proposition \ref{prop:U} and  Assumption \ref{asm:num}, we know that $\widetilde{U}\in H^1_{\tmop{loc}}$. We will perform a decomposition into far and near fields of the profile, and bootstrap the regularity based on the steady state equation \eqref{eq:NS eq eig}.  

To be precise,  by Assumption \ref{asm:num}, $\overline{U} \in
  C^{\infty} (\mathbb{R}^3 \backslash \{ 0 \})\cap H^1_{\tmop{loc}}$ is divergence free, and we 
  construct $U_{\tmop{far}} \in C^\infty$, also divergence
  free, such that $\overline{U} - U_{\tmop{far}} \in H^1$.
    Let
  $\widetilde\chi$ be a smooth truncation function:
  \[ \widetilde\chi (\xi) = 0, \quad |\xi| \leqslant 1, \quad \widetilde\chi (\xi) = 1, \quad |\xi| \geqslant
     2, \]and define:
  \begin{equation}
  \label{Definition-Ufar}
   U_{\tmop{far}} = \Pi (\widetilde\chi \overline{U}) . 
   \end{equation}
  Then, $U_{\tmop{far}}$  satisfies:
  \begin{equation}
  \label{Definition-Ufar2} \left\{\begin{array}{l}
       U_{\tmop{far}} = \widetilde\chi \overline{U} - \nabla P_c,
      \\
       \tmop{div} U_{\tmop{far}} = 0,
     \end{array}\right. 
     \end{equation}
  where $P_c$ solves:
  \begin{equation}
  \label{Definition-Pc}
   \Delta P_c = \tmop{div} (\widetilde\chi \overline{U}) = \overline{U} \cdummy \nabla
     \widetilde\chi . 
     \end{equation}
     Here, the source term is of compact support and mean-zero, and we can invert the Laplacian in 3D to conclude that
  the function $P_c$ is smooth, and $\nabla^j P_c$ decays sufficiently fast as
  $O ( {|\xi|^{-j-2}} )$, ensuring $\overline{U} - U_{\tmop{far}} =
  (1 - \widetilde\chi) \overline{U} + \nabla P_c \in H^1$. 
  Then we decompose $\widetilde{U}$ into two components:
  \begin{equation}
  \label{Decomp-Ufar-Unear} \widetilde{U} = U_{\tmop{near}} + U_{\tmop{far}} . 
  \end{equation}
  
  By Proposition \ref{prop:U} and \eqref{Decomp-Utilde}, we know $U\in H^1$, and thus $U_{\tmop{near}}=\widetilde{U}-U_{\tmop{far}} = U+\overline{U} - U_{\tmop{far}}\in H^1$. Moreover, $U_{\tmop{far}}$ has the same far field asymptotics as $\overline{U}$: \begin{equation}
    \label{higher boundary ufar}
\nabla^jU_{\tmop{far}}(\xi)=\nabla^j(\frac{1}{| \xi |} A ( \frac{\xi}{| \xi |} ))+O(|\xi|^{-j-3}).\end{equation}With this decomposition, we are ready to establish the smoothness and decay properties of $\widetilde{U}$.

\begin{proposition}[Smoothness]
  The solution $\widetilde{U}$ in Proposition \ref{prop:U} is smooth. \label{prop smooth}
\end{proposition}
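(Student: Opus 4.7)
The plan is a standard local elliptic bootstrap applied to the decomposition $\widetilde U = U_{\tmop{near}} + U_{\tmop{far}}$ in \eqref{Decomp-Ufar-Unear}. The far-field part $U_{\tmop{far}} = \Pi(\widetilde\chi \overline{U})$ is smooth on all of $\mathbb{R}^3$: the cutoff kills the origin singularity so that $\widetilde\chi \overline{U} \in C^\infty(\mathbb{R}^3)$ by Assumption \ref{asm:num}, while $\nabla P_c$ is smooth because the source $\overline{U}\cdot\nabla\widetilde\chi$ in \eqref{Definition-Pc} is smooth and compactly supported. It therefore suffices to show $U_{\tmop{near}} \in C^\infty(\mathbb{R}^3)$.

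\textbf{Equation for $U_{\tmop{near}}$.} Substituting $\widetilde U = U_{\tmop{near}} + U_{\tmop{far}}$ into the stationary self-similar system \eqref{eq:NS eq eig} and using that $\widetilde U$ is divergence free, one obtains, with an associated pressure $P_{\tmop{near}}$, an inhomogeneous Stokes system
\begin{equation*}
-\Delta U_{\tmop{near}} + \nabla P_{\tmop{near}} = \tfrac12 U_{\tmop{near}} + \tfrac12 \xi\cdot\nabla U_{\tmop{near}} - U_{\tmop{near}}\cdot\nabla U_{\tmop{near}} + \mathcal R(U_{\tmop{near}}; U_{\tmop{far}}),
\qquad \tmop{div} U_{\tmop{near}} = 0,
\end{equation*}
where $\mathcal R$ is linear in $U_{\tmop{near}}$ with smooth polynomially decaying coefficients built from $U_{\tmop{far}}$ and $\nabla U_{\tmop{far}}$, plus a smooth forcing arising from $\Delta U_{\tmop{far}}$, $U_{\tmop{far}}\cdot\nabla U_{\tmop{far}}$ and the drift acting on $U_{\tmop{far}}$. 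On any bounded ball these coefficients, this forcing, and the factor $\xi$ itself are all smooth and bounded, so the equation is a genuine elliptic system with benign coefficients.

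\textbf{Bootstrap.} From Proposition \ref{prop:U} we have $U\in H^1$, and $(\overline U - U_{\tmop{far}}) = (1-\widetilde\chi)\overline U + \nabla P_c \in H^1$, so $U_{\tmop{near}} \in H^1(\mathbb{R}^3)$. Fix a bounded ball $B$. By Sobolev embedding $H^1\hookrightarrow L^6$, H\"older gives $U_{\tmop{near}}\cdot\nabla U_{\tmop{near}}\in L^{3/2}(B)$, while the remaining contributions to the right-hand side are in $L^2(B)$. Interior regularity for the Stokes system then yields $U_{\tmop{near}}\in W^{2,3/2}_{\tmop{loc}}$, which by Sobolev embedding improves $\nabla U_{\tmop{near}}$ to $L^3_{\tmop{loc}}$. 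Iterating the loop—interior Stokes regularity, then Sobolev embedding, then H\"older for the quadratic term—raises the regularity by one derivative at each step. Once $k p > 3$, $W^{k,p}$ is an algebra and the nonlinear term causes no further loss, so we obtain $U_{\tmop{near}}\in W^{k,p}_{\tmop{loc}}$ for every $k\in\mathbb{N}$ and every $p\in(1,\infty)$. Sobolev embedding then gives $U_{\tmop{near}}\in C^\infty(\mathbb{R}^3)$, hence $\widetilde U\in C^\infty(\mathbb{R}^3)$.

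\textbf{Main obstacle.} The only delicate step is the opening one: in three dimensions the quadratic term only sits in $L^{3/2}$ when $U_{\tmop{near}}\in H^1$, so the first application of elliptic regularity must be performed in a subcritical $L^p$ scale before the bootstrap stabilizes in $L^2$-based spaces. Globally, the unbounded drift $\xi\cdot\nabla U_{\tmop{near}}$ would require weighted estimates, but for pointwise smoothness only local regularity is needed, where $\xi$ is a harmless bounded coefficient; the quantitative far-field decay of $U_{\tmop{near}}$ is the content of the separate Proposition \ref{prop decay}.
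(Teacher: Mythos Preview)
Your proposal is correct and takes a genuinely different route from the paper. You run a \emph{local} interior Stokes bootstrap in $W^{k,p}_{\tmop{loc}}$, treating the unbounded drift $\xi\cdot\nabla$ as a bounded coefficient on each fixed ball and absorbing the quadratic term via the subcritical $L^{3/2}$ opening step. The paper instead proves a \emph{global} statement $U_{\tmop{near}}\in H^k(\mathbb{R}^3)$ for every $k$: it mollifies $u^\epsilon=\rho_\epsilon*U_{\tmop{near}}$, differentiates the equation $k$ times, and tests against $\chi_R\nabla^k u^\epsilon$; the drift then contributes \emph{coercivity} (via $\langle -\tfrac12\xi\cdot\nabla f,f\rangle=\tfrac34\|f\|_{L^2}^2$) rather than being a nuisance term, and after a commutator estimate for $[\rho_\epsilon*,\xi\cdot\nabla]$ and a product estimate for the nonlinearity one closes $\|\nabla^{k+1}u^\epsilon\|_{L^2}^2\lesssim 1+\|\nabla^{k+1}u^\epsilon\|_{L^2}^{7/4}$ uniformly in $\epsilon$. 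Your argument is shorter and avoids mollifiers and commutators; the paper's argument is heavier but yields the global bound $U_{\tmop{near}}\in H^\infty(\mathbb{R}^3)$ (hence $\widetilde U\in L^4\cap L^\infty$), which is immediately reused in the proof of Proposition~\ref{prop decay}. If you follow your local route, you would need to recover that global $L^\infty$ input separately before the weighted estimates there.
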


\medskip
\noindent\textbf{Sketch of proof for Proposition \ref{prop smooth}.} $U_{\tmop{far}}$ is smooth,  so we only need to show the smoothness of $U_{\tmop{near}}$. 
We defer the full proof to Appendix \ref{App-Smooth-Decay} and outline the main steps of coercive estimates here.

\smallskip
\noindent\emph{Near--far split and the equation for $U_{\mathrm{near}}$.}
Plug the decomposition \eqref{Decomp-Ufar-Unear} into equation \eqref{eq:NS eq eig}, we obtain an equation for $U_{\mathrm{near}}$ below in the distributional sense,
\[
-\tfrac12 U_{\mathrm{near}}-\tfrac12\,\xi\cdot\nabla U_{\mathrm{near}}
+\Pi(\widetilde U\cdot\nabla\widetilde U)-\Delta U_{\mathrm{near}}=F,
\quad
\nabla\cdot U_{\mathrm{near}}=0,
\]
where we have the decay of $F$ by the asymptotics \eqref{higher boundary ufar} and leading order cancellation 
\[
F:=\tfrac12U_{\mathrm{far}}+\tfrac12\,\xi\cdot\nabla U_{\mathrm{far}}+\Delta U_{\mathrm{far}},
\quad
\nabla^k F(\xi)=O(|\xi|^{-3-k}) \ \ (k\geqslant0).
\]

\smallskip
\noindent\emph{Mollification and the commutator.}
Let $\rho_\epsilon$ be a standard mollifier and set
\[
u^\epsilon=\rho_\epsilon*U_{\mathrm{near}},\quad F^\epsilon=\rho_\epsilon*F.
\]
Then $u^\epsilon\in H^\infty$, $\nabla\cdot u^\epsilon=0$, and we obtain
\[
-\tfrac12 u^\epsilon-\tfrac12\,\xi\cdot\nabla u^\epsilon
-\tfrac12\,[\rho_\epsilon*,\,\xi\cdot\nabla]U_{\mathrm{near}}
+\rho_\epsilon*\Pi(\widetilde U\cdot\nabla\widetilde U)
-\Delta u^\epsilon
=F^\epsilon,
\]
with the explicit commutator
\[
[\rho_\epsilon*,\,\xi\cdot\nabla]U_{\mathrm{near}}
=\rho_\epsilon*(\xi\cdot\nabla U_{\mathrm{near}})-\xi\cdot\nabla(\rho_\epsilon*U_{\mathrm{near}})
=-\int_{\mathbb{R}^3}\rho_\epsilon(y)\,y\cdot\nabla U_{\mathrm{near}}(\xi-y)\,dy,
\]
and the bound
\[
\|[\nabla^{k-1}\rho_\epsilon*,\,\xi\cdot\nabla]U_{\mathrm{near}}\|_{L^2}
\leqslant C\,\epsilon\,\|U_{\mathrm{near}}\|_{H^{k}}
\quad\text{for all }k\geqslant1.
\]

\smallskip
\noindent\emph{Energy at level $k$ and coercivity of drift and diffusion.}
Apply $\nabla^k$, take inner products with $\nabla^k u^\epsilon$, and integrate by parts (localizing by a large cutoff $\chi_R$ and letting $R\to\infty$ at the end). One obtains
\[
\langle-\Delta \nabla^k u^\epsilon, \nabla^k u^\epsilon\rangle
=\|\nabla^{k+1}u^\epsilon\|_{L^2}^2,
\]
and, using $\nabla\cdot\xi=3$, we get
\[
\langle-\tfrac12\,\xi\cdot\nabla \nabla^k u^\epsilon, \nabla^k u^\epsilon\rangle
=\tfrac14\langle\nabla\cdot\xi,|\nabla^k u^\epsilon|^2\rangle
=\tfrac34\,\|\nabla^k u^\epsilon\|_{L^2}^2.
\]

\smallskip
\noindent\emph{Nonlinearity.}
Recall $\widetilde U=U_{\mathrm{far}}+U_{\mathrm{near}}$. For $k\geqslant2$, $H^k(\mathbb{R}^3)$  embeds into $L^\infty$, and we get
\[\|\rho_\epsilon*\nabla^{k-1}\Pi(\widetilde U\cdot\nabla\widetilde U)\|_{L^2}
\leqslant 
\|\nabla^{k-1}\Pi(\widetilde U\cdot\nabla\widetilde U)\|_{L^2}
\leqslant C_k(1+\|U_{\mathrm{near}}\|^2_{H^{k}}),
\]
since $U_{\mathrm{far}}$ is smooth with the decay \eqref{higher boundary ufar}. For $k=1$ we use a commutator representation, \[
\|\rho_\epsilon*\nabla^{k-1}\Pi(\widetilde U\cdot\nabla\widetilde U)\|_{L^2}
\leqslant C_k(1+\|U_{\mathrm{near}}\|^2_{H^{k}}+\|u^\epsilon\|_{L^\infty}\|U_{\mathrm{near}}\|_{H^{k}})\leqslant C_k(1+\|U_{\mathrm{near}}\|^2_{H^{k}}+\|u^\epsilon\|_{H^{k+1}}^{3/4}\|U_{\mathrm{near}}\|_{H^{k}}^{5/4}).
\] The forcing term satisfies $\|\nabla^k F^\epsilon\|_{L^2}\leqslant C_k$.

\smallskip
\noindent\emph{Bootstrap.}
Collecting the previous bounds and treating $\|U_{\mathrm{near}}\|_{H^{k}}$ as a constant render the bound
\[
\|\nabla^{k+1}u^\epsilon\|_{L^2}^2
\leqslant C_k(1+\|\nabla^{k+1}u^\epsilon\|_{L^2}^{7/4}),
\]
with a constant independent of $\epsilon$. We have a uniform bound on $\|u^\epsilon\|_{H^{k+1}}$ and letting $\epsilon\to0$ yields
$U_{\mathrm{near}}\in{H^{k+1}}$.
Starting from $H^1$ and iterating proves $U_{\mathrm{near}}\in H^\infty$.

In particular, along with the decay of $U_{\mathrm{far}}$ \eqref{higher boundary ufar}, we get that \begin{equation}
    \label{l4u}\widetilde{U}\in L^4\cap L^\infty.
\end{equation}

  \begin{proposition}[Far field asymptotics]
  The solution $\widetilde{U}$ in Proposition \ref{prop:U} satisfies the boundary condition \eqref{boundary u0} with the same $A$ as that associated with $\overline{U}$ in Assumption \ref{asm:num}. \label{prop decay}
\end{proposition}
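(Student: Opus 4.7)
The goal is to show that $U_{\tmop{near}} := \widetilde U - U_{\tmop{far}}$ decays strictly faster than $|\xi|^{-1}$ at infinity; combined with the fact that $U_{\tmop{far}}$ already carries the prescribed leading profile $\frac{1}{|\xi|}A(\xi/|\xi|)$ up to $O(|\xi|^{-3})$ by \eqref{higher boundary ufar}, this yields \eqref{boundary u0} with the same $A$.

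First, I would exploit the cancellation built into the self-similar equation. Recall from the sketch of Proposition \ref{prop smooth} that $U_{\tmop{near}}$ solves
\begin{equation*}
-\tfrac12 U_{\tmop{near}} - \tfrac12\,\xi\cdot\nabla U_{\tmop{near}} - \Delta U_{\tmop{near}} + \Pi(\widetilde U\cdot\nabla\widetilde U) = F,
\end{equation*}
with $F = \tfrac12 U_{\tmop{far}} + \tfrac12\,\xi\cdot\nabla U_{\tmop{far}} + \Delta U_{\tmop{far}}$. Because $\tfrac{1}{|\xi|}A(\xi/|\xi|)$ is $(-1)$-homogeneous, Euler's identity gives $\xi\cdot\nabla\bigl(\tfrac{1}{|\xi|}A\bigr) = -\tfrac{1}{|\xi|}A$, so the leading $|\xi|^{-1}$ contributions of $\tfrac12 U_{\tmop{far}}$ and $\tfrac12\xi\cdot\nabla U_{\tmop{far}}$ cancel. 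Combined with $\Delta U_{\tmop{far}} = O(|\xi|^{-3})$ and \eqref{higher boundary ufar}, this yields $|\nabla^k F(\xi)| = O(|\xi|^{-3-k})$ for all $k\geqslant 0$; the fast decay of $F$ is the engine of the proof.

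Second, I would establish the weighted $L^2$ bounds advertised in the introduction by testing the $U_{\tmop{near}}$ equation against $|\xi|^{2k}U_{\tmop{near}}$ (localized by a smooth cutoff $\chi_R$ that is removed in the limit), for $k=1$ and then $k=2$. Since $\nabla\cdot(|\xi|^{2k}\xi) = (2k+3)|\xi|^{2k}$, the drift term paired with $-\tfrac12 U_{\tmop{near}}$ produces the positive coercive multiple $\tfrac{2k+1}{4}\int|\xi|^{2k}|U_{\tmop{near}}|^2$, while diffusion contributes $\int|\xi|^{2k}|\nabla U_{\tmop{near}}|^2$ modulo a commutator $[|\xi|^{2k},\Delta]$ that is of lower weighted order and can be absorbed by Young's inequality. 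The nonlinearity $\Pi(\widetilde U\cdot\nabla\widetilde U)$ is split using $\widetilde U = U_{\tmop{far}} + U_{\tmop{near}}$: the $U_{\tmop{far}}$-driven pieces inherit polynomial decay from \eqref{higher boundary ufar}, while the quadratic $U_{\tmop{near}}$-terms are controlled using $\widetilde U \in L^4\cap L^\infty$ from \eqref{l4u} and the higher regularity of Proposition \ref{prop smooth}; the Leray projection is $L^2$-bounded so the pressure is invisible in the energy identity. The right-hand side contributes $\int|\xi|^{2k}F\cdot U_{\tmop{near}}$ which is finite because $|\xi|^{2k}|F|^2\in L^1$. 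Bootstrapping from the $H^m$-information already in hand to $k=1$ and then to $k=2$ yields
\begin{equation*}
|\xi|U_{\tmop{near}},\ |\xi|\nabla U_{\tmop{near}},\ |\xi|^2\nabla U_{\tmop{near}},\ |\xi|^2\nabla^2 U_{\tmop{near}}\in L^2.
\end{equation*}

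Third, I convert the weighted $L^2$ bounds into pointwise decay via a dyadic rescaling. For $R\gg 1$, set $V(y):=U_{\tmop{near}}(Ry)$ on the unit annulus $\{\tfrac12\leqslant|y|\leqslant 2\}$. A change of variables and the weighted bounds above yield $\|V\|_{L^2}\lesssim R^{-5/2}\varepsilon_R$, $\|\nabla V\|_{L^2}\lesssim R^{-5/2}\varepsilon_R$, and $\|\nabla^2 V\|_{L^2}\lesssim R^{-3/2}\varepsilon_R$, where $\varepsilon_R\to 0$ is the tail mass of the weighted integrals on $\{|\xi|\geqslant R/2\}$. The 3D Sobolev embedding $H^2\hookrightarrow L^\infty$ on a fixed annulus then gives $\|V\|_{L^\infty}\lesssim R^{-3/2}\varepsilon_R$, so $|U_{\tmop{near}}(\xi)|\lesssim |\xi|^{-3/2}\varepsilon_{|\xi|} = o(|\xi|^{-1})$. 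Combined with $U_{\tmop{far}}(\xi) = \tfrac{1}{|\xi|}A(\xi/|\xi|) + O(|\xi|^{-3})$, this delivers \eqref{boundary u0} with the same $A$.

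The main obstacle I anticipate is closing the weighted estimates in the second step: the commutator $[|\xi|^{2k},\Delta]$ and the portion of the nonlinearity involving two copies of $U_{\tmop{near}}$ both carry weights at the top order of the estimate, so a careless pairing leads to a circular bound. The remedy is to integrate by parts so that one derivative is transferred from $U_{\tmop{near}}$ onto the polynomial weight $|\xi|^{2k}$, reducing one factor by one power and allowing the resulting terms to be bounded by the previous $(k-1)$ estimate together with a small multiple of the top-order norm that is absorbed by the coercive constant $(2k+1)/4$ coming from the self-similar drift.
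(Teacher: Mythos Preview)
Your overall strategy matches the paper's, but Step 2 contains a genuine gap: the claim that ``the Leray projection is $L^2$-bounded so the pressure is invisible in the energy identity'' is false once you test against the weighted function $|\xi|^{2k}U_{\mathrm{near}}$. That test function is \emph{not} divergence-free, so $\langle \Pi(\widetilde U\cdot\nabla\widetilde U),\,|\xi|^{2k}U_{\mathrm{near}}\rangle\neq\langle \widetilde U\cdot\nabla\widetilde U,\,|\xi|^{2k}U_{\mathrm{near}}\rangle$, and the pressure re-enters the identity as $\langle\nabla P,|\xi|^{2k}U_{\mathrm{near}}\rangle=-\langle P,\nabla(|\xi|^{2k})\cdot U_{\mathrm{near}}\rangle$. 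The paper handles this explicitly in the velocity--pressure formulation: at the first level $\|P\|_{L^2}\lesssim\|\widetilde U\|_{L^4}^2$ via Calder\'on--Zygmund suffices, but at the next level one needs weighted boundedness of Riesz transforms, which requires the weight to be $A_2$. In $\mathbb R^3$ the power weight $|\xi|^a$ is $A_2$ only for $-3<a<3$, so your direct use of $|\xi|^4$ (the case $k=2$) falls outside the admissible range and the estimate does not close.

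A second, related gap: testing the undifferentiated equation against $|\xi|^{2k}U_{\mathrm{near}}$ produces from the diffusion term only $\int|\xi|^{2k}|\nabla U_{\mathrm{near}}|^2$, never a second derivative, so you cannot obtain $|\xi|^2\nabla^2 U_{\mathrm{near}}\in L^2$ this way---yet that is precisely the bound driving the $R^{-3/2}$ in your dyadic step. The paper remedies both issues at once: it first tests against $\phi_L U_{\mathrm{near}}$ (with $\phi_L$ a bounded, radially increasing cutoff of $|\xi|^2$) to get $|\xi|U_{\mathrm{near}},|\xi|\nabla U_{\mathrm{near}}\in L^2$, then \emph{differentiates the equation} and tests against $\phi_L|\xi|^\alpha\nabla U_{\mathrm{near}}$ with $\alpha=0$ and then $\alpha=2$. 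This keeps the unbounded weight exponent at most $2$ (inside the $A_2$ range for the pressure) while the differentiated diffusion term now yields $\int\phi_L|\xi|^\alpha|\nabla^2 U_{\mathrm{near}}|^2$.
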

\medskip
\noindent\textbf{Sketch of Proof for Proposition \ref{prop decay}.} Since $U_{\tmop{far}}$ satisfies the desirable far field condition \eqref{higher boundary ufar}, we only need to establish $U_{\tmop{near}}$ has decay $O(|\xi|^{-3/2})$ in the far field.
 We will establish that $|\xi|U_{\tmop{near}},|\xi|^2\nabla^2 U_{\tmop{near}}\in L^2$, and use a dyadic Sobolev embedding to conclude the far field decay. We defer the full proof to Appendix \ref{App-Smooth-Decay} and outline the weighted estimates and dyadic rescaling.

 \smallskip
\noindent\emph{Weighted $L^2$ at level $0$.}
Let $\chi_R$ be a standard cutoff function with radius $R$, for example $1-\widetilde{\chi}(\xi/R)$. Define the cutoff version of the radial weight $|\xi|^2$ as $\phi_{L}$ by:
\[
\phi_L(\xi) \assign \phi\left(|\xi|/L\right), 
\qquad 
\phi(r) = r^2 \ (0 \leqslant r \leqslant 1), \quad \phi(r) = 4 \ (r \geqslant 2),
\]
where $\phi$ is increasing and smooth.
Test the near-field equation against $\chi_R\,\phi_L\,U_{\mathrm{near}}$, integrate by parts, and let $R\to\infty$. For the drift, we have \[\langle - \frac{1}{2} \xi \cdummy \nabla
       U_{\tmop{near}},\phi_{L}U_{\tmop{near}}\rangle=\frac34\langle 
       U_{\tmop{near}},\phi_{L}U_{\tmop{near}}\rangle+\frac{1}{4}\langle 
       U_{\tmop{near}},\xi\cdot\nabla\phi_{L}U_{\tmop{near}}\rangle \geqslant \frac34\langle 
       U_{\tmop{near}},\phi_{L}U_{\tmop{near}}\rangle,\] since $\phi_L$ is radially increasing. For the diffusion,  we have\[\langle - \Delta
       U_{\tmop{near}},\phi_{L}U_{\tmop{near}}\rangle=\langle \nabla
       U_{\tmop{near}},\phi_{L}\nabla U_{\tmop{near}}\rangle-\frac12\langle 
       U_{\tmop{near}},\Delta \phi_{L}U_{\tmop{near}}\rangle,\]
with $\Delta\phi_L$ uniformly bounded in $L$. For the nonlinear term, we split\[|\langle \widetilde{U} \cdot \nabla \widetilde{U},\phi_{L} U_{\tmop{near}}\rangle|=|\langle \widetilde{U} \cdot \nabla U_{\tmop{far}},\phi_{L} U_{\tmop{near}}\rangle-\frac12\langle \widetilde{U}  \cdot\nabla \phi_{L}, |U_{\tmop{near}}|^2\rangle|\lesssim\sqrt{\langle 
       U_{\tmop{near}},\phi_{L}U_{\tmop{near}}\rangle},\]
       where we use the fact by \eqref{higher boundary ufar} and \eqref{l4u} that $|\xi|\nabla U_{\tmop{far}}, \widetilde{U}\in L^4\cap L^\infty$, and the uniform bounds on derivatives of $\phi_L$. For the pressure, we use the Riesz transform to represent $\Delta P=-\partial_i\partial_j(\widetilde U_i\widetilde U_j)$ and the Calder\'on--Zygmund theory gives $\|P\|_{L^2}\lesssim \|\widetilde{U}\|^2_{L^4}\lesssim1$, hence we obtain
\[
|\langle \nabla P,\,\phi_L U_{\mathrm{near}}\rangle|
=|\langle P,\,\nabla\phi_L\cdot U_{\mathrm{near}}\rangle|\lesssim
\sqrt{\langle 
       U_{\tmop{near}},\phi_{L}U_{\tmop{near}}\rangle},
\] where we have used the fact that $|\nabla\phi_L|^2\lesssim \phi_L$. The forcing term has the same bound by the decay of $F$. Collecting the estimates yields a uniform bound on $\langle 
       U_{\tmop{near}},\phi_{L}U_{\tmop{near}}\rangle+\langle \nabla
       U_{\tmop{near}},\phi_{L}\nabla U_{\tmop{near}}\rangle$, and we can take the limit $L \rightarrow \infty$ to conclude \[|\xi|U_{\tmop{near}} \in L^2, \quad \quad|\xi|\nabla U_{\tmop{near}}\in L^2.\]

\smallskip
\noindent\emph{Weighted $L^2$ after one derivative.}
Differentiate the equation, test against $\chi_R\,\phi_L\,|\xi|^\alpha\,\nabla U_{\mathrm{near}}$ with $\alpha=0$ and $\alpha=2$, and repeat the previous bounds (now also using that $|\xi|^\alpha$ is an admissible weight for the pressure estimate). One obtains, after letting $R\to\infty$ and then $L\to\infty$,
\[
|\xi|\,\nabla^2 U_{\mathrm{near}}\in L^2
\quad\text{and}\quad
|\xi|^2\nabla U_{\mathrm{near}},\ |\xi|^2\nabla^2 U_{\mathrm{near}}\in L^2.
\]

\smallskip
\noindent\emph{Dyadic rescaling and decay.} Fix $R>1$ and the annulus $A_R=\{\xi\in\mathbb{R}^3:\ R<|\xi|<2R\}$.
Define $v(y):=U_{\mathrm{near}}(R y)$ on $A_1=\{y:\ 1<|y|<2\}$. The local Sobolev embedding $H^2(A_1)\subset L^\infty(A_1)$ implies
\[
\|U_{\mathrm{near}}\|_{L^\infty(A_R)}
=\|v\|_{L^\infty(A_1)}
\leqslant C\sum_{k=0}^2\|\nabla_y^k v\|_{L^2(A_1)}
= C\sum_{k=0}^2 R^{k-\frac32}\|\nabla^k U_{\mathrm{near}}\|_{L^2(A_R)}.
\]
Since $|\xi|\sim R$ on $A_R$ and the global weighted bounds imply
$\|\nabla^k U_{\mathrm{near}}\|_{L^2(A_R)}\leqslant C R^{-k}$ for $k=0,1,2$, we conclude
\[
\|U_{\mathrm{near}}\|_{L^\infty(A_R)}\leqslant C\,R^{-3/2},
\]
that is, $U_{\mathrm{near}}(\xi)=O(|\xi|^{-3/2})$ as $|\xi|\to\infty$, the desired decay.

Now we have established that $\widetilde{U}$ indeed induces a self-similar Leray--Hopf solution.

\subsection{Nonuniqueness of Leray--Hopf solutions}
\label{Sec2:nonuniqueness}
\begin{proof}[Proof of Theorem \ref{thm:main}]Let $\widetilde{u}$ be the self-similar solution corresponding to the profile $\widetilde{U}$ in Theorem \ref{thm:stationary solu main}. Recall that due to the slow decay of the scale-invariant initial data $u_{\text{in}}$, $\widetilde{u}$ is not a Leray-Hopf solution. Therefore, we introduce a suitable cutoff function $u^{\text{cut}}$ and
consider the following ansatz $u=\widetilde{u}+u^{\text{cut}}+u^{\text{cor}}$ to the Navier--Stokes equation \eqref{eq:NS eq}, where  $u^{\text{cor}}$ is the correction to be constructed that renders non-unique solutions. We adopt the change of variables between physical coordinates $(x,t)$ and self-similar coordinates $(\xi,\tau)$:  \begin{equation}
    \widetilde{u}(x,t)=\frac{1}{\sqrt{t}}\widetilde{U}(\frac{x}{\sqrt{t}}),\quad u^{\text{cut}}(x,t)=\frac{1}{\sqrt{t}}U^{\text{cut}}(\frac{x}{\sqrt{t}},\log t),\quad u^{\text{cor}}(x,t)=\frac{1}{\sqrt{t}}U^{\text{cor}}(\frac{x}{\sqrt{t}},\log t).\label{cov}
\end{equation}  Please note that $u=\widetilde{u}+u^{\text{cut}}+u^{\text{cor}}$ is the Leray-Hopf solution that we construct in this subsection. It does not correspond to the correction $U$ that we used in \eqref{Decomp-Utilde} to adjust $\overline{U}$; consequently, $u$ does not satisfy the relation $u(t,x) = \frac{1}{\sqrt{t}}U(\frac{x}{\sqrt{t}},\log t)$ with that $U$. We will first construct the cutoff function, as in \cite{jia2015incompressible}. Then we demonstrate that different choices of $u^{\text{cor}}(\cdot,1)$ correspond to different solutions in the rescaled time interval $\tau\in (-\infty,0]$, as in \cite{albritton2022non} and \cite{jia2015incompressible}. Here, it is crucial to treat the stable and unstable parts separately via the Riesz projection, as in \cite{jia2015incompressible}; see also \cite{chen2024vorticity, chen2025vorticity}.

\paragraph{Construction of cutoff.} As in \cite{jia2015incompressible}, via the Bogovskii operator, one can construct a decomposition of the initial condition $u_{\tmop{in}}=u_{\tmop{loc}}+w$ such that $u_{\tmop{loc}}$ is compactly supported, divergence free, $w(x)=0$ for $|x|<R$ and $\|w\|_{L^p}\lesssim R^{-(p-3)/p}$ based on the decay of $u_{\tmop{in}}$, for any $p>3$. We will choose $u_{\tmop{loc}}$ of compact support as the modified initial data for our Leray--Hopf solution $u$.

Now we define our cutoff function as\[u^{\text{cut}}=-e^{t
\Delta}w,\]
where $e^{t
\Delta}$ is the standard notation for the heat semigroup. Recall the standard smoothing estimate:
\begin{equation}\label{eq:heat-smoothing}
\|\nabla_x^k e^{t\Delta}f\|_{L^r}\lesssim t^{-\frac{k}{2}}\,t^{-\frac{3}{2}(\frac1p-\frac1r)}\|f\|_{L^p},
\quad 1\leqslant p\leqslant r\leqslant\infty.
\end{equation}
By the similarity transform, we have the identities
\begin{equation}\label{eq:scaling}
\nabla_\xi^k U^{\text{cut}}(\xi,\tau)
=t^{\frac{k}{2}+\frac12}(\nabla_x^k u^{\text{cut}})(x,t)\Big|_{x=\sqrt t\xi},
\quad 
\|g(\sqrt t\,\cdot)\|_{L^r}=t^{-\frac{3}{2r}}\|g\|_{L^r}.
\end{equation}
Combining \eqref{eq:scaling}, \eqref{eq:heat-smoothing}, and the bound on the $p$-norm of $w$, we conclude the estimate
\begin{equation}
    \|U^{\text{cut}}\|_{W^{k,r}}
\lesssim  t^{\frac12-\frac{3}{2p}}\|w\|_{L^p}
\lesssim (\frac{e^{\frac\tau2}}{R})^{(1-\frac{3}{p})}.
\label{ucut est}
\end{equation}
We fix $p=4$ from now on. 
The constants in the inequalities are independent of $R$ and $\tau$.
\paragraph{Duhamel setup.}
We will find solutions $u^{\text{cor}}$ to the following perturbative equation to \eqref{eq:NS eq} \[\partial_tu^{\text{cor}}-\Delta u^{\text{cor}}+\Pi(u\cdot\nabla u-\widetilde{u}\cdot\nabla\widetilde{u})=0,\quad \tmop{div}u^{\text{cor}}=0,\]
with $u^{\text{cor}}\to0$ in $L^2$ as $t\to 0$ so that $u=\widetilde{u}+u^{\text{cut}}+u^{\text{cor}}$ will be a solution to the Navier--Stokes equation in $L^2$ with initial data $u_{\tmop{in}}-w=u_{\tmop{loc}}$. In  similarity coordinates, the equation for $U^{\text{cor}}$ reads \begin{equation}
    \partial_\tau U^{\text{cor}}=\mathcal{L}_{\widetilde{U}}U^{\text{cor}}+\mathcal{L}_{\tmop{far}}U^{\text{cor}}+F+N(U^{\text{cor}},U^{\text{cor}}),\label{ucor eqn}
\end{equation}where we use the definition of $\mathcal{L}_{\widetilde{U}}$ in \eqref{eq:LU def} and define the linear, quadratic, and residue terms as \[\mathcal{L}_{\tmop{far}}f\assign-\Pi (U^{\text{cut}}\cdot\nabla f+f\cdot\nabla U^{\text{cut}}),\quad N(f,g)\assign-\Pi (f\cdot\nabla g),\quad F\assign-\Pi (U^{\text{cut}}\cdot\nabla U^{\text{cut}}+\widetilde{U}\cdot\nabla U^{\text{cut}}+U^{\text{cut}}\cdot\nabla \widetilde{U}).\]

The Duhamel mapping, of which we want to show $U^{\text{cor}}$ is a fixed point, reads \begin{equation}
    \mathcal{T}(U)(\cdot,\tau)\assign\int_{-\infty}^\tau e^{(\tau-s)\mathcal{L}_{\widetilde{U}}}(\mathcal{L}_{\tmop{far}}U(\cdot,s)+F(\cdot,s)+N(U(\cdot,s),U(\cdot,s)))ds.\label{duh1}
\end{equation}
By \eqref{ucut est},  the fact that $H^2$ is an algebra, namely,
\[
\| f g \|_{H^2} \lesssim \| f \|_{H^2} \| g \|_{H^2},
\]
together with the fact that $U^{\text{cut}}, \widetilde{U}$ are in $W^{\infty,4}$, we have \begin{equation}
\|\mathcal{L}_{\tmop{far}}f(\cdot,s)\|_{H^2}\lesssim(\frac{e^{\frac s2}}{R})^{(1-\frac{3}{p})}\|f\|_{H^3},\quad\|N(f,g)\|_{H^2}\lesssim\|f\|_{H^3}\|g\|_{H^3},\quad\|F(\cdot,s)\|_{H^2}\lesssim(\frac{e^{\frac s2}}{R})^{(1-\frac{3}{p})}.
    \label{nl est}
\end{equation}
\paragraph{Decomposition via Riesz projection.} From Section 4.1 of \cite{albritton2022non}, we know that for the operators defined on $L^2_\sigma\to L^2_\sigma$, $\mathcal{L}_{\widetilde{U}}-\mathcal{L}_{0}$ is a compact perturbation of $\mathcal{L}_{0}$, and thus $\mathcal{L}_{\widetilde{U}}$ has the same essential spectrum as $\mathcal{L}_{0}$, contained in the half-plane $\Re(\lambda)\leqslant-\frac14$. Here we have used the same notation $\mathcal{L}_0 := \mathcal{L}_{\widetilde U}\big|_{\widetilde U\equiv 0}$ as in \cite{albritton2022non,jia2015incompressible}. The other spectral values are finite sets of eigenvalues with finite ranks.

We proceed as in the proof of Theorem 4.1 in \cite{jia2015incompressible} to decompose into stable and unstable subspaces $L^2_\sigma=X_s\oplus_jX_j  $ via the Riesz projection $P_{\lambda_j}:L^2_\sigma\to X_j$. For every eigenvalue $\lambda_j$ with a positive real part, we define $P_{\lambda_j}=\frac{1}{2\pi i}\int_{\Gamma_j} (\lambda-\mathcal{L}_{\widetilde{U}})^{-1}d\lambda$, where $\Gamma_j$ is a suitable contour containing ${\lambda_j}$ but no other spectral values
. Here, each  $X_j$ is finite dimensional, and we define $P_s=I-\sum_jP_{\lambda_j}$
. Finally, we denote $A_j=\mathcal{L}_{\widetilde{U}}|_{X_j}$ and $A_s=\mathcal{L}_{\widetilde{U}}|_{X_s}$.  Using Jordan block computation, we have the following estimates for any $\delta>0$ by
\cite{engel2000one}:\begin{equation}
     \begin{aligned}
         &e^{\Re(\lambda_j)\tau-\delta|\tau|}\|f\|_{L^2}\lesssim \|e^{\tau A_j}f\|_{L^2} \lesssim e^{\Re(\lambda_j)\tau+\delta|\tau|}\|f\|_{L^2},\quad \forall f\in X_j, \forall\tau,\\&\|e^{ \tau A_s}f\|_{L^2} \lesssim e^{\delta\tau}\|f\|_{L^2},\quad \forall f\in X_s, \tau>0,
     \end{aligned}\label{bounds}
 \end{equation} 
 since $A_s$ has a growth bound equal to the maximum of its spectral bound and its essential spectrum (Lemma 4.2 of \cite{albritton2022non}), which is non-positive. We rewrite \eqref{duh1} as\begin{equation}
     \begin{aligned}
         &\mathcal{T}(U)_s(\cdot,\tau)\assign\int_{-\infty}^\tau e^{(\tau-s)A_s}P_s(\mathcal{L}_{\tmop{far}}U+F+N(U,U))ds,\\&\mathcal{T}(U)_j(\cdot,\tau)\assign e^{\tau A_j}U_{j0}-\int_{\tau}^0 e^{(\tau-s)A_j}P_j(\mathcal{L}_{\tmop{far}}U+F+N(U,U))ds,
     \end{aligned}\label{duh}
 \end{equation} and we need to show the existence of fixed points $\mathcal{T}(U)_s=P_sU$, $\mathcal{T}(U)_j=P_jU$, for appropriately chosen $U_{j0}=P_jU(\cdot,
 0)\in X_j$. We remark that by Theorem \ref{thm:stationary solu main}, we have at least one unstable eigenmode with eigenvalue $\lambda_j$. {The Riesz projections are moreover bounded since they are of finite rank.}
\paragraph{Smoothing estimates of $A_s$.} With the growth bound \eqref{bounds}, we obtain the smoothing estimate as in Lemma 4.4 of \cite{albritton2022non}: for any $\delta>0$, $\sigma_2\geqslant \sigma_1\geqslant0$, there exists a constant depending on $\delta,\sigma_2,\sigma_1$ only, such that for any $f_s\in X_s\cap H^{\sigma_1}$ and $\tau>0$, we have \begin{equation}
    \|e^{\tau A_s}f_s\|_{H^{\sigma_2}}\lesssim \tau^{-(\sigma_2-\sigma_1)/2} e^{\delta\tau}\|f_s\|_{H^{\sigma_1}},
    \label{smooth eqn}
\end{equation}
whose proof essentially combines the $L^2$ growth bound with a gain of regularity for the short time, as in Step 1 in the proof of Lemma 4.4 in \cite{albritton2022non}. Compact support of $\widetilde{U}$ is not essential for the proof therein, as only its boundedness in $C^k$ norms is used in the standard estimates of energy dissipation along with a gain of regularity by the heat kernel, relating back to the equation in physical time.

Every finite-dimensional unstable eigenspace $X_j$ is the kernel of $(A_j-\lambda_j)^{k_j}$ for some finite $k_j$. Thus it is spanned by basis functions $\{f_{j,k}\}$, where every $f_{j,k}$ satisfies
\[
(A_j-\lambda_j)^{k_j} f_{j,k} = 0.
\]
Similarly, we have a smoothing estimate for $A_j$, so the basis functions $f_{j,k}$ are smooth. 
\paragraph{Contraction mapping.} We recall $p=4$ and fix $\delta=\frac{1}{2}\min\{\frac{p-3}{2p},\min_j\Re(\lambda_j)\}>0$. Consider the norm $\|U\|_X\assign\sup _{\tau<0} R^{\frac{p-3}{2p}}e^{-\delta\tau}\|U(\cdot,\tau)\|_{H^3}$. We will show that there exists a sufficiently large $R$ such that the Duhamel mapping $\mathcal{T}$ in \eqref{duh} induces a contraction on the unit ball of $X$. 

Thanks to the integral computation for $\delta_1>\delta$ that \[\int_{-\infty}^\tau (\tau-s)^{-1/2}e^{\delta(\tau-s)}e^{\delta_1 s}ds\leqslant e^{\delta\tau}(\int_{-\infty}^{\tau-1} e^{(\delta_1-\delta) s}ds+\int_{\tau-1}^\tau (\tau-s)^{-1/2}ds)\lesssim e^{\delta\tau},\]
by \eqref{duh}, \eqref{smooth eqn}, and \eqref{nl est},  we have for $\tau<0$, \begin{equation}
\begin{aligned}
    \|\mathcal{T}(U)_s(\cdot,\tau)\|_{H^3}&\lesssim\int_{-\infty}^\tau (\tau-s)^{-1/2}e^{\delta(\tau-s)}((\frac{e^{\frac s2}}{R})^{(1-\frac{3}{p})}(1+\|U(\cdot,s)\|_{H^3})+\|U(\cdot,s)\|_{H^3}^2)ds\\&\lesssim e^{\delta\tau}R^{-1+\frac{3}{p}}(1+\|U\|_X+\|U\|_X^2),\\
\|\mathcal{T}(U)_s(\cdot,\tau)-\mathcal{T}(V)_s(\cdot,\tau)\|_{H^3}&\lesssim\int_{-\infty}^\tau (\tau-s)^{-1/2}e^{\delta(\tau-s)}((\frac{e^{\frac s2}}{R})^{(1-\frac{3}{p})}+\|U\|_{H^3}+\|V\|_{H^3})\|U-V\|_{H^3}ds\\&\lesssim e^{\delta\tau}R^{-1+\frac{3}{p}}(1+\|U\|_X+\|V\|_X)\|U-V\|_X.\end{aligned}   \label{H3norm-2} 
\end{equation}
Multiplying $R^{\frac{p-3}{2p}}e^{-\delta\tau}$ to \eqref{H3norm-2}, and taking $\sup_{\tau<0}$ give
\begin{equation}
\|\mathcal{T}(U)_s\|_{X}\lesssim R^{-\frac{p-3}{2p}}\big(1+\|U\|_{X}+\|U\|_{X}^{2}\big),\quad
\|\mathcal{T}(U)_s-\mathcal{T}(V)_s\|_{X}\lesssim R^{-\frac{p-3}{2p}} \big(1+\|U\|_{X}+\|V\|_{X}\big)\|U-V\|_{X}.
\label{H3norm-3} 
\end{equation}
Thus, the stable part of $\mathcal{T}$ is a strict contraction on the unit ball of $X$ for $R$ sufficiently large.

Since $U_{j0} \in X_j$ are smooth, using \eqref{bounds}, 
for $\tau<0$, we have similarly that\begin{equation*}
\begin{aligned}\|\mathcal{T}(U)_j(\cdot,\tau)\|_{H^3}&\lesssim e^{(\Re(\lambda_j)-\delta)\tau}\|U_{j0}\|_{H^3}+\int_{\tau}^0 e^{(\Re(\lambda_j)-\delta)(\tau-s)}((\frac{e^{\frac s2}}{R})^{(1-\frac{3}{p})}(1+\|U\|_{H^3})+\|U\|_{H^3}^2)ds\\&\lesssim e^{\delta\tau}\|U_{j0}\|_{H^3}+e^{\delta\tau} R^{-1+\frac{3}{p}}(1+\|U\|_X+\|U\|_X^2),\end{aligned}\label{H3norm-4} 
\end{equation*}\begin{equation*}
\begin{aligned}\|\mathcal{T}(U)_j(\cdot,\tau)-\mathcal{T}(V)_j(\cdot,\tau)\|_{H^3}&\lesssim\int_{\tau}^0 e^{(\Re(\lambda_j)-\delta)(\tau-s)}((\frac{e^{\frac s2}}{R})^{(1-\frac{3}{p})}+\|U\|_{H^3}+\|V\|_{H^3})\|U-V\|_{H^3}ds\\&\lesssim e^{\delta\tau}R^{-1+\frac{3}{p}}(1+\|U\|_X+\|V\|_X)\|U-V\|_X.\end{aligned}\label{H3norm-5} 
\end{equation*}
Similarly, by multiplying $R^{\frac{p-3}{2p}}e^{-\delta\tau}$ to the above inequalities and taking $\sup_{\tau<0}$ give\begin{equation}
\|\mathcal{T}(U)_j\|_{X}\lesssim R^{\frac{p-3}{2p}}\|U_{j0}\|_{H^3}+R^{-\frac{p-3}{2p}}(1+\|U\|_X+\|U\|_X^2),
\label{H3norm-6} 
\end{equation}
\begin{equation}
\|\mathcal{T}(U)_j-\mathcal{T}(V)_j\|_{X}\lesssim R^{-\frac{p-3}{2p}}(1+\|U\|_X+\|V\|_X)\|U-V\|_X.
\label{H3norm-7} 
\end{equation}

Denote \(\alpha:=1-\tfrac{3}{p}=\frac14\). 
For a small $\eta\in(0,1)$, consider the nonempty admissible set of the unit ball of $X$ with fixed unstable projections at $\tau=0$:
\[
\mathcal D_{R,\eta,U_{j0}}:=\Bigl\{\|U\|_X\leqslant1: \ P_jU(\cdot,0)=U_{j0}\quad \forall j\Bigr\},
\]
where $\sum_j R^{\alpha/2}\|U_{j0}\|_{H^3}\leqslant\eta$. Let $C$ denote the maximal absolute constant in the inequalities \eqref{H3norm-3}, \eqref{H3norm-6}, and \eqref{H3norm-7}. 
Choose \(R\) so large that \(C \sum_jR^{-\alpha/2}\leqslant\frac{1}{12}\), and then choose \(\eta>0\) small enough so that
\(C\eta\leqslant\tfrac14\). For any \(U\in\mathcal D_{R,\eta,U_{j0}}\), we get by \eqref{H3norm-3} and \eqref{H3norm-6} that
\[
\|\mathcal T(U)\|_{X}\ \leqslant\ C R^{\alpha/2}\sum_j\|U_{j0}\|_{H^3} + 2 C \sum_jR^{-\alpha/2}\bigl(1+\|U\|_X+\|U\|_X^2\bigr)
 \leqslant\tfrac14 + \tfrac12 <1,
\]
and hence we have \(\mathcal T(\mathcal D_{R,\eta,U_{j0}})\subset \mathcal D_{R,\eta,U_{j0}}\).
Moreover, we obtain by \eqref{H3norm-3} and \eqref{H3norm-7} that
\[
\|\mathcal T(U)-\mathcal T(V)\|_{X}
 \leqslant2C \sum_j R^{-\alpha/2}\bigl(1+\|U\|_X+\|V\|_X\bigr)\|U-V\|_X
 \leqslant\tfrac12\,\|U-V\|_X.
\]
Thus \(\mathcal T\) is a strict contraction on \(\mathcal D_{R,\eta,U_{j0}}\).
 Hence, we conclude, via the Banach fixed point theorem,  a solution to \eqref{duh1} and thus \eqref{ucor eqn} for any small enough admissible $U_{j0}$, which contains infinitely many distinct solutions $U^{\text{cor}}$, different by their projections at time $\tau=0$, i.e. $t=1$.

\paragraph{Conclusion of the proof.} 
We compute by the scaling invariance of the heat semigroup
$$\widetilde{u}(x)+u^{\text{cut}}(x)=\widetilde{u}(x)-e^{t
\Delta}u^{\text{in}}(x)+e^{t
\Delta}u_{\tmop{loc}}(x)=t^{-\frac12}(\widetilde{U}-e^{
\Delta}u^{\text{in}})(\xi)+e^{t
\Delta}u_{\tmop{loc}}.$$ The decay estimate \cite{jia2014local} implies $\widetilde{U}-e^{
\Delta}u^{\text{in}}\in L^q$ for any $q>1$. $u_{\tmop{loc}}$ is of compact support with $1/|x|$ at the origin, so it is in $L^{3-\epsilon}$ for any $\epsilon>0$. Combined with the heat smoothing estimate \eqref{eq:heat-smoothing}  and the scaling \eqref{eq:scaling}, we get for $t\leqslant1$, for constants independent of $t$, we have \begin{equation}
\|\widetilde{u}(x)+u^{\text{cut}}(x)\|_{L^q}\lesssim t^{\frac{3}{2q}-\frac{1}{2(1-\epsilon/3)}}+1,\quad \limsup_{t\to 0}\|\widetilde{u}(x)+u^{\text{cut}}(x)-u_{\tmop{loc}}(x)\|_{L^2}\to 0.
    \label{norm uuuu}
\end{equation} Finally, $\|u^{\text{cor}}\|_{L^2}=t^{1/4}\|U^{\text{cor}}\|_{L^2}\to 0$ as $t\to 0$ so that $u\in L^\infty L^2$. {Similarly, $\|\nabla u\|_{L^2}=t^{-1/4}\|\nabla U\|_{L^2}$ so that the $H^1$-norm of $u$ is bounded by $t^{-1/4}$. Thus $u\in L^2 H^1$}, and we conclude the constructed $u=\widetilde{u}+u^{\text{cut}}+u^{\text{cor}}$ are infinitely many Leray--Hopf solutions with the same $L^2$ initial data $u_{\tmop{loc}}$. 
{Similar to \cite{albritton2022non}, we can establish their smoothness for positive times via the Duhamel representation. To be precise, we can bootstrap from $\sup_{\tau<0}e^{-\delta\tau}\|U^\text{per}(\cdot,\tau)\|_{H^{N}}<\infty$ to $\sup_{\tau<0}e^{-\delta\tau}\|U^\text{per}(\cdot,\tau)\|_{H^{N+1/2}}<\infty$ via the smoothing estimates  \eqref{duh}, \eqref{smooth eqn}, and \eqref{nl est}, similar to the estimate in \eqref{H3norm-2}.} Their smoothness implies that they are moreover suitable and \eqref{suitability} holds with equality, as in \cite{albritton2022non}. 

By \eqref{norm uuuu}, the scaling \eqref{eq:scaling}, and  $U^{\text{cor}}\in H^3\subset L^q$, we compute for $q\geqslant2$,  $$\|u\|_{L^q}\leqslant\|u^{\text{cut}}+\widetilde u\|_{L^q}+\|u^{\text{cor}}\|_{L^q}\lesssim t^{\frac{3}{2q}-\frac{1}{2(1-\epsilon/3)}}+1+t^{\frac{3}{2q}-\frac12}\lesssim t^{\frac{3}{2q}-\frac{1}{2(1-\epsilon/3)}}+1.$$ Choosing $\epsilon>0$ sufficiently small, we conclude the $L^sL^q$ boundedness  and the proof of Theorem \ref{thm:main}.    
\end{proof}
\begin{remark}\label{rmk4}
    {Our choice of localization is propagated in time via the heat smoothing. It would be interesting to explore an alternative approach based on a static spatial cutoff and direct estimates of the boundary terms generated by the cutoff. In particular, one may hope to combine such an approach with inner–outer gluing ideas used by Albritton--Brue--Colombo \cite{albritton2023gluing}, to investigate whether an analogue of Theorem \ref{thm:main} could hold on smooth bounded domains (or the torus), e.g. under no-slip boundary conditions, at comparable levels of regularity. 
    A natural starting point, similar to \cite{albritton2023gluing}, would be to consider an ansatz of the form
\[
u = \eta(\tilde u + u^{\mathrm{cor}}) + \psi,
\]
where \(\eta\) is a cutoff, \(\psi\) corrects boundary/cutoff errors, and \(u^{\mathrm{cor}}\) is intended to capture the nonuniqueness mechanism associated with unstable directions. Again, it is essential to run backward for the unstable directions, so that no assumption on the smallness of positive eigenvalues is needed for the cutoff terms. Another difficulty is that the correction fields $u^{\mathrm{cor}}$ and $\psi$ are expected to satisfy a coupled system whose analysis appears more delicate in bounded geometries. We do not pursue these questions here and leave them for future work.
}
\end{remark}

\section{A Framework for Numerical Analysis}\label{Sec:anal}

In this section, we introduce a framework for \textit{a posteriori} numerical analysis.
Consider a general nonlinear partial differential equation of the form:
\begin{equation}
  J (\widetilde{w}) = 0. \label{eq:general PDE}
\end{equation}
Suppose we have an approximate solution $\overline{w}$, which satisfies
\[ J (\overline{w}) = \Epsilon, \]
where the residual $\Epsilon$ is small in a sense to be clarified later. Our
goal is to find an exact solution $\widetilde{w} = \overline{w} + w$ that is close to $\overline{w}$ and
satisfies
\[ J (\overline{w} + w) = 0, \]
where $w$ represents the error between the exact and approximate solutions.

We expand the equation in terms of the linearized operator $L$ and the nonlinear part $N$ as
\begin{equation}
  L w + N (w) + \Epsilon = 0. \label{eq:U eq}
\end{equation}
If we can solve \eqref{eq:U eq} for some $w$, then $\widetilde{w} = \overline{w} +
w$ solves  the original equation \eqref{eq:general PDE}. Since $w$ is small,
the nonlinear term $N (w)$ should be negligible and \eqref{eq:U eq} is almost
linear. Therefore, the invertibility of the linear operator $L$ essentially
determines the well-posedness of the equation \eqref{eq:U eq}, and we can use a fixed-point argument to conclude the existence of a solution. 

In this section, we will develop a framework of computer-assisted proof for
the existence of solutions to equation  \eqref{eq:U eq}, making the above heuristics rigorous. We will first
introduce a simplified example of a nonlinear Poisson equation to illustrate and motivate our method in Section
\ref{sec:toy}, followed by a general result in Section \ref{sec:general}, when $L$ is not exactly invertible but can be decomposed into a coercive part plus a compact part. We remark that instead of invoking the Banach fixed-point theorem via the proof of a contraction mapping, we will use the Schauder fixed-point theorem since our fixed-point map can be shown to have a gain of regularity. We spend more effort on compactness arguments for analysis, so that the computer-assisted proofs will be easier and to the minimal extent possible.  

\subsection{Toy model: nonlinear Poisson equation}\label{sec:toy}

We illustrate our framework with a simple toy model: the nonlinear Poisson
equation on $D=[0,1]^3$:
\begin{equation*}
  \left\{\begin{array}{l}
    - \Delta \widetilde{w} + \widetilde{w}^2 = f, \text{\quad in } D,\\
    \widetilde{w} = 0, \text{\quad on } \partial D.
  \end{array}\right. \label{eq:nonlin Poisson}
\end{equation*}
In general, we do not know the existence of the solution. Suppose we have a bounded
numerical solution $\overline{w}$ satisfying
\begin{equation*}
  \left\{\begin{array}{l}
    - \Delta \overline{w} + \overline{w}^2 = f + \Epsilon_w, \text{\quad in }
    D,\\
    \overline{w} = 0, \text{\quad on } \partial D,
  \end{array}\right. \label{eq:nonlin Poisson num}
\end{equation*}
where $\Epsilon_w$ stands for the residual. If an exact solution exists, the
error $w = \widetilde{w} - \overline{w}$ should satisfy
\begin{equation}
  \left\{\begin{array}{l}
    - \Delta w + 2 \overline{w} w + w^2 = - \Epsilon_w, \text{\quad in } D,\\
    w = 0, \text{\quad on } \partial D.
  \end{array}\right. \label{eq:nonlin Poisson err}
\end{equation}
In this formulation, we identify the linear and nonlinear components as:
\[ L w \assign - \Delta w + 2 \overline{w} w, \quad N (w) \assign w^2 . \]
We now analyze two cases. In the first case, we assume $L$ is invertible and use a fixed-point argument to construct the solution. In the second case,
where the invertibility of $L$ is unknown, we will use a finite-rank perturbation to approximate the compact part and verify numerically the invertibility of $L$ on the image of the finite-rank perturbation. In this subsection, since all of the norms and inner products are defined on $D$, we will omit the $D$ dependence.

\subsubsection{Simple case: \texorpdfstring{$\overline{w} \geqslant 0$}{overline{w} >= 0}}

First, we consider a simple case where $\overline{w} \geqslant 0$ and $L$ is
thus coercive. Specifically, we have
\[ \langle L w, w \rangle = \| \nabla w \|_{L^2}^2 + \langle 2 \overline{w}, w^2\rangle
   \geqslant \| \nabla w \|_{L^2}^2 . \]
Hence, by the Lax-Milgram theorem, $L$ is invertible in $H^1_0 (D)$. Now we define
a fixed-point map $w^{\ast} = T (w)$ by solving in $H^1_0 (D)$
\[ L w^{\ast} = - \Epsilon_w - N (w) . \]
The solution $w^{\ast}$ exists and satisfies
\[ \| \nabla w^{\ast} \|_{L^2}^2 \leqslant \langle L w^{\ast}, w^{\ast} \rangle
   \leqslant | \langle \Epsilon_w, w^{\ast} \rangle | + | \langle w^2,
   w^{\ast} \rangle | . \]
By Holder's inequality with multiple functions and Sobolev embedding, we have
\[ \| \nabla w^{\ast} \|_{L^2}^2 \leqslant \| \Epsilon_w \|_{L^2} \| w^{\ast} \|_{L^2} +
   \| w \|_{L^3}^2 \| w^{\ast} \|_{L^3}\lesssim (\| \Epsilon_w \|_{L^2} + \|
   \nabla w \|_{L^2}^2) \| \nabla w^{\ast} \|_{L^2} , \]where we have used the Poincar\'e inequality $\|w^{*}\|_{L^2}\lesssim\|\nabla w^{*}\|_{L^2}$ since $w^{*}$ vanishes on $\partial D$.

Assume that the residual is small, i.e.,
\[ \| \Epsilon_w \|_{L^2} \leqslant \varepsilon_w \ll 1. \]
Then the following estimate holds for $w^{\ast}$:
\[ \| \nabla w^{\ast} \|_{L^2} \leqslant C (\varepsilon_w + \| \nabla w \|_{L^2}^2) .
\]
If $\varepsilon_w$ is sufficiently small, we can choose a small radius $r
$ such that
\[ C (\varepsilon_w + r ^2) \leqslant r
    . \]
This implies the map $T$ sends the ball
\[ B_{r} \assign \{ w \in H^1_0 (D) : \| \nabla w \|_{L^2}
   \leqslant r  \} \]
to itself. Next, we prove that $T$ is continuous in a weaker topology. Let $w^{\ast} = T
(w)$, $\widehat{w}^{\ast} = T
(\widehat{w})$, for $w,\widehat{w}\in B_{r }$. Then we get
\[ L (w^{\ast} - \widehat{w}^{\ast}) = w^2 - \widehat{w}^2 = (w - \widehat{w}) (w + \widehat{w}), \]
and similarly, using coercivity and Sobolev embedding, we have 
\[ \|  w^{\ast} -  \widehat{w}^{\ast} \|_{L^3} \lesssim \| \nabla( w^{\ast} -  \widehat{w}^{\ast}) \|_{L^2} \lesssim (\| \nabla w \|_{L^2}
   + \| \nabla \widehat{w} \|_{L^2}) \|  w -  \widehat{w} \|_{L^3} \lesssim r
    \|  w-  \widehat{w} \|_{L^3} . \]
We know that $T$ is continuous in $L^3$-norm on $B_{r }$. Since the bounded ball in $H^1$ is compact in $L^3$, by the Schauder fixed-point theorem, a solution to the equation \eqref{eq:nonlin
Poisson err} exists.

\subsubsection{Difficult case: \texorpdfstring{$\overline{w} \leqslant 0$}{overline{w} <= 0}}

Next, we consider a more difficult case, where $\overline{w} \leqslant 0$. As
before, we aim to establish the existence of a solution to equation
\eqref{eq:nonlin Poisson err}. However, the operator $L$ is no longer
coercive, and directly proving its invertibility is challenging. To address
this, we observe that the operator can be decomposed as $L = L_1 + L_2$, where
$L_1 w \assign - \Delta w$ is coercive and $L_2 w \assign  2 \overline{w}
w$ is a compact perturbation.

Following the strategy in {\cite{chen2022stable}}, we leverage
computer-assisted methods to avoid directly verifying the invertibility of
$L$. Specifically, we decompose $w$ into $w = w_1 + w_2$ with:
\[\begin{aligned}
    &- \Delta w_1  =  - \Epsilon_w - (w_1 + w_2)^2,  \\
  &- \Delta w_2 + L_2 w_2  =  -L_2 w_1 .  
\end{aligned}\]

Summing up the two equations implies
 \eqref{eq:nonlin Poisson err}. The key observation is that $w_2 =
-L^{- 1} L_2 w_1$. If $L_2$ were of of finite-rank instead, 
we would only need to verify the invertibility of $L$ on a
finite-dimensional space, which can be done numerically.

In our case, however, $L_2 =  2 \overline{w}$ is compact but not of finite rank.
To circumvent this, we approximate $w_2$ using a finite-rank operator. That
is, we modify the system to:
\begin{align}
  &- \Delta w_1 + L_{\tmop{res}} w_1  =  - \Epsilon_w - (w_1 + w_2)^2, 
  \label{eq:nonlin Poisson w1}\\
  &- \Delta w_2 + L_2 w_2  =  - L_{\tmop{ap}} w_1 .  \label{eq:nonlin
  Poisson w2}
\end{align}
Here, $ L_{\tmop{ap}}$ is a finite rank approximation of $L_2$, and the
residual $ L_{\tmop{res}} : = L_2 - L_{\tmop{ap}}$ is small and can be
absorbed by the coercivity of $L_1$. Therefore, the modified linear part of
equation \eqref{eq:nonlin Poisson w1}, namely $- \Delta w_1 + L_{\tmop{res}} w_1$, remains coercive,  enabling a fixed-point argument for the existence
of $w_1$.

Now we show how to construct $ L_{\tmop{ap}}$. To begin with, we choose an
interpolation operator $I_n$ with a finite rank $n$ to approximate $w$. This
operator, typically derived from finite element or spectral methods, ensures a
small approximation error:
\[ \| w - I_n w \|_{L^2} \leqslant \delta_0 (n) \| \nabla w \|_{L^2},   \]where $\delta_0 (n)$ tends to zero as $n$ increases.
We use $L_2 I_n$ to approximate $L_2$. Consider  an associated basis $\Psi = (\psi_1, \ldots, \psi_n)$ of the image of $I_n$. We can represent
$I_n w_1$ as:
\[ I_n w_1 = \sum_{j = 1}^n s_j (w_1) \psi_j, \]
where $s_j (w_1)$ are the projection coefficients. Next, we verify the
invertibility of $L$ by solving
\[ - \Delta \phi_i + L_2 \phi_i = -L_2 \psi_i, i = 1, \ldots, n. \]
If this equation can be exactly solved, the solution of \eqref{eq:nonlin
Poisson w2} is thus given by
\begin{equation}
  w_2 = G w_1 \assign \sum_{j = 1}^n s_j (w_1) \phi_j . \label{eq:w2 def}
\end{equation}

In practice, small numerical residuals are unavoidable. We will get
approximate solutions $\phi_i$ with numerical residual $r_i$, that is:
\begin{equation*}
  L \phi_i = -L_2 \psi_i + r_i, i = 1, \ldots, n. \label{eq:gk method}
\end{equation*}
These residuals are expected to be
small, ensuring that the overall approximation remains accurate enough. We can
absorb this residual into $\psi_i$ by slightly modifying $L_{\tmop{ap}}$
as:
\begin{equation*}
  L_{\tmop{ap}} w_1 \assign L_2 I_n w_1 + R w_1,\quad R w_1 \assign -\sum_{j
  = 1}^n s_j (w_1) r_j.\label{eq:Lapn tilde def w1}
\end{equation*}
Then $w_2$ defined by \eqref{eq:w2 def} solves the
equation \eqref{eq:nonlin Poisson w2} precisely. Here, we need to numerically
verify that $G$ is bounded and $R$ is small in $L^2 (D)$, i.e.,
\[ \| R w_1 \|_{L^2} \leqslant \varepsilon_R \| w_1 \|_{L^2}, \quad\| G w_1 \|_{L^2}
   \lesssim \| w_1 \|_{L^2} . \]
The boundedness of $G$ ensures the invertibility of $L$ within the
finite-dimensional space. Moreover, since the residual $R$ is small,
$ L_{\tmop{res}} = L_2 (I - I_n) - R$ also remains small as we want.

Finally, by substituting \eqref{eq:w2 def} into \eqref{eq:nonlin Poisson w1}, we address the first equation in our modified system:
\[ - \Delta w_1 + L_{\tmop{res}} w_1 + (w_1 + G w_1)^2 = - \Epsilon_w .
\]
Using the Poincare's inequality, which gives $\| w_1 \|_{L^2} \lesssim \|\nabla w_1 \|_{L^2}$, we obtain
\[\begin{aligned}
  &\langle - \Delta w_1 + L_{\tmop{res}} w_1, w_1 \rangle  =  \langle -
  \Delta w_1, w_1 \rangle + \langle 2 \overline{w} (w_1 - I_n w_1), w_1
  \rangle - \langle R w_1, w_1 \rangle\\
  & \geqslant  \| \nabla w_1 \|_{L^2}^2 - 2 \| \overline{w} \|_{L^\infty} \| w_1
  - I_n w_1 \|_{L^2} \| w_1 \|_{L^2} - \| R w_1 \|_{L^2} \| w_1 \|_{L^2} \geqslant  (1 - C \delta_0 (n)  - C
  \varepsilon_R) \| \nabla w_1 \|_{L^2}^2.
\end{aligned}\]
Thus,
coercivity is preserved with large $n$ and small $\varepsilon_R$.
Consequently, a fixed-point argument similar to the simpler scenario ensures
the existence of a solution to equation \eqref{eq:nonlin Poisson err}.

\subsection{Generalization}\label{sec:general}

In this section, we formalize the methods and ideas introduced earlier into a
general theorem. For our toy model, the essential structure of the linear operator is characterized by its decomposition into a coercive part and a
compact perturbation. Accordingly, we impose the following assumption for the
linear operator in the equation \eqref{eq:U eq}:

\begin{assumption}[A priori bounds]
  \label{asm:linear op}Let $\mathcal{H}^0$ be a Hilbert space equipped with
  inner product $\langle \cdummy, \cdummy \rangle_{\mathcal{H}^0}$ and
  corresponding norm $\| \cdummy \|_{\mathcal{H}^0}$. Suppose  $\mathcal{H}^1\subset \mathcal{B}\subset \mathcal{H}^0$ are  dense subsets of $\mathcal{H}^0$, respectively endowed with stronger norms ${\mathcal{H}^1}$, ${\mathcal{B}}$, and that the identity embedding $\mathcal{H}^1\to \mathcal{B}$ is compact. In particular, there exists a constant $K_0 > 0$ such that \begin{equation}
      \| f \|_{\mathcal{H}^0} \leqslant K_0 \| f \|_{\mathcal{H}^1}, \quad  \| f \|_{\mathcal{H}^0} \lesssim \| f \|_{\mathcal{B}}, \quad
      \forall f \in \mathcal{H}_1.  \label{eq:embed}
    \end{equation}
    The operator
  $L$ admits the decomposition:
  \[ L = L_1 + L_2 , \]
  where $L_1$ is coercive and bounded, and $L_2$ is a compact perturbation. Specifically, we impose:
  \begin{enumeratenumeric}
    \item Coercivity of $L_1$ for a constant $K_1 > 0$:
    \begin{equation}
      \langle L_1 f, f \rangle_{\mathcal{H}^0} \geqslant K_1 \| f
      \|_{\mathcal{H}^1}^2, \quad \forall f \in \mathcal{H}^1.
      \label{eq:coercive}
    \end{equation}

    \item Boundedness of $L_1$ as a bilinear form, where the exact constant is not important:
    \begin{equation}
      |\langle L_1 f, g \rangle_{\mathcal{H}^0}| \lesssim \| f
      \|_{\mathcal{H}^1}\| g
      \|_{\mathcal{H}^1}, \quad \forall f \in \mathcal{H}^1.
      \label{asm:lax}
    \end{equation}
    
    \item Compactness of $L_2$ via boundedness in $\mathcal{H}^0$ for a constant $K_2 > 0$:
    \begin{equation}
      \| L_2 f \|_{\mathcal{H}^0} \leqslant K_2 \| f \|_{\mathcal{H}^0}, \quad
      \forall f \in \mathcal{H}^0.\label{eq:L2 bound}
    \end{equation}
  \end{enumeratenumeric}
\end{assumption}
\subsubsection{Finite rank approximation}\label{subsub fr}

Following our earlier approach, we now construct a finite-rank approximation
of the operator $L_2$. First, we select an interpolation operator $I_n$
satisfying:
\[ \| f - I_n f \|_{\mathcal{H}^0} \leqslant \delta_1 (n) \| f
   \|_{\mathcal{H}^1}, \]
where $\delta_1 (n)$ tends to zero as $n$ increases.  Consider an associated basis $\Psi = (\psi_1,
\ldots, \psi_n)$ of the image of $I_n$. We express $I_n f$ in terms of its basis functions:
\[ I_n f = S (f) \cdummy \Psi \assign \sum_{j = 1}^n s_j (f) \psi_j . \]
 Next, we construct a set of
functions $\Phi = (\phi_1, \ldots, \phi_n)$ satisfying:
\begin{equation}
  L \phi_i = -L_2 \psi_i + r_i, i = 1, \ldots, n, \label{eq:gk method1}
\end{equation}
where $R = (r_1, \ldots, r_n)^T$ represents the small residual terms. The
finite-rank approximation operator $ L_{\tmop{ap}}$ is defined as:
\begin{equation*}
  L_{\tmop{ap}} w_1 = \sum_{j = 1}^n s_j (w_1) (L_2 \psi_j - r_j).
  \label{eq:Lapn tilde def}
\end{equation*}
The residual of this approximation is given by:
\begin{equation}
    \label{def lresn}
L_{\tmop{res}} w_1 \assign (L_2- L_{\tmop{ap}})w_1= L_2 (I - I_n) w_1 + \sum_{j = 1}^n s_j (w_1) r_j .
\end{equation}
We then decompose the equation \eqref{eq:U eq} into the following system:
\begin{equation}
  L_1 w_1 + L_{\tmop{res}} w_1 = -\Epsilon - N (w_1+w_2), \quad Lw_2 = 
  - L_{\tmop{ap}} w_1 . \label{eq:eigen_num}
\end{equation}
With the definitions of $\phi_j$ and $s_j$ established, the solution $w_2$ is
explicitly represented as:
\begin{equation*}
  w_2 = G w_1 \assign  \sum_{j = 1}^n s_j (w_1) \phi_j.\label{eq:u2 def}
\end{equation*}
The boundedness of the operator $G$ guarantees the invertibility of $L$
within this finite-dimensional space. In summary, to validate the numerical
approach, we will verify the following assumption:

\begin{assumption}[Finite-rank approximation]
  \label{asm:finite_cap}For any $n$, consider the finite-rank operator $I_n$.
  We assume that the interpolation error satisfies:
  \begin{equation}
    \| I_n f - f \|_{\mathcal{H}^0} \leqslant \delta_1 (n) \| f
    \|_{\mathcal{H}^1}, \quad \forall f \in \mathcal{H}^1 . \label{eq:compact}
  \end{equation}
  Suppose $\Phi$ solves equation \eqref{eq:gk method1} with a small residual $R$, specifically:
  \[ \| R \|_{\mathcal{H}^0} \leqslant \varepsilon_1 , \]
  where $\varepsilon_1$, though dependent on $n$, can be made sufficiently
  small in practice. The coefficients $S (f)$ and the solution $\Phi$ satisfy the bounds:
  \begin{equation}
    | S (f) | \lesssim \| f \|_{\mathcal{H}^0},\quad| S (f) | \leqslant \gamma_1 (n) \| f \|_{\mathcal{H}^1},\quad \| \Phi
    \|_{\mathcal{H}^1} \leqslant \gamma_2 (n) . \label{eq:G bound}
  \end{equation}

\end{assumption}
  Here, in the first estimate of \eqref{eq:G bound}, we therefore have a gain of regularity that \[\|Gw_1\|_{\mathcal{H}^1}\lesssim\|w_1\|_{\mathcal{H}^0},\] which is crucial for the continuity estimate in a weaker topology $\mathcal{B}$ to follow, while the precise constant is not important.
\subsubsection{Residual and nonlinear terms}\label{Sec:nonl est}

In this subsection, we analyze the first equation of 
\eqref{eq:eigen_num}. Having established the coercivity of $L_1$ and the smallness of $ L_{\tmop{res}}$, we now shift our focus to the residual and nonlinear
terms. 
Regarding the residual term, we pose the following assumption:
\begin{assumption}[Residual estimate]
  \label{asm:res small}The residual $\Epsilon$ is sufficiently small:
  \[ \| \Epsilon \|_{\mathcal{H}^0} \leqslant \varepsilon_2, \]
  where $\varepsilon_2$ quantifies the
  permissible magnitude of the residual.
\end{assumption}

For the nonlinear term, we pose the following assumption:

\begin{assumption}[Nonlinear estimate]
  \label{asm:nonlinear}The nonlinear term $N (w)$ satisfies the bounded estimate:
  \begin{equation*}
    | \langle N (u), w \rangle_{\mathcal{H}^0} | \leqslant K_3 \| u
    \|_{\mathcal{H}^1}^2 \| w \|_{\mathcal{H}^1}, \label{eq:Nu asm}
  \end{equation*}
  and the continuous estimate
  \begin{equation*}
    | \langle N (u_1) - N (u_2), w_1 - w_2 \rangle_{\mathcal{H}^0} | \lesssim
   (\| u_1 \|_{\mathcal{H}^1} + \| u_2 \|_{\mathcal{H}^1}) \| u_1 - u_2
    \|_{\mathcal{B}} \| w_1 - w_2 \|_{\mathcal{B}}. \label{eq:dtNu asm}
  \end{equation*}
\end{assumption}

\begin{remark}
We remark that the constant in the continuity estimate is not important, and that the nonlinear estimates ensure a continuous mapping in $\mathcal{B}$ and a bounded mapping in $\mathcal{H}^1$, in preparation for the Schauder fixed-point argument.

Essentially, a quadratic nonlinearity combined with Sobolev embedding will satisfy the above assumptions.
  These assumptions can be relaxed to incorporate the nonlinearities of other orders. 
\end{remark}


\subsubsection{Main theorem}

With the assumptions above, we establish the following result:

\begin{theorem}[Existence of exact solutions in the abstract case]
  \label{thm:general}If Assumptions \ref{asm:linear op} and
  \ref{asm:nonlinear} hold,  Assumption
  \ref{asm:finite_cap} holds for sufficiently large $n$ and a
  sufficiently small $\varepsilon_1$, ensuring:
  \begin{equation}
    K_1 - K_0 K_2 \delta_1 (n) - K_0 \gamma_1 (n) \varepsilon_1 > 0, \label{eq:N constraint}
  \end{equation}
  and Assumption \ref{asm:res small} holds with a
  sufficiently small $\varepsilon_2$ such that the determinant \begin{equation}M_4\assign(K_1 - K_0 K_2 \delta_1  - K_0 \gamma_1  \varepsilon_1)^2 - 4\varepsilon_2K_0 K_3 (1 +
    \gamma_1 \gamma_2)^2\geqslant0,\label{eq:e2 constraint}
  \end{equation} then the equation \eqref{eq:eigen_num} has a solution $w_1$ with:
  \begin{equation}
    \| w_1 \|_{\mathcal{H}^1} \leqslant  x^w\assign\frac{K_1 - K_0 K_2 \delta_1  - K_0 \gamma_1  \varepsilon_1-\sqrt{M_4}}{2K_3 (1 +
    \gamma_1 \gamma_2)^2}  . \label{solve}
  \end{equation}
  Consequently, there exists a solution $\widetilde{w}$ to equation
  \eqref{eq:general PDE} such that:
  \begin{equation*}
    \| \overline{w} - \widetilde{w} \|_{\mathcal{H}^1} \leqslant (1+\gamma_1\gamma_2)x^w . \label{eq:error bd}
  \end{equation*}
\end{theorem}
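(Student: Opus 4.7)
Since the second equation of \eqref{eq:eigen_num} is solved explicitly by $w_2 = G w_1$, substituting reduces the coupled system to the single nonlinear equation
$$
(L_1 + L_{\tmop{res}}) w_1 = -\Epsilon - N(w_1 + G w_1)
$$
for $w_1 \in \mathcal{H}^1$. I then define the fixed-point map $T : w_1 \mapsto w_1^{*}$, where $w_1^{*}$ solves the linearized problem $(L_1 + L_{\tmop{res}}) w_1^{*} = -\Epsilon - N(w_1 + G w_1)$. The first step is to verify that $L_1 + L_{\tmop{res}}$ is coercive and bounded as a bilinear form on $\mathcal{H}^1$: from \eqref{def lresn}, Assumption \ref{asm:finite_cap}, and the first embedding of \eqref{eq:embed}, I obtain
$$
|\langle L_{\tmop{res}} w, w \rangle_{\mathcal{H}^0}| \leqslant K_0\bigl(K_2 \delta_1(n) + \gamma_1(n) \varepsilon_1\bigr) \|w\|_{\mathcal{H}^1}^2,
$$
which combined with \eqref{eq:coercive} gives an effective coercivity constant $a := K_1 - K_0 K_2 \delta_1(n) - K_0 \gamma_1(n) \varepsilon_1$, strictly positive by \eqref{eq:N constraint}. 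Lax-Milgram then makes $T$ well-defined, and the final $\widetilde{w}$ will be recovered as $\overline{w} + w_1 + G w_1$.

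\textbf{Self-map and the quadratic for the ball radius.} Testing the equation for $w_1^{*}$ against $w_1^{*}$, using Cauchy-Schwarz with \eqref{eq:embed} and Assumption \ref{asm:res small}, together with the bilinear nonlinear bound of Assumption \ref{asm:nonlinear} applied to $u = w_1 + G w_1$ and the estimate $\|G w_1\|_{\mathcal{H}^1} \leqslant \gamma_1 \gamma_2 \|w_1\|_{\mathcal{H}^1}$ obtained from the second bound in \eqref{eq:G bound}, I expect to derive
$$
a\,\|w_1^{*}\|_{\mathcal{H}^1} \leqslant K_0 \varepsilon_2 + K_3 (1 + \gamma_1 \gamma_2)^2 \|w_1\|_{\mathcal{H}^1}^2.
$$
Setting $b := K_3 (1 + \gamma_1 \gamma_2)^2$ and $c := K_0 \varepsilon_2$, the self-map condition $\|w_1^{*}\|_{\mathcal{H}^1} \leqslant r$ whenever $\|w_1\|_{\mathcal{H}^1} \leqslant r$ reduces to the quadratic inequality $b r^2 - a r + c \leqslant 0$, whose smallest root is exactly $x^w$ in \eqref{solve}; the nonnegativity of the discriminant $a^2 - 4 b c = M_4$ is exactly the hypothesis \eqref{eq:e2 constraint}. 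Thus $T$ sends $B_{x^w} := \{w \in \mathcal{H}^1 : \|w\|_{\mathcal{H}^1} \leqslant x^w\}$ into itself.

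\textbf{Continuity in $\mathcal{B}$ and Schauder.} For $w_1, \widehat{w}_1 \in B_{x^w}$ the difference $\delta := T w_1 - T \widehat{w}_1$ satisfies $(L_1 + L_{\tmop{res}}) \delta = -[N(w_1 + G w_1) - N(\widehat{w}_1 + G \widehat{w}_1)]$. Testing against $\delta$, using coercivity on the left and the continuity half of Assumption \ref{asm:nonlinear} on the right, together with $\|\delta\|_{\mathcal{B}} \lesssim \|\delta\|_{\mathcal{H}^1}$ and, crucially, the gain of regularity of $G$ from the first bound in \eqref{eq:G bound} combined with $\|f\|_{\mathcal{H}^0} \lesssim \|f\|_{\mathcal{B}}$ in \eqref{eq:embed}, I obtain
$$
a\,\|\delta\|_{\mathcal{H}^1}^2 \lesssim (1 + \gamma_1 \gamma_2) x^w\,\|w_1 - \widehat{w}_1\|_{\mathcal{B}}\,\|\delta\|_{\mathcal{H}^1},
$$
hence $\|\delta\|_{\mathcal{H}^1} \lesssim \|w_1 - \widehat{w}_1\|_{\mathcal{B}}$. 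Since $\mathcal{H}^1 \hookrightarrow \mathcal{B}$ is compact, $B_{x^w}$ is convex and compact in the $\mathcal{B}$-topology and $T$ is continuous on it in that topology, so the Schauder fixed-point theorem furnishes $w_1 \in B_{x^w}$ with $T w_1 = w_1$. Setting $\widetilde{w} := \overline{w} + w_1 + G w_1$ solves \eqref{eq:general PDE}, and the bound $\|\overline{w} - \widetilde{w}\|_{\mathcal{H}^1} \leqslant (1 + \gamma_1 \gamma_2) x^w$ follows from $\|G w_1\|_{\mathcal{H}^1} \leqslant \gamma_1 \gamma_2 \|w_1\|_{\mathcal{H}^1}$.

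\textbf{Main obstacle.} The principal difficulty is the quantitative interplay between \eqref{eq:N constraint} and \eqref{eq:e2 constraint}: the finite-rank parameters $\delta_1(n)$ and $\varepsilon_1$ must be small enough relative to $K_1$ to preserve coercivity of $L_1 + L_{\tmop{res}}$, and then $\varepsilon_2$ must be small enough relative to the amplification $(1 + \gamma_1 \gamma_2)^2$ coming from $G$ so that the quadratic $b r^2 - a r + c$ admits a real root. Any slack in the nonlinear constant $K_3$, in $K_0$, or in the product $\gamma_1 \gamma_2$ degrades the discriminant, so sharpness of all constants is essential for the assumptions to be simultaneously verifiable in the concrete application.
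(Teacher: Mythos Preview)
Your proposal is correct and follows essentially the same approach as the paper: coercivity of $L_1+L_{\tmop{res}}$ via the bound on $L_{\tmop{res}}$, Lax--Milgram to define $T$, the a~priori estimate leading to the quadratic $br^2-ar+c\leqslant 0$ for the self-map radius, and the continuity estimate in $\mathcal{B}$ using the regularity gain $\|Gf\|_{\mathcal{H}^1}\lesssim\|f\|_{\mathcal{H}^0}\lesssim\|f\|_{\mathcal{B}}$ followed by Schauder. The paper's proof is the same in structure and in all key constants.
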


\begin{remark}
  The logical sequence for addressing the assumptions in the preceding theorem
  is as follows: First, Assumptions \ref{asm:linear op} and
  \ref{asm:nonlinear} describe the qualitative properties of the operators and
  must initially be verified. Next, we select a sufficiently large $n$ to construct the
  approximation operator so that Assumption \ref{asm:finite_cap} and \eqref{eq:N constraint} hold. Finally, the residue $\varepsilon_2$ is chosen
  sufficiently small satisfying Assumption \ref{asm:res small} and \eqref{eq:e2 constraint}.
 
\end{remark}

\begin{proof}
  We employ a Schauder fixed-point argument to establish this theorem.
  
  \paragraph{Well-definedness of the fixed point map.} We will first establish that  $L_1 + L_{\tmop{res}}$ induces a coercive and bounded bilinear form.

  We begin by quantifying the smallness of the residual operator $L_{\tmop{res}}$. Recall \eqref{def lresn} that:
  \[ L_{\tmop{res}} f = L_2 (I - I_n) f + \sum_{j = 1}^n
     s_j (f) r_j. \]
  Using the boundedness of $L_2$ \eqref{eq:L2 bound} and the finite-rank approximation \eqref{eq:compact}, we have:
  \begin{equation*}
    \| L_2 (I - I_n) f\|_{\mathcal{H}^0} 
    \leqslant K_2 \delta_1 \| f \|_{\mathcal{H}^1} .
  \end{equation*}
  For the second term, by applying the Cauchy-Schwarz inequality, we have:
  \[
    \| \sum_{j = 1}^n s_j (f) r_j\|_{\mathcal{H}^0} 
     \leqslant  \sqrt{\sum_{j = 1}^n | s_j (f) |^2} \sqrt{\sum_{j
    = 1}^n \| r_j \|_{\mathcal{H}^0}^2 }= | S (f) | \| R \|_{\mathcal{H}^0}\leqslant \gamma_1 \varepsilon_1 \| f
    \|_{\mathcal{H}^1} ,
  \]
  where we use Assumption \ref{asm:finite_cap} in the last inequality.
  We conclude that\begin{equation}
      \label{eq:Lres est}\| L_{\tmop{res}}f\|_{\mathcal{H}^0}\leqslant (K_2 \delta_1 +\gamma_1 \varepsilon_1) \| f
    \|_{\mathcal{H}^1}.
  \end{equation}

We conclude the coercivity of $L_1 + L_{\tmop{res}}$ using estimates \eqref{eq:coercive}, \eqref{eq:Lres est}, and the embedding \eqref{eq:embed}:
  \begin{equation}
    \langle (L_1 + L_{\tmop{res}}) f, f \rangle_{\mathcal{H}^0} \geqslant (K_1 - K_0 K_2 \delta_1  - K_0 \gamma_1  \varepsilon_1)\| f \|_{\mathcal{H}^1}^2. \label{eq:LHS est}
  \end{equation}
  The boundedness is trivial by \eqref{eq:Lres est}, \eqref{eq:embed}, and Assumption \ref{asm:linear op}. By the Lax-Milgram theorem, we define the mapping $T$ by
  setting $T (w_1)=w_1^{\ast}$, where $w_1^{\ast}$ solves:
  \begin{equation}
    (L_1 + L_{\tmop{res}}) w_1^{\ast} = -\Epsilon - N (w_1 + G w_1) .
    \label{eq:Tmap eq}
  \end{equation}

  \paragraph{A priori estimate in $\mathcal{H}^1$.}
  From Assumption \ref{asm:res small} and \ref{asm:nonlinear}, the residual term and the nonlinear term are respectively controlled by:
  \begin{equation}
    | \langle \Epsilon, w_1^{\ast} \rangle_{\mathcal{H}^0} | \leqslant
    \varepsilon_2 \| w_1^{\ast} \|_{\mathcal{H}^0}\leqslant
    \varepsilon_2K_0 \| w_1^{\ast} \|_{\mathcal{H}^1}, \label{eq:res est}
  \end{equation}
  \begin{equation*}
    | \langle N (w_1 + G w_1), w_1^{\ast} \rangle_{\mathcal{H}^0} |
    \leqslant K_3 \| w_1 + G w_1 \|_{\mathcal{H}^1}^2 \|w_1^{\ast}
    \|_{\mathcal{H}^1} \leqslant K_3 (\| w_1 \|_{\mathcal{H}^1} + \| G w_1
    \|_{\mathcal{H}^1})^2 \|w_1^{\ast} \|_{\mathcal{H}^1} .
  \end{equation*}
  The estimate of $G w_1$ follows from the Cauchy-Schwarz inequality:
  \begin{equation} \| G w_1 \|_{\mathcal{H}^1}  \leqslant \sum_{j = 1}^n |s_j (w_1)| \|
     \phi_j \|_{\mathcal{H}^1} \leqslant | S (w_1) | \| \Phi
     \|_{\mathcal{H}^1} \leqslant \gamma_1 \gamma_2 \| w_1 \|_{\mathcal{H}^1} 
     , \label{G collected} \end{equation}
     where we use \eqref{eq:G bound} from Assumption \ref{asm:finite_cap}.
  We finally have
  \begin{equation}
    | \langle N (w_1 + G w_1), w_1^{\ast} \rangle_{\mathcal{H}^0} |
    \leqslant K_3 (1 + \gamma_1 \gamma_2)^2 \| w_1 \|_{\mathcal{H}^1}^2
    \|w_1^{\ast} \|_{\mathcal{H}^1} . \label{eq:nonl est}
  \end{equation}
  
 By collecting the estimates \eqref{eq:LHS est}, \eqref{eq:res est}, and \eqref{eq:nonl est}, we obtain:
  \begin{equation*}
    (K_1 - K_0 K_2 \delta_1  - K_0 \gamma_1  \varepsilon_1)\| w_1^{\ast} \|_{\mathcal{H}^1} \leqslant \varepsilon_2K_0 + K_3 (1 +
    \gamma_1 \gamma_2)^2 \| w_1 \|_{\mathcal{H}^1}^2 . \label{eq:collect}
  \end{equation*}
  Solving the inequality as in \eqref{solve} shows that
  if $\| w_1 \|_{\mathcal{H}^1}\leqslant x^w$, then   $\| w_1^{\ast} \|_{\mathcal{H}^1}\leqslant x^w$. $T$ maps the ball $B_{M}$ to itself.

  \paragraph{Continuity estimate in $\mathcal{B}$.}
  Next, we establish the continuity of $T$. Let $w_1^{\ast} = T
  (w_1), \widehat{w}_1^{\ast} = T (\widehat{w}_1)$. Subtracting the equation \eqref{eq:Tmap eq} that they satisfy, we get:
  \[ (L_1 + L_{\tmop{res}})  (w_1^{\ast} - \widehat{w}_1^{\ast}) = N
     (\widehat{w}_1 + G \widehat{w}_1) - N (w_1 + G w_1) . \]
     Instead of the estimate \eqref{G collected}, we need the gain of regularity in $G$ by \eqref{eq:G bound}:
  \begin{equation*} \| G f \|_{\mathcal{H}^1}  \leqslant | S (f) | \| \Phi
     \|_{\mathcal{H}^1} \lesssim \| f \|_{\mathcal{H}^0} \lesssim \| f \|_{\mathcal{B}} \lesssim \| f \|_{\mathcal{H}^1} 
     , \label{G gain} \end{equation*}
     where we use the comparison of the norms ${\mathcal{H}^0},\mathcal{B},\mathcal{H}^1$ in Assumption \ref{asm:linear op}.
  Combining with the nonlinear estimate in Assumption \ref{asm:nonlinear}, 
    the difference between the nonlinear term is
  bounded by\begin{equation}
  \begin{aligned}
    & | \langle N (\widehat{w}_1 + G \widehat{w}_1) - N (w_1 + G w_1),
    \widehat{w}_1^{\ast} - w_1^{\ast} \rangle_{\mathcal{H}^0} | \nonumber\\
    &\lesssim   (\| w_1 + G w_1 \|_{\mathcal{H}^1} + \| \widehat{w}_1 +
    G \widehat{w}_1 \|_{\mathcal{H}^1}) \| \widehat{w}_1 - w_1 + G \widehat{w}_1
    - G w_1 \|_{\mathcal{B}} \| \widehat{w}_1^{\ast} - w_1^{\ast}
    \|_{\mathcal{B}} \nonumber\\
     &\lesssim   (\| w_1 \|_{\mathcal{H}^1} + \|
    \widehat{w}_1 \|_{\mathcal{H}^1}) \| \widehat{w}_1 - w_1 \|_{\mathcal{B}} \|
    \widehat{w}_1^{\ast} - w_1^{\ast} \|_{\mathcal{B}} 
    \lesssim    \|
    \widehat{w}_1 - w_1 \|_{\mathcal{B}} \| \widehat{w}_1^{\ast} - w_1^{\ast}
    \|_{\mathcal{B}} ,  \label{eq:nonlin diff est}
  \end{aligned}    
  \end{equation}
  for $\widehat{w}_1, {w}_1\in B_{M}$. 
  By the coercivity estimate \eqref{eq:LHS est} and the embedding \eqref{eq:embed}, it follows that
  \[ \| w_1^{\ast} - \widehat{w}_1^{\ast} \|^2_{\mathcal{B}} \lesssim\| w_1^{\ast} - \widehat{w}_1^{\ast} \|^2_{\mathcal{H}^1} \lesssim  \| \widehat{w}_1 - w_1
     \|_{\mathcal{B}}\| w_1^{\ast} - \widehat{w}_1^{\ast} \|_{\mathcal{B}} . \]
If $w_1^{\ast}\neq \widehat{w}_1^{\ast}$, we can cancel the common factor to obtain
\[
\| w_1^{\ast} - \widehat{w}_1^{\ast} \|_{\mathcal B}
\;\lesssim\; \| \widehat{w}_1 - w_1 \|_{\mathcal B}.
\]
Hence $T$ is (Lipschitz) continuous in the weaker norm $\mathcal B$.
  
    The ball $B_{M}$ in $\mathcal{H}^1$ is compact in $\mathcal{B}$ by Assumption \ref{asm:linear op}, and by the Schauder fixed-point theorem, there exists
  a  solution $w_1$ to equation \eqref{eq:eigen_num} satisfying
  $\| w_1 \|_{\mathcal{H}^1} \leqslant x^w . $ By \eqref{G collected}, we know that $$\| \overline{w} - \widetilde{w} \|_{\mathcal{H}^1}=\| w_1+Gw_1 \|_{\mathcal{H}^1} \leqslant (1+\gamma_1\gamma_2)x^w . $$
  This concludes the proof.
\end{proof}

\section{Proof of Theorem \ref{thm:stationary solu main}: Analysis Part}\label{sec:analysis}

In this section, we apply the framework developed in the previous section to establish Theorem
\ref{thm:stationary solu main}, whose proof consists of two parts: demonstrating the
existence of the profile $\widetilde{U}$, and proving the existence of the
eigenpair $(\widetilde{\lambda}, \widetilde{v})$, via Propositions \ref{prop:U} and \ref{prop:u lambda}.
We will use superscript $U$ and $v$, such as $L^U$ and $L^v$, to indicate dependence on the operators associated with Propositions \ref{prop:U} and \ref{prop:u lambda}, respectively. The strategies are largely the same, with minor modifications highlighted. From this section onward, we will only work in the self-similar variables, and upon an abuse of notation, we still use $x$ rather than $\xi$ for the spatial variables.

\begin{remark} [Key Idea] In order to obtain sharper constants, we choose to 
verify Assumptions \ref{asm:linear op} and \ref{asm:res small}, while
replacing Assumptions \ref{asm:finite_cap} and \ref{asm:nonlinear} by obtaining the
key estimates \eqref{eq:LHS est},  \eqref{eq:nonl est}, and \eqref{eq:nonlin
diff est}  from the proof of Theorem \ref{thm:general} directly.\end{remark} We remark that the key challenge lies in the linear estimate \eqref{eq:LHS est}, while the nonlinear and residual estimates are more straightforward.  The crucial changes are that the linear operator is no longer bounded due to the term $x\cdot\nabla$, and that $H^1$ is not compactly embedded into $L^2$ on the whole space.
We truncate the domain so that the associated bilinear form will be bounded, as in  Assumption \ref{asm:linear op} to apply the Lax-Milgram theorem. Moreover, thanks to the gain of regularity, we show that the fixed point map maps a bounded set in $H^1$ to itself and is continuous in $L^4$. Therefore, in the truncated domain, we can apply the Schauder fixed-point theorem to obtain a fixed point,  and taking weak limits yields the desired solution.

We first discuss the concrete setting and the linear operators in Section \ref{Sec:summary}. The finite-rank approximation for a general type of compact perturbations applicable to $U$ and $v$ simultaneously is discussed in Section \ref{subsec:finiterank}. 
Proposition \ref{prop:U} is addressed in Section \ref{Sec:prop 3}, as
Proposition \ref{prop:u lambda} follows a similar verification process in
Section \ref{Sec:prop 4}.  Section
\ref{Sec:num prev} briefly presents our numerical results and records the verified numerical constants used in the proof. Implementation details of the numerical
algorithm are deferred to Sections \ref{Sec:num} and \ref{Sec:numfr}, and a roadmap for rigorous computer-assisted proof is in Section \ref{sec7}.

\subsection{Setting and decomposition of linear operators}\label{Sec:summary}
We discuss the setting and the definition of the linear operators.

\paragraph{Setting for $U$ for Proposition \ref{prop:U}.}
For $U$, we separate the linear and nonlinear components in equation \eqref{eq:U
pert}:
\[\begin{aligned}
  L^U U & \assign  - \frac{1}{2} U - \frac{1}{2} x \cdot \nabla U - \Delta U
  + \Pi (\overline{U} \cdot \nabla U + U \cdot \nabla \overline{U}),\\
  N^U (U) & \assign  \Pi (U \cdot \nabla U) .
\end{aligned}\]
Our strategy involves decomposing the linear operator $L^U$ into two parts as in Section \ref{Sec:anal}: a
coercive operator $L_1^U$ and a compact operator $L_2^U$. We first specify the Hilbert spaces under consideration: $\mathcal{H}^0 =
L^2_{\sigma} (\mathbb{R}^3), \mathcal{H}^1 = H^1_{\sigma} (\mathbb{R}^3)$. For numerical purposes, we will later restrict to a subspace with certain symmetries, which does not affect the analysis presented below. Noticing the cancellation via an integration by parts, an initial attempt at this decomposition could be:
\[ L_1^U U = -\frac{1}{2} U - \frac{1}{2} x \cdot \nabla U - \Delta U + \Pi
   (\overline{U} \cdot \nabla U), \quad L_2^U U = \Pi (U \cdot \nabla
   \overline{U}) . \]
While this decomposition is correct, the construction of a finite rank approximation prompts
us to ensure the matrix function in $L_2^U$ is compactly supported. To achieve
this, we truncate the gradient $\nabla \overline{U}$ as follows. We first
decompose $\nabla \overline{U}$ into antisymmetric and symmetric parts:
\[ \nabla \overline{U} (x) = Q_a (x) + Q_s (x), \quad Q_a (x) \assign
   \frac{\nabla \overline{U} (x) - \nabla^T \overline{U} (x)}{2}, \quad Q_s
   (x) \assign \frac{\nabla \overline{U} (x) + \nabla^T \overline{U} (x)}{2} .
\]
Note that $Q_a$ is antisymmetric and thus does not contribute to the scalar
product $\langle L_2^U U, U \rangle$. Therefore, we focus on truncating the
symmetric part $Q_s$.

To truncate $Q_s$, we choose a constant $0 < \eta_1 < \frac{1}{4}$. Then, the
symmetric matrix function $Q_s + \eta_1 I_3$ (where $I_3$ is the $3 \times 3$
identity matrix) is decomposed into positive and negative parts:
\[ Q_s (x) + \eta_1 I_3 = Q_{+} (x) - Q_{-} (x), \quad Q_{+} (x) \geqslant 0, \quad Q_{-}
   (x) \geqslant 0. \]
Since $\nabla \overline{U} (x)$ decays at infinity, we observe that:
\[ \lim_{| x | \to + \infty} Q_s (x) + \eta_1 I_3 = \eta_1 I_3
   \geqslant 0. \]
Therefore, the negative part $Q_{-} (x)$ of $Q_s (x)+ \eta_1 I_3 $ has compact support. Using $\nabla\overline{U}=Q_a+Q_{+}-\eta_1I_3-Q_{-}$, the operator $L_U$ is finally decomposed as:
    \begin{align}
         L_1^U U & \assign  ( - \eta_1 - \frac{1}{2} ) U - \frac{1}{2} x
  \cdot \nabla U - \Delta U + \Pi (\overline{U} \cdot \nabla U + U \cdot (Q_a
  + Q_{+}) ), \label{eq:L1 def}\\ 
  L_2^U U & \assign  -\Pi (U \cdot Q), \label{eq:L2 def}
    \end{align}
where we choose $Q=Q_{-}$ to ensure the following coercivity:
\begin{equation}
    \langle L_1^U U, U \rangle = ( \frac{1}{4} - \eta_1 ) \| U
   \|_{L^2}^2 + \| \nabla U \|_{L^2}^2 + \langle Q_{+} U, U \rangle \geqslant (
   \frac{1}{4} - \eta_1 ) \| U \|_{L^2}^2 + \| \nabla U \|_{L^2}^2 . \label{l1u c}
\end{equation} 
Since $Q$ is bounded of compact support, $L_2^U$ is a compact operator from
$\mathcal{H}^1$ to $\mathcal{H}^0$ and bounded from $\mathcal{H}^0$ to $\mathcal{H}^0$. We have verified Assumption \ref{asm:linear
op}: the coercivity of $L_U^1$ and the compactness of $L_U^2$.

\paragraph{Setting for $v$ for Proposition \ref{prop:u lambda}.}
For $v$, the only nonlinear term is $\lambda v$, and we need to decompose $\lambda$ appropriately. Starting from the equation
\eqref{eq:u pert}, we take the inner product with $\overline{v}$ to obtain:
\begin{align*}
  \lambda & =  \langle - \frac{1}{2} v - \frac{1}{2} x \cdummy \nabla v
  - \Delta v + \Pi (\widetilde{U} \cdot \nabla v + U \cdot \nabla \overline{v} + v
  \cdot \nabla \widetilde{U} + \overline{v} \cdot \nabla U) - \overline{\lambda} v
  - \lambda v + \Pi\Epsilon^v, \overline{v} \rangle\\
  & =  \langle - \frac{1}{2} x \cdummy \nabla v - \Delta v + \widetilde{U}
  \cdot \nabla v + v \cdot \nabla \widetilde{U} +
  \overline{v} \cdot \nabla U, \overline{v} \rangle ,
\end{align*}
where we use orthogonalities to $\overline{v}$ and divergence-free assumptions in Assumption \ref{asm:num} combined with an integration by parts.
It can be decomposed into:
\begin{equation*}
  \lambda (v) = \lambda^\text{lin} (v) + \lambda^\text{con},\quad
  \lambda^\text{lin} (v)  \assign  \langle - \frac{1}{2} x \cdummy \nabla v -
  \Delta v + \widetilde{U} \cdot \nabla v + v \cdot \nabla \widetilde{U}, \overline{v}
  \rangle,\quad
  \lambda^\text{con}  \assign \langle \overline{v}
  \cdot \nabla U, \overline{v} \rangle .
\end{equation*}
Here $\lambda^\text{lin}$ is linear in $v$, while $\lambda^\text{con}$ is independent of $v$. Consequently, equation
\eqref{eq:u pert} can be decomposed as:
\[ L^v v =  - \widehat{\Epsilon}^v-N^v (v), \]
where we define\begin{equation}
  \label{v-def-eqn}
\begin{aligned}
  L^v v & \assign - ( \frac{1}{2} + \overline{\lambda} + \lambda^\text{con}
  ) v - \frac{1}{2} x \cdummy \nabla v - \Delta v + \Pi (\widetilde{U} \cdot
  \nabla v + v \cdot \nabla \widetilde{U})  - \lambda^\text{lin} (v) \overline{v},\\
  N^v (v) & \assign  -\lambda^\text{lin} (v) v,\quad
  \widehat{\Epsilon}^v  \assign  \Pi\Epsilon^v + \Pi (U \cdot \nabla \overline{v} +
  \overline{v} \cdot \nabla U) - \lambda^\text{con} \overline{v} .
\end{aligned}\end{equation}

We observe that $L^v$ resembles $L^U$,
except for a few additional terms in the linear operator. The
$(\overline{\lambda} + \lambda^\text{con}) v$ term provides enhanced coercivity, while the
extra terms $U \cdot \nabla v$ and $v \cdot \nabla U$ are treated as
perturbations.
The term $- \lambda^\text{lin} (v) \overline{v}$ can be interpreted as a projection.
More explicitly, we define:
\[ P_{\overline{v}} f \assign f - \langle f, \overline{v} \rangle \overline{v} . \]
Since $\overline{v}$ is divergence-free, it follows that $P_{\overline{v}}$
commutes with the Leray projection operator $\Pi$. With this projection, the
linear operator can be represented as:
\begin{equation*}\label{lin op v}
     L^v v = - ( \frac{1}{2} + \overline{\lambda} + \lambda^\text{con} ) v -
   \frac{1}{2} P_{\overline{v}}  (x \cdummy \nabla v) - P_{\overline{v}}\Delta v +
   P_{\overline{v}} \Pi (\widetilde{U} \cdot \nabla v + v \cdot \nabla \widetilde{U})
    .
\end{equation*}  We
decompose the linear operator $L$ into a coercive operator $L_1^v$
and a compact perturbation $L_2^v$.

Recall the definitions of $Q_a, Q_{+}, Q$ from the decomposition of $L^U$. Similarly, we split the linear operator $L^v$ into two components:
\begin{align}
  L_1^v v & \assign  ( - \eta_1 - \frac{1}{2} - \overline{\lambda} -
  \lambda^\text{con} ) v - \frac{1}{2} P_{\overline{v}} (x \cdot \nabla v) -
  P_{\overline{v}}\Delta v + P_{\overline{v}} \Pi (\widetilde{U} \cdot \nabla v + v \cdummy \nabla
  U + v \cdot (Q_a + Q_{+}) ), \nonumber\\
  L_2^v v & \assign  - P_{\overline{v}} \Pi (v \cdot Q) .  \label{eq:L2 def2}
\end{align}
We choose the relevant Hilbert spaces as follows:
\begin{equation}
\label{Def-H0H1-space}
 \mathcal{H}^0 = \{ u \in L^2_{\sigma} (\mathbb{R}^3) : \langle u,
   \overline{v} \rangle = 0 \},\quad \mathcal{H}^1 = \{ u \in H^1_{\sigma}
   (\mathbb{R}^3) : \langle u, \overline{v} \rangle = 0 \} . 
   \end{equation}
   Similarly, we will impose symmetries later for numerical purposes.
We verify the coercivity of the operator $L_1^v$. Indeed, recalling that $\overline{\lambda}$ is negative and $| \lambda^\text{con} |$ is small, it holds that
\[ \langle L_1^v v, v \rangle \geqslant ( \frac{1}{4} - \eta_1 -
   \overline{\lambda} - | \lambda^\text{con} | ) \| v \|_{L^2}^2 + \| \nabla v \|_{L^2}^2
   - | \langle v \cdummy \nabla U, v \rangle | . \]
By Holder's inequality with multiple functions and Sobolev embedding in Lemma \ref{lem:sobolev const}, where we define the sharp Sobolev constant $K_4$,
 we have
\begin{equation}
    | \langle v \cdummy \nabla U, v \rangle | \leqslant \| \nabla U \|_{L^2} \| v
   \|_{L^4}^2 \leqslant 2K_4 \| \nabla U \|_{L^2} (\| v \|_{L^2}^2 + \| \nabla v \|_{L^2}^2),\label{eq: cross term}
\end{equation}
we obtain:
\begin{equation}
    \langle L_1^v v, v \rangle \geqslant ( \frac{1}{4} - \eta_1 -
   \overline{\lambda} - 2K_4 \| \nabla U \|_{L^2} - | \lambda^\text{con} | ) \| v \|_{L^2}^2
   + (1 - 2K_4 \| \nabla U \|_{L^2}) \| \nabla v \|_{L^2}^2, \label{l1v c}
\end{equation} 
ensuring coercivity for sufficiently small $\| \nabla U \|_{L^2}$ and $| \lambda^\text{con}
|$. Finally, since $Q$ is bounded of compact support, $L_2^v$ is a compact operator from
$\mathcal{H}^1$ to $\mathcal{H}^0$ and bounded from $\mathcal{H}^0$ to $\mathcal{H}^0$. We have verified Assumption \ref{asm:linear
op}: the coercivity of $L_v^1$ and the compactness of $L_v^2$.
 
\subsection{Finite rank approximation}
\label{subsec:finiterank}
For a general compact operator in the form of \eqref{eq:L2 def} and \eqref{eq:L2 def2} as \begin{equation}
    L_2=-\mathcal{P}(U\cdot Q),\label{L22 def}
\end{equation} where $Q$ is compactly supported and positive definite, and $\mathcal{P}$ denotes a projection on $L^2$ that equals the Leray projection $\Pi$ for the case of $U$ and $P_{\overline v}\Pi$ for the case of $v$. Here, $L_2^U$ and $L_2^v$ only differ by the projection operator. Notice that $L$ maps the image of the projection operator to itself: for any $f=\mathcal{P} f$, we have $\mathcal{P} L f=L f$. 

Recall that we want the finite-rank operator to have a number of basis functions as small as possible so that our numerical verification of invertibility on the space can be easier. 
Ideally, we should choose the eigenspace corresponding to the largest eigenvalues of $L_2$ so that the residue of the finite-rank approximation is small. This is not tractable in practice, however, since the eigenvectors are not explicit. We instead solve the eigenvalue problem on an explicitly constructed finite-dimensional subspace numerically, and require that $L_2$ acting outside of the space is small.
To be precise, the construction involves three main steps:
\begin{enumeratenumeric}
  \item Choose a domain $\Omega$ sufficiently large to encompass
  the support of $Q$, and construct a finite-dimensional collection of spectral basis functions on $\Omega$. 
  
  \item Compute the eigenvectors corresponding to the largest eigenvalues and
  construct their spanning eigenspace in the finite-dimensional space.   
  
  \item Construct
  approximations of $L_2$ and invert $L$ numerically restricted to the eigenspace. 
\end{enumeratenumeric}
\subsubsection{Construction of the finite rank approximation}
We first identify a suitable domain $\Omega$ that fully encompasses the
support of the matrix function $Q (x)$. 
Now, we introduce spectral approximations for functions
defined on $\Omega$. Let $V_N$ denote the finite-dimensional space with the first $N$ spectral basis functions. We remark that the space and thus the eigenpairs associated with $U$ and $v$ will be different due to their respective parity symmetries. 
Consider the following $E$-inner product and its induced norm \[\langle u, v \rangle_{E}\assign\lambda(\langle \nabla u, \nabla v \rangle_{\Omega}+\eta_2\langle u, v \rangle_{\Omega}),\] where we choose the constant $\eta_2$ to be $0.005$. Notice that by construction of the spectral basis, the $L^2$-projection $I_N$ defined by \eqref{eq:I def} in Section \ref{Sec:numfr} onto the space $V_N$ is also an $H^1$-projection, hence an $E$-projection.

To obtain an efficient finite-rank approximation, we consider the following
eigenvalue problem within the space of spectral  basis functions $V_N$:
\begin{equation}
  \langle u, Q v \rangle_{\Omega} = \lambda \langle u, v \rangle_{E} .
  \label{eq:eig M}
\end{equation}

Specifically, we arrange the eigenvalues as:
\[ \lambda^{Q}_{1} \geqslant\cdots\geqslant \lambda^{Q}_k > 0 = \lambda^{Q}_{k+1} = \cdots
  = \lambda^{Q}_N, \]
and denote their corresponding eigenfunctions as $\{ \psi^{Q}_{1}, \ldots,
\psi^{Q}_N \}$. The
eigenfunctions are orthonormal in  the $E$-inner
product:
\[
  \langle \psi^{Q}_i, \psi^{Q}_i \rangle_{E} = 1;\quad
  \langle \psi^{Q}_i, \psi^{Q}_j \rangle_{E} = \langle \psi^{Q}_i, Q \psi^{Q}_j
  \rangle_{\Omega} = 0, i \neq j.
\]

We retain only eigenfunctions with index $j \leqslant m$, for a small $m$.
For each $j 
\leqslant m$, we numerically obtain the eigenpair $(\overline{\lambda}_j,\overline{\psi}_j)$, and solve for $\overline{\phi}_j$ as:
\begin{equation}
  L \overline{\phi}_j= \mathcal{P} Q \overline\psi_j + r_j, \quad \mathcal{P} \overline\phi_j =\overline\phi_j, \label{eq:phi
  eq}
\end{equation}
where $r_j$ denotes a small residual term that satisfies $\mathcal{P} r_j =r_j$ by definition. As in Section \ref{subsub fr}, to ensure an exact inversion of $L$, we define the final finite-rank approximation as:
\begin{equation}
  L_{\tmop{ap}} f \assign -\sum_{j = 1}^m\overline{\lambda}_j^{-1} \langle f, Q\overline\psi_j \rangle_{
  \Omega} \mathcal{P} (Q \overline\psi_j + r_j) , \label{eq:Lap def}
\end{equation}
with $Gf\assign-L^{-1}L_{\tmop{ap}}$ given by\[Gf= \sum_{j = 1}^m\overline{\lambda}_j^{-1} \langle f, Q\overline\psi_j \rangle_{
  \Omega} \overline\phi_j.\]

\subsubsection{Estimates of the finite rank approximation}\label{Estimate_finite_rank}
Since our approximate eigensystem incurs numerical errors, we first present a lemma concerning projection onto the eigensystem, whose proof is deferred to Appendix \ref{appen:lem1}. We denote the eigenvalues $\overline\Lambda=\tmop{diag}\{\overline\lambda_1,\cdots,\overline\lambda_m\}$, and the space $V_{\tmop{lg}}=\tmop{span}\{\overline\psi_j\}_{j=1}^m$.   The Gram matrices \begin{equation*}
G_E\assign(\langle \overline\psi_i, \overline\psi_j \rangle_{E})_{1\leqslant i, j\leqslant m},\quad  G_Q\assign(\langle \overline\psi_i, Q\overline\psi_j \rangle_{ \Omega})_{1\leqslant i, j\leqslant m},
      \label{eq:qusi-or}
  \end{equation*} are close to the identity matrix $\mathbb{1}_{m}$ and $\overline\Lambda$ respectively. 

\begin{lemma}[Quasi-orthonormal projection]
  \label{lem:quasi ort}

  For any $f$ with the orthogonal projection $f^{Q,\perp}_{\tmop{lg}}\in V_{\tmop{lg}}$ such that $\langle f-f^{Q,\perp}_{\tmop{lg}},Q\overline\psi_j\rangle_{\Omega}=0$,   we have the estimates \begin{equation}
      |\|Q^{\frac{1}{2}}f^{Q,\perp}_{\tmop{lg}}\|^2_{L^2}-\sum_{j = 1}^m \overline\lambda_j^{-1} \langle f, Q\overline\psi_j \rangle_{\Omega} ^2|\leqslant \varepsilon_3\| \lambda_{\max} (Q) \|_{L^\infty}\|f\|^2_{{L^2}(\Omega)}.\label{fn 1}
  \end{equation}
  Here we use the notation that \begin{equation}
  \label{def-eps3}
  \varepsilon_3\assign \|\mathbb{1}_{m}-(G_Q)^{\frac{1}{2}}\overline\Lambda^{-1}(G_Q)^{\frac{1}{2}}\|,
  \end{equation}
measuring how close the approximate eigenpairs are to satisfying exact orthonormality in the $Q$-geometry.
  \end{lemma}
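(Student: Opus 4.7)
\textbf{Proof proposal for Lemma~\ref{lem:quasi ort}.}

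The plan is to reduce both sides of the claimed inequality to quadratic forms on $\mathbb{R}^m$ in the common vector $b := \bigl(\langle f, Q\overline\psi_j\rangle_\Omega\bigr)_{j=1}^m$, and then to bound the difference of these forms by a similarity transform that produces exactly the matrix whose norm defines $\varepsilon_3$. First I would introduce coordinates on $V_{\tmop{lg}}$: writing $f^{Q,\perp}_{\tmop{lg}} = \sum_{j=1}^m c_j\,\overline\psi_j$, the Galerkin-type orthogonality condition $\langle f - f^{Q,\perp}_{\tmop{lg}},\, Q\overline\psi_i\rangle_\Omega = 0$ for all $i$ is just the linear system $G_Q\, c = b$. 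Provided $G_Q$ is invertible (which will be checked momentarily), this gives $c = G_Q^{-1} b$ and hence
\[
\|Q^{1/2} f^{Q,\perp}_{\tmop{lg}}\|_{L^2}^2 \;=\; c^{\top} G_Q\, c \;=\; b^{\top} G_Q^{-1} b,
\qquad
\sum_{j=1}^m \overline\lambda_j^{-1}\, b_j^2 \;=\; b^{\top}\overline\Lambda^{-1} b.
\]

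Next I would carry out the matrix factorization that brings $\varepsilon_3$ into play. The identity
\[
G_Q^{-1} - \overline\Lambda^{-1}
\;=\; G_Q^{-1/2}\!\left(\mathbb{1}_m - G_Q^{1/2}\,\overline\Lambda^{-1}\,G_Q^{1/2}\right)\!G_Q^{-1/2}
\]
applied between $b$ and $b$ and bounded in operator norm yields
\[
\bigl|b^{\top}(G_Q^{-1} - \overline\Lambda^{-1}) b\bigr|
\;\leqslant\; \varepsilon_3 \cdot b^{\top} G_Q^{-1} b
\;=\; \varepsilon_3\, \|Q^{1/2} f^{Q,\perp}_{\tmop{lg}}\|_{L^2}^2.
\]
To finish, I would use the projection property: since $f^{Q,\perp}_{\tmop{lg}}$ is by construction the orthogonal projection of $f$ onto $V_{\tmop{lg}}$ in the semi-inner product $(g,h)\mapsto\langle g, Qh\rangle_\Omega$, Pythagoras gives $\|Q^{1/2} f^{Q,\perp}_{\tmop{lg}}\|_{L^2}^2 \leqslant \|Q^{1/2} f\|_{L^2}^2 \leqslant \|\lambda_{\max}(Q)\|_{L^\infty}\,\|f\|_{L^2(\Omega)}^2$, and the claim \eqref{fn 1} follows.

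The only delicate point — and the main obstacle — is justifying that $G_Q$ is strictly positive definite (so that $G_Q^{1/2}$ and $G_Q^{-1/2}$ are well defined and the change of basis above is legitimate). This is precisely where $\varepsilon_3$ enters analytically rather than as a post-hoc estimate: by definition of $\varepsilon_3$, the symmetric matrix $G_Q^{1/2}\overline\Lambda^{-1}G_Q^{1/2}$ differs from $\mathbb{1}_m$ by at most $\varepsilon_3$ in operator norm, and one verifies numerically (with interval arithmetic, as in Section~\ref{sec7}) that $\varepsilon_3 < 1$; combined with $\overline\Lambda \succ 0$ from retaining only the positive-eigenvalue modes of \eqref{eq:eig M}, this forces $G_Q \succ 0$. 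With that in hand, the matrix identity used above is purely algebraic and the proof collapses to the three short steps outlined.
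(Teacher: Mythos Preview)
Your proof is correct and follows essentially the same approach as the paper. Both arguments reduce to the substitution $y = G_Q^{1/2}D = G_Q^{-1/2}b$ (you parametrize by $b=G_Qc$, the paper by the coordinate vector $D=c$), which turns the difference of the two quadratic forms into $y^{\top}(\mathbb{1}_m - G_Q^{1/2}\overline\Lambda^{-1}G_Q^{1/2})y$ and hence is bounded by $\varepsilon_3\,|y|^2=\varepsilon_3\,\|Q^{1/2}f^{Q,\perp}_{\mathrm{lg}}\|_{L^2}^2$; the final step via the projection inequality and $\|Q^{1/2}f\|_{L^2}^2\leqslant\|\lambda_{\max}(Q)\|_{L^\infty}\|f\|_{L^2(\Omega)}^2$ is identical. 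Your explicit discussion of the invertibility of $G_Q$ (deduced from $\varepsilon_3<1$) is a nice addition that the paper leaves implicit.
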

  We now present a lemma on the quasi-orthogonality in the $E$-inner product; see its proof in Appendix \ref{appen:lem1}.
  \begin{lemma}[Quasi-orthogonality in $Q$- and $E$- inner products]
  \label{lem:quasi ort1}
  Denote the residues of the approximate eigenvector problem as $R^e\assign\{r^e_j\}_{j=1}^m$ defined via $$\langle \overline\psi_j, Q v \rangle_{\Omega} = \overline\lambda_j \langle \overline\psi_j, v \rangle_{E}+\langle r^e_j, v \rangle_{E},\quad \forall v\in V_N.$$
  For any $f\in V_{\tmop{lg}}$ and $u$ orthogonal to $V_{\tmop{lg}}$ in the $Q$-inner product as $\langle \overline\psi_j, Q u\rangle_{\Omega}=0$, we have \[|\langle f, u \rangle_{E}|\leqslant\varepsilon_4\|f\|_{E}\|u\|_{E}.\]
  Here we use the notation that \begin{equation*}
  \label{def-eps4}
  \varepsilon_4\assign |(G_E)^{\frac{-1}{2}}\overline\Lambda^{{-1}}(\|r^e_1\|_{E},\cdots,\|r^e_m\|_{E})^T|,
  \end{equation*}
where
$\varepsilon_4$ is the quasi-orthogonality constant between the ``large'' subspace
$V_{\mathrm{lg}}$ and its $Q$--orthogonal complement,
\emph{measured in the $E$--inner product}. 
  \end{lemma}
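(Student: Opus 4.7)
The strategy is to convert the $E$-inner product $\langle f,u\rangle_E$ into a weighted sum involving the residues $r^e_j$, by combining the Galerkin identity defining the $r^e_j$ with the hypothesized $Q$-orthogonality, and then to apply a single Cauchy--Schwarz in the Gram geometry of $G_E$ to recover the stated constant $\varepsilon_4$.

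First, expand $f=\sum_{j=1}^m c_j\overline\psi_j\in V_{\tmop{lg}}$, so that $\|f\|_E^2=\mathbf{c}^T G_E\mathbf{c}$. Taking the test function $v=u$ in the defining equation of $r^e_j$ yields
\[
\langle\overline\psi_j,Qu\rangle_\Omega=\overline\lambda_j\langle\overline\psi_j,u\rangle_E+\langle r^e_j,u\rangle_E,
\]
and the orthogonality hypothesis $\langle\overline\psi_j,Qu\rangle_\Omega=0$ forces $\langle\overline\psi_j,u\rangle_E=-\overline\lambda_j^{-1}\langle r^e_j,u\rangle_E$ for each $j$. Defining $\mathbf{r}_u\assign(\langle r^e_j,u\rangle_E)_{j=1}^m$, this gives
\[
\langle f,u\rangle_E=-\mathbf{c}^T\overline\Lambda^{-1}\mathbf{r}_u=-(G_E^{1/2}\mathbf{c})^T(G_E^{-1/2}\overline\Lambda^{-1}\mathbf{r}_u).
\]
Since $\|G_E^{1/2}\mathbf{c}\|=\|f\|_E$, Cauchy--Schwarz in $\mathbb{R}^m$ produces $|\langle f,u\rangle_E|\leqslant\|f\|_E\,\|G_E^{-1/2}\overline\Lambda^{-1}\mathbf{r}_u\|$. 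The componentwise bound $|(\mathbf{r}_u)_j|\leqslant\|r^e_j\|_E\|u\|_E$, together with the near-diagonal structure of $G_E$ (the $\overline\psi_j$ are $E$-orthonormal up to the small error encoded by $G_E-\mathbb{1}_m$), upgrades this to the vector estimate $\|G_E^{-1/2}\overline\Lambda^{-1}\mathbf{r}_u\|\leqslant\varepsilon_4\|u\|_E$, yielding the desired $|\langle f,u\rangle_E|\leqslant\varepsilon_4\|f\|_E\|u\|_E$.

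\textbf{Main obstacle.} The delicate step is the last one: for a general matrix $A$, a componentwise bound $|v_j|\leqslant e_j$ does not imply $\|Av\|\leqslant\|A\mathbf{e}\|$, so one must exploit the specific structure of $A=G_E^{-1/2}\overline\Lambda^{-1}$. The saving grace is that the $\overline\psi_j$ are obtained as a numerical Galerkin eigenbasis, so $G_E$ is explicitly assembled and nearly diagonal; the monotonicity required for the box-maximization of the positive semidefinite quadratic form $\mathbf{v}\mapsto\mathbf{v}^T\overline\Lambda^{-1}G_E^{-1}\overline\Lambda^{-1}\mathbf{v}$ on $|v_j|\leqslant e_j$ is then verified by a small interval-arithmetic check on the explicit $m\times m$ Gram matrix, the same mechanism by which $\varepsilon_4$ itself is evaluated.
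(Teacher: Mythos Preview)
Your derivation matches the paper's proof line for line: write $f=\overline\Psi B$, use the defining identity for $r^e_j$ with $v=u$ together with $Q$-orthogonality to obtain $\langle f,u\rangle_E=-B^T\overline\Lambda^{-1}\langle R^e,u\rangle_E$, record $\|f\|_E^2=B^TG_EB$, and then apply Cauchy--Schwarz in $\mathbb R^m$ with the $G_E$-weight. The paper's argument is exactly this and literally ends with the phrase ``we thus conclude the proof by Cauchy--Schwarz.''

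Where you go beyond the paper is in the final step, and you are right to flag it. Setting $\rho=(\|r^e_j\|_E)_j$, the passage from the componentwise bound $|(\mathbf r_u)_j|\leqslant\rho_j\|u\|_E$ to $|G_E^{-1/2}\overline\Lambda^{-1}\mathbf r_u|\leqslant\|u\|_E\,|G_E^{-1/2}\overline\Lambda^{-1}\rho|$ is not automatic for a general positive-definite $G_E$: it requires the quadratic form with matrix $\overline\Lambda^{-1}G_E^{-1}\overline\Lambda^{-1}$ to be maximized over the box $\{|s_j|\leqslant\rho_j\}$ at the all-positive corner, which holds precisely when that matrix is entrywise nonnegative. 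The paper's one-line ``by Cauchy--Schwarz'' does not spell this out. Your proposed interval-arithmetic check on the explicit $m\times m$ matrix would close the gap and is trivially satisfied in the paper's regime $G_E=I+O(10^{-10})$, but it is an additional hypothesis not recorded in the lemma statement. A hypothesis-free alternative is to replace $G_E^{-1}$ by its entrywise absolute value in the definition of $\varepsilon_4$; the resulting constant is always a valid upper bound and agrees with the stated $\varepsilon_4$ up to the same machine-precision discrepancy.
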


Now we work on bounded estimates.
The approximation rate associated with $V_N$ is defined as \begin{equation*}
  K_5\assign{\| \lambda_{\max} (Q) \|_{L^\infty}}\sup_{u \neq 0, u \bot_{L^2} V_N} \frac{\|
    u \|^2_{L^2(\Omega)}}{\|  u \|^2_{E}}.\end{equation*}$K_5$ tends to $0$ as $N$ increases by our spectral basis construction of $V_N$, since the Rayleigh quotients are upper bounded by the inverse of eigenvalues of the Laplacian. 
    Defining the $L^2$-projection on $V_N$ as $I_N$, we have the estimate \begin{equation}
      \label{k5}\| Q^{\frac{1}{2}} (u - I_N u)
    \|_{L^2}^2\leqslant K_5 \|  (u - I_N u) \|_{{E}}^2.\end{equation}
  
  We define the following constant
  \begin{equation}
  K_6\assign\sup_{u\in V_N, u \neq 0, u \bot_{Q}V_\text{lg}} 
      \frac{\|Q^{\frac{1}{2}}
    u \|^2_{L^2}}{\|  u \|^2_{E}},\label{constant small}
  \end{equation}the maximal Rayleigh quotient of $Q$ measured in the $E$--energy on the $Q$--orthogonal complement of the captured subspace $V_{\mathrm{lg}}=\mathrm{span}\{\psi_1,\dots,\psi_m\}\subset V_N$,
i.e. the top generalized eigenvalue of $Q$ on $V_{\mathrm{lg}}^{\perp_Q}$. 
Moreover,
     $K_6$ can be computed rigorously in the matrix equivalent characterization of $$K_6\leqslant\sup_{u\in V_N, u \neq 0} \frac{\langle u, Q
    u \rangle_{\Omega}-\sum_{j = 1}^m \overline\lambda_j^{-1} \langle u, Q\overline\psi_j \rangle_{\Omega} ^2}{\|  u \|^2_{E}},$$
which should approximately be the eigenvalue smaller than $\overline\lambda_m$. In practice, since we only need to provide an upper bound of $K_6$, we can increase $K_6$ a bit and only need to ensure and verify a positive-definiteness in the following sense: 
\begin{equation}
    \label{k6real} \sup_{u\in V_N} K_6\|  u \|^2_{E}+\sum_{j = 1}^m \overline\lambda_j^{-1} \langle u, Q\overline\psi_j \rangle_{\Omega} ^2-\langle u, Q
    u \rangle_{\Omega}\geq0,
\end{equation}
which we can verify in the matrix representation via a numerical $LDL^T$ decomposition and by showing the residue is small and diagonal entries are away from $0$.

With the bounded estimates in hand, we are in a position to present
the following lemma on a rigorous estimate of the approximation error
associated with $L_{\tmop{ap}}$.

\begin{lemma}[Finite-rank approximation]
  \label{lem:fe est}For $L_{\tmop{ap}}$ defined by \eqref{eq:Lap def}
  and $L_2$ defined by \eqref{L22 def},
    we have the estimate for any $f$ such that $\mathcal{P} f=f$:
  \[  |\langle (L_2 - L_{\tmop{ap}}) f, f \rangle |\leqslant M_1 \|
     \nabla f \|_{L^2(\Omega)}^2 + M_2 \| f \|_{L^2(\Omega)}^2, \]
  where
  \[ M_1\assign K_5+\frac{K_6}{(1-\varepsilon_4)^2},\quad M_2 \assign \eta_2M_1+\varepsilon_3\| \lambda_{\max} (Q) \|_{L^\infty}+K_7. \]
    Here, the constant $K_7$ is related to the inner product of the Gram matrices 
    \begin{equation}K_7\assign\sqrt{\tmop{tr}(G_{Q^2}G_r)},\quad
G_{Q^2}\assign(\langle \overline{\lambda}_i^{-1}Q\overline\psi_i, \overline{\lambda}_j^{-1}Q\overline\psi_j \rangle_{ \Omega})_{1\leqslant i, j\leqslant m},\quad  G_r\assign(\langle r_i, r_j \rangle_{ \Omega})_{1\leqslant i, j\leqslant m} .
      \label{eq:qusi-or1}
  \end{equation} 
  Finally, we remark that by construction, $\overline\phi_j \in H^2$.
\end{lemma}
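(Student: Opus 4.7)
The plan is to expand the bilinear form $\langle(L_2-L_{\tmop{ap}})f,f\rangle$ explicitly and then separate it into three pieces: (i) a discrete eigenvalue defect controlled by Lemma~\ref{lem:quasi ort}, (ii) the genuinely analytic ``missed mass'' $\|Q^{1/2}g\|^2$ of $f$ away from the captured subspace, and (iii) a residual contribution from the inexact numerical inversion of $L$, which will be handled by a Hilbert--Schmidt estimate. Since $\mathcal{P}f=f$ and $\mathcal{P}$ is self-adjoint, writing $b_j\assign\langle f,Q\overline{\psi}_j\rangle_{\Omega}$, a short computation gives
\begin{equation*}
\langle(L_2-L_{\tmop{ap}})f,f\rangle=-\bigl(\|Q^{1/2}f\|_{L^2}^{2}-\sum_{j=1}^{m}\overline{\lambda}_j^{-1}b_j^{2}\bigr)+\sum_{j=1}^{m}\overline{\lambda}_j^{-1}b_j\,\langle r_j,f\rangle.
\end{equation*}
Next I would decompose $f=f^{Q,\perp}_{\tmop{lg}}+g$ with $f^{Q,\perp}_{\tmop{lg}}\in V_{\tmop{lg}}$ the $Q$-projection and $g\perp_{Q}V_{\tmop{lg}}$, so that the $Q$-Pythagorean identity yields $\|Q^{1/2}f\|^{2}=\|Q^{1/2}f^{Q,\perp}_{\tmop{lg}}\|^{2}+\|Q^{1/2}g\|^{2}$, and then Lemma~\ref{lem:quasi ort} converts the first bracket into an error of size $\varepsilon_3\|\lambda_{\max}(Q)\|_{L^\infty}\|f\|^{2}_{L^2(\Omega)}$.

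\textbf{Main estimate $\|Q^{1/2}g\|^{2}\leqslant M_1\|f\|_E^{2}$.} I would split $g=(I-I_N)g+I_Ng$ into its $L^2$-exterior part (outside $V_N$) and its interior part. For the exterior piece, the approximation inequality \eqref{k5} directly gives $\|Q^{1/2}(I-I_N)g\|^{2}\leqslant K_5\,\|(I-I_N)g\|_E^{2}$, producing the $K_5$ contribution. For the interior part $I_Ng\in V_N$, a further $Q$-orthogonal splitting $I_Ng=p+q$ with $p\in V_{\tmop{lg}}$ and $q\in V_N\cap V_{\tmop{lg}}^{\perp_Q}$ produces $\|Q^{1/2}q\|^{2}\leqslant K_6\|q\|_E^{2}$ from the definition \eqref{constant small} of $K_6$; the $p$ component is constrained by $\langle p,Q\overline{\psi}_j\rangle=-\langle(I-I_N)g,Q\overline{\psi}_j\rangle$ (since $g\perp_{Q}V_{\tmop{lg}}$), so $\|Q^{1/2}p\|$ is subordinate to the exterior piece. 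The factor $(1-\varepsilon_4)^{-2}$ then arises by invoking Lemma~\ref{lem:quasi ort1} to pass between the $Q$-geometry (diagonalizing $Q$) and the $E$-geometry (adapted to the computable projection $I_N$): the quasi-$E$-orthogonality $|\langle p,q\rangle_E|\leqslant\varepsilon_4\|p\|_E\|q\|_E$ allows a comparison $\|q\|_E\lesssim(1-\varepsilon_4)^{-1}\|I_Ng\|_E$, and the presence of both a direct and a cross contribution squares this factor after a Cauchy--Schwarz absorption of the $\langle Q(I-I_N)g,I_Ng\rangle$ cross term. Collecting all bounds produces $\|Q^{1/2}g\|^{2}\leqslant M_1\|f\|_E^{2}$, which via $\|f\|_E^{2}=\|\nabla f\|^{2}_{L^2(\Omega)}+\eta_2\|f\|^{2}_{L^2(\Omega)}$ contributes $M_1\|\nabla f\|^{2}+\eta_2 M_1\|f\|^{2}$ to the final estimate.

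\textbf{Residual term and assembly.} The second sum equals $\langle Tf,f\rangle$ for the rank-$m$ operator $Tf\assign\sum_{j}\overline{\lambda}_j^{-1}\langle f,Q\overline{\psi}_j\rangle\,r_j$. A direct computation of $\|T\|_{HS}^{2}=\tmop{tr}(T^{\ast}T)$ identifies it with $\tmop{tr}(G_{Q^{2}}G_r)=K_7^{2}$, whence $|\langle Tf,f\rangle|\leqslant K_7\|f\|^{2}_{L^2(\Omega)}$. Summing the three contributions---$M_1\|f\|_E^{2}$, the $\varepsilon_3\|\lambda_{\max}(Q)\|_{L^\infty}\|f\|^{2}_{L^2(\Omega)}$ defect from Lemma~\ref{lem:quasi ort}, and the $K_7\|f\|^{2}_{L^2(\Omega)}$ residual---produces exactly $M_1\|\nabla f\|^{2}_{L^2(\Omega)}+M_2\|f\|^{2}_{L^2(\Omega)}$ with $M_2=\eta_2M_1+\varepsilon_3\|\lambda_{\max}(Q)\|_{L^\infty}+K_7$. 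The concluding remark $\overline\phi_j\in H^{2}$ is immediate from the construction: $\overline\phi_j$ is produced inside the finite-dimensional spectral subspace used to solve \eqref{eq:phi eq}, whose basis functions are smooth, and the right-hand side $\mathcal{P}Q\overline{\psi}_j+r_j$ lies in $L^2$ with $-\Delta$ as the principal part of $L$.

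\textbf{Main obstacle.} The delicate step is extracting the sharp constant $K_6/(1-\varepsilon_4)^{2}$: one must simultaneously track two different inner-product structures on $V_N$---the $Q$-geometry (which diagonalizes the quadratic form being estimated) and the $E$-geometry (which governs the tractable projection $I_N$ and the computable Rayleigh quotient $K_6$)---and quantitatively bridge them through Lemma~\ref{lem:quasi ort1} without losing any factor that would spoil the coercivity margin \eqref{eq:N constraint}. All cross terms between $(I-I_N)g$, $p$, and $q$ must be absorbed by the announced constants, which is where the $(1-\varepsilon_4)^{-2}$ (rather than a weaker $(1-\varepsilon_4)^{-1}$) enters.
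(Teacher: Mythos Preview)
Your overall strategy matches the paper's: the same three-term split (the paper calls them $I_1,I_2,I_3$), and your treatments of the eigenvalue defect via Lemma~\ref{lem:quasi ort} and of the residual term via the Hilbert--Schmidt bound $\|T\|_{HS}^2=\tmop{tr}(G_{Q^2}G_r)$ are correct and essentially identical to the paper's double-integral Cauchy--Schwarz argument.

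The gap is in your handling of $I_1=\|Q^{1/2}g\|^2$. Your bound $\|q\|_E\lesssim(1-\varepsilon_4)^{-1}\|I_Ng\|_E$, obtained by applying Lemma~\ref{lem:quasi ort1} to the pair $(p,q)$ inside $I_Ng$, does not close: here $I_Ng=I_Nf-f^{Q,\perp}_{\tmop{lg}}$, and since $f^{Q,\perp}_{\tmop{lg}}$ is a $Q$-projection (not an $E$-projection) its $E$-norm is not controlled by $\|f\|_E$ with constant $1$. So $\|I_Ng\|_E$ is not dominated by $\|f^E\|_E=\|I_Nf\|_E$, and you cannot combine with the $K_5\|(I-I_N)g\|_E^2$ piece through the $E$-Pythagorean identity $\|f^E_{\tmop{or}}\|_E^2+\|f^E\|_E^2=\|f\|_E^2$ to reach the sharp constant $M_1$.

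The paper fixes this in two moves. First, it uses the minimizing property of the $Q$-projection to pass from $g=f-f^{Q,\perp}_{\tmop{lg}}$ to $f-f^E_{\tmop{lg}}=f^E_{\tmop{or}}+f^E_{\tmop{sm}}$ (in your language, the $-\|Q^{1/2}p\|^2$ contribution has the good sign and can simply be dropped). Second---and this is the step you apply to the wrong object---Lemma~\ref{lem:quasi ort1} is invoked on the decomposition of $I_Nf=f^E_{\tmop{lg}}+f^E_{\tmop{sm}}$ (not of $I_Ng$), using the $E$-orthogonal projection $f^{E,\perp}_{\tmop{lg}}$ of $f^E$ onto $V_{\tmop{lg}}$ as intermediary: one gets $\|f^E_{\tmop{sm}}\|_E\leqslant(1-\varepsilon_4)^{-1}\|f^E-f^{E,\perp}_{\tmop{lg}}\|_E\leqslant(1-\varepsilon_4)^{-1}\|f^E\|_E$. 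Since $q=f^E_{\tmop{sm}}$, this gives the needed bound, and a weighted AM-GM with parameter $\mu=K_6/(K_5(1-\varepsilon_4)^2)$ together with the $E$-orthogonality of $f^E_{\tmop{or}}$ and $f^E$ yields exactly $M_1\|f\|_E^2$.
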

We defer the proof of the lemma to Appendix \ref{appen:lem1}. Here, $K_7$ is related to the cross term resulting from the residue of the finite-rank approximation.

\subsection{Proof of Proposition \ref{prop:U}}\label{Sec:prop 3}

We begin by addressing Proposition \ref{prop:U}.  We construct a finite-rank
approximation $L_{\tmop{ap}}^{U}$ of $L_2^U$ by the strategy in Section \ref{subsec:finiterank} as
\begin{equation}
  L_{\tmop{ap}}^{U} f \assign -\sum_{j = 1}^m (\overline{\lambda}_j^U)^{-1} \langle f, Q\overline\psi^U_j \rangle_{
  \Omega} \Pi (Q \overline\psi^U_j + r_j^U)  . \label{eq:Lapu def}
\end{equation}
By Lemma \ref{lem:fe est} and the coercivity of $L_1^U$ in \eqref{l1u c}, we have the  linear estimate:
\begin{equation}
  \langle L_1^U U_1 + (L_2^U - L_{\tmop{ap}}^{U}) U_1,
  U_1 \rangle \geqslant ( \frac{1}{4} - \eta_1 - M_2^U ) \| U_1
  \|_{L^2}^2 + (1 - M_1^U) \| \nabla U_1 \|_{L^2}^2 . \label{eq:TU lin est}
\end{equation}
This verifies \eqref{eq:LHS est}. We will subsequently choose the parameters
appropriately to ensure that both coefficients on the right-hand side are
strictly positive.

This approximation allows us to split the solution $U = U_1 + U_2$ and
rewrite equation \eqref{eq:U pert} as:
\begin{align}
  &L_1^U U_1 + (L_2^U - L_{\tmop{ap}}^{U}) U_1  =  -N^U (U_1 + U_2) -
  \Pi\Epsilon^U,\label{eq: fix u 1} \\
  &L^U U_2  =  -L_{\tmop{ap}}^{U} U_1 . \label{eq: fix u 2}
\end{align}

Finally, we define:
\begin{equation}
    U_2 = G^{U} U_1 \assign \sum_{j = 1}^m (\overline{\lambda}_j^U)^{-1} \langle U_1, Q\overline\psi_j^U \rangle_{
   \Omega} \overline\phi_j^U  , \label{nl def u}
\end{equation} 
which gives an exact solution to \eqref{eq: fix u 2}.
   By construction, $\overline\phi_j^U$ and thus $U_2$ are divergence-free.
We detail our fixed-point approach for \eqref{eq: fix u 1} as follows. 

\paragraph{Definition of the fixed point map.} Typically, the fixed-point map
$T^U$ is defined as:
\[ L_1^U U_1^{\ast} + (L_2^U - L_{\tmop{ap}}^{U}) U_1^{\ast} = -\Pi ((U_1 +
   G^{U} U_1) \cdot \nabla (U_1 + G^{U} U_1)) - \Pi\Epsilon^U , \]
where $U_1^{\ast} \assign T^U (U_1)$. To simplify nonlinear estimates, we replace $\nabla U_1$ by $\nabla U_1^{\ast}$ in the nonlinear term. In the weak form, we slightly
modify the map $T^U_R$ to:
\begin{equation}  \langle L_1^U U_1^{\ast} + (L_2^U - L_{\tmop{ap}}^{U}) U_1^{\ast}+(U_1 +
   G^{U} U_1) \cdot \nabla U_1^{\ast} , g \rangle =  \langle-(U_1 +
   G^{U} U_1) \cdot \nabla   G^{U} U_1 - \Epsilon^U,g \rangle, \label{eq:Ustar def}
\end{equation}
 for all $g\in \mathcal{H}^1_R$. Here we denote $U_1^{\ast} \assign T^U_R (U_1)$, and 
  work in the space $ \mathcal{H}^1_R$ consisting of functions in $ \mathcal{H}^1$ of compact support in the open ball $B_R(0)$. Recall that $ \mathcal{H}^1$ consists of divergence-free functions, so we can remove the Leray projection. We require $R$ sufficiently large so that $\Omega\subset B_R(0)$ in the finite rank approximation. 

     \paragraph{Well-definedness of the fixed point map.} 
     Integration by parts yields:
\begin{equation} \langle  (U_1 +G^{U} U_1) \cdot \nabla g, g \rangle = 0,
\label{eq:Ustar canq}
\end{equation}
and thus we know by \eqref{eq:TU lin est} that \begin{equation}
    \langle L_1^Uf + (L_2^U - L_{\tmop{ap}}^{U})f+(U_1 + G^{U} U_1) \cdot \nabla f,g\rangle\label{lax-mil fix}
\end{equation} induces a coercive bilinear form for $(f,g)$ on $\mathcal{H}^1_R$.

By definition of the finite rank approximation in \eqref{eq:Lapu def}, we compute using the boundedness of $Q$, $\overline\psi_j^U$ and $r_j^U$ that $\|L_{\tmop{ap}}^{U}f\|_{L^2}\lesssim\|f\|_{L^2(\Omega)}\lesssim\|f\|_{\mathcal{H}^1}.$
Using Sobolev embedding, along with the forms \eqref{eq:L1 def} and \eqref{eq:L2 def}, we deduce that \eqref{lax-mil fix} induces a bounded bilinear form for $(f,g)$ in $\mathcal{H}^1_R$. 

By definition \eqref{nl def u} and Lemma \ref{lem:fe est}, $G^{U} U_1\in H^2_{\tmop{loc}}$, and by Sobolev embedding we have $$-(U_1 +
   G^{U} U_1) \cdot \nabla G^{U} U_1 - \Epsilon^U\in L^2\,.$$

As a consequence of the Lax-Milgram theorem applied to the bilinear form \eqref{lax-mil fix}, we know that there exists $U_1^{\ast}=T_R^U(U_1)\in\mathcal{H}^1_R$ such that \eqref{eq:Ustar def} holds.

\paragraph{A priori estimate in $H^1$.} 
By \eqref{eq:Ustar def}, \eqref{eq:TU lin est}, and \eqref{eq:Ustar canq}, we obtain:
\begin{equation}
  ( \frac{1}{4} - \eta_1 - M_2^U ) \| U_1^{\ast}
  \|_{L^2}^2 + (1 - M_1^U) \| \nabla U_1^{\ast} \|_{L^2}^2 \leqslant - \langle (U_1 +G^{U} U_1) \cdot \nabla G^{U} U_1 + \Epsilon^U,
  U_1^{\ast} \rangle . \label{eq:Ustar eq}
\end{equation}
The residual term is  bounded by:
\[ | \langle \Epsilon^U, U_1^{\ast} \rangle | \leqslant \| \Epsilon^U \|_{L^2} \|
   U^{\ast}_1 \|_{L^2} \leqslant \varepsilon^U \| U^{\ast}_1 \|_{L^2}, \]
with $\varepsilon^U$ sufficiently small defined in Assumption \ref{asm:num}.

To handle the nonlinear term, we establish the following bound:
\begin{equation}
  | \langle (U_1 + G^{U} U_1) \cdot \nabla G^{U} U_1, U_1^{\ast} \rangle |\leqslant\|(U_1 + G^{U} U_1) \cdot \nabla G^{U} U_1\|_{L^2}\|U_1^{\ast}\|_{L^2}
  \leqslant M_3^U \|  U_1 \|_{L^2}^2\| U_1^{\ast} \|_{L^2}  ,
  \label{eq:TU nonlin est}
\end{equation}
where we recall the definition of $G^U_{Q^2}$ in \eqref{eq:qusi-or1} and define $M^U_3$
\[M^U_3\assign\sqrt{\|G^U_{Q^2}\|\sum_{j = 1}^m \| \nabla \overline\phi_j ^U\|_{L^\infty}^2} + \|G^U_{Q^2}\|\sqrt{\sum_{j = 1}^m\sum_{i = 1}^m \| \overline\phi_i^U \cdummy \nabla \overline\phi_j^U
  \|_{L^2}^2}.\]
\begin{proof}[Proof of \eqref{eq:TU nonlin est}]
  Now we estimate the LHS by the definition in \eqref{nl def u} as
  \begin{align*}
    &\|U_1 \cdummy \nabla G^{U} U_1 + G^{U} U_1 \cdummy \nabla G^{U} U_1 \|_{L^2}\leqslant
     \|\sum_{j = 1}^m (\overline{\lambda}_j^U)^{-1} \langle U_1, Q\overline\psi_j^U \rangle_{\Omega}  U_1
    \cdummy \nabla \overline\phi_j^U \|_{L^2}\\
    &+ \|\sum_{j = 1}^m \sum_{i = 1}^m (\overline{\lambda}_j^U)^{-1} \langle U_1, Q\overline\psi_j^U \rangle_{\Omega}(\overline{\lambda}_i^U)^{-1} \langle U_1, Q\overline\psi_i^U \rangle_{\Omega}  \overline\phi_i ^U\cdummy
    \nabla \overline\phi_j^U\|_{L^2}\backassign I_4+I_5
  \end{align*}
  The absolute value of the first term is bounded by Cauchy-Schwarz as
  \[|I_4|\leqslant
    \sqrt{\sum_{j = 1}^m  \langle U_1, (\overline{\lambda}_j^U)^{-1}Q\overline\psi_j^U \rangle_{\Omega}^2}
    \sqrt{\sum_{j = 1}^m \| \nabla \overline\phi_j ^U\|_{L^\infty}^2} \bigg(\| U_1 \|_{L^2}\bigg) .
\]
  The second term is similarly bounded by
  \[ |I_5|\leqslant\sqrt{\sum_{j = 1}^m   \langle U_1, (\overline{\lambda}_j^U)^{-1}Q\overline\psi_j^U \rangle_{\Omega}^2 \sum_{i = 1}^m \langle U_1, (\overline{\lambda}_i^U)^{-1}Q\overline\psi_i^U \rangle_{\Omega}^2}    \sqrt{\sum_{j = 1}^m
     \sum_{i = 1}^m \| \overline\phi_i^U \cdummy \nabla \overline\phi_j^U \|_{L^2}^2} 
     . \]
Relating the projection to the Gram matrix as in \eqref{eq111}, we conclude the proof of \eqref{eq:TU nonlin est}.
     \end{proof}
     
     We collect the estimates in $H^1$ by \eqref{eq:Ustar eq}: 
  \begin{equation*}
  ( \frac{1}{4} - \eta_1 - M_2^U ) \| U_1^{\ast}
  \|_{L^2}^2 + (1 - M_1^U) \| \nabla U_1^{\ast} \|_{L^2}^2 \leqslant  (\varepsilon^U+M_3^U \|  U_1 \|_{L^2}^2)\| U_1^{\ast} \|_{L^2}. \label{eq:final est}
\end{equation*} We analyze the bounded mapping with the following assumption.
\begin{assumption}[Bounded mapping assumption for $U$]\label{eq:cond ref U}
    We assume\begin{equation*}
    M_4^U\assign( \frac{1}{4} - \eta_1 - M_2^U )^2 - 4M_3^U \varepsilon^U > 0. 
  \end{equation*}
\end{assumption}
  With Assumption \ref{eq:cond ref U}, we define $$x_0^U \assign \frac{2 \varepsilon^U}{\frac{1}{4} - \eta_1 - M_2^U + \sqrt{ M_4^U}}, \quad\text{ so that}\quad \varepsilon^U+M_3^U (x_0^U)^2=(\frac{1}{4} - \eta_1 - M_2^U )x_0^U.$$  We can solve the $H^1$ estimates to see $T_R^U$ maps $B^R_{x_0^U, x_1^U}$ to itself, where
  \[ B^R_{x_0, x_1} \assign \{ U \in \mathcal{H}^1_R: \| U \|_{L^2} \leqslant x_0, \| \nabla U
     \|_{L^2} \leqslant x_1 \} ,\quad x_1^U \assign \sqrt{\frac{\frac{1}{4} - \eta_1 - M_2^U}{1 - M_1^U}} \frac{x_0^U}{2}. \]
We will verify Assumption \ref{eq:cond ref U} in Subsection \ref{Sec:num prev}.

\paragraph{Estimates of $U_2$.} \label{rmk: U2}
  In the argument above, we derived an estimate for $U_1$. However, this does
  not yet yield a bound for $U$, since $U = U_1 + U_2$. For $U_2$, its
  definition implies the following bounds:
  \begin{eqnarray}
    \| U_2 \|_{L^2} & \leqslant & \sum_{j = 1}^m | (\overline{\lambda}_j^U)^{- 1}
    \langle U_1, Q \overline{\psi}_j^U \rangle_{\Omega} | \|
    \overline{\phi}_j^U \|_{L^2}\nonumber\\
    & \leqslant & \sqrt{\sum_{j = 1}^m (\overline{\lambda}_j^U)^{- 2} | \langle
    U_1, Q \overline{\psi}_j^U \rangle_{\Omega} |^2} \sqrt{\sum_{j = 1}^m \|
    \overline{\phi}_j^U \|_{L^2}^2}\nonumber\\
    & \leqslant & \| G_{Q^2}^U \| \sqrt{\sum_{j = 1}^m \|
    \overline{\phi}_j^U \|_{L^2}^2}\| U_1 \|_{L^2},\label{u2l2111}
  \end{eqnarray}
  and similarly,
  \begin{equation}
      \| \nabla U_2 \|_{L^{\infty}} \leqslant \| G_{Q^2}^U \|\sqrt{\sum_{j =
     1}^m \| \nabla \overline{\phi}_j^U \|_{L^{\infty}}^2}\| U_1 \|_{L^2} . \label{u2h1}
  \end{equation} 
  Here, for convenience, we estimate the $L^{\infty}$ norm of $\nabla U_2$
  rather than the $L^2$ norm. This allows us to sharpen some of the previous
  estimates. For instance, \eqref{eq: cross term} becomes
  \[ | \langle v \cdummy \nabla U, v \rangle | \leqslant | \langle v \cdummy
     \nabla U_1, v \rangle | + | \langle v \cdummy \nabla U_2, v \rangle |
     \leqslant 2 K_4 \| \nabla U_1 \|_{L^2} (\| v \|_{L^2}^2 + \| \nabla v
     \|_{L^2}^2) + \| \nabla U_2 \|_{L^{\infty}} \| v \|_{L^2}^2 . \]
  Accordingly, the coercivity estimate for the linear operator in \eqref{l1v c} becomes
  \begin{equation}
    \langle L_1^v v, v \rangle \geqslant \left( \frac{1}{4} - \eta_1 -
    \overline{\lambda} - 2 K_4 \| \nabla U_1 \|_{L^2} - \| \nabla U_2
    \|_{L^{\infty}} - | \lambda^{\tmop{con}} | \right) \| v \|_{L^2}^2 + (1 -
    2 K_4 \| \nabla U_1 \|_{L^2}) \| \nabla v \|_{L^2}^2 . \label{eq:new coercivity v}
  \end{equation}
  Similarly, the residual defined in \eqref{v-def-eqn} satisfies
  \begin{equation}
      | \langle \hat{\Epsilon}^v, v_1^{\ast} \rangle | =|\langle  \Pi\Epsilon^v + \Pi (U \cdot \nabla \overline{v} +
  \overline{v} \cdot \nabla U) - \lambda^\text{con} \overline{v}, v_1^{\ast} \rangle |\leqslant (\varepsilon^v
     + \varepsilon_5) \|v_1^{\ast} \|_{L^2},\label{eq: res est v}
  \end{equation}
  where via the orthogonality $\langle v_1^{\ast},
   \overline{v} \rangle = 0$ \eqref{Def-H0H1-space}, we have 
  \begin{equation}
      \varepsilon_5 \assign (\| U_1 \|_{L^2} + \| U_2 \|_{L^2}) \| \nabla
     \overline{v} \|_{L^{\infty}} + \| \nabla U_1 \|_{L^2} \| \overline{v}
     \|_{L^{\infty}} + \| \nabla U_2 \|_{L^{\infty}} .\label{eq: eps5 def}
  \end{equation}
  Since $\| \overline{v} \|_{L^2} = 1$, the final term does not carry a factor
  of $\| \overline{v} \|_{L^2}$.

\paragraph{Continuity estimate in $L^4$.}
We estimate the difference between $U_1^{\ast}\assign T_R^U(U_1)\in\mathcal{H}^1_R$ and $\widehat{U}_1^{\ast}\assign T_R^U(\widehat{U}_1)\in\mathcal{H}^1_R$. By \eqref{eq:Ustar def}, we take differences and plug in $g=U_1^{\ast}-\widehat{U}_1^{\ast}$, to get
\begin{equation}
   \langle L_1^U \delta U^{\ast} + (L_2^U -
  L_{\tmop{ap}}^{U}) \delta U^{\ast}, \delta U^{\ast} \rangle =-\langle (\widehat{U}_1 + G^{U} \widehat{U}_1) \cdot \nabla (G^{U} \delta U) + ( \delta U + G^{U} \delta U ) \cdot
  \nabla (U_1^{\ast} + G^{U} U_1), \delta U^{\ast} \rangle ,\label{tu dif eqn}
\end{equation}
where we denote $\delta U^{\ast}\assign U_1^{\ast} - \widehat{U}_1^{\ast}$, $\delta U\assign U_1- \widehat{U}_1$, and use the cancellation \eqref{eq:Ustar canq}.

Notice that for the continuity estimate, we do not require sharp constants, but want to gain some regularity to the extent possible. We first provide some estimates for $G^{U}f$ by $\|f\|$. Recall the definition \eqref{nl def u} and notice that $\overline\phi^U_j\in H^2$ by Lemma \ref{lem:fe est}. 
We conclude that \begin{equation}
         \label{eq: reg gain}
\|\nabla^iG^{U}f\|_{L^2}\leqslant C\|f\|_{L^2},
     \end{equation}
     where $i=0,1$ and $C$ is a constant depending on $\overline\phi^U_j,\overline\psi^U_j$.

We estimate the difference of the nonlinear term as follows:
\begin{align}
  & | \langle (\widehat{U}_1 + G^{U} \widehat{U}_1) \cdot \nabla (G^{U} \delta U) + ( \delta U + G^{U} \delta U ) \cdot
  \nabla (U_1^{\ast} + G^{U} U_1), \delta U^{\ast} \rangle |  \label{eq:TU nonlin est1}\nonumber
  \\&\leqslant C(\|\widehat{U}_1\|_{H^1}+\|{U}_1^{\ast}  \|_{H^1}+\| {U}_1\|_{H^1})\|\delta U \|_{L^4}\|\delta U^{\ast} \|_{L^4},
\end{align}where $C$ is some constant only depending on $\overline\phi^U_j,\overline\psi^U_j,R$.

\begin{proof}[Proof of \eqref{eq:TU nonlin est1}]
     Combining the boundedness of $G^U$ in \eqref{eq: reg gain} with Holder's inequality with multiple functions and Sobolev embedding, we bound the LHS by 
     $$\begin{aligned}
        &\leqslant(\|\widehat{U}_1 + G^{U} \widehat{U}_1\|_{L^4}\|\nabla (G^{U} \delta U)\|_{L^2}+\|\delta U + G^{U} \delta U\|_{L^4}\|\nabla (U_1^{\ast} + G^{U} U_1)\|_{L^2})\|\delta U^{\ast} \|_{L^4}\\&\lesssim(\|\widehat{U}_1 + G^{U} \widehat{U}_1\|_{H^1}\| \delta U\|_{L^2}+(\|\delta U\|_{L^4} + \|G^{U} \delta U\|_{H^1})(\|{U}_1^{\ast}  \|_{H^1}+\| {U}_1\|_{H^1}))\|\delta U^{\ast} \|_{L^4}\\&\lesssim(\|\widehat{U}_1\|_{H^1}+\|{U}_1^{\ast}  \|_{H^1}+\| {U}_1\|_{H^1})\|\delta U \|_{L^4}\|\delta U^{\ast} \|_{L^4},
     \end{aligned}$$
 where in the last inequality, we use the fact that $\delta U$ is compactly supported.
\end{proof}

Combined with \eqref{tu dif eqn}, the coercivity \eqref{eq:TU lin est}, Sobolev embedding of $H^1$ into $L^4$ with $\|f\|_{L^4} \lesssim \|f\|_{H^1}$, and the previous step on the a priori bound of $H^1$ norm, we conclude that 
\[
\|\delta U^{\ast}\|_{L^4} \lesssim \|\delta U\|_{L^4}.
\]
Thus $T^U_R$ is a continuous mapping in $L^4$, in the space $B^R_{x_0^U, x_1^U}$.

Notice that $B^R_{x_0^U, x_1^U}$ is bounded in $\mathcal{H}^1_R$, and by the Rellich–Kondrachov embedding theorem we know that it is a compact subset of $L^4$.
We apply the Schauder fixed-point theorem to the topology induced by $L^4$. Since $T^U_R$ is continuous, maps the compact subset $B^R_{x_0^U, x_1^U}$ to itself, there exists a fixed point $U_1^R$ such that by \eqref{eq:Ustar def}, for any $g\in \mathcal{H}^1_R$, we have 
\begin{equation}  \langle L_1^U U_1^R + (L_2^U - L_{\tmop{ap}}^{U}) U_1^R , g \rangle =  \langle-(U_1^R +
   G^{U} U_1^R) \cdot \nabla   (U_1^R +
   G^{U} U_1^R) - \Epsilon^U,g \rangle.\label{eq:Ufix def}
\end{equation}

\paragraph{Taking weak limits.}
Notice that the divergence-free condition is preserved in the weak limit, and $U_1^R$ has uniform bounds in $H^1$ by definition of $B^R_{x_0^U, x_1^U}$. We can thus take weak limits for a subsequence of $U_1^{R}$ in $\mathcal{H}^1$ for $R\to\infty$. Denote $U_1^{\infty}$ as the weak limit.

Now fix any test function $\varphi\in C^\infty_c$ that is divergence free, for sufficiently large $R$, $B_{R}(0)$ contains the support of $\varphi$. We compute by taking limits of \eqref{eq:Ufix def} to get $$\langle L_1^U U_1^\infty + (L_2^U - L_{\tmop{ap}}^{U}) U_1^\infty , \varphi \rangle =  \langle-(U_1^\infty +
   G^{U} U_1^\infty) \cdot \nabla   (U_1^\infty +
   G^{U} U_1^\infty) - \Epsilon^U,\varphi \rangle,$$
   where we use an integration by parts for the diffusion term on the left-hand side.
  For the convergence of the right-hand side, we use the fact that in $L^2$, $\nabla^i G^{U}U_1^R\to\nabla^i G^{U}U_1^\infty$ by definition \eqref{nl def u}, $i=0,1$, and $(U_1^R +
   G^{U} U_1^R)\varphi\to(U_1^\infty +
   G^{U} U_1^\infty)\varphi$ due to the compact support of $\varphi$ upon taking a subsequence by Rellich–Kondrachov again, and $\langle f_R,g_R\rangle\to\langle f_\infty,g_\infty\rangle$ for $f_R\to f_\infty$ and $g_R \rightharpoonup g_\infty$.
   Since the test function $\varphi$ is arbitrary, we conclude that \eqref{eq: fix u 1} holds for our $U_1^\infty$ and we conclude Proposition \ref{prop:U}.
\subsection{Proof of Proposition \ref{prop:u lambda}}\label{Sec:prop 4}
We work on Proposition \ref{prop:u lambda} following an argument similar  to the previous subsection by assuming that Proposition \ref{prop:U} holds, and we only highlight the key changes. We construct the finite-rank
approximation operator $L_{\tmop{ap}}^{v}$ of $L_2^v$ as:
\begin{equation*}
  L_{\tmop{ap}}^{v} f \assign -\sum_{j = 1}^m (\overline{\lambda}_j^v)^{-1}\langle f, Q\overline\psi_j^v \rangle_{
  \Omega} P_{\overline{v}} \Pi (Q \overline\psi_j^v + r_j^v).
  \label{eq:Lap def2}
\end{equation*}
Similarly, Lemma \ref{lem:fe est} and the coercivity of $L_1^v$ in \eqref{eq:new coercivity v} indicate the estimate:
\begin{align}
  & \langle L_1^v v_1 + (L_2^v - L_{\tmop{ap}}^{v}) v_1,
  v_1 \rangle \nonumber \geqslant ( \frac{1}{4} - \eta_1 - \overline{\lambda} - 2K_4 \| \nabla
  U_1 \|_{L^2} - \| \nabla U_2 \|_{L^{\infty}} - | \lambda^\text{con} | - M_2^v ) \| v_1 \|_{L^2}^2\\
  + & (1 - 2K_4 \| \nabla U_1 \|_{L^2} -
  M_1^v) \| \nabla v_1 \|_{L^2}^2 .  \label{eq:Tv lin est}
\end{align}
We choose parameters to ensure that both coefficients on the right-hand side are
strictly positive.

Consequently, equation \eqref{eq:u pert} can be rewritten as:
\begin{align*}
  &L_1^v v_1 + (L_2^v - L_{\tmop{ap}}^{v}) v_1  = -N^v (v_1 + v_2) -
  \widehat{\Epsilon}^v, \\
  &L^v v_2  =  -L_{\tmop{ap}}^{v} v_1 . 
\end{align*}

Finally, we define $v_2$ as follows:
\begin{equation}\label{v2 def}
    v_2 = G^{v} v_1 \assign \sum_{j = 1}^m (\overline{\lambda}_j^v)^{-1}\langle v_1, Q\overline\psi_j^v \rangle_{
  \Omega} 
   \overline\phi_j^v .
\end{equation}

\paragraph{Well-definedness of the fixed point map.}
Define $T^v_R$ via the weak form:
\begin{equation}  \langle L_1^v v^{\ast}_1 + (L_2^v - L_{\tmop{ap}}^{v}) v^{\ast}_1 , g \rangle =  \langle-N^v (v_1 + G^{v} v_1) - \widehat{\Epsilon}^v ,g \rangle, \label{eq:vstar def1}
\end{equation}
 for all $g\in \mathcal{H}^1_R$. We know by \eqref{eq:Tv lin est} that the left-hand side induces a coercive bilinear form on $\mathcal{H}^1_R$. As in the case for $U$, we know it is a bounded bilinear form. We can show $N^v (v_1 + G^{v} v_1) - \widehat{\Epsilon}^v\in L^2$ easily and by Lax-Milgram, we know that there exists $v_1^{\ast}\assign T_R^v(v_1)\in\mathcal{H}^1_R$ such that \eqref{eq:vstar def1} holds.

 \paragraph{A priori estimate in $H^1$.} 
We first estimate of $\lambda^{\tmop{lin}}$ and $\lambda^{\tmop{con}}$, in a
way similar to \eqref{eq:new coercivity v}:
\[ | \lambda^{\tmop{con}} | \leqslant \| \overline{v} \cdummy \nabla U
   \|_{L^2} \leqslant \| \nabla U_1 \|_{L^2} \| \overline{v} \|_{L^{\infty}} +
   \| \nabla U_2 \|_{L^{\infty}} . \]
For $\lambda^{\tmop{lin}}$, we derive the following bound by integration by
parts:
\begin{eqnarray}
  | \lambda^{\tmop{lin}} (v) | & = & | \langle \frac{1}{2} x \cdummy \nabla
  \bar{v} - \Delta \bar{v} - \tilde{U} \cdot \nabla \bar{v} + \bar{v} \cdummy
  \nabla^T  \tilde{U}, v \rangle | \leqslant M_0 \|v\|_{L^2} \nonumber\\
  M_0 & \assign & \left\| P_{\bar{v}} (\frac{1}{2} x \cdummy \nabla \bar{v} -
  \Delta \bar{v} - \bar{U} \cdot \nabla \bar{v} + \bar{v} \cdummy \nabla^T 
  \bar{U}) \right\|_{L^2} + \varepsilon_5,  \label{eq:lbd1 est}
\end{eqnarray}
where we use the orthogonality of $v$ to $\bar{v}$ and the definition of
$P_{\bar{v}}$ and $\varepsilon_5$ is defined by \eqref{eq: eps5 def}. By the same argument as $U_2$, $v_2$ is bounded by 
    \begin{equation*}
        \| v_2 \|_{L^2} \leqslant \| v_1 \|_{L^2}  \| G_{Q^2}^v \| \sqrt{\sum_{j
        = 1}^m \| \overline{\phi}_j^v \|_{L^2}^2}.
    \end{equation*}
 Then we have the following nonlinear estimate by \eqref{eq:lbd1 est}, \eqref{v2 def}, the Cauchy-Schwarz inequality, and relating the projection to the Gram matrix as in the proof of \eqref{eq:TU nonlin est}:
\begin{equation} 
| \langle N^v (v_1 + G^{v} v_1), v_1^{\ast} \rangle | = | \langle \lambda^{\text{lin}} (v_1 + v_2) (v_1+v_2), v_1^{\ast} \rangle | \leqslant 
  M_0 (\| v_1 \|_{L^2} + \| v_2 \|_{L^2})^2 \| v_1^{\ast} \|_{L^2}\leqslant M_3^v  \| v_1 \|_{L^2}^2\|v_1^{\ast}\|_{L^2} ,
  \label{eq:Tv nonlin est}
\end{equation}
where we define 
\[M_3^v\assign M_0 ( 1  + \sqrt{\|G^v_{Q^2}\|\sum_{j = 1}^{m} \| \overline\phi_j^v \|_{L^2}^2} )^2.\]
We thus collect the following $H^1$ estimate by \eqref{eq: res est v},  \eqref{eq:Tv lin est}, \eqref{eq:vstar def1} and \eqref{eq:Tv nonlin est}:\begin{align*}
  & ( \frac{1}{4} - \eta_1 - \overline{\lambda} - 2K_4 \| \nabla
  U_1 \|_{L^2} - \| \nabla U_2 \|_{L^{\infty}} - | \lambda^\text{con} | - M_2^v ) \| v_1 \|_{L^2}^2
  + (1 - 2K_4 \| \nabla U_1 \|_{L^2} -
  M_1^v) \| \nabla v_1 \|_{L^2}^2 \\
   & \leqslant(\varepsilon^v + \varepsilon_5
  + M_3^v \|v_1\|_{L^2}^2) \|v_1^{\ast}\|_{L^2}   . 
  \label{eq:Tv col}
\end{align*}
As in the case of $U$, we prepare the following assumption.
\begin{assumption}[Bounded mapping assumption for $v$]\label{asm:cond ref v} Recall from Proposition \ref{prop:U} that \[\| 
  U_1 \|_{L^2}\leqslant x_0^U,\quad\| \nabla
  U_1 \|_{L^2}\leqslant x_1^U.\]
We also denote the bound of $U_2$ as
\[\| 
  U_2 \|_{L^2}\leqslant y_0^U,\quad\| \nabla
  U_2 \|_{L^\infty}\leqslant y_1^U.\]
  
We assume\begin{equation*}
    M_4^v\assign (M_5)^2 - 4M_3^v (\varepsilon^v
  + \varepsilon_5) > 0.\quad M_5  \assign  \frac{1}{4} - \eta_1 - \overline{\lambda} - 2 K_4 x_1^U - y_1^U - | \lambda^{\tmop{con}} | - M_2^v . 
  \end{equation*}
\end{assumption}
We see that the map $T_R^v$ maps the ball $B^R_{x_0^v, x_1^v}$ to itself, where
  \[ x_0^v \assign \frac{2(\varepsilon^v+
  \varepsilon_5)}{M_5+ \sqrt{ M_4^v}} ,\]\[ x_1^v \assign \sqrt{\frac{M_5}{1 - 2K_4 x_1^U  -
  M_1^v}} \frac{x_0^v}{2}. \]
We will verify Assumption \ref{asm:cond ref v} in Subsection \ref{Sec:num prev}.


\paragraph{Continuity estimate in $L^4$.} For $v_1^{\ast}\assign T_R^v(v_1), \widehat{v}_1^{\ast}\assign T_R^v(\widehat{v}_1)$, we denote $\delta v^{\ast}\assign   v_1^{\ast} - \widehat{v}_1^{\ast}$, $\delta v\assign v_1- \widehat{v}_1$. By \eqref{eq:vstar def1}, we have \begin{equation}
   \langle L_1^v \delta v^{\ast} + (L_2^v -
  L_{\tmop{ap}}^{v}) \delta v^{\ast}, \delta v^{\ast} \rangle =-\langle \lambda^{\text{lin}} (\widehat{v}_1 + G^{v} \widehat{v}_1) (\delta v+G^{v} \delta v) + \lambda^{\text{lin}} (\delta v + G^{v} \delta v) (v_1 + G^{v} v_1), \delta v^{\ast} \rangle .
 \label{tv dif eqn}
\end{equation}
Recalling \eqref{v2 def} and noticing that $\overline\phi^v_j\in H^2$, we conclude similarly $\|G^{v}f\|_{L^2} \leqslant C\|f\|_{L^2} $.
     Using \eqref{eq:lbd1 est} to estimate the right-hand side of \eqref{tv dif eqn}, we get the upper bound $$M_0 (C+1)^2( \|
      v_1 \|_{L^2} +\|
      \widehat{v}_1 \|_{L^2} )\|
      \delta v \|_{L^2} \|
      \delta v^{\ast} \|_{L^2} .$$
      By \eqref{tv dif eqn},  \eqref{eq:Tv lin est}, the Sobolev embedding of $H^1$ into $L^4$ into $L^2$, and the a priori bound of the $H^1$ norm, we conclude that 
\[
\|\delta v^{\ast}\|_{L^4} \lesssim \|\delta v\|_{L^4}.
\]
Thus $T^v_R$ is a continuous mapping in $L^4$ in the space $B^R_{x_0^v, x_1^v}$. By the Schauder fixed-point theorem, there exists a fixed point $v_1^R$ such that by \eqref{eq:vstar def1}, for any $g\in \mathcal{H}^1_R$, we have 
\begin{equation*}  \langle L_1^v v_1^R + (L_2^v - L_{\tmop{ap}}^{v}) v_1^R , g \rangle =  \langle-N^v (v_1^R + G^{v} v_1^R) - \widehat{\Epsilon}^v,g \rangle. \label{eq:vfix def}
\end{equation*}

\paragraph{Taking weak limits.}
Notice that the divergence-free condition and orthogonality condition are preserved in the weak limit.  Denote $v_1^{\infty}\in\mathcal{H}^1$ as the weak limit for a subsequence of $v_1^{R}$ as $R$ tends to infinity. For any test function $\varphi\in C^\infty_c\cap\mathcal{H}^1$, we have 
\[ \langle L_1^v v_1^\infty + (L_2^v - L_{\tmop{ap}}^{v}) v_1^\infty , \varphi \rangle =  \langle-N^v (v_1^\infty + G^{v} v_1^\infty) - \widehat{\Epsilon}^v,\varphi \rangle, \]
where the only change is to show $\lambda^{\text{lin}} (v_1^R + G^{v} v_1^R)\to\lambda^{\text{lin}} (v_1^\infty + G^{v} v_1^\infty)$, this is true by definition of the linear operator $\lambda^{\text{lin}}$ in \eqref{eq:lbd1 est}. Since the test function $\varphi$ is arbitrary, we conclude  Proposition \ref{prop:u lambda}.

\subsection{Numerical results}\label{Sec:num prev}
Although comprehensive numerical details are presented in subsequent sections,
we summarize the key results here for convenience. In particular, we complete the proofs of the propositions by verifying Assumptions \ref{eq:cond ref U} and \ref{asm:cond ref
v}. We remark that our verifications are carried out effectively in a subspace of
$H^1_{\sigma}$, leveraging axissymmetry and parities, to reduce the number of
basis functions in our finite-rank approximation. We begin by listing all quantities verified by numerics:

\begin{eqnarray}
  \| \Epsilon^U \|_{L^2} & \leqslant & \varepsilon^U = 6.5 \times 10^{- 7},\\
  \frac{\nabla \bar{U} + \nabla^T  \bar{U}}{2} + \eta_1 I + Q & \succeq & 0,
  \quad \eta_1 = 0.199,\\
  \| \lambda_{\max} (Q)\|_{L^{\infty}} & \leqslant & 4.5,\\
  \| \Epsilon^v \|_{L^2} & \leqslant & \varepsilon^v = 1.9 \times 10^{- 5},\\
  \overline{\lambda} & \approx  & - 0.11314203274385938 \leqslant - 0.113,\\
  \| \overline{v} \|_{L^{\infty}} & \leqslant & 2.52,\\
  \| \nabla \overline{v} \|_{L^{\infty}} & \leqslant & 3.4,\\
  \|P_{\bar{v}} (\frac{1}{2} x \cdummy \nabla \bar{v} - \Delta \bar{v} -
  \bar{U} \cdot \nabla \bar{v} + \bar{v} \cdummy \nabla^T  \bar{U})\|_{L^2} &
  \leqslant & 6.38,\\
  \sup_{u \neq 0, u \bot_{L^2} V^U_N}  \frac{\|u\|^2_{L^2
  (\Omega)}}{\|u\|^2_E}, \sup_{u \neq 0, u \bot_{L^2} V^v_N} 
  \frac{\|u\|^2_{L^2 (\Omega)}}{\|u\|^2_E} & \leqslant & 0.1024,\\
  \sup_{u \in V_N^{U}, u \neq 0}  \frac{\langle u, Qu
   \rangle_{\Omega} - \sum_{j = 1}^m \overline{\lambda}_j^{- 1}  \langle u, Q
   \overline{\psi}_j^{ U} \rangle_{\Omega}^2}{\|u\|^2_E} & \leqslant & 0.46,\\
   \sup_{u \in V_N^{v}, u \neq 0}  \frac{\langle u, Qu
   \rangle_{\Omega} - \sum_{j = 1}^m \overline{\lambda}_j^{- 1}  \langle u, Q
   \overline{\psi}_j^{ v} \rangle_{\Omega}^2}{\|u\|^2_E} & \leqslant & 0.46,\\
  \varepsilon^U_3, \varepsilon^U_4, \varepsilon^v_3, \varepsilon^v_4 &
  \leqslant & 5.3 \times 10^{- 5},\\
  \sqrt{\tmop{tr} (G_{Q^2}^U G_r^U)} & \leqslant & 0.0012,\\
  \|G^U_{Q^2} \| _F & \leqslant & 0.025,\\
  \sqrt{\sum_{j = 1}^m \sum_{l = 1}^m \| \overline{\phi}_l^U \cdummy \nabla
  \overline{\phi}_j^U \|^2_{L^2}} & \leqslant & 0.12,\\
  \sqrt{\sum_{j = 1}^m \| \nabla \overline{\phi}_j^U \|^2_{L^{\infty}}} &
  \leqslant & 1.33,\\
  \sqrt{\sum_{j = 1}^m \| \overline{\phi}_j^U \|^2_{L^2}} & \leqslant & 0.36, \\
    \sqrt{\tmop{tr} (G_{Q^2}^v G_r^v)} & \leqslant & 0.004,\\
  \|G^v_{Q^2} \| _F & \leqslant & 0.029,\\
  \sqrt{\sum_{j = 1}^m \| \overline{\phi}_j^v \|^2_{L^2}} & \leqslant & 1.02 \; .
\end{eqnarray}

\subsubsection{Verification of Proposition \ref{prop:U}}

To verify Proposition \ref{prop:U}, we need to check that
\[ M_4^U = (\frac{1}{4} - \eta_1 - M_2^U)^2 - 4 M_3^U \varepsilon^U > 0. \]
Since the residual satisfies
\[ \varepsilon^U \leqslant 6.5 \times 10^{- 7}, \]
and we take $\eta_1 = 0.199$, $M_4^U$ reduces to
\[ M_4^U \geqslant (0.051 - M_2^U)^2 - 2.8 \times 10^{- 6} \times M_3^U . \]
We still need to estimate $M_2^U$ and $M_3^U$.

\paragraph{Step 1: Estimate of $M_2^U$.}

$M_2^U$ is defined as
\[ M_2^U = \eta_2 M_1^U + \varepsilon^U_3 \| \lambda_{\max} (Q) \|_{L^\infty} +
   K_7^U, \]
where $\eta_2 = 0.005$ is the $L^2$ coefficient of $E$-norm in \eqref{eq:eig M}.
The term $M_1$ is given by
\[ M_1^U = K^U_5 + \frac{K_6^U}{(1 - \varepsilon^U_4)^2}, \quad \]
where
\[ K_6^U = \sup_{u \in V_N^{U}, u \neq 0}  \frac{\langle u, Qu
   \rangle_{\Omega} - \sum_{j = 1}^m \overline{\lambda}_j^{- 1}  \langle u, Q
   \overline{\psi}_j^{ U} \rangle_{\Omega}^2}{\|u\|^2_E} \leqslant 0.46, \]
and
\[
    K^U_5 = \| \lambda_{\max} (Q)\|_{L^\infty} \sup_{u \neq 0, u
   \bot_{L^2} V^U_N}  \frac{\|u\|^2_{L^2(\Omega)}}{\|u\|^2_E} \leqslant 0.47
\]
Therefore, we get
\[ M_1^U \leqslant 0.47 + \frac{0.46}{(1 - 5.3 \times 10^{-5})^2} \leqslant 0.9301. \]
By the definition of the constant $K_7^U$, we can bound it as follows:
\[ K_7^U = \sqrt{\tmop{tr} (G^U_{Q^2} G^U_r)} \leqslant 0.0012. \]
Therefore, we obtain
\[ M_2^U \leqslant 0.005 \times 0.9301 + 5.3 \times 10^{-5} \times 4.5 + 0.0012
   \leqslant 0.0061. \]

\paragraph{Step 2: Estimate of $M_3^U$ and $M_4^U$.}

$M_3^U$ is defined and bounded by
\[ M_3^U =  \sqrt{ \| G^U_{Q^2} \|\sum_{j = 1}^m \| \nabla \overline{\phi}_j^U
   \|_{L^\infty}^2} + \| G^U_{Q^2} \| \sqrt{\sum_{j = 1}^m \sum_{i = 1}^m \|
   \overline{\phi}_i^U \cdummy \nabla \overline{\phi}_j^U \|_{L^2}^2}  \leqslant \sqrt{0.025} \times 1.33 + 0.025 \times 0.12 \leqslant 0.22.
\]
Therefore, $M_4^U$ has the following lower bound:
\[ M_4^U \geqslant (\frac{1}{4} - 0.199 - 0.0061)^2 - 4 \times 0.22 \times 6.5
   \times 10^{- 7} \geqslant 0.002. \]

\paragraph{Step 3: Estimate of $x_0^U$ and $x_1^U$.}

Finally, we conclude that
\[ x_0^U = \frac{2 \varepsilon^U}{\left( \frac{1}{4} - \eta_1 - M_2^U +
   \sqrt{M_4^U} \right)} \leqslant \frac{2 \times 6.5 \times 10^{- 7}}{0.25 -
   0.199 - 0.005 + \sqrt{0.002}} \leqslant 1.44 \times 10^{- 5}, \]
and
\[ x_1^U = \sqrt{\frac{\frac{1}{4} - \eta_1 - M_2^U}{1 - M_1^U}} 
   \frac{x_0^U}{2} \leqslant \sqrt{\frac{\frac{1}{4} - 0.199}{1 - 0.9301}}
   \times \frac{1.44 \times 10^{- 5}}{2} \leqslant 6.2 \times 10^{- 6} . \]
   By the estimates \eqref{u2l2111} and \eqref{u2h1}, $U_2$ is bounded by:
\begin{eqnarray*}
     y_0^U\assign\| U_2 \|_{L^2} & \leqslant & \| U_1 \|_{L^2} \| G_{Q^2}^U \|_F \sqrt{\sum_{j
    = 1}^m \| \overline{\phi}_j^U \|_{L^2}^2}  \leqslant 1.44 \times 10^{- 5} \times 0.025 \times 0.36 \leqslant 1.3 \times 10^{-7} ,\\
    y_1^U\assign\| \nabla U_2 \|_{L^\infty} & \leqslant & \| U_1 \|_{L^2} \| G_{Q^2}^U \|_F \sqrt{\sum_{j
    = 1}^m \| \nabla \overline{\phi}_j^U \|_{L^\infty}^2}  \leqslant 1.44 \times 10^{- 5} \times 0.025 \times 1.33 \leqslant 4.8 \times 10^{-7}  .
\end{eqnarray*}
This finishes the proof of Proposition \ref{prop:U}.

\subsubsection{Verification of Proposition \ref{prop:u lambda}}

For Proposition \ref{prop:u lambda}, similarly, we need to verify
\[ M_4^v = (M_5)^2 - 4 M_3^v (\varepsilon^v+\varepsilon_{5}) > 0, \]
where we recall
\begin{eqnarray*}
  M_5 & \assign & \frac{1}{4} - \eta_1 - \overline{\lambda} - 2 K_4 x^U_1 - y^U_1 - | \lambda^{\tmop{con}} | - M_2^v .
\end{eqnarray*}
\paragraph{Step 1: Estimate of $\varepsilon_{5}$ and $|
\lambda^{\tmop{con}} |$.}

Therefore, $\varepsilon_5$ and $\lambda^{\tmop{con}}$ are bounded by
\begin{eqnarray*}
  | \lambda^{\tmop{con}} | & \leqslant & \| \nabla U_1\|_{L^2} \| \overline{v} \|_{L^\infty} + \| \nabla U_2\|_{L^\infty}
  \leqslant 6.2 \times 10^{- 6} \times 2.52 + 4.8 \times 10^{-7} \leqslant 1.62 \times 10^{- 5},\\
  \varepsilon_5 & \leqslant & (1.44 \times
  10^{- 5} + 1.3 \times 10^{-7}) \times 3.4 + 6.2 \times 10^{- 6} \times 2.52 + 4.8 \times 10^{-7} \leqslant 6.6 \times 10^{-5}.
\end{eqnarray*}
We have verified that the residuals satisfy
\[ 
\varepsilon^v+\varepsilon_5 \leqslant 1.9 \times 10^{- 5} + 6.6 \times 10^{-5} \leqslant 8.5 \times 10^{-5}.
\]

\paragraph{Step 2: Estimate of $M_5$.}

Recall that $K_4 \leqslant 0.091$ is introduced in Lemma \ref{lem:sobolev const}. We can
simplify $M_5$ as follows:
\[ M_5 \geqslant \frac{1}{4} - 0.199 + 0.113 - 2 \times 0.091 \times 6.2
   \times 10^{- 6} - 4.8 \times 10^{-7} - 1.62 \times 10^{- 5} - M_2^v \geqslant 0.1639 - M_2^v . \]
$M_2^v$ is defined by
\[ M_2^v = \eta_2 M_1^v + \varepsilon_3^v \| \lambda_{\max} (Q) \|_{L^\infty} +
   K_7^v. \]
Since $M_1^v$ is bounded by
\[
M_1^v = K^v_5 + \frac{K_6^v}{(1 - \varepsilon^v_4)^2} \leqslant 0.47 + \frac{0.46}{(1 - 5.3 \times 10^{- 5})^2} \leqslant
  0.9301,
\]
we obtain the following upper bound for $M_2^v$:
\[
M_2^v \leqslant 0.005 \times 0.9301 + 4.5 \times 5.3 \times 10^{- 5} + 0.004
  \leqslant 0.0089.
\]
$M_5$ thus has the following lower bound:
\[ M_5 \geqslant 0.1639 - 0.0089 = 0.155. \]

\paragraph{Step 3: Estimate of $M_3^v$ and $M_4^v$.}

Recall that $M_3^v$ is defined by
\[ M_3^v = M_0  (1 +  \sqrt{ \| G_{Q^2}^v \|\sum_{j = 1}^m \| \overline{\phi}_j^v \|_{L^2}^2})^2, \]
where
\begin{eqnarray*}
M_0 &=& \|P_{\overline{v}} (\frac{1}{2} x \cdummy \nabla \overline{v} - \Delta \overline{v}
   - \overline{U} \cdot \nabla \overline{v} + \overline{v} \cdummy \nabla^T 
   \overline{U})\|_{L^2}+\varepsilon_5\\
   &\leqslant & 6.38 + 6.6 \times 10^{- 5}\\
   &\leqslant & 6.4,
\end{eqnarray*}
which yields the following approximate values:
\begin{eqnarray*}
  M_3^v & \leqslant & 6.4 \times (1 + 1.02 \times \sqrt{0.029})^2 \leqslant 8.82,\\
  M_4^v & \geqslant & 0.155^2 - 4 \times 8.82 \times 8.5 \times 10^{- 5}
  \geqslant 0.021.
\end{eqnarray*}

\paragraph{Step 4: Estimate of $\| v \|$ and $| \lambda |$.}

We collect all estimates above and obtain:
\begin{eqnarray*}
  x_0^v & = & \frac{2 (\varepsilon^{v}+\varepsilon_5)}{ M_5 + \sqrt{M_4^v}
  } \leqslant \frac{2 \times 8.5 \times 10^{- 5}}{ 0.155 +
  \sqrt{0.021} } \leqslant 5.67 \times 10^{- 4},\\
  x_1^v & = & \sqrt{\frac{M_5}{1 - 2 K_4 \| \nabla U\|_{L^2}- M_1^v}} 
  \frac{x_0^v}{2} \leqslant \sqrt{\frac{0.155}{1 - 2 \times 0.091 \times 6.2
  \times 10^{- 6} - 0.9301}} \times \frac{5.67 \times 10^{- 4}}{2} \leqslant 4.3
  \times 10^{- 4} .
\end{eqnarray*}
Hence, we have verified the existence of $v$. The estimate of
$\lambda^{\tmop{lin}} (v)$ follows:
\[
  | \lambda^{\tmop{lin}} (v) | \leqslant  M_0 \| v \|_{L^2} \leqslant M_0  (1 +
  \sqrt{ \sum_{j = 1}^m \| G_{Q^2}^v \| \|
  \overline{\phi}_j^v \|_{L^2}^2}) x_0^v
   \leqslant  6.4 \times (1 + 0.18) \times 5.67 \times 10^{- 4} \leqslant
  0.0044.
\]
Finally, we confirm that:
\[ | \lambda | \leqslant | \lambda^{\tmop{lin}} (v) | + | \lambda^{\tmop{con}}
   | \leqslant 0.0045 \ll | \overline{\lambda} | \approx 0.113. \]
This concludes the numerical verification and thus completes the proof of
the proposition \ref{prop:u lambda}.

\section{Numerical Construction of Approximate Solutions}\label{Sec:num}

In this section, we detail the numerical procedures used to solve the
equation \eqref{eq:NS eq eig}. In prior research {\cite{guillod2023numerical}}, boundary conditions of the
form: 
\[ A_\alpha (\theta) = \alpha ( 0, 0, e^{- 4 \cot^2 \theta} ), \]
were used, where $\alpha$ is a parameter. As $\alpha$ increased, the
eigenvalue associated with the linear operator $\mathcal{L} (\overline{U})$
crossed the imaginary axis at approximately $\alpha \approx 292$. However, the finite-rank approximation in Section \ref{subsec:finiterank} for solutions with such large values of boundary conditions will be inefficient, rendering the numerical verification challenging. To overcome this, we introduce an alternative
numerical strategy aimed at obtaining a smaller solution associated with a
negative eigenvalue.
Instead of tuning $\alpha$ and tracking eigenvalue crossings, we optimize the field $\overline U$ directly: we compute $\nabla \lambda(\overline U)$ and run a projected gradient descent to obtain a small solution with a negative eigenvalue. Then, we interpolate the approximate solution to
enforce exact divergence-free conditions.

This section is structured as follows. In Section \ref{sec:gd}, we derive the
gradient descent method for the eigenvalue $\lambda (\widetilde{U})$. Section
\ref{sec:sym} simplifies the problem by leveraging symmetries (axisymmetry in spherical coordinates). In Section
\ref{sec:fem}, we discretize the equations using the finite element method.
Section \ref{sec:eig} details the computation of the eigenvalue. The first four subsections produce a finite-element candidate; we then refine it to a high-precision candidate using a spectral basis, which is divergence-free and smooth away from the origin. In Section
\ref{sec:ext}, we extend the obtained solutions to the entire space. Section
\ref{sec:div-free} presents the construction of a divergence-free basis. In
Section \ref{sec:res cg}, we interpolate our numerical solutions onto this
basis and reduce residuals. Finally, Section \ref{sec:sum} summarizes our
numerical approach and provides pseudo-code for the implemented algorithm.

\subsection{Gradient descent for eigenvalue}\label{sec:gd}

To compute the desired eigenvalue, we adopt a gradient descent approach. The
iterative update from $\bar{U}_n$ to $\bar{U}_{n + 1}$ is defined as follows.
Recall that $\lambda (\tilde{U})$, as a functional of $\tilde{U}$, is the
eigenvalue of $\mathcal{L} (\tilde{U})$ with the smallest real part. First, we
calculate the Fr{\'e}chet derivative of $\lambda (\tilde{U})$. Throughout, we
assume that $\lambda (\tilde{U})$ is simple (thus real) and isolated, which
guarantees that it is also a simple and isolated eigenvalue for $\mathcal{L}
(\tilde{U})^{\ast}$. Let $v^l (\tilde{U}), v^r (\tilde{U})$ denote the left
and right eigenvectors associated with the eigenvalue $\lambda (\tilde{U})$,
respectively satisfying:
\[ \mathcal{L} (\tilde{U}) v^r (\tilde{U}) = \lambda (\tilde{U}) v^r
   (\tilde{U}), \quad \mathcal{L} (\tilde{U})^{\ast} v^l (\tilde{U}) = \lambda
   (\tilde{U}) v^l (\tilde{U}) . \]
Denoting $v^r_n = v^r (\bar{U}_n)$ and $v^l_n = v^l (\bar{U}_n)$ respectively,
the gradient of $\lambda (\bar{U}_n)$ is explicitly given by:
\begin{equation}
  \frac{\delta \lambda (\bar{U}_n)}{\delta \tilde{U}} = \frac{v^l_n \cdot
  \nabla^T v^r_n - v^r_n \cdot \nabla v^l_n}{\langle v^l_n, v^r_n \rangle} .
  \label{eq:gradient lambda}
\end{equation}
A formal derivation of the above equation is provided in Appendix
\ref{sec:nabla lambda}. This is also known as the non-self-adjoint
Hellmann--Feynman formula:
\[ \partial \lambda = \langle v^l, (\partial \mathcal{L}) v^r \rangle /
   \langle v^l, v^r \rangle, \quad \partial \mathcal{L} (\delta U) \assign -
   (\delta U)  \hspace{-0.17em} \cdummy \hspace{-0.17em} \nabla
   \hspace{0.17em} (\cdummy) \hspace{0.17em} + (\cdummy)  \hspace{-0.17em}
   \cdummy \hspace{-0.17em} \nabla (\delta U) . \]
Without additional constraints, the increment $g_n$ would ideally be given by
\[ g_n = - k_n \frac{\delta \lambda (\bar{U}_n)}{\delta \tilde{U}}, \]
where $k_n$ denotes step size. In practice, $k_n$ can be chosen by different
strategies; we use the Barzilai--Borwein method {\cite{barzilai1988two}} for
efficiency.

However, since $\tilde{U}$ is constrained to lie on the solution manifold
defined by equation \eqref{eq:NS-eq-self-similar0}, the increment $g_n$ must
belong to the tangent space of this manifold at the current iterate
$\bar{U}_n$:
\[ T_{\bar{U}_n} \assign \{ \hspace{0.17em} g : \mathrm{div} g = 0,
   \mathcal{L}(\bar{U}_n) \hspace{0.17em} g = 0 \hspace{0.17em} \} . \]
Moreover, the boundary condition $A_n (\omega)$ with $\omega = \frac{x}{|x|}$
is updated together with $\bar{U}_n$. In addition to the tangent-space
constraint, we impose conditions on $A (\omega)$ that governs the far-field
behavior of $\tilde{U}$. The key motivation is to ensure that the most
negative eigenvalue of the symmetric gradient
\[ Q_s (x) = \frac{1}{2} (\nabla U (x) + \nabla^T U (x)), \]
decays as rapidly as possible in the far field. Using the ansatz $\bar{U}_n
\approx \frac{A_n (\omega)}{| x |}$, we obtain
\[ Q_{s, n} \approx \frac{Q^A_{s, n} (\omega)}{| x |^2}, \]
so the radial decay rate is fixed at $| x |^{- 2}$, while the prefactor $Q_{s,
n}^A (\omega)$ depends on $A_n (\omega)$ and controls the size of the most
negative eigenvalue. To control this prefactor and to ensure numerical
stability, we introduce a positive constant $\gamma_U > 0$ and impose two
boundary constraints on $A_{n + 1}$:
\begin{enumerate}
  \item Regularity bound (to sufficient smoothness of $A (\omega)$):
  \begin{equation}
    \| \nabla^2 A (\omega)\|_{L^\infty} \leqslant \gamma_U, \label{eq:bd an}
  \end{equation}
  \item Spectral bound (to suppress the most negative eigenvalue in the far
  field):
  \begin{equation}
    \lambda_{\max} (- Q_{s, n}^A (\omega)) \leqslant \gamma_U . \label{eq:lbd
    an}
  \end{equation}
\end{enumerate}
Combining these requirements, we determine the increment by solving the convex
problem:
\[ \min_{g_n} \| g_n + k_n \frac{\delta \lambda (\bar{U}_n)}{\delta
   \tilde{U}} \|_{L^2}, \text{ \ s.t. $g_n \in T_{\bar{U}_n}$, $A_{n + 1}$
   satisfies \eqref{eq:bd an} and \eqref{eq:lbd an}.} \]
This problem can be solved efficiently in MATLAB. With the increment $g_n$
determined by solving the constrained optimization problem, we obtain the
tentative update
\[ \bar{U}^{\ast}_{n + 1} = \bar{U}_n + g_n . \]
Since $\bar{U}^{\ast}_{n + 1}$ is not yet a solution to equation
\eqref{eq:NS-eq-self-similar0}, we project it onto the solution manifold while
fixing the boundary condition $A_{n + 1} (\omega)$, yielding the updated
solution $\bar{U}_{n + 1}$. In practice, we discretize the system
\eqref{eq:NS-eq-self-similar0} using the finite element method, resulting in a
nonlinear equation for the numerical solution $\bar{U}$. We then fix the
boundary condition $A_{n + 1}$ and apply Newton's method with initial guess
$\bar{U}^{\ast}_{n + 1}$ to solve the discretized version of equation
\eqref{eq:NS-eq-self-similar0}, thereby obtaining $\bar{U}_{n + 1}$. A few
Newton steps suffice; we stop once the discrete residual falls below a
prescribed tolerance.

We iterate this procedure until convergence. If the eigenvalue satisfies
$\lambda < - 0.1$, the iteration terminates. Otherwise, we incrementally
increase $\gamma_U$ and repeat the above steps.

\subsection{Axisymmetry, parity and boundary conditions}\label{sec:sym}
We consider the axisymmetric case throughout the computations and denote the cylindrical coordinates as $(\rho,z,\phi)$. We also impose symmetry conditions in $z$: even for the profile problem $(U,P)$ and odd for the eigenvalue problem $(v,q)$. In the Euclidean coordinates, a vector field $(u_x,u_y,u_z)$ is \emph{even in $z$} if 
\[
u_x(x,y,-z)=u_x(x,y,z),\quad  
u_y(x,y,-z)=u_y(x,y,z),\quad  
u_z(x,y,-z)=-u_z(x,y,z).
\]
In other words, the horizontal components are even functions of $z$, while the vertical component is odd. In spherical coordinates $(r,\theta,\phi)$, the reflection $z\mapsto -z$ corresponds to $\theta \mapsto \pi-\theta$. The condition of being even in $z$ then reads  
\[
u_r(r,\pi-\theta,\phi) = u_r(r,\theta,\phi),\quad  
u_\phi(r,\pi-\theta,\phi) = u_\phi(r,\theta,\phi),\quad  
u_\theta(r,\pi-\theta,\phi) = -\,u_\theta(r,\theta,\phi).
\]
The definition of \emph{odd in $z$} is the opposite.

We note that if $\widetilde{\lambda}$ is a simple eigenvalue, since $\widetilde{U}$ is even in $z$, $\widetilde{v}$ must be either even or odd
.
Numerical evidence indicates that eigenvalues associated with odd
eigenvectors decrease more rapidly. Therefore, we assume $\widetilde{v}$ is odd
with respect to $z$.

In practice, for the construction of accurate approximate solutions in this section, we adopt spherical coordinates $(r, \theta, \phi)$.  Due to the axisymmetry and parity, the computational
domain reduces to $r \in \mathbb{R}_+$ and $\theta \in \left[ 0, \frac{\pi}{2}
\right]$. Since the spherical coordinates $(r,\theta)$ correspond to a polar representation  of the cylindrical coordinates $(\rho,z)$, boundary conditions in the spherical coordinates are thus given by:
\begin{itemizedot}
  \item At $\theta = 0$, which corresponds to $\rho=0$, we have $e_r =e_z $ and $e_{\theta}=e_\rho$. To ensure $\Delta U,\Delta v\in L^2$ and smoothness of the pressure, we impose a Dirichlet zero condition on the $\rho$ and $\phi$ components of the velocity field and a Neumann zero condition on the $z$ component and the pressure.  This implies $\widetilde{U}_{\theta} (r, 0) = \widetilde{U}_{\phi} (r, 0)
  = 0$ and  $\partial_\theta
  \widetilde{U}_r (r, 0) =\partial_\theta\widetilde{P}(r, 0)= 0$. Likewise, $\widetilde{v}_{\theta} (r, 0) = \widetilde{v}_{\phi} (r, 0)
  = 0$ and  $\partial_\theta
  \widetilde{v}_r (r, 0) =\partial_\theta\widetilde{q}(r, 0)= 0$.
  
  \item At $\theta = \frac{\pi}{2}$, which corresponds to $z=0$, we have $e_r =e_\rho $ and $e_{\theta}=-e_z$, implying  $\partial_\theta \widetilde{U}_r \left( r, \frac{\pi}{2} \right) =
  \widetilde{U}_{\theta} \left( r, \frac{\pi}{2} \right) = \partial_\theta
  \widetilde{U}_{\phi} \left( r, \frac{\pi}{2} \right) = \partial_\theta
  \widetilde{P} \left( r, \frac{\pi}{2} \right)= 0$ by even symmetry. Conversely, by odd symmetry, we have $ \widetilde{v}_r \left( r, \frac{\pi}{2} \right) =
 \partial_\theta \widetilde{v}_{\theta} \left( r, \frac{\pi}{2} \right) = 
  \widetilde{v}_{\phi} \left( r, \frac{\pi}{2} \right) = 
  \widetilde{q} \left( r, \frac{\pi}{2} \right)= 0$.
  
  \item At $r = 0$,  which corresponds to the origin in the cylindrical coordinates, the even symmetry in $z$ and the Dirichlet zero condition in $\rho$ directly
  lead to $\widetilde{U} (0, \theta) = (0, 0, 0) .$ Similarly, we impose $\partial_r \widetilde{P}(0, \theta)=0$ and  $\widetilde{q}(0, \theta)=0$. Finally, since at  the origin in cylindrical coordinates, $\widetilde{v}$ must align with the direction
  $e_z$, we know that $\widetilde{v}$ takes the following form for a constant $C$:\begin{equation}
  \label{v-boundary-condition}
 \widetilde{v} =  C
  (\cos \theta, -\sin \theta, 0).
  \end{equation} 
  Another reason why we need to impose this boundary condition at $r=0$ is because we have to cancel the two  terms that contain the singular factor $\frac{1}{r^2}$ in the Laplacian operator $\Delta$ in the spherical coordinates, see Appendix \ref{sec sphdel}.
If we do not enforce this boundary condition and allow $\widetilde{v}$ to have modes $\sin(m\theta)$ or $\cos(m\theta)$ with $m \geqslant 2$, then the equation for $\widetilde{v}$ will generate some singular terms that do not cancel each other, rendering $\widetilde{v}\notin H^2$.
  
  \item At the far-field boundary, the condition is given by the function $A (\theta)$: $\lim_{r
  \to + \infty} r \widetilde{U} (r, \theta) = A (\theta) .$ We require $\widetilde{v}$ to decay faster than $\widetilde{U}$ by imposing  $\lim_{r
  \to + \infty} r \widetilde{v} (r, \theta) = 0.$ In
  {\cite{guillod2023numerical}}, the authors estimate the decay rate as
  approximately $O (r^{- 4})$.
\end{itemizedot}
Here, $A (\theta)$ satisfies the consistency condition derived from the
incompressibility constraint \eqref{eq:A constraint}:
\begin{equation}
  A_r (\theta) + \frac{1}{\sin \theta} \partial_{\theta}  (A_{\theta} (\theta)
  \sin \theta) = 0. \label{eq:A constraint sph}
\end{equation}

\subsection{Spherical coordinates and finite element method}\label{sec:fem}

In this section, we detail our setting for finite element approximation. The
incompressible Navier--Stokes equations in the spherical coordinates read:
\begin{equation}
  \left\{\begin{array}{l}
    - \frac{1}{2}  \widetilde{U} - \frac{1}{2} r \partial_r  \widetilde{U} + \widetilde{U}
    \cdot \nabla_{\tmop{sph}}  \widetilde{U} + \nabla_{\tmop{sph}} \widetilde{P} -
    \Delta_{\tmop{sph}}  \widetilde{U} = 0,\\
    \tmop{div}_{\tmop{sph}}  \widetilde{U} = 0,
  \end{array}\right. \label{eq:NS ss sph}
\end{equation}
where $\nabla_{\text{sph}}$, $\text{div}_{\text{sph}}$, and
$\Delta_{\text{sph}}$ are the gradient, divergence, and Laplacian operators in
the spherical coordinates, respectively (see Appendix \ref{sec:ops} for explicit
expressions). The weak form of \eqref{eq:NS ss sph} is:
\begin{equation*}
  \langle \widetilde{U} \cdummy \nabla \widetilde{U}, V \rangle - \frac{1}{2} \langle
  x \cdummy \nabla \widetilde{U}, V \rangle - \frac{1}{2} \langle \widetilde{U}, V
  \rangle - \langle \widetilde{P}, \tmop{div} V \rangle - \langle Q, \tmop{div}
  \widetilde{U} \rangle + \langle \nabla \widetilde{U}, \nabla V \rangle = 0,
  \label{eq:weak NS}
\end{equation*}
for all test functions $V \in H_0^1 (\mathbb{R}^3)^3, Q \in L^2
(\mathbb{R}^3)$. In the spherical coordinates, the weak form
reads:
\begin{equation*}
  \begin{array}{l}
    \langle \widetilde{U} \cdummy \nabla_{\tmop{sph}} \widetilde{U}, V
    \rangle_{\tmop{sph}} - \frac{1}{2} \langle r \partial_r \widetilde{U}, V
    \rangle_{\tmop{sph}} - \frac{1}{2} \langle \widetilde{U}, V
    \rangle_{\tmop{sph}} - \langle \widetilde{P}, \tmop{div}_{\tmop{sph}} V
    \rangle_{\tmop{sph}}\\
    - \langle Q, \tmop{div}_{\tmop{sph}}  \widetilde{U} \rangle_{\tmop{sph}} +
    \langle \nabla_{\tmop{sph}} \widetilde{U}, \nabla_{\tmop{sph}} V
    \rangle_{\tmop{sph}} = 0,
  \end{array}
\end{equation*}
for all test functions $V \in H^1_{0,\tmop{sph}} ( \mathbb{R}_+ \times
[ 0, \frac{\pi}{2} ] )^3, Q \in L^2_{\tmop{sph}} (
\mathbb{R}_+ \times[ 0, \frac{\pi}{2} ] )$.
 The inner product is
\[ \langle f, g \rangle_{\tmop{sph}} = \int_{\left[ 0, \frac{\pi}{2}
   \right]}\int_{\left[ 0, \infty\right]} f^T (r, \theta) \nosymbol g (r, \theta) r^2 \sin \theta \mathd r \mathd \theta . \]

In practice, we truncate the
domain to a finite domain $B
(0, R)$ with $R = 20$. The far-field boundary condition $\lim \nospace_{r
\to + \infty} r \widetilde{U} (r, \theta) = A (\theta)$ is approximated by
the Dirichlet boundary condition:
\[ \widetilde{U} (r = R, \theta) = \frac{1}{R} A (\theta), \]
where $A (\theta)$ must satisfy the consistency constraint \eqref{eq:A
constraint sph} derived from incompressibility. We impose a Neumann boundary condition for the pressure: $\partial_r\widetilde{P} (R, \theta) = 0$.

We discretize the domain using finite elements, following closely the method
detailed in {\cite{guillod2023numerical}}. Specifically, the computational
domain is divided into $N_r \times N_{\theta}$ squares, with $N_r = 128$ and
$N_{\theta} = 64$. Each square is subdivided into two triangles. As the
parameter $\gamma_U$ increases, a larger computational domain is required to
ensure that truncation errors remain negligible. To efficiently handle this
domain expansion without modifying the finite element mesh, we adopt a spatial
rescaling strategy following the approach described in
{\cite{guillod2023numerical}}. Specifically, we fix the computational domain
radius at $R = 20$ and introduce a spatial rescaling given by:
\[ \widetilde{U}_{\kappa} (x) = \widetilde{U} (\kappa x). \]
This rescaling is equivalent to expanding the computational domain from $B (0,
R)$ to $B (0, \kappa R)$, while allowing the reuse of the same finite element mesh
for varying values of $\gamma_U$. The scaling factor $\kappa$ is chosen to be:
\[ \kappa = \sqrt{1 + \frac{\gamma_U}{4}} . \]

Thus, the rescaled equations are given by:
\begin{equation}
  \begin{array}{l}
    \kappa^{-1} \langle \widetilde{U} \cdummy \nabla_{\tmop{sph}} \widetilde{U}, V
    \rangle_{\tmop{sph}} - \frac{1}{2} \langle r \partial_r \widetilde{U}, V
    \rangle_{\tmop{sph}} - \frac{1}{2} \langle \widetilde{U}, V
    \rangle_{\tmop{sph}} - \kappa^{-1} \langle \widetilde{P}, \tmop{div}_{\tmop{sph}} V
    \rangle_{\tmop{sph}}\\
    - \kappa^{-1} \langle Q, \tmop{div}_{\tmop{sph}}  \widetilde{U}
    \rangle_{\tmop{sph}} + \kappa^{-2} \langle \nabla_{\tmop{sph}} \widetilde{U},
    \nabla_{\tmop{sph}} V \rangle_{\tmop{sph}} = 0,
  \end{array} \label{eq:weak NS sph}
\end{equation}
with boundary conditions:
\[ \widetilde{U} (r = R, \theta) = \frac{1}{\kappa R} A (\theta) . \]
Here, we omit the subscript $\kappa$ for simplicity.
 Finally, we discretize
the weak form \eqref{eq:weak NS sph} using Lagrange quadratic polynomials (P2
elements) for $\widetilde{U}$ and linear polynomials (P1 elements) for
$\widetilde{P}$. The discretization involves assembling not only bilinear forms
but trilinear forms, such as the convection term $\langle \widetilde{U} \cdot
\nabla_{\text{sph}} \widetilde{U}, V \rangle_{\text{sph}}$, which require handling
third-order tensors. For this reason, we implement our finite element
discretization using the Python package {\tmem{scikit-fem}} rather than the
more commonly used {\tmem{FEniCS}}.

\subsection{Eigenvalue solver}\label{sec:eig}

Suppose we have obtained a solution $\widetilde{U}$. Our next step is to compute
the eigenvalues of the linearized operator $\mathcal{L} (\widetilde{U})$. The eigenvalue problem in the spherical coordinates reads:
\[ \left\{\begin{array}{l}
     - \frac{1}{2}  \widetilde{v} - \frac{1}{2} r \partial_r  \widetilde{v} +
     \widetilde{U} \cdot \nabla_{\tmop{sph}}  \widetilde{v} + \widetilde{v} \cdot
     \nabla_{\tmop{sph}}  \widetilde{U} + \nabla_{\tmop{sph}} \widetilde{q} -
     \Delta_{\tmop{sph}}  \widetilde{v} = \widetilde{\lambda}  \widetilde{v},\\
     \tmop{div}_{\tmop{sph}}  \widetilde{v} = 0.
   \end{array}\right. \]

We start
by truncating the domain to a ball $B (0, R)$. 
The same spatial rescaling by $\kappa$ discussed earlier is then applied. Consequently,
the weak form of the eigenvalue problem is expressed as:
\begin{equation}
  \begin{array}{l}
    \kappa^{-1} \langle \widetilde{U} \cdummy \nabla_{\tmop{sph}} v, u
    \rangle_{\tmop{sph}} + \kappa^{-1} \langle v \cdummy \nabla_{\tmop{sph}}
    \widetilde{U}, u \rangle_{\tmop{sph}} - \frac{1}{2} \langle r \partial_r v, u
    \rangle_{\tmop{sph}} - \frac{1}{2} \langle v, u \rangle_{\tmop{sph}} -
    \kappa^{-1} \langle q, \tmop{div}_{\tmop{sph}} u \rangle_{\tmop{sph}}\\
    - \kappa^{-1} \langle p, \tmop{div}_{\tmop{sph}} v \rangle_{\tmop{sph}} +
    \kappa^{-2} \langle \nabla_{\tmop{sph}} v, \nabla_{\tmop{sph}} u
    \rangle_{\tmop{sph}} = \lambda \langle v, u \rangle_{\tmop{sph}},
  \end{array} \label{eq:weak NS eig sph}
\end{equation}
for all test functions $u \in H^1_{0,\tmop{sph}} (\mathbb{R}_+ \times [0, \frac\pi2])^3, p
\in L^2_{\tmop{sph}} (\mathbb{R}_+ \times [0, \frac\pi2])$. We impose a homogeneous Dirichlet boundary condition at the truncation
  radius $r = R$: $\widetilde{v} (R, \theta) = (0, 0, 0)$ and a Neumann boundary condition for the pressure: $\partial_r\widetilde{q} (R, \theta)= 0$. 

We discretize this weak form using the finite element method described in the
previous section, which yields a generalized matrix eigenvalue problem of the
form:
\[ A x = \lambda B x, \]
where $A$ corresponds to the discretization of the left-hand side of equation
\eqref{eq:weak NS eig sph}, and $B$ represents the discretization of the inner
product term on the right-hand side. The resulting eigenvalue problem is
solved numerically using MATLAB.

\begin{remark}
   In \cite{guillod2023numerical}, the authors noted that directly imposing
  Dirichlet boundary conditions at a finite radius $R$ might slightly distort
  the computed solutions. For this reason, the numerical settings described in
  this subsection and the previous section are employed only within the
  gradient descent iterations for obtaining coarse solutions $\overline{U}$
  and $\overline{v}$, rather than for rigorous numerical analysis. To minimize
  potential distortions and obtain accurate numerical solutions, we adopt an
  interpolation-based extension technique, as detailed in the subsequent
  section.
\end{remark}

\subsection{Extension to whole space}\label{sec:ext}

After obtaining a numerical solution $(\overline{U}, \overline{P})$ on the
bounded domain, we need to extend it to the entire space. To achieve this, we
introduce the coordinate transformation:
\[ r = \tan \beta . \]
Under this transformation, we define new variables to capture the decay at the far field:
\[ \widehat{U} (\beta, \theta) = \frac{\widetilde{U} (\tan\beta, \theta)}{\cos \beta},
   \quad \widehat{P} (\beta, \theta) = \frac{\widetilde{P} (\tan\beta, \theta)}{\cos
   \beta}.\]
In these new variables, equation \eqref{eq:NS ss sph} takes the following
form:
\begin{equation*}
  \left\{\begin{array}{l}
    - \frac{1}{2} \cos^2 \beta (\cos \beta  \widehat{U} + \sin
    \beta \partial_{\beta}\widehat{U}) + \cos \beta\widehat{U} \cdot \widetilde{\nabla}_{\tmop{sph}}  \widehat{U} +
    \widetilde{\nabla}_{\tmop{sph}} \widehat{P} - \widetilde{\Delta}_{\tmop{sph}}  \widehat{U}
    = 0,\\
    \widetilde{\tmop{div}}_{\tmop{sph}}  \widehat{U} = 0,
  \end{array}\right. \label{eq:NS ss sph cv}
\end{equation*}
where the definitions of $\widetilde{\nabla}_{\tmop{sph}}$,
$\widetilde{\Delta}_{\tmop{sph}}$ and $\widetilde{\tmop{div}}_{\tmop{sph}}$ are
listed in Appendix \ref{sec:ops}. The corresponding weak form can be expressed
similarly as:
\begin{equation}
  \begin{array}{l}
    \kappa^{-1} \langle \cos \beta\widehat{U} \cdummy \widetilde{\nabla}_{\tmop{sph}} \widehat{U}, V
    \rangle_{\tmop{sphcv}} - \frac{1}{2} \langle \cos^2 \beta (\cos \beta  \widehat{U} + \sin
    \beta \partial_{\beta}\widehat{U}), V \rangle_{\tmop{sphcv}} -
    \kappa^{-1} \langle \widehat{P}, \widetilde{\tmop{div}}_{\tmop{sph}} V
    \rangle_{\tmop{sphcv}}\\
    - \kappa^{-1} \langle Q, \widetilde{\tmop{div}}_{\tmop{sph}}  \widehat{U}
    \rangle_{\tmop{sphcv}} + \kappa^{-2} \langle \widetilde{\nabla}_{\tmop{sph}}
    \widehat{U}, \widetilde{\nabla}_{\tmop{sph}} V \rangle_{\tmop{sphcv}} = 0,
  \end{array} \label{eq:weak NS sph cv} 
\end{equation}
for all test functions $V \in H^1_{0, \tmop{sphcv}} ([0, \frac\pi2]^2)^3, Q \in L^2_{\tmop{sphcv}} ([0, \frac\pi2]^2)$, with the
inner product
\[ \langle f, g \rangle_{\tmop{sphcv}} = \int_{\left[ 0, \frac{\pi}{2}
   \right]^2} f^T (\beta, \theta) \nosymbol g (\beta, \theta) \frac{\tan^2
   \beta}{\cos^2 \beta} \sin \theta \mathd \beta \mathd \theta . \]
The far-field boundary condition transforms into:
\[ \widehat{U} \left( \frac{\pi}{2}, \theta \right) = \frac{A (\theta)}{\kappa} .
\]
When numerically solving the weak form \eqref{eq:weak NS sph cv}, it is
beneficial to select a larger scaling factor $\kappa$:
\[ \kappa = 2 \sqrt{1 + \gamma_U / 4}. \]
We employ the same discretization method as before to solve this weak form
numerically. Given a solution $\overline{U}$ defined on $\mathbb{R}_+ \times
\left[ 0, \frac{\pi}{2} \right]$, we first interpolate it onto the finite
element basis corresponding to the new mesh. Although the interpolated
function generally does not exactly satisfy equation \eqref{eq:weak NS sph
cv}, it provides a sufficiently accurate initial guess. Consequently, by
applying Newton's iteration with fixed boundary conditions, we can efficiently
solve the transformed system within a few steps, obtaining a good
approximation over the entire spatial domain.

Next, we linearize the operator at the solution profile $\overline{U}$
obtained from equation \eqref{eq:weak NS sph cv} and compute the corresponding
eigenvectors. We note that the computed solution profile $\overline{U}$ and
eigenvector $\overline{v}$ are close to being divergence-free, though not exactly
so. In the next subsections, we introduce a procedure to enforce the
divergence-free condition exactly.

\begin{remark}
  Numerical instabilities occasionally lead to spurious eigenvalues with
  highly oscillatory eigenvectors, which do not correspond to  the actual
  eigenmodes of the operator. However, despite these numerical artifacts, the
  method still enables us to approximate the genuine eigenvectors accurately.
\end{remark}

\subsection{Construction of divergence-free basis}\label{sec:div-free}

To exactly enforce the divergence-free condition, we construct a
divergence-free basis. We first expand the vector $\widetilde{U}$ using axisymmetric vector spherical harmonics $X_l, Y_l, Z_l$, defined later in \eqref{xyz}:
\[ \widetilde{U} (r, \theta) = \sum_{l = 0}^{+ \infty} (\widetilde{U}_l^Y (r) Y_l
   (\theta) + \widetilde{U}_l^X (r) X_l (\theta) + \widetilde{U}_l^Z (r) Z_l (\theta))
   . \]
    Here, for a scalar function $p$, we can expand $p$ with the scalar spherical harmonics:
\[ p (r, \omega) = \sum_{l = 0}^{+ \infty} \sum_{m = - l}^l p_{l m} (r) S_{l
   m} (\omega), \quad r \geqslant 0, \quad \omega \in \mathbb{S}^2 , \]
where $S_{l m} (\omega)$ is the spherical harmonics. For a vector field $u$,
we can also expand $u$ with the vector spherical harmonics:
\[ u (r, \omega) = \sum_{l = 0}^{+ \infty} \sum_{m = - l}^l (u_{l m}^Y (r)
   Y_{l m} (\omega) + u_{l m}^X (r) X_{l m} (\omega) + u_{l m}^Z (r) Z_{l m}
   (\omega)), \]
where
\[ Y_{l m} (\omega) = S_{l m} (\omega) \vec{e}_r, \quad X_{l m} (\omega) = \nabla_{\mathbb{S}^2}
   S_{l m} (\omega), \quad Z_{l m} (\omega) = \vec{e}_r \times \nabla_{\mathbb{S}^2} S_{l m}
   (\omega) . \]
   
Since we only consider axisymmetric functions, we choose $m = 0$ and $\omega$
reduces to $\theta$:
\begin{equation}
    \label{xyz}
 Y_l (\theta) = S_l (\theta) \vec{e}_r, \quad X_l (\theta) = S_l' (\theta)
   \vec{e}_{\theta}, \quad Z_l (\theta) = \vec{e}_{\phi} S_l' (\theta), \quad
   S_l (\theta) = P_l (\cos \theta), \end{equation}
where $P_l$ denotes the Legendre polynomial. Here we drop the subscript $m =
0$ for simplicity. 

We have the reduced expansions
$$
  p (r, \theta) = \sum_{l = 0}^{+ \infty} p_l (r) S_l (\theta),\quad
  u (r, \theta) = \sum_{l = 0}^{+ \infty} (u_l^Y (r) Y_l (\theta) + u_l^X
  (r) X_l (\theta) + u_l^Z (r) Z_l (\theta)) .
$$
We note that $X_l, Y_m, Z_n$ are mutually orthogonal for all
indices $l, m, n$. Furthermore, they are normalized in the $L^2$ inner product:
$$
  \int_0^{\pi} Y_l (\theta) Y_m (\theta) \sin \theta \mathd \theta =
  \frac{2 \delta_{l m}}{2 l + 1}, \quad
  \int_0^{\pi} X_l (\theta) X_m (\theta) \sin \theta \mathd \theta =
  \int_0^{\pi} Z_l (\theta) Z_m (\theta) \sin \theta \mathd \theta =\frac{2 l (l
  + 1)\delta_{l m}}{2 l + 1}  .
$$
The gradients of $p$ and $u$ have the following forms, by Appendix \ref{sec sphdel} and the eigenvector properties of the spherical harmonics and the Legendre polynomials:
\begin{eqnarray}
  \nabla_{\tmop{sph}} p & = & \sum_{l = 0}^{+ \infty} \left( p_l' (r) Y_l
  (\theta) + \frac{1}{r} p_l (r) X_l (\theta) \right),  \label{eq:grad p}\\
  \Delta_{\tmop{sph}} p & = & \sum_{l = 0}^{+ \infty} \left( \Delta_r p_{
  l} (r) - \frac{\mu_l}{r^2} p_{l} (r) \right) S_l (\theta),  \label{eq:lap
  p}\\
  \tmop{div}_{\tmop{sph}} u & = & \sum_{l = 0}^{+ \infty} \left(
  \frac{\mathd}{\mathd r} u_l^Y + \frac{2}{r} u_l^Y - \frac{\mu_l}{r} u_l^X
  \right) S_l,  \label{eq:div u}\\
  \Delta_{\tmop{sph}} u & = & \sum_{l = 0}^{+ \infty} (\Delta_r u_l^Y Y_l +
  \Delta_r u_l^X X_l + \Delta_r u_l^Z Z_l) \nonumber\\
  & + & \frac{1}{r^2} \sum_{l = 0}^{+ \infty} ((2 \mu_l u_l^X - (\mu_l + 2)
  u_l^Y) Y_l + (2 u_l^Y - \mu_l u_l^X) X_l - \mu_l u_l^Z Z_l),  \label{eq:lap
  u}
\end{eqnarray}
where we define
$$
  \mu_l = l (l + 1),\quad
  \Delta_r = \frac{1}{r^2} \frac{\mathd}{\mathd r} r^2
  \frac{\mathd}{\mathd r} .
$$
 We note that the superscripts $X,Y,Z$ refer to the spherical harmonics and have nothing to do with Cartesian coordinates.
We note that when $l$ is even, the vector field $(Y_l, X_l, 0)$ is even with
respect to $z$ while $(0, 0, Z_l)$ is odd. When $l$ is odd, they have the
opposite parities. Therefore, for the even $\widetilde{U}$, we choose even $l$ for
$Y_l, X_l$ and odd $l$ for $Z_l$. For the odd $\widetilde{v}$, it is the other way
around. Therefore, we modify our expansion to:
\begin{align}
  \widetilde{U} (r, \theta) & = \sum_{l = 0}^{+ \infty} (\widetilde{U}_l^Y (r) Y_{2
  l} (\theta) + \widetilde{U}_l^X (r) X_{2 l} (\theta)) + \sum_{l = 1}^{+ \infty}
  \widetilde{U}_l^Z (r) Z_{2 l - 1} (\theta),  \label{eq:Utld exp}\\
  \widetilde{v} (r, \theta) & = \sum_{l = 1}^{+ \infty} (\widetilde{v}_l^Y (r) Y_{2
  l - 1} (\theta) + \widetilde{v}_l^X (r) X_{2 l - 1} (\theta) + \widetilde{v}_l^Z (r)
  Z_{2 l} (\theta)) . \nonumber
\end{align}  We compute
the divergence of $\widetilde{U}$ by \eqref{eq:div u} as
\[ \tmop{div} \widetilde{U} = \sum_{l = 0}^{+ \infty} \left( \frac{\mathd}{\mathd
   r}  \widetilde{U}_l^Y + \frac{2}{r}  \widetilde{U}_l^Y - \frac{\mu_{2 l}}{r} 
   \widetilde{U}_l^X \right) S_l, \quad \mu_{2 l} = 2 l (2 l + 1) . \]
Therefore, the divergence-free condition is
\[ r \frac{\mathd}{\mathd r}  \widetilde{U}_l^Y + 2 \widetilde{U}_l^Y - \mu_{2 l} 
   \widetilde{U}_l^X = 0. \]
Under the previously introduced coordinate transformation, the divergence-free
condition becomes:
\begin{equation}
  \frac{1}{2} \sin 2 \beta \frac{\mathd}{\mathd \beta}  \widehat{U}_l^Y +
  \frac{1}{2}  (3 + \cos 2 \beta)  \widehat{U}_l^Y - \mu_{2 l}  \widehat{U}_l^X = 0,\quad \forall l\geqslant0.
  \label{eq:div sph cv 1}
\end{equation}
For each mode $l$, since both $\widehat{U}_l^Y$ terms are multiplied with $\sin 2 \beta$ or $\cos 2 \beta$, we need one more Fourier mode in $\beta$ for $\widehat{U}_l^X$ to make sure \eqref{eq:div sph cv 1} holds. In practice, we truncate the series in \eqref{eq:Utld exp} up to $L$ terms. Therefore, the divergence
free condition splits into $L$ equations relating the $\beta$ coefficient
functions $\widehat{U}_l^Y$ and $\widehat{U}_l^X$.

In order to expand the coefficient functions into trigonometric series, we first examine the boundary behavior. Specifically, we recall the boundary conditions in Section 
\ref{sec:sym} as $\widehat{U} (\beta = 0, \theta) = (0, 0, 0)$ at the origin. The far field behavior \eqref{boundary u} in Assumption \ref{asm:num} implies that
\[
    \widehat{U} = \frac{\widetilde{U}}{\cos \beta}  = (\frac{A(\theta)}{r} + O(\frac{1}{r^3})) / O((\frac{\pi}{2}-\beta)) = A(\theta) + O((\frac{\pi}{2}-\beta)^2),
\]
where we have used the change of variable $r=\tan \beta = O((\frac{\pi}{2}-\beta)^{-1})$. These conditions lead naturally to the following expansion of $\widehat{U}_l^Y
(\beta)$ and $\widehat{U}_l^X (\beta)$ as:
\begin{equation}
  \widehat{U}_l^Y (\beta) = \sum_{m = 1}^M \Beta^{U, Y}_{m, l} \sin ((2 m - 1)
  \beta), \quad \widehat{U}_l^X (\beta) = \sum_{m = 1}^{M + 1} \Beta^{U, X}_{m, l}
  \sin ((2 m - 1) \beta), \label{eq:U1 ansatz 1}
\end{equation}
since 
\[
 \sin ((2 m - 1) \beta = (-1)^{m-1} + O((\frac{\pi}{2}-\beta)^2).
\]
It should be noted, however, that this expansion lacks smoothness at the origin $\beta=0$. Substituting these into the divergence-free constraint \eqref{eq:div sph cv 1}
yields the algebraic relations:
\begin{equation}
  \Beta^{U, X}_{m, l} = - \frac{1}{\mu_{2 l}}  \left( \frac{m}{2} \Beta^{U,
  Y}_{m + 1, l} - \frac{3}{2} \Beta^{U, Y}_{m, l} - \frac{m - 1}{2} \Beta^{U,
  Y}_{m - 1, l} \right), \quad \forall m \geqslant 1, l \in \mathbb{Z}^+
  \label{eq:div sph cv 2}
\end{equation}
with boundary conditions: $\Beta^{U, Y}_{0, l} = \Beta^{U, Y}_{M + 1, l} =
\Beta^{U, Y}_{M + 2, l} = 0$. Now we have solved the coefficients  $\Beta^{U, X}$ from $\Beta^{U, Y}$ to enforce the divergence-free condition.

When $l = 0$, we have $\Beta^{U, Y}_{m, l} = 0$, and hence $\widehat{U}_l^Y
(\beta)$ becomes trivial. Moreover, since $X_0 (\theta) = 0$, the first term
$\widetilde{U}_0^Y (r) Y_0 (\theta) + \widetilde{U}_0^X (r) X_0 (\theta)$ vanishes.
Consequently, in the first series of \eqref{eq:Utld exp}, the summation
effectively starts from 1.

For the eigenvector $\overline{v}$, the boundary conditions are given by: $\overline{v}
(\beta = 0, \theta) = C (\cos \theta, - \sin \theta, 0)$ and $\overline{v} \left(
\beta = \frac{\pi}{2}, \theta \right) = (0, 0, 0)$. Consequently, only the
functions $\widehat{v}_1^X$ and $\widehat{v}_1^Y$ remain nonzero at $\beta = 0$. To
satisfy these boundary conditions, we adopt the following expansions:
\begin{equation*}
  \widehat{v}_1^Y (\beta) = \sum_{m = 1}^M \Beta^{v, Y}_{m, 1} \cos ((2 m - 1)
  \beta), \quad \widehat{v}_1^X (\beta) = \sum_{m = 1}^{M + 1} \Beta^{v, X}_{m, 1}
  \cos ((2 m - 1) \beta), \quad \text{for } l = 1, \label{eq:v ansatz 2.1}
\end{equation*}
and
\begin{equation*}
  \widehat{v}_l^Y (\beta) = \sum_{m = 1}^M \Beta^{v, Y}_{m, l} \sin (2 m \beta),
  \quad \widehat{v}_l^X (\beta) = \sum_{m = 1}^{M + 1} \Beta^{v, X}_{m, l} \sin (2
  m \beta) . \quad \text{for $l \geqslant 2$.} \label{eq:v1 ansatz 2}
\end{equation*}
The coefficients must satisfy the algebraic relations:
\begin{equation}
\begin{aligned}
  \Beta^{v, X}_{m, 1} & = - \frac{1}{\mu_{2 l - 1}}  \left( \frac{m}{2}
  \Beta^{v, Y}_{m + 1, 1} - \frac{3}{2} \Beta^{v, Y}_{m, 1} - \frac{m - 1}{2}
  \Beta^{v, Y}_{m - 1, 1} \right),\\
  \Beta^{v, X}_{m, l} & = - \frac{1}{\mu_{2 l - 1}}  \left( \frac{2 m + 1}{4}
  \Beta^{v, Y}_{m + 1, l} - \frac{3}{2} \Beta^{v, Y}_{m, l} - \frac{2 m -
  1}{4} \Beta^{v, Y}_{m - 1, l} \right), \quad \forall l \in \mathbb{Z}^+,
\end{aligned}\label{eq:v ansatz 2}\end{equation}
with boundary conditions: $\Beta^{v, Y}_{0, l} = \Beta^{v, Y}_{M + 1, l} =
\Beta^{v, Y}_{M + 2, l} = 0$. These algebraic constraints ensure strict divergence-freeness of $\overline{U}$ and
$\overline{v}$. From \eqref{eq:div sph cv 2} and \eqref{eq:v ansatz 2}, we can construct a divergence-free basis from any trigonometric basis of $U_r$ and $U_\phi$. For example, we take the coefficients 
$\Beta^{U, Y}, \Beta^{U, Z}$
as independent variables, and define 
$\Beta^{U, X}$
 by the divergence-free constraint \eqref{eq:div sph cv 2}. This defines a linear reconstruction map from the space of free coefficients into the truncated vector-field space. Its image is precisely the truncated exactly divergence-free subspace. Therefore, by applying this reconstruction map to the standard coordinate basis in the free coefficient space, we obtain a basis of the truncated exactly divergence-free subspace.
 In the next subsection, we demonstrate how to interpolate our
solutions onto this basis.

\subsection{Divergence-free interpolation}\label{sec:res cg}

In the preceding subsections, we obtained numerical approximations for the
profile $\overline{U}$ and the eigenvector $\overline{v}$ using the finite element
method. To ensure strict adherence to the divergence-free condition, we now
interpolate these solutions onto the carefully structured divergence-free
basis developed in the previous subsection.

We remark that $\overline{U}_{\theta}$, $\overline{U}_{r}$, $\overline{U}_{\phi}$ corresponds to the $X$, $Y$ and $Z$ family coefficients in the harmonic expansions respectively, and we enforce the divergence-free condition in a mode-by-mode fashion, to be precise, via the algebraic relations \eqref{eq:div sph cv 2} and \eqref{eq:v ansatz 2}.
We use $\overline{U}_{r}$ as an example. We select a computational mesh
defined by $\beta_j = \frac{j \pi}{2 M}, \theta_k = \frac{k \pi}{2 L}$, and
evaluate $\overline{U}_{r}$ at each grid point $(\beta_j, \theta_k)$. We apply
discrete Fourier transforms in $\theta$ to obtain expansions:
\[ \overline{U}_{r} (\beta_j, \theta) = \sum_{l = 1}^L \mathcal{F}_{\theta} 
   \overline{U}_{r}^l (\beta_j) \sin (2 l \theta), \quad j = 1, \ldots, M, \]
where $\mathcal{F}_{\theta}$ denotes the discrete Fourier transform with
respect to $\theta$. We then convert the resulting Fourier expansion in $\theta$ into the eigenfunction basis $Y_{2 l}$, which amounts to a finite change of basis in the angular variable
$\theta$. Therefore, we get the value of $\widehat{U}_l^Y$ at
each $\beta_j$. Applying another discrete Fourier transform in $\beta$, we
derive the coefficients $\Beta^{U, Y}_{m, l}$. Similarly we obtain $\Beta^{v, Y}_{m, l}$.
We explicitly enforce the divergence-free conditions by setting coefficients
$\Beta^{U, X}_{m, l}$ and $\Beta^{v, X}_{m, l}$ according to the derived
algebraic constraints \eqref{eq:div sph cv 2} and \eqref{eq:v ansatz 2}. In practice, we observe only small deviations from
the original values.

Typically, the initial interpolation yields residuals insufficiently small for
rigorous numerical analysis. To address this, we further refine our solutions.
Defining the residual:
\[ R_{\tmop{sph}}^U  (\overline{U}, \overline{P}) = - \frac{1}{2}  \overline{U} - \frac{1}{2}
   r \partial_r  \overline{U} + \overline{U} \cdot \nabla_{\tmop{sph}}  \overline{U} +
   \nabla_{\tmop{sph}}  \overline{P} - \Delta_{\tmop{sph}}  \overline{U} . \]
We want to find a correction $\delta \overline{U}, \delta \overline{P}$ to minimize
$\|R_{\tmop{sph}}^U (\overline{U} + \delta \overline{U}, \overline{P} + \delta \overline{P})\|$.
We approximate $\delta \overline{U}$ within the structured divergence-free basis as
follows:
\begin{equation}
  \delta \overline{U}_r (\beta, \theta) = \sum_{l = 1}^L \sum_{m = 1}^M \delta
  \Beta^{U, Y}_{ml} \sin ((2 m - 1) \beta) Y_{2 l} (\theta) . \label{eq:delta
  U ansatz}
\end{equation}
For the other entries, we adopt similar ansatz expansions. Motivated by
Newton's method, we linearize $R_{\tmop{sph}}^U$ around $(\overline{U}, \overline{P})$,
yielding the following equation:
\begin{equation}
  - \frac{1}{2} \delta \overline{U} - \frac{1}{2} r \partial_r \delta \overline{U} +
  \overline{U} \cdot \nabla_{\tmop{sph}} \delta \overline{U} + \delta \overline{U} \cdot
  \nabla_{\tmop{sph}}  \overline{U} + \nabla_{\tmop{sph}} \delta \overline{P} -
  \Delta_{\tmop{sph}} \delta \overline{U} = R_{\tmop{sph}}^U  (\overline{U}, \overline{P}),
  \label{eq:delta U eq}
\end{equation}
which cannot be solved precisely by the approximated $\delta \overline{U}$. Hence,
we solve the resulting optimization problem:
\begin{equation}
  \min_{\delta \overline{U}} \left| - \frac{1}{2} \delta \overline{U} - \frac{1}{2} r
  \partial_r \delta \overline{U} + \overline{U} \cdot \nabla_{\tmop{sph}} \delta \overline{U}
  + \delta \overline{U} \cdot \nabla_{\tmop{sph}}  \overline{U} + \nabla_{\tmop{sph}}
  \delta \overline{P} - \Delta_{\tmop{sph}} \delta \overline{U} - R_{\tmop{sph}}^U
  (\overline{U}, \overline{P}) \right| . \label{eq:min pb}
\end{equation}
The above optimization problem is essentially a least squares problem for the
coefficients of $\delta U$ and $\delta P$ and can be efficiently solved using
the conjugate gradient method without the need to construct the matrix
corresponding to the bilinear form. The updated profile $(\overline{U} + \delta
\overline{U}, \overline{P} + \delta \overline{P})$ exhibits a smaller residual. We iterate
this refinement process until the residual is sufficiently small for precise
numerical analysis. This iterative refinement method is similarly applied to
refine the eigenvector $\overline{v}$. Notice that by Assumption \ref{asm:num}, the
residue is orthogonal to $\overline{v}$ and $\lambda$ is uniquely determined given
$\overline{v}$, and we thus update $\overline{v}$ and $\overline{q}$ before updating
$\overline{\lambda}$.

\subsection{Summary}\label{sec:sum}

For clarity, we distinguish four stages of the numerical approximation.
We denote by $(\overline{U}_{n},\overline{P}_{n})$ the finite-element solution
computed on the bounded computational domain using $n$-steps of Newton iterations. After applying the
coordinate transformation described above, we obtain the whole-space
candidate $(\overline{U}^{\mathrm{tr}},\overline{P}^{\mathrm{tr}})$. Interpolating this field
into the truncated exactly divergence-free spectral representation
yields $(\overline{U}^{\mathrm{df}},\overline{P}^{\mathrm{df}})$. Finally, we use $(\overline{U}^{(k)},\overline{P}^{(k)})$ for the iteratively refined approximations produced by
the residual correction procedure at iteration $k$. The converged output of this residual correction procedure is denoted by $(\overline{U}^{\mathrm{cv}},\overline{P}^{\mathrm{cv}})$ and $(\overline{\lambda},\overline{v}^{\mathrm{cv}},\overline{q}^{\mathrm{cv}})$.

We summarize below the four-stage numerical pipeline, from the finite-element approximation on the bounded computational domain to the final corrected profile and associated eigenpair.

\begin{algorithm}[H]
  \SetAlgoLined
  \KwIn{Initial bound $\gamma_{\lambda}$ and boundary
    condition $A_0$}
  \KwOut{Profile $\overline{U}^{\text{cv}}$ and eigenpair $(\overline{\lambda}, \overline{v}^{\text{cv}})$}
    Construct a mesh on the computational domain $[0, R] \times \left[ 0,
      \frac{\pi}{2} \right]$;
      
    Assemble the weak form and obtain the corresponding matrices and
    tensors;
    
    Set initial bound $\gamma_U, \varepsilon$;
    
    Initialize iteration index $n = 0$ and eigenvalue $\lambda_0 = 2$;
    
    \While{$\lambda_n > - \gamma_{\lambda}$}{
        Update scaling factor $\kappa = \sqrt{1 + \gamma_U / 4}$ and adjust boundary condition $A_n$;
        
        \While{$n=0$ or $| \lambda_n - \lambda_{n - 1} | > \varepsilon$}{
            Solve equation \eqref{eq:weak NS sph} under boundary condition $A_n$ using Newton's method;

            Compute the smallest eigenvalue $\lambda_n$ of the linearized operator $\mathcal{L} (\overline{U}_n)$;

            Evaluate the gradient $\frac{\delta \lambda}{\delta U}$ using \eqref{eq:gradient lambda};

            Solve the projected gradient problem to determine descent direction;

            Update solution and boundary condition: $\overline{U}_n, A_n \to \overline{U}_{n+1}^{\ast}, A_{n+1}$; \tcp{Note $\overline{U}_{n+1}^{\ast}$ is not yet on the solution manifold and will be corrected in the next iteration of the loop.} 

            Increment the iteration index $n \leftarrow n + 1$;
        }
        Increase $\gamma_U$;
    }
    
    Construct a new mesh on the extended domain $\left[ 0, \frac{\pi}{2} \right]^2$;

    Transform the computed solution $\overline{U}_n$ onto the new mesh;
      
    Solve equation \eqref{eq:weak NS sph cv} using Newton's method: $\overline{U}_n \to \overline{U}^{\tmop{tr}}$;
    
    Compute eigenpair $(\overline{\lambda}, \overline{v}^{\tmop{tr}})$;
    
    Interpolate the solutions onto a divergence-free basis: $\overline{U}^{\tmop{tr}}, \overline{v}^{\tmop{tr}} \to \overline{U}^{\tmop{df}}, \overline{v}^{\tmop{df}}$;
    
    Initialize iteration index $k = 0$, with initialization $\overline{U}^{(0)}, \overline{v}^{(0)}=\overline{U}^{\tmop{df}}, \overline{v}^{\tmop{df}}$;
    
    Set the bound for residual $\varepsilon_{\tmop{res}}$;
    
    \While{$\| R_{\tmop{sph}}^U (\overline{U}^{(k)},
      \overline{P}^{(k)}) \| > \varepsilon_{\tmop{res}}$ or $\|
      R_{\tmop{sph}}^v (\overline{v}^{(k)},
      \overline{q}^{(k)}) \| > \varepsilon_{\tmop{res}}$}{
        Solve \eqref{eq:min pb} to obtain $(\delta
      \overline{U}^{(k)}, \delta \overline{P}^{(k)}$ and an analogous problem for $(\delta \overline{v}^{(k)}, \delta
      \overline{q}^{(k)})$;

      Update $\overline{U}^{(k+1)}, \overline{P}^{(k+1)}, \overline{v}^{(k+1)}, \overline{q}^{(k+1)} \leftarrow \overline{U}^{(k)} + \delta \overline{U}^{(k)}, \overline{P}^{(k)} + \delta \overline{P}^{(k)}, \overline{v}^{(k)} + \delta \overline{v}^{(k)}, \overline{q}^{(k)} + \delta \overline{q}^{(k)}$;

      Update $\overline{\lambda}^{(k+1)}$ based on orthogonality by Assumption \ref{asm:num};

      Increment the iteration index $k \leftarrow k + 1$;
    }
  \Return{Profile $\overline{U}^{\text{cv}}=\overline{U}^{(k)}$ and eigenpair $(\overline{\lambda}, \overline{v}^{\text{cv}})=(\overline{\lambda}^{(k)}, \overline{v}^{(k)})$}\;
  \caption{Gradient descent for eigenvalue}
\end{algorithm}

We also present our numerical results in cylindrical coordinates. 
Figure \ref{fig:U figs} shows the three components of $\overline{U}$ together with the streamlines, while Figure \ref{fig:v figs} displays the corresponding components of $\overline{v}$. Compared with the results of Guillod and Sverak  \cite{guillod2023numerical}, our profile $\overline{U}$ exhibits relatively larger components in the $\rho$- and $z$-directions and resembles a hyperbolic flow. The azimuthal component of the eigenvector $\overline{v}$ is comparatively small, indicating that the bifurcation primarily occurs in the $\rho$-$z$ plane, where it disrupts the hyperbolic structure. Figure \ref{fig:Uv figs} illustrates $\overline{U} + \sigma 
\overline{v}$ for $\sigma = 10, 20, 30, 50$, which can be interpreted as the flow at different times $\tau$ after the bifurcation has taken place. As $\sigma$ increases, the parity symmetry with respect to $z$ is broken, and small swirls emerge near the $\rho$-axis. In figure \ref{fig:omega figs}, we display the vorticity $\omega_\phi$ of $\overline{U}, \overline{v}$ and $\overline{U} + 10 \overline{v}$. The results show that the vorticity is concentrated along the $\rho$-axis, where $z=0$.

\begin{figure}[htbp]
  \centering
  \begin{subfigure}{0.24\textwidth}
    \includegraphics[width=\linewidth]{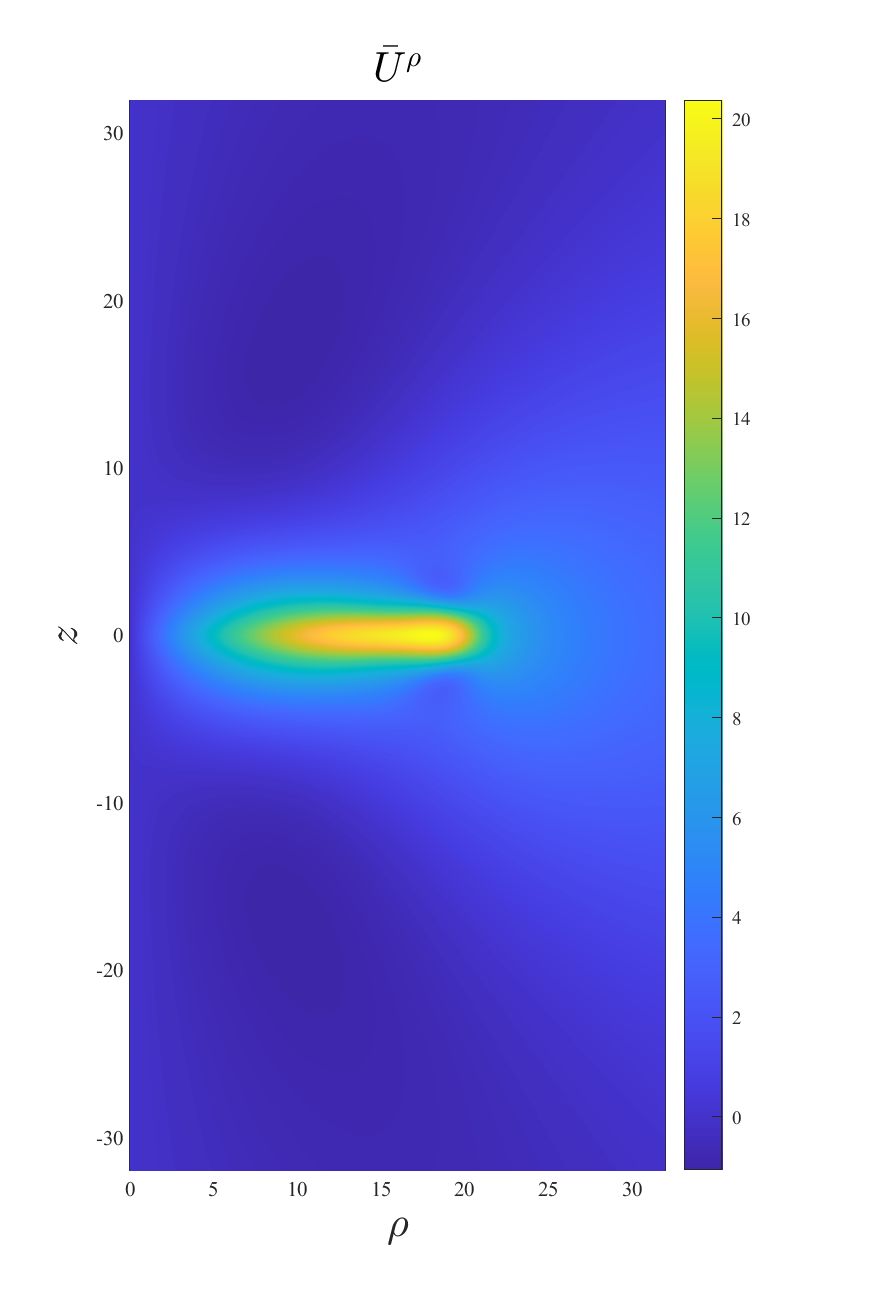}
    \caption{$\overline{U}^\rho$}
  \end{subfigure}
  \begin{subfigure}{0.24\textwidth}
    \includegraphics[width=\linewidth]{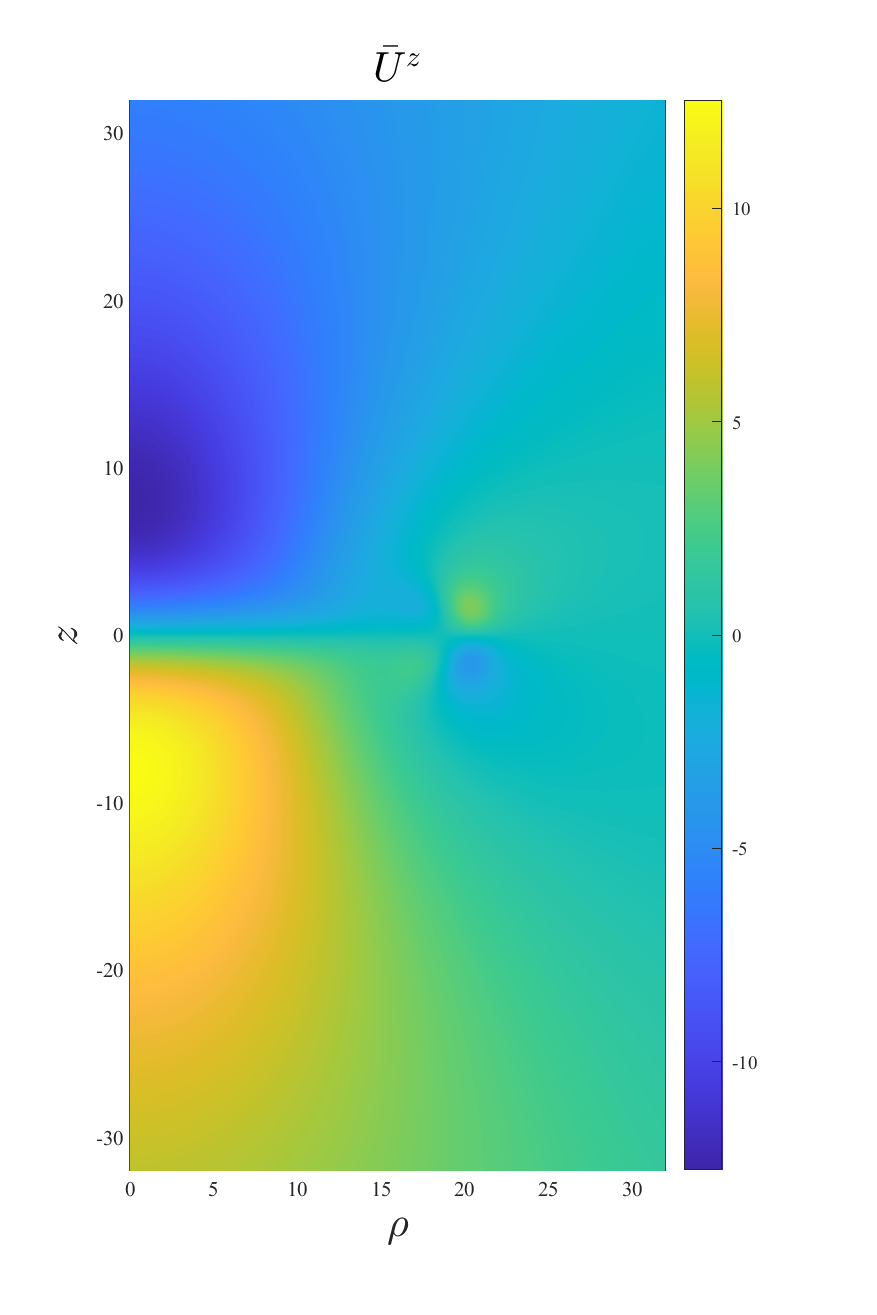}
    \caption{$\overline{U}^z$}
  \end{subfigure}
  \begin{subfigure}{0.24\textwidth}
    \includegraphics[width=\linewidth]{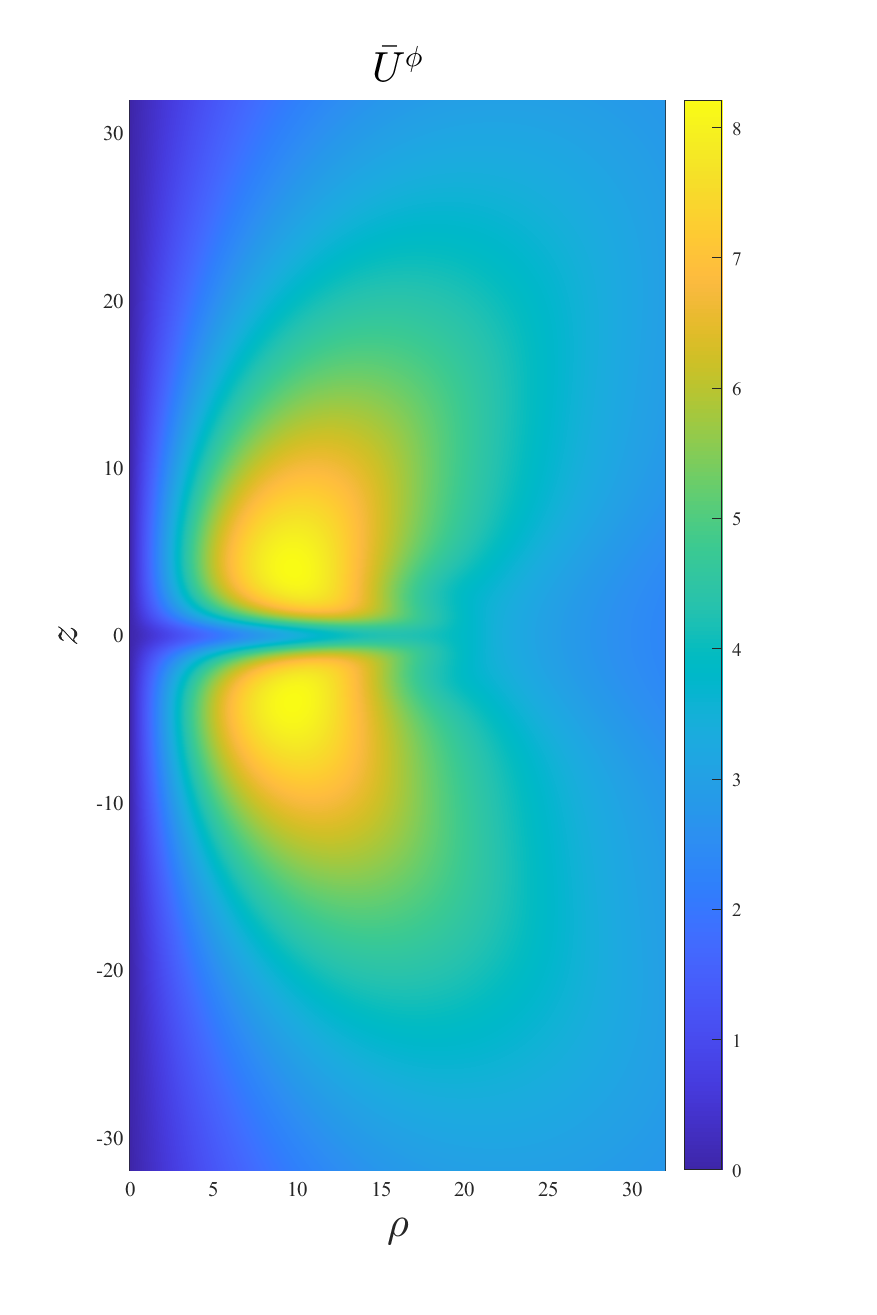}
    \caption{$\overline{U}^\phi$}
  \end{subfigure}
  \begin{subfigure}{0.24\textwidth}
    \includegraphics[width=\linewidth]{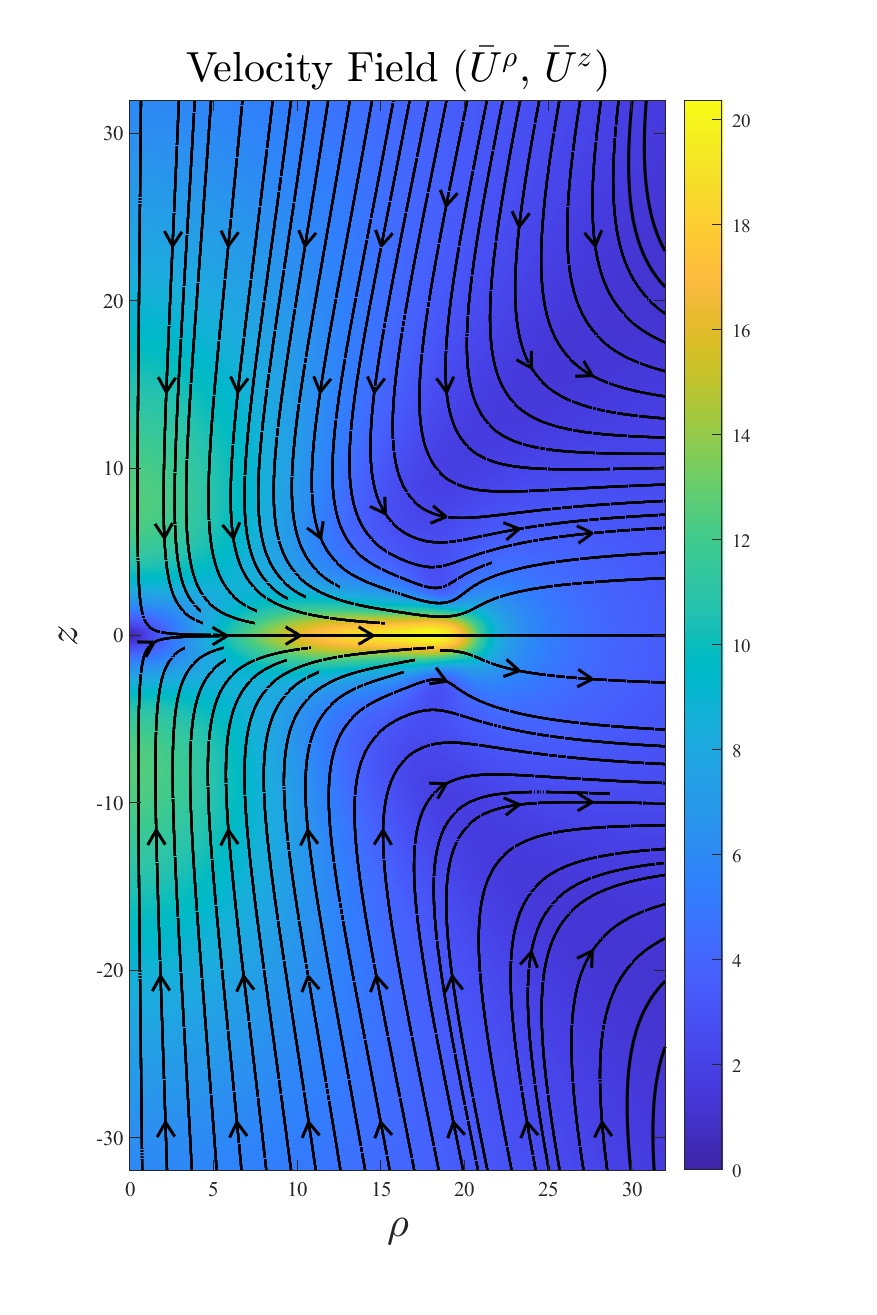}
    \caption{$\overline{U}$ on the $\rho-z$ plane}
  \end{subfigure}
  \caption{Figures of $\overline{U}$}
  \label{fig:U figs}
\end{figure}

\begin{figure}[htbp]
  \centering
  \begin{subfigure}{0.24\textwidth}
    \includegraphics[width=\linewidth]{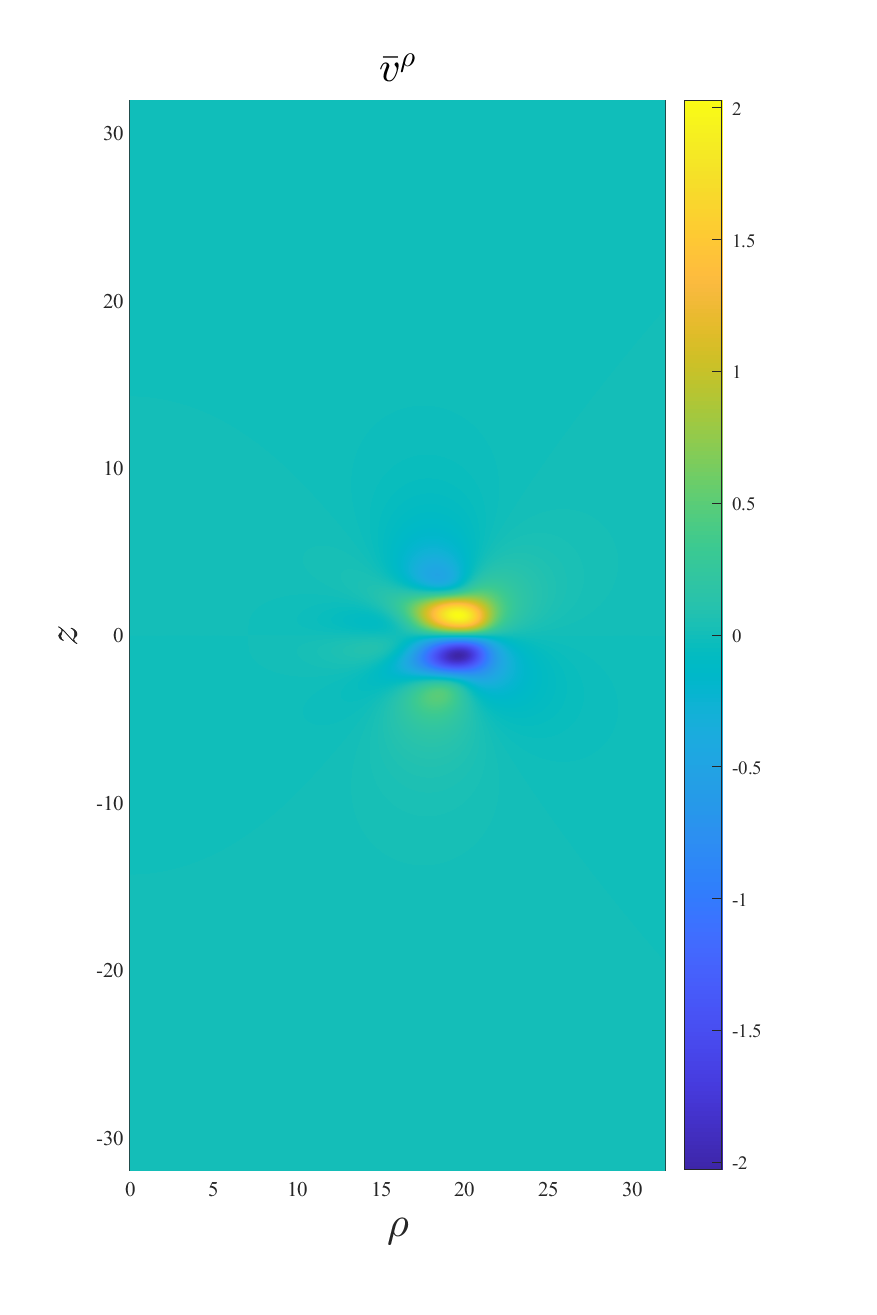}
    \caption{$\overline{v}^\rho$}
  \end{subfigure}
  \begin{subfigure}{0.24\textwidth}
    \includegraphics[width=\linewidth]{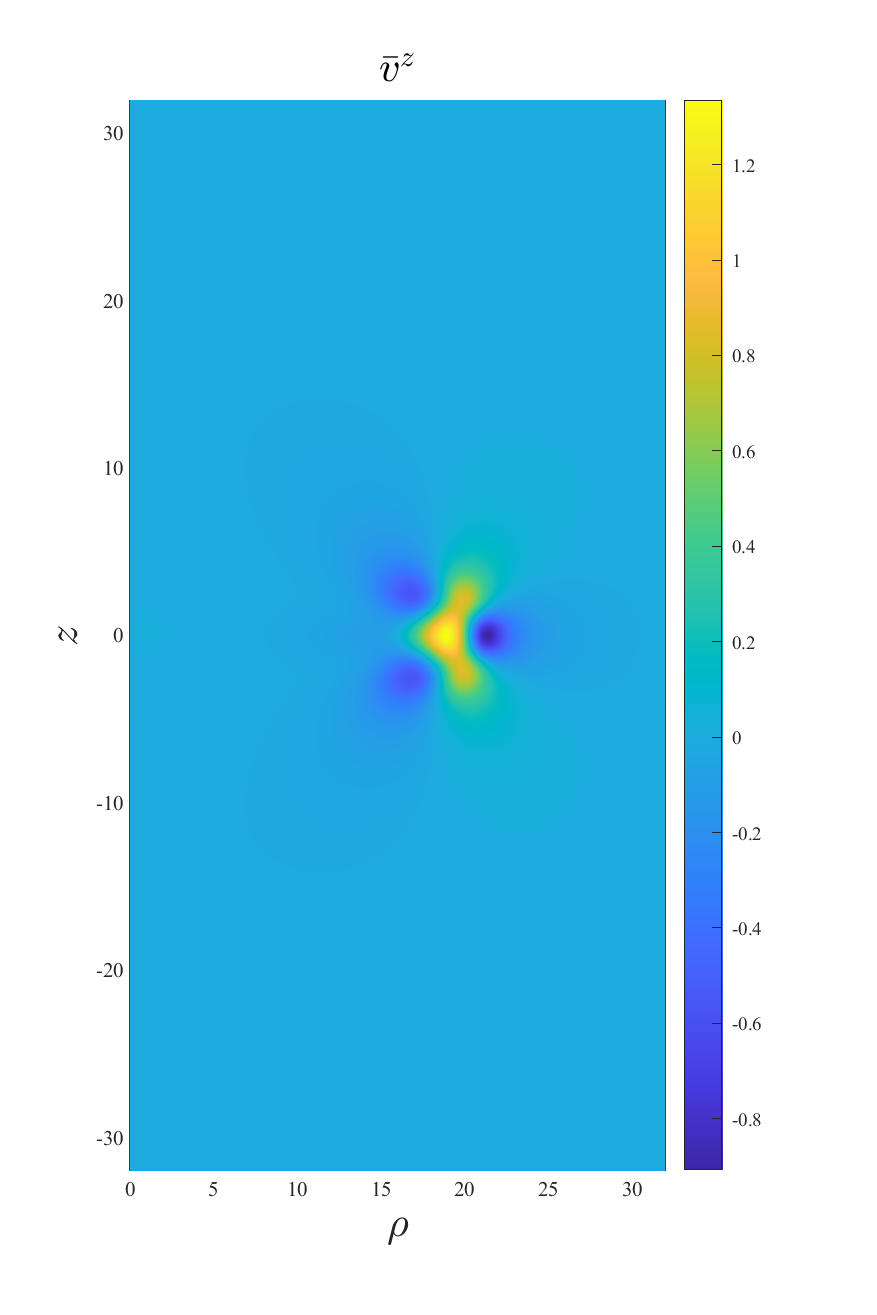}
    \caption{$\overline{v}^z$}
  \end{subfigure}
  \begin{subfigure}{0.24\textwidth}
    \includegraphics[width=\linewidth]{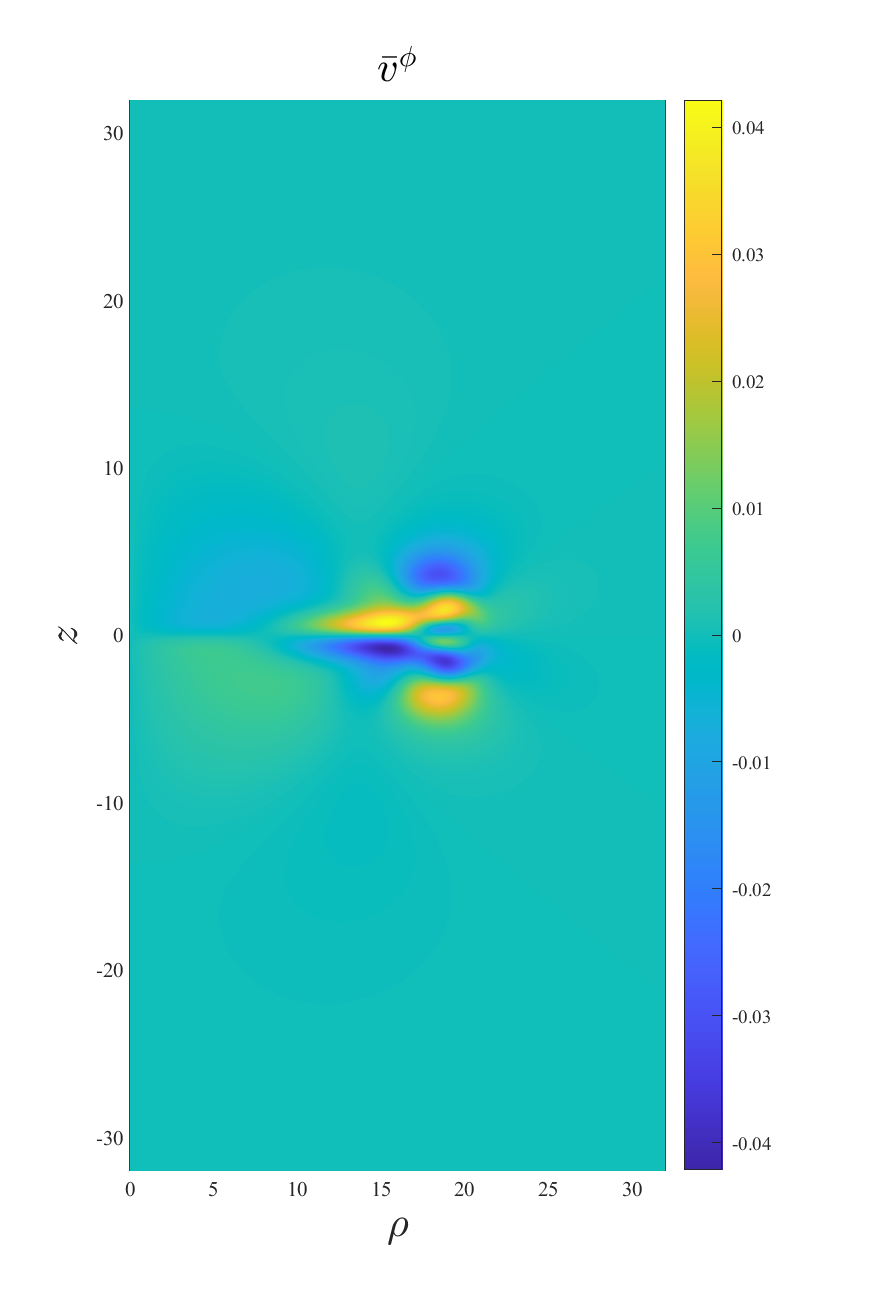}
    \caption{$\overline{v}^\phi$}
  \end{subfigure}
  \begin{subfigure}{0.24\textwidth}
    \includegraphics[width=\linewidth]{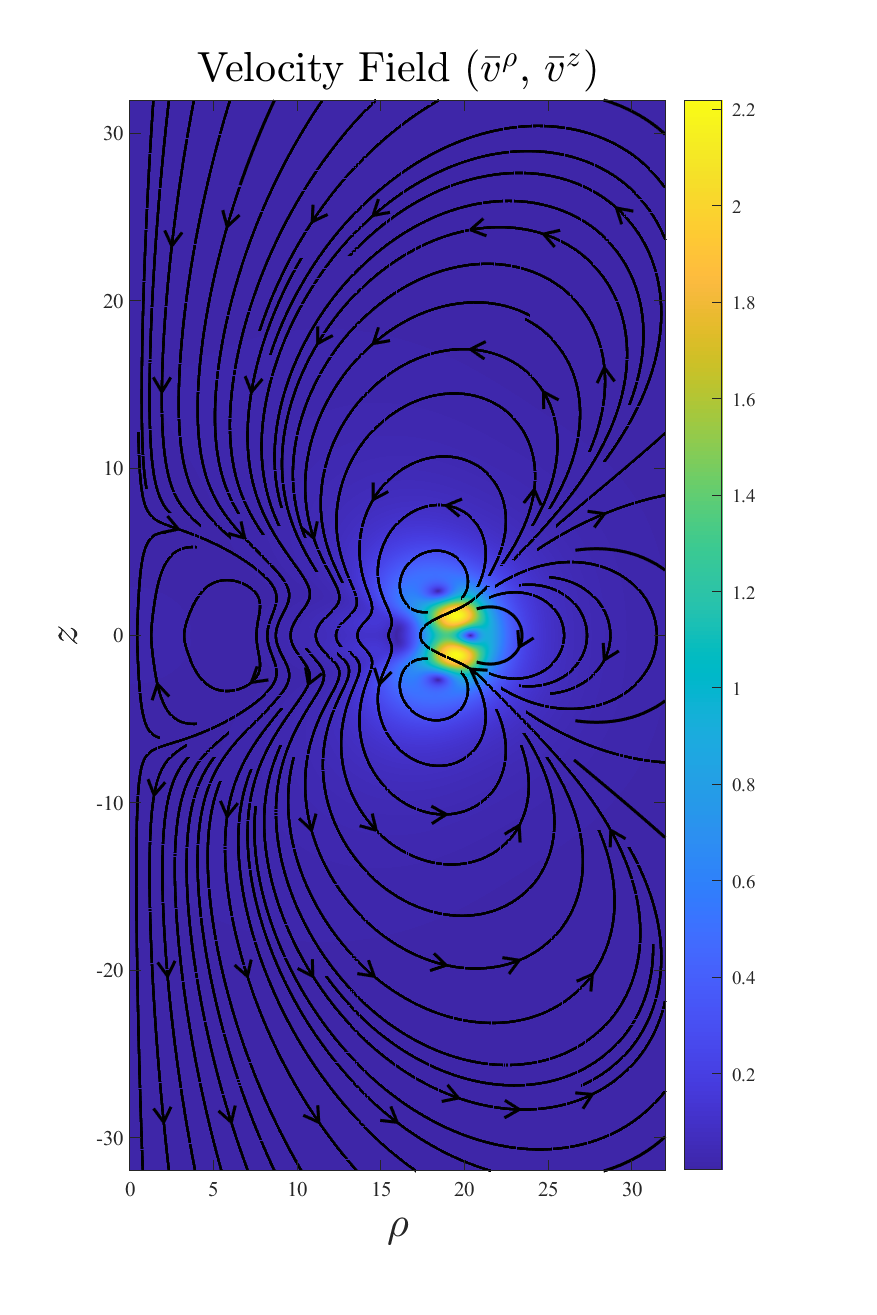}
    \caption{$\overline{v}$ on the $\rho-z$ plane}
  \end{subfigure}
  \caption{Figures of $\overline{v}$}
  \label{fig:v figs}
\end{figure}

\begin{figure}[htbp]
  \centering
  \begin{subfigure}{0.24\textwidth}
    \includegraphics[width=\linewidth]{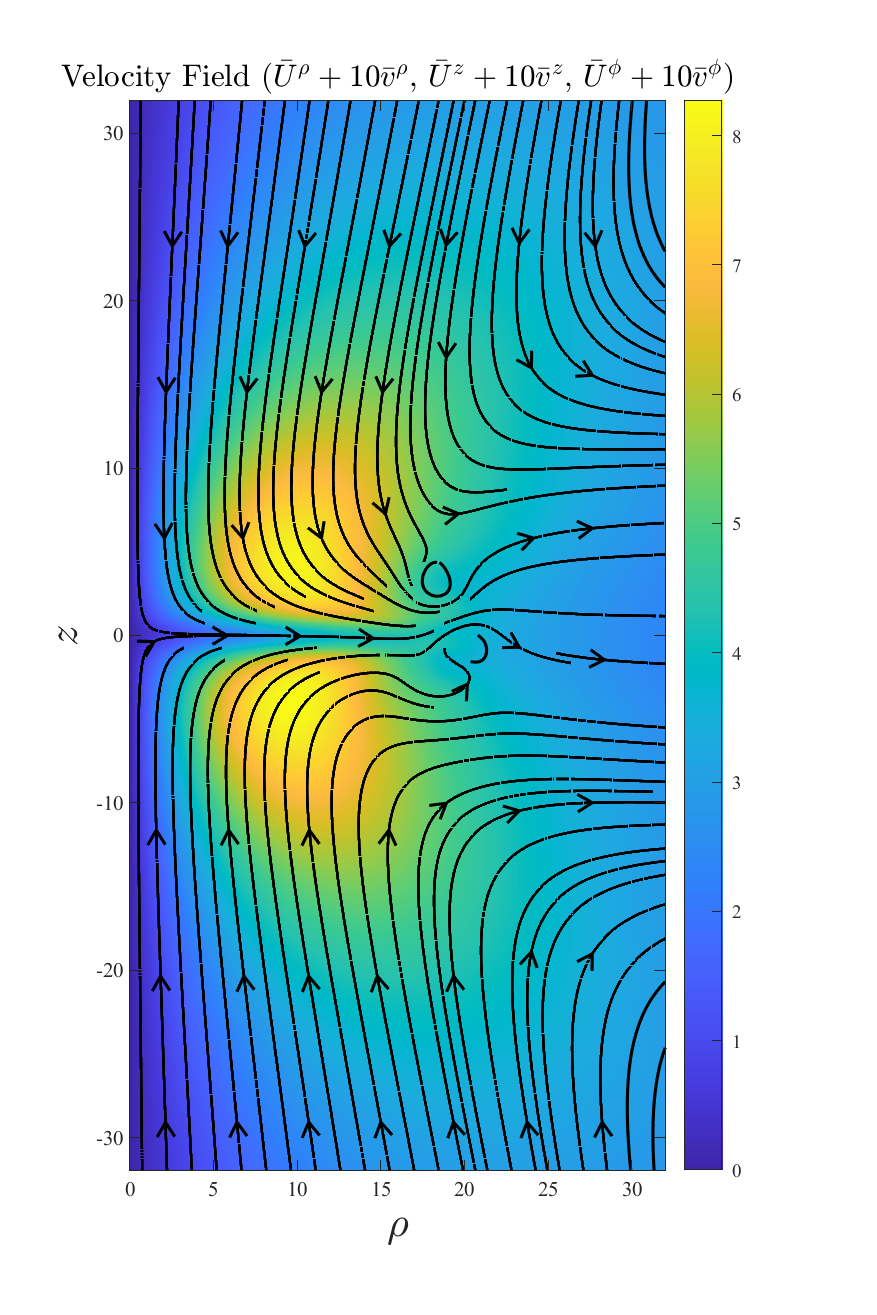}
    \caption{$\overline{U} + 10 \overline{v}$}
  \end{subfigure}
  \begin{subfigure}{0.24\textwidth}
    \includegraphics[width=\linewidth]{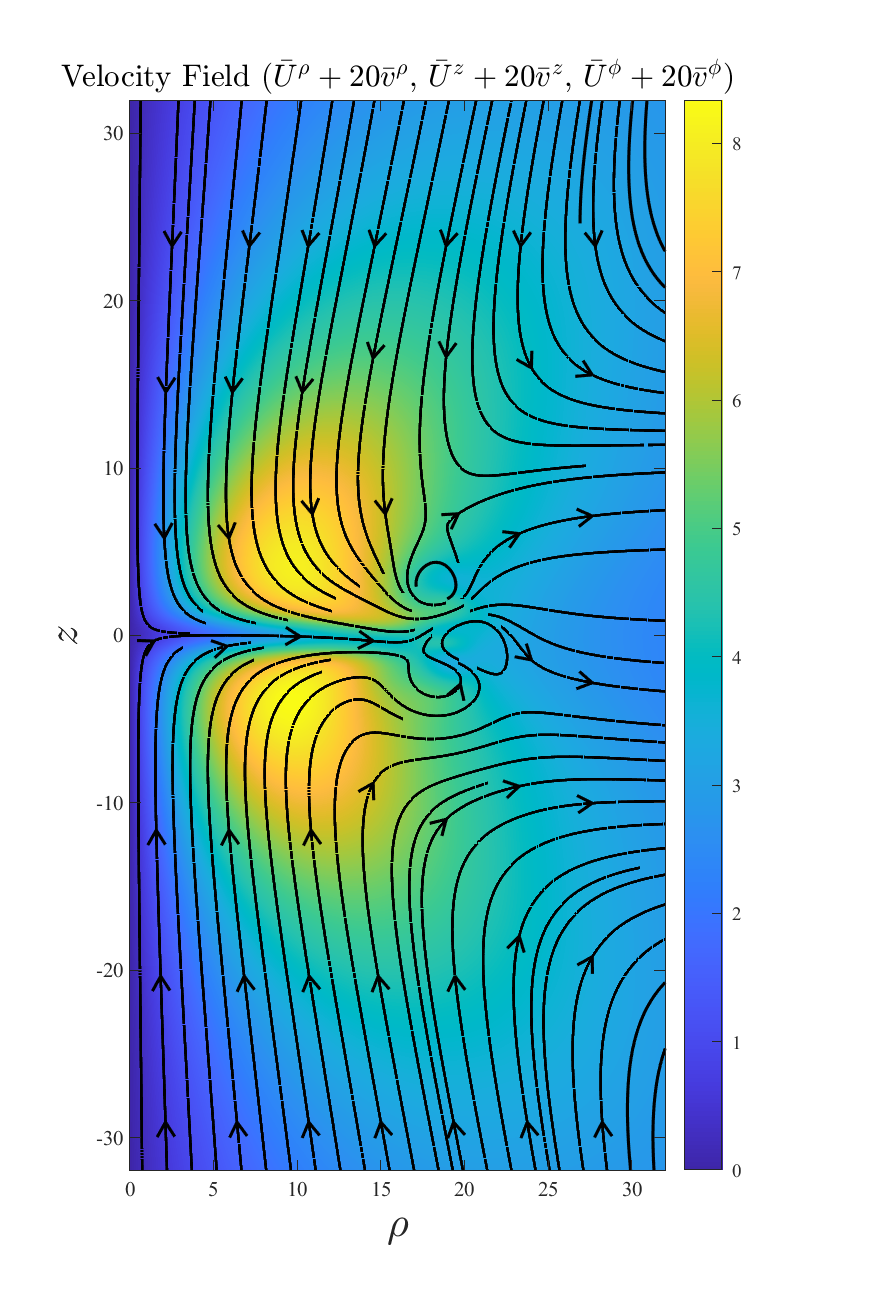}
    \caption{$\overline{U} + 20 \overline{v}$}
  \end{subfigure}
  \begin{subfigure}{0.24\textwidth}
    \includegraphics[width=\linewidth]{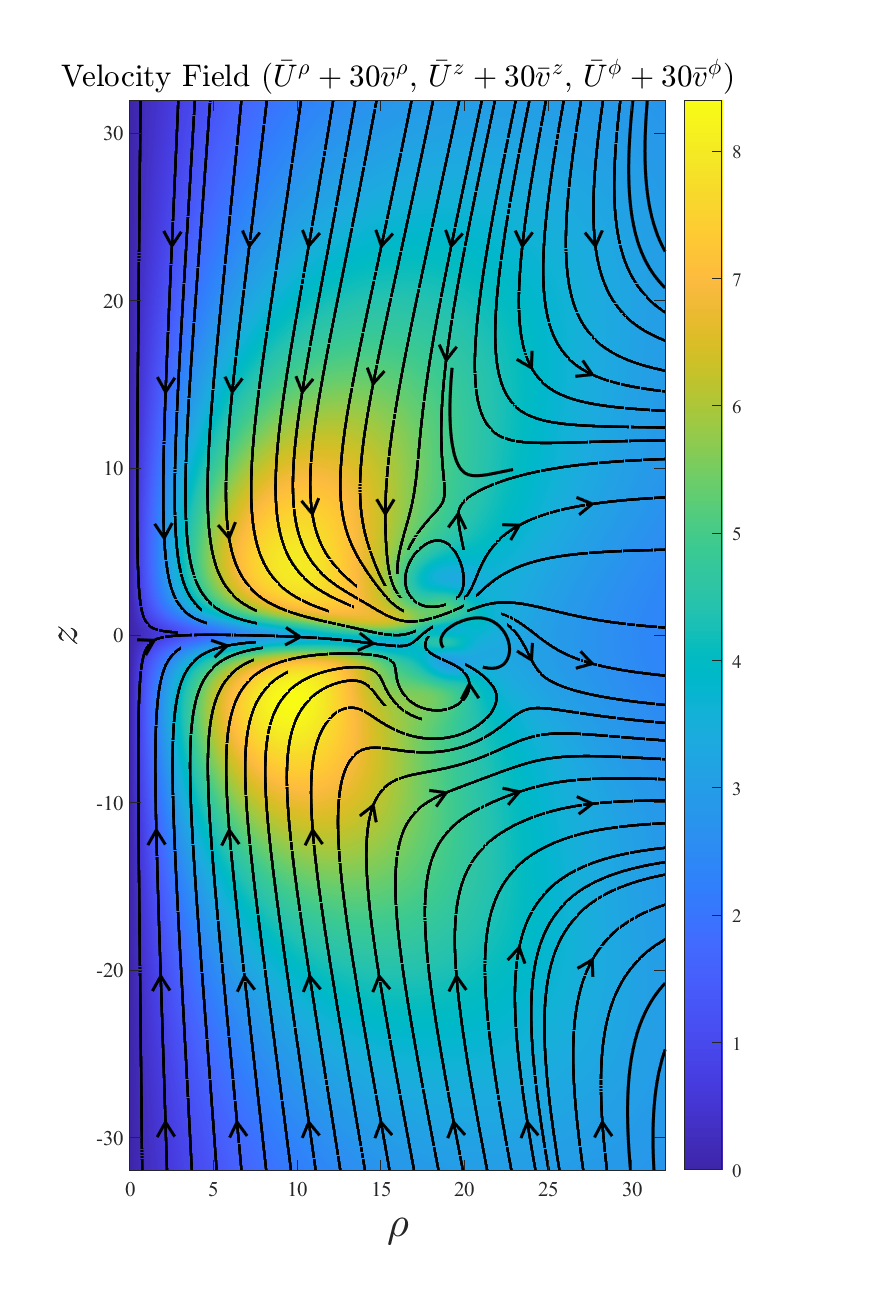}
    \caption{$\overline{U} + 30 \overline{v}$}
  \end{subfigure}
  \begin{subfigure}{0.24\textwidth}
    \includegraphics[width=\linewidth]{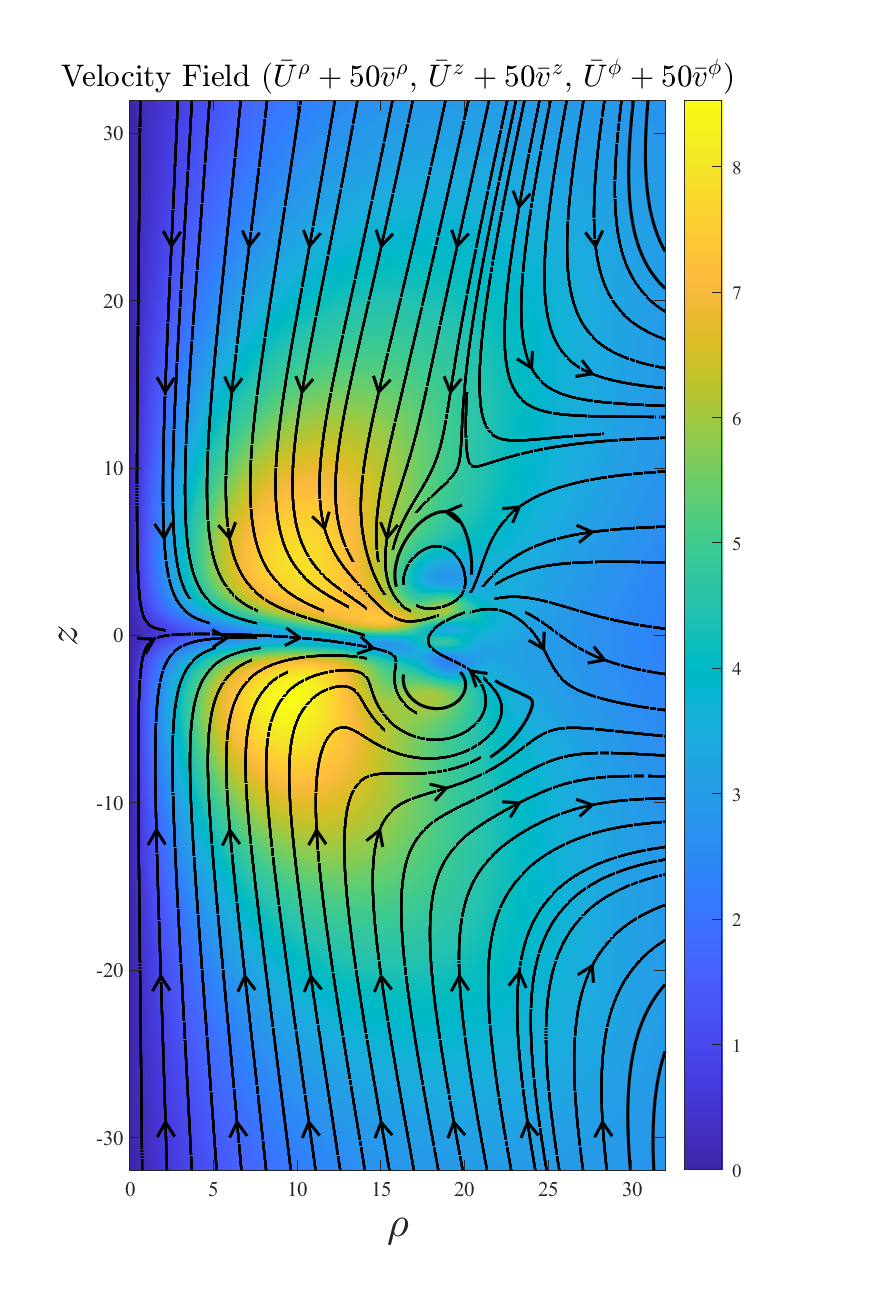}
    \caption{$\overline{U} + 50 \overline{v}$}
  \end{subfigure}
  \caption{Figures of $\overline{U} + \sigma \overline{v}$ with different $\sigma$}
  \label{fig:Uv figs}
\end{figure}

\begin{figure}[htbp]
  \centering
  \begin{subfigure}{0.24\textwidth}
    \includegraphics[width=\linewidth]{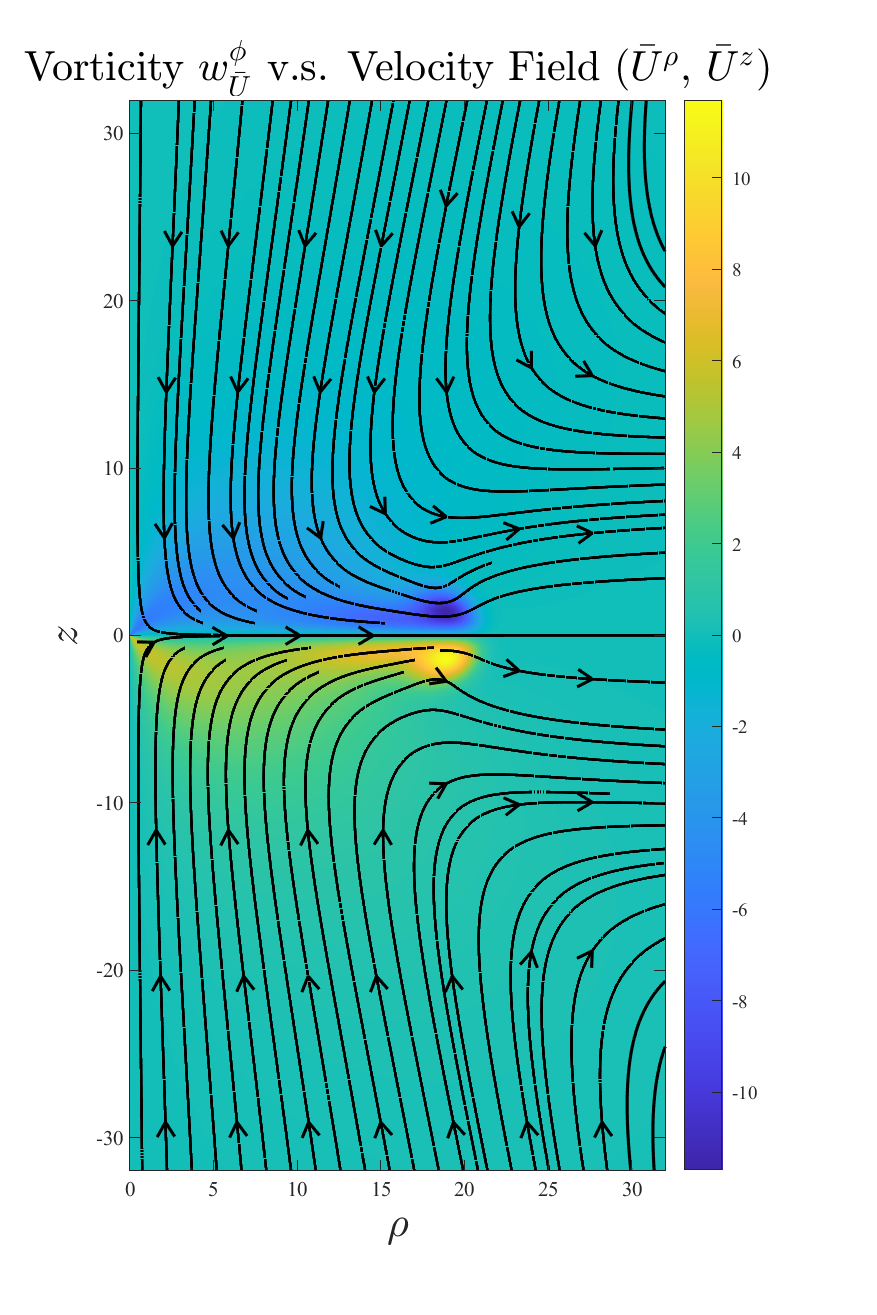}
    \caption{$\omega_\phi^U$}
  \end{subfigure}
  \begin{subfigure}{0.24\textwidth}
    \includegraphics[width=\linewidth]{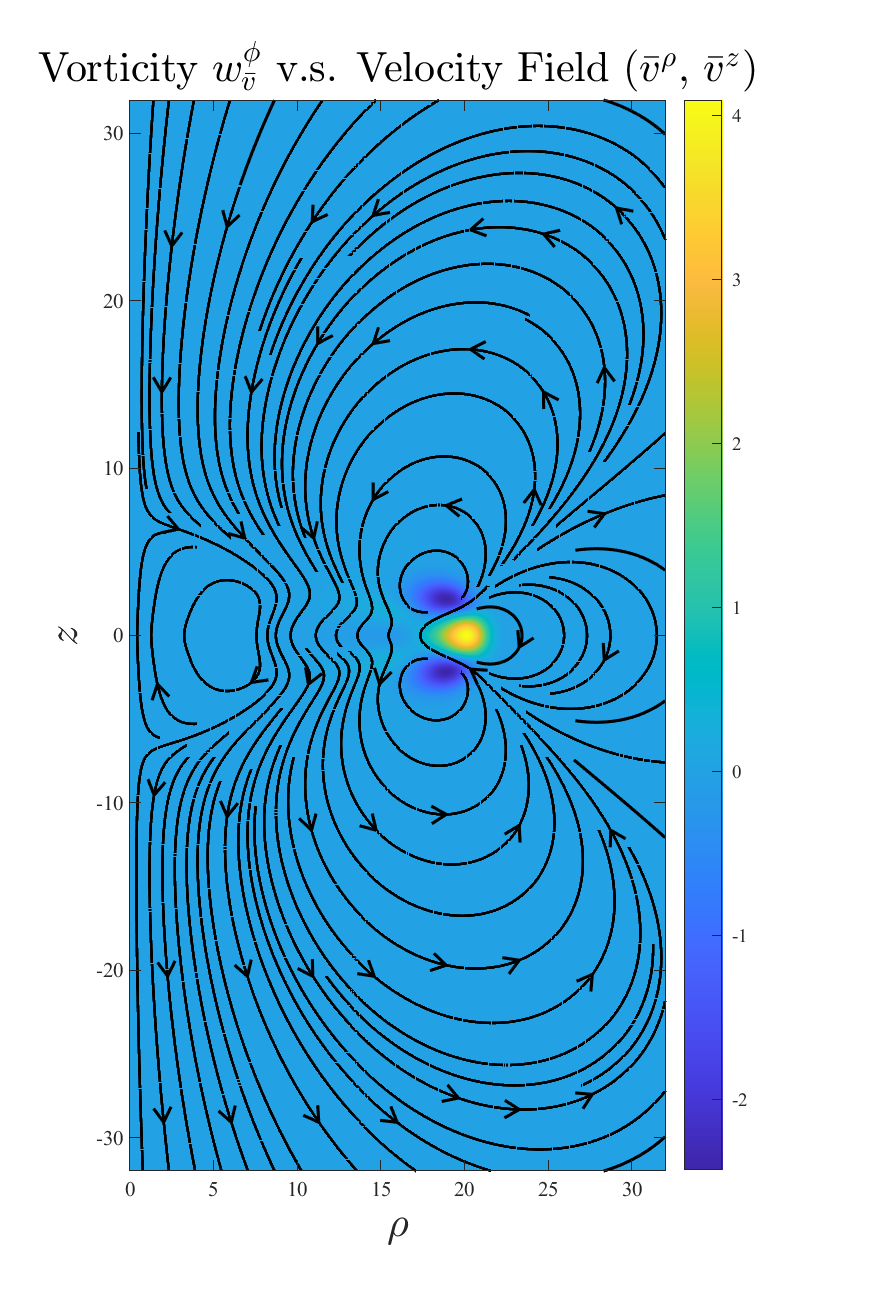}
    \caption{$\omega_\phi^v$}
  \end{subfigure}
  \begin{subfigure}{0.24\textwidth}
    \includegraphics[width=\linewidth]{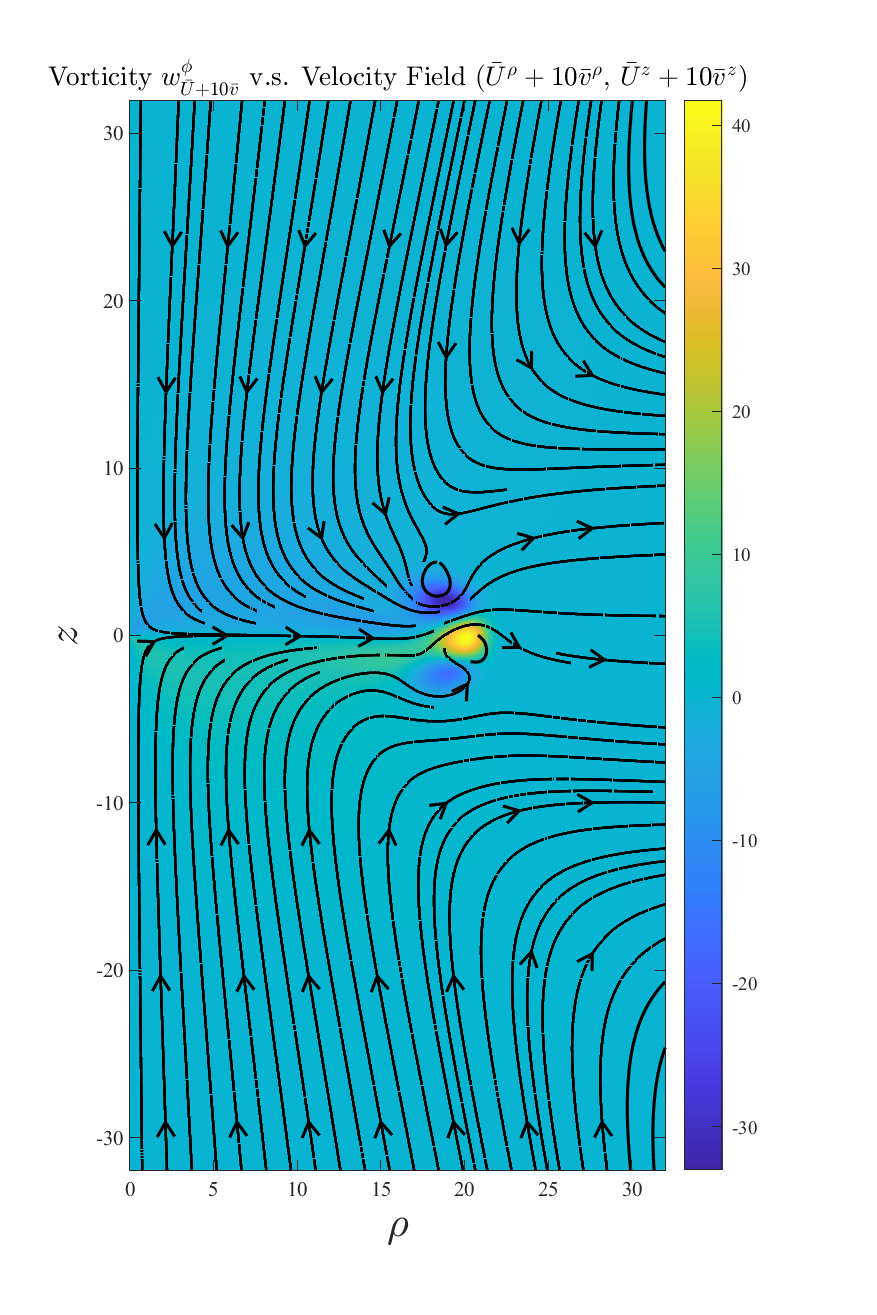}
    \caption{$\omega_\phi^U + 10 \omega_\phi^v$}
  \end{subfigure}
  \caption{Vorticity of $\overline{U}, \overline{v}$ and $\overline{U} + 10 \overline{v}$}
  \label{fig:omega figs}
\end{figure}

\section{Finite Rank Approximation}\label{Sec:numfr}

In this section, we construct the finite rank approximation for $L_2 = - \Pi
(U \cdot Q)$. Recall the decomposition of $\nabla \overline{U}$:
\[ Q_s = \frac{\nabla \overline{U} + \nabla^T \overline{U}}{2}, \quad Q_s +
   \eta_1 I = Q_{+} - Q_{-}, \quad Q_{+} \succeq 0, \quad Q_{-} \succeq 0. \]
Here, $Q_{-}$ is the negative part of $Q_s + \eta_1 I$. In previous sections, we
choose $Q = Q_{-}$. We truncate the domain $\Omega = [0, R_0] \times \left[ 0,
\frac{\pi}{2} \right]$ with $R_0 = 32$ and verify that outside $\Omega$, we
have $\lambda_{\max} (Q_s) < 0.199$. Thus, we set $\eta_1 = 0.199$ so that the
support of $Q_{-}$ is fully contained in $\Omega$.

Although it is natural to take $Q = Q_{-}$, this choice has the drawback that
$Q$ is not smooth, which complicates the future error analysis of numerical
integrals. Fortunately, we only need $Q$ to satisfy
\begin{equation}
  Q_s + \eta_1 I + Q \succeq 0. \label{eq:Q cond}
\end{equation}
Therefore, we design a smooth matrix function $Q (x)$ on $\Omega$ that meets
this condition. The algorithm is described in Section \ref{sec:def Q}. Next,
we construct an appropriate basis to approximate functions defined on $\Omega$
in Section \ref{subSec:numfr} and solve an eigenvalue problem to build
$L_{\tmop{ap}}$ in Section \ref{sec:Q eig}. Finally, in Section \ref{sec:inv
L2}, we solve
\[ L \phi_i = Q \psi_i, \]
to define the operator $G$.

\subsection{Construction of $Q$}\label{sec:def Q}

We first construct $Q$. Since $Q$ is a symmetric matrix function, it has 6
independent entries: $Q_{11}, Q_{12}, Q_{13}, Q_{22}, Q_{23}, Q_{33}$. Each
entry satisfies the same boundary conditions as the corresponding entry of
$Q_s$ on the three sides of the domain boundary:
\[ \theta = 0, \frac{\pi}{2} \infixand r = 0. \]
Specifically, if $Q_{s, i j} = 0$ on a particular part of the boundary (say,
$r = 0$), then we impose the Dirichlet boundary condition $Q_{i j} = 0$ on
that same boundary segment. On the remaining boundary segments, where $Q_{s, i
j}$ does not vanish, we instead impose the Neumann boundary condition
$\partial_n Q = 0$. Consequently, $Q$ satisfies:
\[ \begin{array}{l}
     \partial_r Q_{11} (0, \theta) = \partial_r Q_{12} (0, \theta) = Q_{13}
     (0, \theta) = \partial_r Q_{22} (0, \theta) = Q_{23} (0, \theta) = Q_{33}
     (0, \theta) = 0,\\
     \partial_{\theta} Q_{11} (r, 0) = Q_{12} (r, 0) = Q_{13} (r, 0) =
     \partial_{\theta} Q_{22} (r, 0) = Q_{23} (r, 0) = \partial_{\theta}
     Q_{33} (r, 0) = 0,\\
     \partial_{\theta} Q_{11} \left( r, \frac{\pi}{2} \right) = Q_{12} \left(
     r, \frac{\pi}{2} \right) = \partial_{\theta} Q_{13} \left( r,
     \frac{\pi}{2} \right) = \partial_{\theta} Q_{22} \left( r, \frac{\pi}{2}
     \right) = Q_{23} \left( r, \frac{\pi}{2} \right) = \partial_{\theta}
     Q_{33} \left( r, \frac{\pi}{2} \right) = 0.
   \end{array} \]
On $r = R_0$, we need to impose the Dirichlet boundary condition $Q = 0$ since
the support of $Q$ is fully contained in $\Omega$. Guided by these boundary
conditions, we select the corresponding trigonometric basis to approximate
each entry. For instance, $Q_{11}$ is approximated by
\[ Q_{11} = \sum_{l = 1}^L \sum_{m = 1}^M q_{11, l m} \cos \left( \frac{(2 m +
   1) \pi r}{2 R_0} \right) \cos (2 l \theta) . \]
A na{\"i}ve approach would be to apply the discrete Fourier transform to $Q_{-}$
and truncate the series. However, this does not ensure $Q_s + \eta_1 I + Q
\succeq 0$. To address this, we first select a slightly smaller $\eta_0 =
0.19$ and  reconstruct $Q$ using the
same idea of a truncation of Fourier series. As long as $L, M$ are sufficiently large, we obtain \eqref{eq:Q
cond}.

This method, however, typically requires very large $L, M$, which is
undesirable. To improve efficiency, we fix $L = 150, M = 300$ and use an iterative method. We initialize
$Q_0 = 0$ with $\eta_0 = 0.19$. For every iteration $Q_i$, we calculate
\[ \delta Q_{-, i} \assign \text{negative part of}  \,(Q_s + \eta_0 I + Q_i) .
\]
If $\| \lambda_{\max} (\delta Q_{-, i}) \|_{L^{\infty}} < \delta_{\eta}
\assign \eta_1 - \eta_0$, we terminate the iteration.

Otherwise, we approximate $\delta Q_{-, i}$ by a trigonometric polynomial,
denoted $\delta Q_i$. Specifically, to guarantee that $\delta Q_i$ is positive, we take $\delta Q_i$ to be the Fej{\'e}r
transform of $\delta Q_{-, i}$ in both $r$ and $\theta$, which is equivalent
to convolving with the kernel:
\[ K_N (x) = \frac{1}{N} \left( \frac{1 - \cos (N x)}{1 - \cos (x)} \right) .
\]
This algorithm is particularly easy to implement because convolution acts as a
Fourier multiplier. Thus, it suffices to compute the Fourier transform of the
kernel, whose coefficients are given by
\[ \mathcal{F}_x K_N^j = \left( 1 - \frac{| j |}{N} \right), \quad | j | < N,
\]
and vanish for $| j | \geqslant N$. We then update $Q_i$ by
\[ Q_{i + 1} = Q_i + \delta Q_i . \]
Finally, we can obtain a smooth function $Q$ satisfying \eqref{eq:Q cond} within 5 iterations.

\subsection{Construction of spectral basis}\label{subSec:numfr}

Since the $Q$-norm $\| \cdummy \|_Q$ is controlled by the standard $L^2$-norm,
we begin by considering the following eigenvalue problem in the Rayleigh quotient
form:
\begin{equation}
  \lambda_n^s = \sup_{\dim (V_n) = n - 1} \inf_{u \neq 0, u \in V, u \bot V_n}
  \frac{\| \nabla u\|^2_{\Omega}}{\|u\|^2_{\Omega}}, \label{rayleigh}
\end{equation}
where $V$ is the space of admissible functions, namely divergence-free
axisymmetric functions. We obtain a sequence of $\lambda_n^s$ in ascending
order with associated eigenfunctions $\psi_n^s$. These eigenfunctions will
constitute our spectral basis.

The solutions to this eigenvalue problem admit explicit representations in
terms of Bessel functions and spherical harmonics, as established in the
following lemma.

\begin{lemma}
  \label{lem:lap eig}Consider the eigenvalue problem \eqref{rayleigh} with
  $\Omega$ being the unit ball in 3D. The first eigenpair is
  \[ \lambda_0^s = 0, \quad \psi_0^s = (\cos \theta, - \sin \theta, 0) . \]
  For $s \geqslant 1$, the remaining eigenvalues can be written as
  \[ \lambda_n^s = (\alpha_{l, m}^i)^2, \quad l \geqslant 1, \quad m \geqslant
     1, \quad i = Y \infixor Z. \]
  {\tmstrong{Case 1: $i = Z$}}
  
  Here, $\alpha_{l, m}^Z > 0$ denotes the m-th zero of the equation for $r$:
  \begin{equation}
    rJ_{l + \frac{1}{2}}' (r) - \frac{1}{2} J_{l + \frac{1}{2}} (r) = 0,
    \label{eq:alphaZ}
  \end{equation}
  where $J_{\nu}$ is the Bessel function of the first kind. The corresponding
  eigenfunction is
  \[ \psi_n^s = \psi_{ml}^Z = \frac{J_{l + \frac{1}{2}}  (\alpha^Z_{ml}
     r)}{\sqrt{r}} Z_l (\theta) . \]
  {\tmstrong{Case 2: $i = Y$}}
  
  Here, $\alpha_{l, m}^Y > 0$ denotes the m-th zero of the equation for $r$:
  \begin{equation}
    (r^2 - l (l - 1) (l + 2)) r \tilde{j}_l' (r) + (r^2 - l (l - 1))  (r^2 -
    (l - 1) (l + 2))  \tilde{j}_l (r) = 0, \label{eq:alphaY}
  \end{equation}
  where
  \[ \tilde{j}_l (r) \assign \frac{J_{l + \frac{1}{2}} (r)}{r^{\frac{3}{2}}} .
  \]
  The corresponding eigenfunction is
  \[ \psi_n^s = \psi_{ml}^Y = (B_{ml}  \tilde{j}_l (\alpha^Y_{ml} r) + lA_{ml}
     r^{l - 1}) Y_l (\theta) + \left( \frac{B_l}{\mu_l} (r \alpha^Y_{ml} 
     \tilde{j}_l' (\alpha^Y_{ml} r) + 2 \tilde{j}_l (\alpha^Y_{ml} r)) + A_l
     r^{l - 1} \right) X_l (\theta), \]
  with
  \[ A_{ml} = - ((\alpha^Y_{ml})^2 - (l - 1) (l + 2))  \tilde{j}_l
     (\alpha^Y_{ml}), \quad B_{ml} = (\alpha^Y_{ml})^2 - l (l - 1)  (l + 2) .
  \]
\end{lemma}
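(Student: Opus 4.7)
The plan is to reduce the vector Rayleigh quotient to a family of scalar radial ODEs via vector spherical harmonics. The minimizers of \eqref{rayleigh} satisfy the Stokes-type eigenvalue system $-\Delta u = \lambda u + \nabla p$, $\operatorname{div} u = 0$ on $\Omega$, with the natural boundary condition $(\partial_n u - p\,\hat n)|_{\partial\Omega}=0$ inherited from the unconstrained weak formulation (no Dirichlet condition is imposed in the definition of $V$). I would then expand $u$ in the axisymmetric ($m=0$) vector spherical basis $\{Y_l,X_l,Z_l\}$ as in \eqref{eq:Utld exp}, so that the system decouples into the toroidal $Z_l$-sector (automatically divergence-free) and the poloidal $(Y_l,X_l)$-sector, with each $l$ handled independently. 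Completeness of the listed eigenfunctions follows from completeness of axisymmetric vector spherical harmonics on the sphere together with completeness of the resulting radial eigenfunctions on $[0,1]$.

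For the toroidal sector, the pressure term drops out since $Z_l$ is $L^2$-orthogonal to gradients, and the vector Laplacian on $f(r)Z_l$ reduces to the scalar spherical-harmonic Laplacian at degree $l$. The radial equation $f''+\tfrac{2}{r}f'+\bigl(\lambda-\tfrac{l(l+1)}{r^2}\bigr)f=0$ has a unique regular solution at the origin proportional to $J_{l+1/2}(\sqrt\lambda r)/\sqrt r$, giving the stated $\psi^Z_{ml}$. The natural boundary condition reduces to $\partial_r f(1)=0$, and direct differentiation of $J_{l+1/2}(\alpha r)/\sqrt r$ at $r=1$ yields exactly \eqref{eq:alphaZ}.

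For the poloidal sector, I would use $\omega:=\nabla\times u$, which is purely toroidal, $\omega = h(r) Z_l$. Taking curl of the PDE gives $-\Delta\omega = \lambda \omega$, so $h$ solves the same Bessel equation as above with eigenvalue $\lambda=(\alpha^Y_{ml})^2$. Inverting the curl recovers $u$ up to the kernel of $\nabla\times$ on divergence-free fields, which within the $(Y_l,X_l)$ sector is spanned by the single harmonic polynomial mode $\propto r^{l-1}$. This produces the stated two-parameter ansatz with coefficients $A_{ml}$ (harmonic part) and $B_{ml}$ (Bessel part), consistent with the divergence constraint $ru_l^{Y,\prime}+2u_l^Y=\mu_l u_l^X$ derived from \eqref{eq:div sph cv 1}. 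The two natural boundary conditions at $r=1$---tangential stress-free and normal stress balancing the pressure---form a homogeneous $2\times 2$ linear system in $(A_{ml},B_{ml})$; the vanishing of its determinant, once the Bessel recurrence $\tilde j_l'$-relations are applied, collapses precisely to \eqref{eq:alphaY}, and the associated null vector reads off the stated formulas for $A_{ml},B_{ml}$.

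Finally, the zero mode: $\lambda_0^s=0$ forces $\|\nabla u\|_{L^2}=0$, so $u$ must be a constant vector field. Axisymmetry singles out $u\propto \hat z$, whose spherical-frame components are $(\cos\theta,-\sin\theta,0)$; this sits in the poloidal sector at $l=1$ with $B=0$. The principal obstacle I foresee is the poloidal analysis: carefully translating the natural boundary condition $(\partial_n u - p\hat n)|_{r=1}=0$ into two scalar relations in the rotating spherical frame where the basis vectors depend on $\theta$, combining them with the Bessel--polynomial ansatz, and using Bessel recurrences $(r\tilde j_l)'$ and $(r^2\tilde j_l)''$ to simplify the $2\times 2$ determinant into the stated compact form \eqref{eq:alphaY}. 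The $Z$-sector and the $\lambda=0$ case are essentially routine once the framework is in place.
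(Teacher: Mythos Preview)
Your proposal is correct and follows essentially the same architecture as the paper: derive the Stokes eigensystem $-\Delta u+\nabla p=\lambda u$, $\operatorname{div}u=0$ with the natural boundary condition $\partial_r u-p\,e_r=0$ from the Rayleigh quotient, then decouple via axisymmetric vector spherical harmonics into the toroidal $Z_l$-sector and the poloidal $(Y_l,X_l)$-sector. The $Z$-sector and the $\lambda=0$ mode are handled identically.

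The one substantive difference is in the poloidal sector. The paper first observes $\Delta p=0$, so $p_l(r)=C_l r^l$, then uses the divergence constraint \eqref{eq:div u} to eliminate $u_l^X$ and obtain a single inhomogeneous radial ODE for $u_l^Y$; its particular solution is the polynomial $lA_l r^{l-1}$ (with $C_l=\lambda A_l$) and the homogeneous solution is $B_l\tilde j_l(kr)$. You instead pass to the vorticity $\omega=\nabla\times u$, which is toroidal and satisfies the pressure-free equation $-\Delta\omega=\lambda\omega$, then recover $u$ as $\lambda^{-1}\operatorname{curl}\omega$ (the Bessel part, automatically divergence-free with zero pressure) plus the curl-free divergence-free kernel $\nabla(r^lS_l)$ (the polynomial part, which carries all the pressure). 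Both routes produce the same two-parameter family $(A_{ml},B_{ml})$, and from that point the boundary-condition computation leading to \eqref{eq:alphaY} is the same. Your vorticity approach is slightly more conceptual and explains structurally why the polynomial mode appears; the paper's direct substitution is more mechanical but avoids having to justify the curl-inversion step.
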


The proof of the lemma is given in Appendix \ref{eig cons}.

\begin{remark}[Scaling and Parity]
  In our setting, $\Omega = B_0 (R_0)$ with $R_0 = 32$. The eigenvalues and
  eigenfunctions can be obtained by rescaling with respect to $R_0$.
  Specifically, the eigenvalues are $\frac{\lambda^s_n}{R_0^2}$ and the
  eigenvectors are $\psi_n^s \left( \frac{r}{R_0}, \theta \right)$. Recall
  that, since $U$ and $v$ also satisfy different boundary conditions, in the
  vector spherical harmonics expansion as in Section \ref{sec:div-free}, only
  even $l$ or only odd $l$ appear. Hence, we retain only eigenpairs of certain
  parities in the construction. We denote by $\lambda_k^{s, U}$ the $k$-th
  smallest eigenvalue among all $\frac{\lambda^s_n}{R_0^2}$ with even
  eigenfunctions $\psi_k^{s, U} = \psi_n^s \left( \frac{r}{R_0}, \theta
  \right)$, and similarly define $\lambda_k^{s, v}$ and $\psi_k^{s, v}$.
\end{remark}

Let $V_N^i$ denote the space spanned by the first $N$ eigenvectors $\psi_n^{s,
i}$ for $i = U, v$ respectively. We define the projection $I^i_N$ as
\begin{equation}
    I^i_N \psi = \sum_{k = 1}^N \langle \psi, \psi_k^{s, i} \rangle \psi_k^{s,
   i} . \label{eq:I def}
\end{equation}
It is both the $L^2$ and $H^1$ projection onto $V_N^i$. By \eqref{rayleigh},
it satisfies
\[ \| \psi - I^i_N \psi \|_{L^2} \leqslant \frac{1}{\lambda_{N + 1}^{c, i}} \|
   \psi \|_{H^1}, \quad \| \psi - I^i_N \psi \|_{L^2} \leqslant \| \psi
   \|_{L^2}, \quad \forall \psi \in V. \]
Recall the constant $K_5^i$ from \eqref{k5}, we can set
\[ K_5^i = \frac{\| \lambda_{\max} (Q)\|_{\infty}}{\eta_2 + \lambda_{N
   + 1}^{s, i}} . \]
Numerically, we find $\| \lambda_{\max} (Q)\|_{\infty} \leqslant 4.5$. Since
we hope $K_5^i \leqslant 0.45$, we need to choose $N$ such that
$\lambda_{N + 1}^{s, i} > 10$. Thus, we take $N = 1257$ for $U$ and $N = 1260$
for $v$ to ensure the required bound.

Finally, note that it is necessary to compute all eigenvectors corresponding
to eigenvalues smaller than 10. In particular, we need the following estimate
to guarantee that all eigenvalues below 10 have been identified: (This lemma
is still for the unscaled eigenvalue $\lambda^s_n$).

\begin{lemma}
  \label{lem:zeros}Let $0 = \beta_{l, 0} < \beta_{l, 1} < \beta_{l, 2} <
  \cdots$ denote the zeros of $J_{l + \frac{1}{2}} (k)$ and $\gamma = \sqrt{l
  (l - 1)  (l + 2)}$.
  \begin{itemizedot}
    \item If $l = 1$, define $\tilde{\beta}_{1, j} = \beta_{1, j}$;
    
    \item If $l \geqslant 2$ and $\gamma \neq \beta_{l, j}$ for any $j > 0$,
    denote $\tilde{\beta}_{l, j}$ as the sequence obtained by sorting
    $\{\beta_{l, j}, \gamma\}$ in an ascending order.
  \end{itemizedot}
  Let $s_l$ be the unique positive root of:
  \[ Q_l (s) = s^3 - 4 l^2 s^2 + l (l - 1)  (6 l^2 + 11 l + 7) s - 3 l (l -
     1)^2  (l + 1)^2  (l + 2), \]
  and $t_l = \sqrt{l (l + 1)}$. Assume that $\sqrt{s_l}$ is not a root of
  \eqref{eq:alphaY} and that $t_l$ is not a root of \eqref{eq:alphaZ}.
  
  Then the following statements hold:
  \begin{enumeratenumeric}
    \item The equation \eqref{eq:alphaY} has exactly one root on every
    interval $(\tilde{\beta}_{l, j}, \tilde{\beta}_{l, j + 1})$ for $j
    \geqslant 0$.
    
    \item The equation \eqref{eq:alphaZ} has exactly one root on every
    interval $(\beta_{l, j}, \beta_{l, j + 1})$ for $j \geqslant 0$.
    
    \item The first zeros $\alpha_{l, 1}^Y$ and $\alpha_{l, 1}^Z$ satisfy the
    lower bounds:
    \[ \alpha_{l, 1}^Y \geqslant \sqrt{s_l}, \quad \alpha_{l, 1}^Z \geqslant
       t_l . \]
  \end{enumeratenumeric}
\end{lemma}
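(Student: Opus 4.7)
The proof strategy is to reduce both equations to statements about the zeros and critical points of the spherical-Bessel-type functions $\psi(r):=J_{l+1/2}(r)/\sqrt{r}$ (for \eqref{eq:alphaZ}) and $\tilde{j}_l(r):=J_{l+1/2}(r)/r^{3/2}$ (for \eqref{eq:alphaY}), and then combine elementary sign-counting via the intermediate value theorem with Sturm--Liouville style monotonicity arguments coming from the underlying Bessel ODE. All three claims boil down to careful tracking of sign changes of $J_{l+1/2}$, its derivative, and the explicit polynomial prefactors $r^2-\gamma^2$, $r^2-l(l-1)$, $r^2-(l-1)(l+2)$.

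First I would treat equation \eqref{eq:alphaZ}. A direct computation gives $\psi'(r)=r^{-3/2}(rJ_{l+1/2}'(r)-\tfrac12 J_{l+1/2}(r))$, so zeros of \eqref{eq:alphaZ} coincide with zeros of $\psi'$, and $\psi$ itself has zeros exactly at the $\beta_{l,j}$. Between any two consecutive $\beta_{l,j}$, Rolle's theorem yields at least one zero of $\psi'$. The function $\psi$ satisfies $(r^2\psi')'+(r^2-l(l+1))\psi=0$; at a critical point with $\psi>0$ this forces $\mathrm{sgn}\,\psi''=-\mathrm{sgn}(1-l(l+1)/r^2)$, so on $\{r>t_l\}$ every critical point in a sign-constant interval is a strict local extremum of the correct type, precluding a second critical point in the same interval. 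Combined with existence this gives exactly one zero per $(\beta_{l,j},\beta_{l,j+1})$. For the lower bound $\alpha_{l,1}^Z\geqslant t_l$, the same ODE rewritten as $(r^2\psi')'=(l(l+1)-r^2)\psi>0$ on $(0,t_l)$ together with $r^2\psi'(r)\to0$ as $r\to 0$ shows that $r^2\psi'>0$, hence $\psi'\neq0$, on $(0,t_l)$.

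Next I would turn to \eqref{eq:alphaY}. Let $F(r)=A(r)\tilde{j}_l'(r)+B(r)\tilde{j}_l(r)$ with $A(r)=(r^2-\gamma^2)r$ and $B(r)=(r^2-l(l-1))(r^2-(l-1)(l+2))$. Deriving from the Bessel equation that $\tilde{j}_l$ satisfies $r^2\tilde{j}_l''+4r\tilde{j}_l'+(r^2-(l-1)(l+2))\tilde{j}_l=0$, I would evaluate $F$ on the merged sequence $\{\tilde{\beta}_{l,j}\}$: at $r=\beta_{l,j}$ one has $F(\beta_{l,j})=(\beta_{l,j}^2-\gamma^2)\beta_{l,j}\,\tilde{j}_l'(\beta_{l,j})$, while at $r=\gamma$ the coefficient $A$ vanishes and, for $l\geqslant 2$, both factors $(\gamma^2-l(l-1))$ and $(\gamma^2-(l-1)(l+2))$ are strictly positive, so $F(\gamma)$ has the sign of $\tilde{j}_l(\gamma)\neq 0$. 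A short case analysis, using that $\mathrm{sgn}\,\tilde{j}_l'(\beta_{l,j})$ alternates in $j$ and $(r^2-\gamma^2)$ changes sign exactly once (at $\gamma$, inserted into the $\beta$-sequence), shows that $F$ changes sign across each consecutive pair $\tilde{\beta}_{l,j},\tilde{\beta}_{l,j+1}$; IVT then gives at least one root in each interval. Uniqueness follows by a Pr\"ufer-type monotonicity argument on $w:=\tilde{j}_l'/\tilde{j}_l$, which satisfies the Riccati equation $w'=-4w/r-1+(l-1)(l+2)/r^2-w^2$: the equation $F=0$ becomes $w=-B/A$, and one checks that on each sign-constant interval of $\tilde{j}_l$ the graph of $-B/A$ crosses the monotone branch of $w$ at most once.

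Finally, for $\alpha_{l,1}^Y\geqslant\sqrt{s_l}$ I would show that $F$ has a definite sign on $(0,\sqrt{s_l})$. Near $r=0$ a direct series expansion of $\tilde{j}_l$ shows that the two leading contributions to $F$ cancel and leaves $F(r)/\tilde{j}_l(r)=-\frac{3(l-1)(l+1)^2}{2l+3}\,r^2+O(r^4)$, so $F$ has a definite (nonzero) sign near the origin for $l\geqslant 2$; the case $l=1$ is exceptional and is handled separately using the convention $\tilde{\beta}_{1,j}=\beta_{1,j}$ and the first-interval argument analogous to the one for \eqref{eq:alphaZ}. To propagate this sign as far as $r=\sqrt{s_l}$, I would derive an algebraic identity of the form $F\Phi-F'\Psi=c\,Q_l(r^2)\,\tilde{j}_l^2/r^{\alpha}$ for explicit polynomials $\Phi,\Psi$ and a positive constant $c>0$, obtained by eliminating $\tilde{j}_l''$ via the ODE and then differentiating once more; this identity forces $F$ to be monotone (and nonvanishing) on any interval where $Q_l(r^2)$ has constant sign. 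Since $Q_l$ has a unique positive real root $s_l$ (checkable by Descartes's rule or the cubic discriminant), $Q_l(r^2)$ is of definite sign on $(0,\sqrt{s_l})$, giving the bound. The hard part, and the place I expect to spend most of the effort, is producing the precise Wronskian-type identity linking $F$, $F'$, and $Q_l(r^2)$; the specific coefficients $-4l^2$ and $l(l-1)(6l^2+11l+7)$ appearing in $Q_l$ strongly suggest $Q_l$ is exactly the discriminant of this algebraic elimination, which once computed closes all of Steps 2--3 in a unified way.
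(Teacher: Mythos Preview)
Your treatment of \eqref{eq:alphaZ} via $\psi=J_{l+1/2}/\sqrt{r}$ and the ODE $(r^2\psi')'+(r^2-l(l+1))\psi=0$ is correct and essentially identical to the paper's. For \eqref{eq:alphaY}, your existence argument by sign changes of $F$ at the merged sequence $\tilde\beta_{l,j}$ is also correct, though the paper streamlines the case analysis by passing to the quotient $F_l(k)=R_l(k)-\tfrac32+T_l(k)$ with $R_l=kJ_{l+1/2}'/J_{l+1/2}$ and $T_l=B/(k^2-\gamma^2)$; since $F_l\to+\infty$ from the right and $-\infty$ from the left at every $\tilde\beta_{l,j}$, IVT gives one root per interval without tracking signs of $\tilde j_l'(\beta_{l,j})$ and $(\beta_{l,j}^2-\gamma^2)$ separately.

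Where your proposal and the paper differ substantively is in uniqueness and the lower bound. You split these into two mechanisms (a Pr\"ufer crossing argument for uniqueness, and a conjectural global identity $F\Phi-F'\Psi=cQ_l(r^2)\tilde j_l^2/r^\alpha$ for the lower bound), and you flag the latter as the hard part. The paper replaces both by a single, much easier computation: at any zero $k$ of $F_l$ one has $R_l(k)=\tfrac32-T_l(k)$, and substituting this into $F_l'=R_l'+T_l'$ together with the Riccati relation $R_l'=-(R_l^2+k^2-(l+\tfrac12)^2)/k$ gives exactly
\[
F_l'(k)=-\frac{k\,Q_l(k^2)}{(k^2-\gamma^2)^2}\qquad\text{whenever }F_l(k)=0.
\]
This is the algebraic elimination you anticipated, but it only needs to hold \emph{at zeros of $F_l$}, so there is no global Wronskian identity to find. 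Since $Q_l$ has a unique positive root $s_l$ with $Q_l<0$ on $(0,s_l)$ and $Q_l>0$ on $(s_l,\infty)$, the sign of $F_l'$ at any zero is determined by whether $k\lessgtr\sqrt{s_l}$. The first zero in any interval must have $F_l'\leqslant0$ (since $F_l$ descends from $+\infty$), forcing $k_1\geqslant\sqrt{s_l}$; and two zeros in the same interval are impossible because the signs of $F_l'$ at consecutive crossings must alternate while $Q_l(k^2)$ keeps a fixed sign past $\sqrt{s_l}$ (with the borderline case $F_l'(k_1)=0$ ruled out by the hypothesis $F_l(\sqrt{s_l})\neq0$). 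Your Pr\"ufer route on $w=\tilde j_l'/\tilde j_l$ would, if carried through, reduce to the same substitution (note $kw=R_l-\tfrac32$), so it is not wrong---but the paper's formulation gets there in one line and handles uniqueness and the lower bound simultaneously.
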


The proof of the lemma is given in Appendix \ref{eig cons}. In practice, we
use the lower bounds of $\alpha_{l, 1}^Y$ and $\alpha_{l, 1}^Z$ to determine
the largerst index $l$ required, denoted by $l_{\max}$. For every $1 \leqslant
l \leqslant l_{\max}$, we then search all the intervals $(\beta_{l, j},
\beta_{l, j + 1})$ and $(\widetilde{\beta}_{l, j}, \widetilde{\beta}_{l, j + 1})$ for
$\alpha_{l, 1}^Z$ and $\alpha_{l, 1}^Y$, respectively, until the left endpoint
of the interval exceeds the prescribed threshold.

\subsection{Eigenvalue problem}\label{sec:Q eig}

We subsequently solve the finite-dimensional eigenvalue problem numerically:
\[ \langle \psi, Q \psi_n^{Q, i} \rangle_{\Omega} = \lambda^{Q, i}_n  \langle
   \psi, \psi_n^{Q, i} \rangle_E, \quad \forall \psi \in V_N^i , \]
on the space $V_N^i$. Recall that $\psi_k^{s, i}$ are orthogonal in $L^2$ and
$H^1$. We will normalize them to obtain an orthonormal basis in $E$ as
$\widetilde{\psi}_k^{s, i}$ and compute the matrix:
\begin{equation}
  (A_Q^i)_{k, j} = \langle \widetilde{\psi}_k^{s, i}, Q \widetilde{\psi}_j^{s, i}
  \rangle_{\Omega}, \label{def:aq}
\end{equation}
using numerical integration, and solve the resulting eigenvalue problem:
\[ A_Q^i \psi^{Q, i}_n = \lambda^{Q, i}_n \psi^{Q, i}_n . \]
Here, by an abuse of notation, we use $\psi^{Q, i}_n$ to denote both the
eigenvector and its coordinates in the basis $\widetilde{\psi}_k^{s, i}$.

We solve the matrix eigenvalue problem numerically to determine all
eigenvalues $\lambda^{Q, i}_n \geqslant 0.46$. Numerical computations yield:
\[ \lambda^U_{20} \approx 0.456 < 0.46, \quad \lambda^v_{25} \approx 0.453 <
   0.46. \]
These numerics determine the rank used in the finite-rank operator. The
computed eigenvectors provide the foundation for a finite-rank approximation
of $L_2$, which corresponds to the second step in Section
\ref{subsec:finiterank}.

\subsection{Inversion on the subspace}\label{sec:inv L2}

Finally, we invert the operator $L$ on the subspace spanned by the finitely many of eigenfunctions, as in the last step in
Section \ref{subsec:finiterank}. We numerically solve equation \eqref{eq:phi
eq} by employing the finite element method described in Section \ref{sec:ext},
using the $(\beta, \theta)$ coordinates. We remark that for $\phi_n^v$, since
$\widetilde{\lambda}$ is not explicit in \eqref{lin op v}, we invert instead
the operator $P_{\overline{v}}(L^v - \overline{\lambda}) v$ and leave the small term
$\lambda \psi^v_n$ in the residue $r_n^v$. Additionally, we must
impose the orthogonality constraint:
\[ \langle \phi_n^v, \overline{v} \rangle = 0, \]
which is numerically enforced by solving the following modified linear system:
\[ \left( \begin{array}{cc}
     A & y\\
     y^T & 0
   \end{array} \right) \left( \begin{array}{c}
     x\\
     \mu
   \end{array} \right) = \left( \begin{array}{c}
     b\\
     0
   \end{array} \right), \]
where the matrix $A$ stands for the discretization of the linear operator $L^v - \overline{\lambda} I$, and
the vector $x, y, b$ stands for $\phi^v_n, \overline{v}, Q \overline{\psi}_n^v$
respectively. $\mu$ is a scalar Lagrange multiplier ensuring the orthogonality
constraint. 

The numerical solutions $\Phi$ are then interpolated onto the explicitly
constructed divergence-free basis presented in Section \ref{sec:div-free}. To maintain accuracy, we subsequently refine our solutions using iterative
correction techniques detailed in Section \ref{sec:res cg}, until sufficiently
small residuals are obtained. Specifically, we iteratively minimize the
residual:
\[ \left| - \frac{1}{2} \phi_n^U - \frac{1}{2} r \partial_r \phi_n^U + \overline{U}
   \cdot \nabla_{\tmop{sph}} \phi_n^U + \phi_n^U \cdot \nabla_{\tmop{sph}} 
   \overline{U} + \nabla_{\tmop{sph}} P_n - \Delta_{\tmop{sph}} \phi_n^U - Q
   \psi_n^U \right|, \]
for $U$ and
\[ \left| - \left( \frac{1}{2} + \overline{\lambda} \right) \phi_n^v - \frac{1}{2}
   r \partial_r \phi_n^v + \overline{U} \cdot \nabla_{\tmop{sph}} \phi_n^v +
   \phi_n^v \cdot \nabla_{\tmop{sph}}  \overline{U} + \nabla_{\tmop{sph}} q_n -
   \Delta_{\tmop{sph}} \phi_n^v - Q \psi_n^v - \mu \overline{v} \right|, \]
where $\mu$ should be again understood as the scalar Lagrange multiplier
ensuring the orthogonality constraint. Finally, we obtain exactly
divergence-free functions $\overline{\phi}^U_n$, $\overline{\phi}^v_n$, and we have to
modify $\overline{\phi}^v_n$ by a projection to ensure orthogonality to $\overline{v}$.
\begin{remark}
    Since we require $\phi^U_n$ belongs to $L^2$, the boundary condition at $\beta=\frac{\pi}{2}$ becomes
\[
\phi^U_n(\frac{\pi}{2}, \theta) = 0.
\]
When solving for $\phi^U_n$ with a finite element method, we must impose the homogeneous Dirichlet boundary condition, which differs from the condition for $\overline{U}$. Moreover, in the expansion \eqref{eq:U1 ansatz 1}, the basis functions $\sin((2 m-1)\beta)$ must be replaced by $\sin(2 m \beta)$. Consequently, the coefficients $\Beta_{m,l}$-s are required to satisfy a modified condition.
\end{remark}

\section{On Rigorous Numerical Verification}\label{sec7}

In this section, we outline the framework we use for rigorous computer-assisted validation; in particular, the rigorous justification for the constants in Section \ref{Sec:num prev}. 
We release the code and reproducibility scripts in a public GitHub repository. 
Throughout, we work in the spherical coordinates and employ the change of variables
\[
r=\tan\beta,\qquad (\beta,\theta)\in[0,\tfrac{\pi}{2})\times[0,\tfrac{\pi}{2}],
\]
so that all fields are expanded on trigonometric basis functions in $(\beta,\theta)$. The items below detail how we bound the residuals, $\!L^\infty$-norms, eigenvalue envelopes, and quadrature errors appearing in our verification, in a way that is fully compatible with the verifications in Section \ref{Sec:num prev}. We remark that verifications concerning the finite-dimensional matrices are much simpler, already sketched in Section \ref{sec:analysis} and referenced in detail in the codes.

The high-level idea and organization of this section are as follows: we first discuss estimates of numerical integration and, in particular, $L^2$-residues in Section \ref{subsec:analytic-L2} and \ref{subsec:Quadrature}. We leverage the trigonometric representation in the profiles and use exact analytic integration if possible in Section \ref{subsec:analytic-L2}, and discuss how we perform numerical quadrature with certified error bounds using $L^\infty$ bounds on higher-order derivatives in Section \ref{subsec:Quadrature}. Then we detail in the remaining part of this section on how to build $L^\infty$ estimates. We discuss the classical linear interpolation bound in 2D using higher-order derivatives in Section \ref{subsec:Linf-envelope} and discuss in particular the use of the Bernstein inequality for trigonometric functions on bounds for these higher-order derivatives. Concerning the construction and the finite-rank approximation of $Q$ and its associated spectral bound estimate, we need to use a coordinate transform on top of the linear interpolation bound in Section \ref{subsec:Q-envelope}. Finally, for Bessel functions associated with the eigenpairs constructed from the finite-rank approximation, we derive $L^\infty$ estimates based on the known spherical Bessel integral representation and  plane-wave expansions via Legendre polynomials in Section \ref{sec:bes ln}.

\subsection{Analytic $L^2$ residuals via a trigonometric representation}\label{subsec:analytic-L2}
We expand the approximate self-similar profile $\overline U$ and the eigenfunction $\overline v$ as finite trigonometric sums
\[
\overline U(\beta,\theta)
= \sum_{p\in\mathcal P_\beta}\ \sum_{q\in\mathcal P_\theta}\Big(
\mathbf{A}^{ss}_{pq}\,\sin(p\beta)\sin(q\theta)
+\mathbf{A}^{sc}_{pq}\,\sin(p\beta)\cos(q\theta)
+\mathbf{A}^{cs}_{pq}\,\cos(p\beta)\sin(q\theta)
+\mathbf{A}^{cc}_{pq}\,\cos(p\beta)\cos(q\theta)
\Big),
\]
where each coefficient \(\mathbf{A}^{\star}_{pq}\in\mathbb R^3\) collects the three components of \(\overline U\), and \(\mathcal P_\beta,\mathcal P_\theta\subset\mathbb N\) are finite index sets.
Similarly, we have an expansion for $\overline v$. Because sums, products, and derivatives of sines/cosines remain finite trigonometric sums, every term in the residuals $E^U$ and $E^v$ stays in the same class. The only division introduced arises from the Jacobian in the change of variables
$r=\tan\beta$, which produces factors of the form either $\sin(n\beta)/\sin(\beta)$ or $\cos((2m+1)\beta)/\cos(\beta)$. Note that we never have $\cos((2m)\beta)/\cos(\beta)$ in our equations: such terms will be unbounded for $\beta \approx \pi/2$.
For $\sin(n\beta)/\sin(\beta)$, we know that this ratio admits an exact trigonometric expansion, so no true division remains.
Indeed, recalling that
\[
\frac{\sin(n\beta)}{\sin(\beta)} = U_{n-1}(\cos\beta),
\]
where $U_{n-1}$ denotes the Chebyshev polynomial of the second kind, we obtain
the explicit cosine series
\[
\frac{\sin(n\beta)}{\sin(\beta)} =
\begin{cases}
\displaystyle 1 + 2\sum_{k=1}^{m} \cos(2k\beta), & n=2m+1, \\[6pt]
\displaystyle 2\sum_{k=1}^{m} \cos\big((2k-1)\beta\big), & n=2m.
\end{cases}
\]
For $\frac{\cos\big((2m+1)\beta\big)}{\cos\beta}$,
we have for any integer $m\geqslant 0$ and $\beta\not\equiv \tfrac{\pi}{2}\ (\mathrm{mod}\ \pi)$,
\[
\frac{\cos\big((2m+1)\beta\big)}{\cos\beta}
= (-1)^{m}\!\left( 1 + 2\sum_{k=1}^{m} (-1)^{k}\cos(2k\,\beta)\right).
\]
In particular, the ratio is a finite cosine polynomial in $\beta$.
Hence, every such Jacobian factor can be represented exactly as a finite trigonometric sum, and we can evaluate certain integrals analytically. 
 We remark that
while in principle, inner products for
$
\|E^U\|_{L^2}, \|E^v\|_{L^2}$
could be evaluated {analytically}, we choose to use numerical integration for the sake of simplicity; see the next subsection.

Finally we note that we need to keep track of the max
frequency. Only products of two trigonometric polynomial increase the max
frequency. Therefore, the term with the largest frequency is the transport term
$\overline{U} \cdummy \nabla \overline{U}$ with max frequency smaller than $2
M + 10$, where we include a constant wavenumber to account for Jacobians.  Here, $M$ stands for the largest frequency of $\overline{U}$.
The benefit is that we can easily use the Bernstein inequality to estimate the
$L^{\infty}$ bound of any derivative of a trigonometric polynomial, which
brings huge convenience for the error estimate of numerical integrals in the
following sections.


\subsection{Quadrature for numerical integrals and its error}\label{subsec:Quadrature}

Now we deal with numerical integrals. In contrast to Section \ref{subsec:analytic-L2},  due to a multiplication of $Q$, the integrals related to the finite rank approximation cannot be evaluated analytically, and we use numerical integrations. For a function $f$, we use the composite Newton–Cotes 7-point rule in each direction, denoted by $\mathrm{Quad} (f)$. A 1D version can be written with an error estimate as  
\[
\int_a^b f(x)= \frac{h}{140}\Big(41f(x_0)+216f(x_1)+27f(x_2)+272f(x_3)+27f(x_4)+216f(x_5)+41f(x_6)\Big) -\frac{9}{1400}\,h^{9}\,f^{(8)}(\xi),
\]
where $\xi\in(a,b), h=\frac{b-a}{6},x_i=a+ih\ \ (i=0,\dots,6)$.
Let $h_{\beta}$ and $h_{\theta}$
denote the step size in $\beta$ and $\theta$ directions, respectively. We know that the error
is controlled by
\[ \left| \int_0^{\frac{\pi}{2}} \int_0^{\frac{\pi}{2}} f (\beta, \theta)
   \mathd \beta \mathd \theta - \mathrm{Quad} (f) \right| \leqslant
   \frac{3\pi^2}{11200} (\| \partial_{\beta}^8 f \|_{L^{\infty}} h_{\beta}^8 + \|
   \partial_{\theta}^8 f \|_{L^{\infty}} h_{\theta}^8) . \]
   
In many cases, we need to calculate the $L^2$ inner product of two functions
$f$ and $g$, given by
\[ \left| \int_0^{\frac{\pi}{2}} \int_0^{\frac{\pi}{2}} f (\beta, \theta) g
   (\beta, \theta) \sin \theta \mathd \beta \mathd \theta - \mathrm{Quad} (f g
   \sin \theta) \right| \leqslant \frac{3\pi^2}{11200} (F_{\beta} (f, g)
   h_{\beta}^8 + F_{\theta} (f, g, \sin \theta) h_{\theta}^8), \]
where the bound of $\| \partial^8_t (f_1 f_2 \cdots f_n) \|, t = \beta,
\theta$ can be derived via Leibniz's rule as
\[ F_t (f_1, f_2, \ldots, f_n) \leqslant \sum_{i_1 + \cdots + i_n = 8}
   \left(\begin{array}{c}
     8\\
     i_1, \ldots, i_n
   \end{array}\right) \prod_{j = 1}^n \| \partial_t^{i_j} f_j \|_{L^{\infty}}
   . \]
   In the subsequent subsections, we will detail how to obtain tight $L^\infty$ estimates on the functions and their derivatives.

\subsection{Certified $L^{\infty}$ bounds on trigonometric
polynomials}\label{subsec:Linf-envelope}

Let $f$ be any scalar component of $\bar{\phi}_j$, $\bar{v}$, or $\bar{U}$.
Partition the $\beta$ (and similarly for $\theta$) interval into $N_{\beta}$
subintervals $[\beta_i, \beta_{i + 1}]$ of uniform mesh size $h_{\beta}
\assign \beta_{i + 1} - \beta_i$, and denote $f_{i, j} = f (\beta_i,
\theta_j)$. In each square $I_{i, j} \assign [\beta_i, \beta_{i + 1}] \times
[\theta_j, \theta_{j + 1}]$, using the classical error bound for linear
interpolation with the Lagrange (Cauchy) form of the remainder, we obtain the following estimate:
\begin{equation}
  |f (\beta, \theta) | \leqslant \max \{f_{i, j}, f_{i + 1, j}, f_{i, j + 1},
  f_{i + 1, j + 1} \} + \frac{h_{\beta}^2 \|f_{\beta \beta} \|_{L^{\infty}
  (I_{i, j})} + h_{\theta}^2 \|f_{\theta \theta} \|_{L^{\infty} (I_{i,
  j})}}{8}, \label{eq:f Iij est}
\end{equation}
for all $(\beta, \theta) \in I_{i, j}$, where $f_{i, j} = f (\beta_i,
\theta_j)$.

For a 1D trigonometric polynomial with the highest wavenumber $M$ we have the
classical bound via Bernstein's inequality that $\|f'' \|_{\infty} \leqslant M^2 \|f\|_{\infty}$. Taking the supremum and using Bernstein's inequality, we arrive at
\begin{equation}
  \|f\|_{\infty} \leqslant (\max_{0 \leqslant i \leqslant N_{\beta}, 0
  \leqslant j \leqslant N_{\theta}} f_{i, j}) / \left( 1 - \tfrac{(M_{\beta}
  h_{\beta})^2 + (M_{\theta} h_{\theta})^2}{8} \right) .
  \label{eq:linf-envelope}
\end{equation}
Taking $M_{\beta} h_{\beta}, M_{\theta} h_{\theta} \ll 1$ makes the
amplification factor close to $1$.

To evaluate the upper bound of derivatives of $f$, we can use similar
techniques via finite differences. We take $f_{\theta}$ as an example. We first evaluate $f$ on the
mesh grids. Then we know that for every $i, j$, there exists a $\xi_{i, j} \in
[\theta_j, \theta_{j + 1}]$ such that
\[ f_{\theta} (\beta_i, \xi_{i, j}) = \frac{f_{i, j + 1} - f_{i,
   j}}{h_{\theta}} . \]
Although we do not know the exact value of $\xi_{i, j}$, since $| \xi_{i, j +
1} - \xi_{i, j} | < 2 h_{\theta}$, we can still use the same argument to
estimate $\| f_{\theta} \|_{L^{\infty}}$. In fact, as in the previous subsection, we need to obtain sharp estimates of as high as 8th-order derivatives of functions. We can use Bernstein's inequality to get a crude estimate of the 12th-order derivatives before applying \eqref{eq:f Iij est} to get, recursively, a tighter estimate of 10th-order and 8th-order derivatives.

Recall that in our run, we use a $300 \times 150$ grid in $(\beta, \theta)$ for the numerical construction of the
approximate solutions. So with a fixed
spectral cutoff $M$, we pick $N$ as the finer \emph{certification} grid
$(N_\beta,N_\theta)$ used to obtain rigorous $L^\infty$ bounds for trigonometric polynomials via the
interpolation/Bernstein enclosure, large enough that $h = \frac{\pi}{2 N} \ll
M^{- 1}$.  In our computation, we take $M_{\beta} = 600$ and $M_{\theta} = 300$, and choose $N_{\beta} = 4096$ for $\beta$ and $N_{\theta} = 2048$ for
$\theta$. All pointwise evaluations $f_{i, j}$ are computed with interval
arithmetic to enclose round-off.

\subsection{Largest eigenvalue estimate: ${L^{\infty}}$-estimate via coordinate transform}\label{subsec:Q-envelope}

We also need to estimate $\| \lambda_{\max} (Q (x))
\|_{L^{\infty}}$, where a key difference is that we need to use the $r$ coordinate instead via its construction. Recall that the $3 \times 3$ symmetric positive definite
matrix $Q$ has compact support in $r$. We interpolate $Q$ by a trigonometric
polynomial in $r$ for $0 \leqslant r = \tan (\beta) \leqslant R = 32$, and
set $Q (\beta, \theta) = 0$ whenever $r \geqslant R$. In this way, both
positivity and compact support are preserved, since the support is enforced
exactly by truncating $Q$ outside the ball $B_0 (R)$. On each grid node $x_{i,
j} = (r_i, \theta_j)$ we compute closed-form eigenvalues of $Q (x_{i, j})$
and take the largest, $\lambda_{\max} (Q (x_{i, j}))$, with interval
arithmetic capturing floating-point uncertainty. To control the error of the
eigenvalue $\lambda_{\max} (Q (x))$ within each subdomain $I_{i, j}$ of size
$h$, we use the formula
\[ \lambda_{\max} (Q (x)) = \sup_{| v | = 1} v^T Q (x) v. \]
Since $Q$ is expressed in terms of trigonometric polynomials, applying the
same envelope argument as \eqref{eq:linf-envelope} yields a certified global
bound on $\|v^T Q (x) v\|_{\infty}$ as follows:
\[ \|v^T Q (x) v\|_{L^{\infty}} \leqslant (\max_{0 \leqslant i \leqslant
   N_{r}, 0 \leqslant j \leqslant N_{\theta}} v^T Q_{i, j} v) / \left( 1 -
   \tfrac{(M_{r} h_{r})^2 + (M_{\theta} h_{\theta})^2}{8} \right) . \]
Thus, we obtain
\[ \| \lambda_{\max} (Q (x))\|_{L^{\infty}} \leqslant (\max_{0 \leqslant i
   \leqslant N_{r}, 0 \leqslant j \leqslant N_{\theta}} \lambda_{\max}
   (Q_{i, j})) / \left( 1 - \tfrac{(M_{r} h_{r})^2 + (M_{\theta}
   h_{\theta})^2}{8} \right), \]
for some computable absolute constant. By taking $h_{r}, h_{\theta}$ small
enough, we can essentially bound $\| \lambda_{\max} (Q)\|_{L^{\infty}}$ by
$\max_{i, j} \lambda_{\max} (Q_{i, j})$ up to a very small error. The
numerical results show that
\[ \| \lambda_{\max} (Q)\|_{L^{\infty}} \leqslant 4.5. \]

Next, we need to verify that
\begin{equation*}
  Q_{\tmop{res}} \assign \frac{\nabla \overline{U} + \nabla^T \overline{U}}{2}
  + \eta_1 I + Q \succeq 0. 
\end{equation*}
We need to emphasize that $\frac{\nabla \overline{U} + \nabla^T
\overline{U}}{2}$ is trigonometric entrywise in the coordinate $\beta$
while $Q$ is trigonometric entrywise in the coordinate $r$. Therefore,
$Q_{\tmop{res}}$ is not trigonometric entrywise in either coordinate. We estimate the $L^{\infty}$ norm via interpolation as in \eqref{eq:f Iij est} and then control the derivatives via Bernstein's inequality in $\beta$ or $r$ respectively. Finally, we perform a change of coordinates via
\[ \partial_{\beta \beta} Q = (1 + r^2)^2 \partial_{r r} Q + 2 r (1 + r^2)
   \partial_r Q, \]
and we can control $\| \partial_{\beta \beta} Q \|_{L^{\infty}}$ by $\|
\partial_{r r} Q \|_{L^{\infty}}$ and $\| \partial_r Q \|_{L^{\infty}}$. Thus, we can conclude the pointwise estimate in this mixture of coordinates. 
Lastly, we note that $Q_{\tmop{res}}$ is not smooth at $\beta = \arctan R$.
Therefore, we should include $\arctan R$ as a mesh point.

\subsection{$L^{\infty}$ bounds for Bessel functions}\label{sec:bes ln}
Now we deal with Bessel functions arising from the finite-rank approximation, whose $L^\infty$ bounds are trickier than trigonometric functions. We recall the construction of our finite-rank approximation in detail in Lemma \ref{lem:lap eig}, where the  basis 
$\psi_{m l}^Y$ and $\psi_{m l}^Z$ are of the form:
\begin{eqnarray*}
  \psi_{m l}^Y & = & (U_{m l}^Y (r) Y_l (\theta), U_{m l}^X (r) X_l (\theta),
  0),\\
  U_{m l}^Y & = & B_{m l} \frac{J_{l + \frac{1}{2}} (\alpha^Y_{m l}
  r)}{r^{\frac{3}{2}}} + l A_{m l} r^{l - 1},\\
  U_{m l}^X & = & \frac{B_{m l}}{\mu_l \sqrt{r}} \left( \alpha^Y_{m l} J_{l -
  \frac{1}{2}} (\alpha^Y_{m l} r) - \frac{l J_{l + \frac{1}{2}} (\alpha^Y_{m
  l} r)}{r} \right) + A_{m l} r^{l - 1},\\
  A_{m l} & = & - ((\alpha^Y_{m l})^2 - (l - 1) (l + 2)) \frac{J_{l +
  \frac{1}{2}} (\alpha^Y_{m l} )}{(\alpha^Y_{m l})^{\frac{3}{2}}},\\
  B_{m l} & = & \frac{(\alpha^Y_{m l})^2 - l (l - 1) (l + 2)}{(\alpha^Y_{m
  l})^{\frac{3}{2}}},\\
  \psi_{m l}^Z & = & (0, 0, U_{m l}^Z (r) Z_l (\theta)),\\
  U_{m l}^Z & = & \frac{J_{l + \frac{1}{2}} (\alpha^Z_{m l} r)}{\sqrt{r}} .
\end{eqnarray*}
The quantities $\alpha_{ml}^{Y}$, $\alpha_{ml}^{Z}$, $A_{ml}$, and $B_{ml}$
are themselves computed with rigorous interval
enclosures. In particular, these constants are not treated as floating
point inputs; instead, all subsequent bounds for the basis functions
$\psi_{ml}^{Y}$, $\psi_{ml}^{Z}$ and their derivatives are propagated
from these validated enclosures.

In a similar spirit to the two previous subsections, we would like to give $L^\infty$ estimates on $\frac{J_{l + \frac{1}{2}} (r)}{r^{\frac{1}{2}}}$ and its derivatives before using a finer linear interpolation estimate like \eqref{eq:f Iij est}. From that, we can derive $L^\infty$ estimates on $\frac{\mathd^k}{\mathd r^k} U_{m l}^i
(r)$ and consequently $\psi_{m l}^Y$ and $\psi_{m l}^Z$.

We work with $\frac{J_{l + \frac{1}{2}} (r)}{r^{\frac{1}{2}}}$ since it relates to the spherical Bessel functions, which admit an integral expression via plane-wave expansions; see \eqref{int rep} and we can further differentiate under the integrand. To be specific, from the plane-wave expansion with $t=\cos\theta\in[-1,1]$, we have \[e^{izt}=\sum_{n\geq0}(2n+1)i^nj_n(z)P_n(t),\]
where $P_n$ are Legendre polynomials, $j_n(z)=\sqrt{\frac{\pi}{2z}}J_{n + \frac{1}{2}} (z)$ are spherical Bessel functions. Using the orthogonality of the Legendre polynomials, we compute \[\int_{- 1}^1 e^{i z t} P_l (t) \mathd t=2i^lj_l(z).\]
As a consequence, we derive
\begin{equation}
    \label{int rep}
 \frac{J_{l + \frac{1}{2}} (r)}{r^{\frac{1}{2}}} = \frac{i^{- l}}{\sqrt{2
   \pi}} \int_{- 1}^1 e^{i r t} P_l (t) \mathd t, \end{equation}
and we know that from Cauchy-Schwarz 
\[
  \left\| \frac{\mathd^k}{\mathd r^k} \left( \frac{J_{l + \frac{1}{2}}
  (r)}{r^{\frac{1}{2}}} \right) \right\|_{L^{\infty}}  \leqslant 
  \frac{1}{\sqrt{2 \pi}} \int_{- 1}^1 t^k | P_l (t) | \mathd t \leqslant  \frac{\sqrt{2}}{\sqrt{\pi (2 k + 1) (2 l + 1)}} ,
\]
where we have used the fact that $|e^{i r t}|=1$ and the normalization of the Legendre polynomials.

Therefore, we have
\[ \left\| \frac{\mathd^k}{\mathd r^k} U_{m l}^Z (r) \right\|_{L^{\infty} ([0,
   1])} \leqslant \frac{\sqrt{2} (\alpha^Z_{m l})^k}{\sqrt{\pi (2 k + 1) (2 l
   + 1)}} . \]
We use a recursion formula for the Bessel functions to relate $U_{m l}^Y$ and $U_{m l}^X$ again to $\frac{J_{l + \frac{1}{2}} (r)}{r^{\frac{1}{2}}}$:
\[ (2 l + 1) \text{ } \frac{J_{l + \frac{1}{2}} (r)}{r^{\frac{3}{2}}} = \text{
   } \frac{J_{l - \frac{1}{2}} (r)}{r^{\frac{1}{2}}} + \text{ } \frac{J_{l +
   \frac{3}{2}} (r)}{r^{\frac{1}{2}}}, \]
and we get
\begin{eqnarray*}
  U_{m l}^Y & = & \frac{\alpha^Y_{m l} B_{m l}}{(2 l + 1) \sqrt{r}} \left(
  J_{l - \frac{1}{2}} (\alpha^Y_{m l} r) + J_{l + \frac{3}{2}} (\alpha^Y_{m l}
  r) \right) + l A_{m l} r^{l - 1},\\
  U_{m l}^X & = & \frac{B_{m l} \alpha^Y_{m l}}{\mu_l \sqrt{r}} \left( \frac{l
  + 1}{2 l + 1} J_{l - \frac{1}{2}} (\alpha^Y_{m l} r) - \frac{l}{2 l + 1}
  J_{l + \frac{3}{2}} (\alpha^Y_{m l} r) \right) + A_{m l} r^{l - 1}.
\end{eqnarray*}
From the above formula, we yield
\begin{eqnarray*}
  \left\| \frac{\mathd^k}{\mathd r^k} U_{m l}^Y (r) \right\|_{L^{\infty} ([0,
  1])} & \leqslant & \frac{\sqrt{2} (\alpha^Y_{m l})^{\frac{3}{2} + k} B_{m
  l}}{\sqrt{\pi (2 k + 1)} (2 l + 1)} \left( \frac{1}{\sqrt{(2 l - 1)}} +
  \frac{1}{\sqrt{(2 l + 3)}} \right) + \frac{l!A_{m l}}{(l - 1 - k) !},\\
  \left\| \frac{\mathd^k}{\mathd r^k} U_{m l}^X (r) \right\|_{L^{\infty} ([0,
  1])} & \leqslant & \frac{\sqrt{2} (\alpha^Y_{m l})^{\frac{3}{2} + k} B_{m
  l}}{\sqrt{\pi (2 k + 1)} (2 l + 1) \mu_l} \left( \frac{l + 1}{\sqrt{(2 l -
  1)}} + \frac{l}{\sqrt{(2 l + 3)}} \right) + \frac{(l - 1) !A_{m l}}{(l - 1 -
  k) !} .
\end{eqnarray*}

Suppose we have the bound
\[ \left\| \frac{\mathd^k}{\mathd r^k} U_{m l}^i (r) \right\|_{L^{\infty}}
   \leqslant M^i_{k m l}, \quad i = X, Y, Z. \]
By Bernstein's inequality of spherical harmonics defined in Section \ref{sec:div-free}, we also have
\[ \left\| \frac{\mathd^k}{\mathd \theta^k} S_l (\theta) \right\|_{L^{\infty}}
   \leqslant l^k . \]
Therefore, we obtain
\[ \| \partial_r^j \partial_{\theta}^k \psi_{m l}^Y \|_{L^{\infty}} \leqslant
   l^k M^Y_{j m l} + l^{k + 1} M^X_{j m l}, \quad \| \partial_r^j
   \partial_{\theta}^k \psi_{m l}^Z \|_{L^{\infty}} \leqslant l^{k + 1} M^Z_{j
   m l} . \]
Since
\[ \psi^{Q, i} = \sum_{m, l} \kappa^{i, Y}_{m l} \psi_{m l}^Y (r, \theta) +
   \sum_{m, l} \kappa^{i, Z}_{m l} \psi_{m l}^Z (r, \theta), \quad i = U, v,
\]
we have
\[ \| \partial_r^j \partial_{\theta}^k \psi^{Q, i} \|_{L^{\infty}} \leqslant
   \sum_{m, l} \kappa^{i, Y}_{m l} (l^k M^Y_{j m l} + l^{k + 1} M^X_{j m l}) +
   \sum_{m, l} \kappa^{i, Z}_{m l} l^{k + 1} M^Z_{j m l} . \]

In a similar fashion, when we want to give estimates on the Bessel functions $J_{l+\frac12}(r)$ directly instead of via spherical harmonics, for example, when we want to give a point value estimate of $J$, we can proceed using the integral representation \eqref{int rep} similarly as follows.
For an interval $[a, b]$, to compute $J_{l+\frac{1}{2}}([a,b])$, we first use the package Arblib to compute the end point values $J_{l+\frac{1}{2}}(a)$ and $J_{l+\frac{1}{2}}(b)$ rigorously, and then perform continuum enclosure between endpoints via a second-derivative remainder bound. Since for any $x \in [a, b]$, we know similar to \eqref{eq:f Iij est} that
\[
\min\left(|J_{l+\frac{1}{2}}(a) - J_{l+\frac{1}{2}}(x)|, |J_{l+\frac{1}{2}}(b) - J_{l+\frac{1}{2}}(x)|\right) \leqslant \frac{(b-a)^2}{8} \sup_{z\in [a,b]} || J''_{l+\frac{1}{2}}(z)  ||_{L^\infty([a,b])}.
\]
To get a rigorous enclosure of $J_{l+\frac{1}{2}}([a,b])$, we only need to bound $|| J''_{l+\frac{1}{2}}(z)  ||_{L^\infty([a,b])}$, which follows from the following bound, obtained from \eqref{int rep} and Cauchy-Schwarz
\[
    \left|J''_{l+\frac{1}{2}}(z)\right| \leqslant \sqrt{\frac{2}{(2l+1)\pi}}\left( \frac{1}{4 z^{3/2}} + \frac{1}{\sqrt{3} z^{1/2}}  + \frac{z^{1/2}}{\sqrt{5}} \right).
\]

\vspace{0.2in}

\textbf{Acknowledgements:} The research was in part supported by the NSF grants DMS-2205590 and DMS-2512878. We would like to thank  Jiajie Chen, Joel Dahne, Jia Hao, Xuefeng Liu, Xiang Qin, and Vladimir Sverak for a number of stimulating discussions. 

\appendix
\section{Sharp Sobolev Constant}
We provide a sharp Sobolev constant in the following lemma.
\begin{lemma}[Sobolev embedding]
  \label{lem:sobolev const}For any vector function $u = (u_1, \ldots, u_m)\in L^2(\mathbb{R}^3)$, we have:
  \begin{equation*}
    \| u \|^2_{L^4} \leqslant K_4 (\| u \|_{L^2} + \| \nabla u \|_{L^2})^2,
  \quad
    K_4 \assign \frac{3^{\frac{3}{4}}}{2^{3} \pi} \approx
    0.0907.
  \end{equation*}
\end{lemma}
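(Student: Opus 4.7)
The plan is to deduce the lemma by chaining three classical inequalities so that the product of the constants matches the claimed value $K_4 = 3^{3/4}/(2^3 \pi)$ exactly. First I would recall the $L^2$–$L^6$ interpolation inequality
\[
\|u\|_{L^4} \leqslant \|u\|_{L^2}^{1/4}\,\|u\|_{L^6}^{3/4},
\]
valid componentwise (so it holds for $|u|$ and hence for the vector $u$ with the Frobenius norm). Squaring gives $\|u\|_{L^4}^2 \leqslant \|u\|_{L^2}^{1/2}\,\|u\|_{L^6}^{3/2}$.

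Next, I would apply the sharp Aubin–Talenti Sobolev embedding on $\mathbb{R}^3$,
\[
\|f\|_{L^6} \leqslant S_3\,\|\nabla f\|_{L^2}, \qquad
S_3 = \frac{1}{\sqrt{3\pi}}\,\Bigl(\tfrac{\Gamma(3)}{\Gamma(3/2)}\Bigr)^{1/3} = \frac{2^{2/3}}{3^{1/2}\,\pi^{2/3}},
\]
to $f=|u|$. Since $\||u|\|_{L^6}=\|u\|_{L^6}$ and Kato's inequality gives $|\nabla |u|| \leqslant |\nabla u|$ a.e., one obtains the vector version $\|u\|_{L^6} \leqslant S_3\,\|\nabla u\|_{L^2}$ with the same constant. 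Substituting,
\[
\|u\|_{L^4}^2 \leqslant S_3^{3/2}\,\|u\|_{L^2}^{1/2}\,\|\nabla u\|_{L^2}^{3/2}.
\]

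The final step is a sharp weighted AM–GM: set $a=\|u\|_{L^2}$, $b=\|\nabla u\|_{L^2}$ and maximize $\varphi(t) = (1-t)^{1/2}t^{3/2}$ for $t=b/(a+b)\in[0,1]$. The critical point $t=3/4$ yields $\max \varphi = (1/4)^{1/2}(3/4)^{3/2} = 3\sqrt{3}/16$, hence
\[
a^{1/2}b^{3/2} \leqslant \tfrac{3\sqrt{3}}{16}\,(a+b)^2.
\]
Combining the two inequalities gives $\|u\|_{L^4}^2 \leqslant S_3^{3/2}\cdot \tfrac{3\sqrt 3}{16}\,(\|u\|_{L^2}+\|\nabla u\|_{L^2})^2$, and a direct computation
\[
S_3^{3/2}\cdot \frac{3\sqrt 3}{16}
= \frac{2}{3^{3/4}\,\pi}\cdot\frac{3^{3/2}}{16}
= \frac{3^{3/4}}{2^{3}\,\pi}
\]
identifies this prefactor with $K_4$.

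Both factors in the chain are sharp: the Aubin–Talenti constant is attained (on extremals of the form $(1+|x|^2)^{-1/2}$), and the AM–GM step is attained at $\|\nabla u\|_{L^2}=3\|u\|_{L^2}$, so the proof is tight up to the interpolation step. The only mild subtlety I anticipate is the passage from the scalar sharp Sobolev inequality to the vector case with the same constant, which I would justify by the Kato inequality argument above; everything else is bookkeeping of exponents and constants.
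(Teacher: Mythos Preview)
Your proposal is correct and follows essentially the same route as the paper: sharp Aubin--Talenti $L^6$ Sobolev inequality (extended to vector fields via $|\nabla|u||\leqslant|\nabla u|$, which is exactly the Cauchy--Schwarz step the paper uses), $L^2$--$L^6$ interpolation to reach $L^4$, and the weighted AM--GM $a^{1/4}b^{3/4}\leqslant \tfrac{3^{3/4}}{4}(a+b)$ to arrive at $K_4=3^{3/4}/(2^3\pi)$. The only cosmetic difference is that you square before applying AM--GM while the paper does it on the unsquared inequality; the constants match.
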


\begin{proof}
  By {\cite{talenti1976best}}, we know that for each $u_i$,
  \[ \| u_i \|_{L^6} \leqslant K_s \| \nabla u_i \|_{L^2}, \quad K_s \assign \frac{2^{\frac{2}{3}}}{3^{\frac{1}{2}} \pi^{\frac{2}{3}}} . \]
  Now we consider the vector function $u = (u_1, \ldots, u_m)$:
  \[ \| u \|_{L^6} = \| ( \sum_{i = 1}^m | u_i |^2 )^{\frac{1}{2}}
     \|_{L^6} \leqslant K_s \| \nabla ( \sum_{i = 1}^m | u_i |^2
     )^{\frac{1}{2}} \|_{L^2} . \]
  By the Cauchy-Schwarz inequality, we have 
  \[ | \nabla ( \sum_{i = 1}^m | u_i |^2 )^{\frac{1}{2}}
     | = \frac{| \sum_{i = 1}^m u_i \nabla u_i |}{(
     \sum_{i = 1}^m | u_i |^2 )^{\frac{1}{2}}} \leqslant ( \sum_{i
     = 1}^m | \nabla u_i |^2 )^{\frac{1}{2}}, \]
  and we get
  \[ \| u \|_{L^6} \leqslant K_s \| \nabla u \|_{L^2} . \]
  It follows from the Cauchy-Schwarz inequality that
  \[ \| u \|_{L^4} \leqslant \| u \|^{\frac{1}{4}}_{L^2} \| u \|^{\frac{3}{4}}_{L^6}
     \leqslant K_s^{\frac{3}{4}} \| u \|^{\frac{1}{4}}_{L^2} \| \nabla u
     \|^{\frac{3}{4}}_{L^2} \leqslant \frac{3^{\frac{3}{4}}}{4} K_s^{\frac{3}{4}} (\| u \|_{L^2} + \| \nabla u \|_{L^2}), \]
  and we conclude the proof.
\end{proof}

\section{Estimates of Finite Rank Approximation}
\label{appen:lem1}
We prove Lemmas \ref{lem:quasi ort} and \ref{lem:quasi ort1} on estimates of quasi-orthonormal projection, and then prove Lemma \ref{lem:fe est} on estimates of finite-rank approximation. We recall that $\langle\cdot,\cdot\rangle_E$ is the energy inner product induced by the norm  $\|u\|_E^2=\|\nabla u\|_{L^2}^2+\eta_2\|u\|_{L^2}^2$, the diagonal matrix for approximate eigenvalues is $\overline\Lambda=\mathrm{diag}(\overline\lambda_1,\dots,\overline\lambda_m)$, and the Gram matrix is $G_Q=(\langle \psi_i,Q\psi_j\rangle_\Omega)_{1\leqslant i,j\leqslant m}$.

\begin{proof}[Proof of Lemma \ref{lem:quasi ort}]
We assume the  representation \[f^{Q,\perp}_{\tmop{lg}}=\sum_{j = 1}^m d_j\overline\psi_j=\overline{\Psi}D\,,\]
where $D\assign(d_1,\cdots,d_m)^T$, and $\overline{\Psi}\assign(\overline\psi_1,\cdots,\overline\psi_m)$. 
By definition, we have 
\begin{equation}
    \label{eq111}\|Q^{\frac{1}{2}}f^{Q,\perp}_{\tmop{lg}}\|^2_{L^2}=|(G_Q)^{\frac{1}{2}}D|^2,\quad \sum_{j = 1}^m \overline\lambda_j^{-1} \langle f, Q\overline\psi_j \rangle_{\Omega} ^2 =|\overline\Lambda^{\frac{-1}{2}}G_QD|^2.
\end{equation}
     Hence we have
\[|\|Q^{\frac{1}{2}}f^{Q,\perp}_{\tmop{lg}}\|^2_{L^2}-\sum_{j = 1}^m \overline\lambda_j^{-1} \langle f, Q\overline\psi_j \rangle_{\Omega} ^2|\leqslant\|\mathbb{1}_{m}-(G_Q)^{\frac{1}{2}}\overline\Lambda^{-1}(G_Q)^{\frac{1}{2}}\|\|Q^{\frac{1}{2}}f^{Q,\perp}_{\tmop{lg}}\|^2_{L^2}.\]This is the ``Gram mismatch'' constant $\varepsilon_3$ defined in \eqref{def-eps3} in Section \ref{sec:analysis}.
We thus conclude the proof of \eqref{fn 1}
 using the definition of projections.\end{proof}
 
 \begin{proof}[Proof of Lemma \ref{lem:quasi ort1}] Similar to the proof above, we assume the representation $f=\overline{\Psi}B$. By definition of $r^e_j$ and orthogonality of $u$ to $V_{\tmop{lg}}$ in the $Q$-inner product, we have $$\langle f, u \rangle_{E}=-B^T\overline\Lambda^{-1}\langle R^e, u \rangle_{E},\quad \langle f, f \rangle_{E}=B^TG_EB,$$
 and we thus conclude the proof by the Cauchy-Schwarz inequality.
 \end{proof}

\begin{proof}[Proof of Lemma \ref{lem:fe est}]
We decompose $V_N$ into $V_{\tmop{lg}}=\tmop{span}\{\overline\psi_j\}_{j=1}^m$ and its orthogonal complement in the $Q$-inner product $V_{\tmop{sm}}$. 
For any $f$, due to the orthogonality of $V_{\tmop{sm}}$ and $V_{\tmop{lg}}$, we can write an orthogonal decomposition in the $Q$-inner product as $$f=f^Q_{\tmop{or}}+f^Q=f^Q_{\tmop{or}}+f^Q_{\tmop{sm}}+f^Q_{\tmop{lg}},$$
  where $f^Q_{\tmop{or}}\perp V_N$, $f^Q_{\tmop{sm}}\in V_{\tmop{sm}}$, $f^Q_{\tmop{lg}}\in V_{\tmop{lg}}$ hence they are mutually orthogonal. In particular $f^Q_{\tmop{lg}}$ coincides with the orthogonal projection $f^{Q,\perp}_{\tmop{lg}}$.

\paragraph{Step 1: Quasi-orthogonal projection in $E$.}

We invoke another orthogonal decomposition in the $E$-inner product as 
$$f=f^E_{\tmop{or}}+f^E=f^E_{\tmop{or}}+f^E_{\tmop{sm}}+f^E_{\tmop{lg}},$$
where $f^E_{\tmop{or}}\perp V_N$, $f^E_{\tmop{sm}}\in V_{\tmop{sm}}$, $f^E_{\tmop{lg}}\in V_{\tmop{lg}}$.   Notice that $f-f^E_{\tmop{or}}$ is the $E$-projection of $f$ onto the space $V_N$, which coincides with $I_Nf$. Thus $f^E_{\tmop{or}}=f-I_Nf$, and 
 from \eqref{k5} we have \begin{equation}
      \label{k51}\| Q^{\frac{1}{2}} f^E_{\tmop{or}}
    \|_{L^2}^2\leqslant K_5 \|  f^E_{\tmop{or}} \|_{{E}}^2.\end{equation}
    
    $V_{\tmop{sm}}$ and $V_{\tmop{lg}}$ are not orthogonal in the $E$-inner product, so $f^E_{\tmop{lg}}$ does not coincide with $f^{E,\perp}_{\tmop{lg}}$, the orthogonal projection of $f^E$ in the $E$-inner product in $V_{\tmop{lg}}$, but we show that they are close. In fact\begin{equation*}
    0=\langle f^E-f^{E,\perp}_{\tmop{lg}}, f^E_{\tmop{lg}}-f^{E,\perp}_{\tmop{lg}} \rangle_{E}=\|f^E_{\tmop{lg}}-f^{E,\perp}_{\tmop{lg}} \|^2_{E}+\langle f^E_{\tmop{sm}}, f^E_{\tmop{lg}}-f^{E,\perp}_{\tmop{lg}} \rangle_{E}.
\end{equation*}As a consequence of Lemma \ref{lem:quasi ort1}, we have \begin{equation*}
    \|f^E_{\tmop{lg}}-f^{E,\perp}_{\tmop{lg}} \|_{E}\leqslant\varepsilon_4\| f^E_{\tmop{sm}}\|_{E}\leqslant\varepsilon_4\| f^E_{\tmop{lg}}-f^{E,\perp}_{\tmop{lg}} \|_{E}+\varepsilon_4\| f^E-f^{E,\perp}_{\tmop{lg}}\|_{E},
\end{equation*}
and we solve  that 
\begin{equation}
    \|f^E_{\tmop{lg}}-f^{E,\perp}_{\tmop{lg}} \|_{E}\leqslant\frac{\varepsilon_4}{1-\varepsilon_4}\| f^E-f^{E,\perp}_{\tmop{lg}}\|_{E},\quad \|f^E_{\tmop{sm}}\|_{E}\leqslant\frac{1}{1-\varepsilon_4}\| f^E-f^{E,\perp}_{\tmop{lg}}\|_{E}\leqslant\frac{1}{1-\varepsilon_4}\| f^E\|_{E}.\label{h_proj_1}
\end{equation}
 \paragraph{Step 2: Conclusion of the estimate.}

   Now
for $\mathcal{P}f=f$, the difference  consists of three parts thanks to the orthogonal decomposition,
  $$\begin{aligned}
    &\langle -(L_2 - L_{\tmop{ap}}) f, f \rangle 
    \backassign  I_1 + I_2+I_3,\quad
    I_1  \assign  \|Q^{\frac{1}{2}}(f-f^Q_{\tmop{lg}})\|_{L^2}^2,\\
    &I_2  \assign  \|Q^{\frac{1}{2}}f^Q_{\tmop{lg}}\|_{L^2}^2- \sum_{j = 1}^m \overline\lambda_j^{-1}\langle f, Q\overline\psi_j \rangle_{
  \Omega}^2,\quad
    I_3\assign- \sum_{j = 1}^m\overline{\lambda}_j^{-1} \langle f, Q \overline\psi_j
    \rangle_{\Omega} \langle f, r_j \rangle_{\Omega} .
  \end{aligned}$$
  By the $Q$-orthogonality, we have the estimate using \eqref{k51}, \eqref{constant small}, and  \eqref{h_proj_1} that\begin{equation}\begin{aligned}
  \label{fn eq1}&|I_1|\leqslant\| Q^{\frac{1}{2}} (f-f^Q_{\tmop{lg}}+f^Q_{\tmop{lg}}-f^{E}_{\tmop{lg}})
    \|^2_{L^2}=\| Q^{\frac{1}{2}} (f^E_{\tmop{or}}+f^{E}_{\tmop{sm}})
    \|_{L^2}^2\leqslant (1 + \mu) \|Q^{\frac{1}{2}}f^E_{\tmop{or}}\|_{L^2}^2 + ( 1 + \frac{1}{\mu} ) \|Q^{\frac{1}{2}}f^E_{\tmop{sm}}\|_{L^2}^2\\&\leqslant(1 + \mu)K_5\|f^E_{\tmop{or}}\|_E^2+ ( 1 + \frac{1}{\mu} ) \frac{K_6}{(1-\varepsilon_4)^2}\| f^E\|_{E}^2=(K_5+\frac{K_6}{(1-\varepsilon_4)^2})\| f\|_{E}^2,    
  \end{aligned}
  \end{equation}
   where $\mu=\frac{K_6}{K_5(1-\varepsilon_4)^2}$ is chosen to balance the coefficients of the two terms using the $E$-orthogonality, leveraging the weighted AM-GM inequality. 

  For $I_3$, we expand it into double integrals and use the Cauchy-Schwarz inequality to get 
  \begin{equation}
      \label{est i3}\begin{aligned}
          | I_3 |&=|\iint_{\Omega} \sum_{j=1}^m f(x)f(y)\overline{\lambda}_j^{-1}Q \overline\psi_j(x)r_j(y)dxdy|\\
     &\leqslant \sqrt{\iint_{\Omega} f^2(x)f^2(y)dxdy} \sqrt{\iint_{\Omega}\sum_{j = 1}^m \sum_{i = 1}^m\overline{\lambda}_j^{-1}Q \overline\psi_j(x)r_j(y)\overline{\lambda}_i^{-1}Q \overline\psi_i(x)r_i(y)dxdy}   = K_7 \| f \|_{{L^2}(\Omega)}^2.\end{aligned}
  \end{equation}
  
Combining the estimates \eqref{fn eq1} and \eqref{est i3} with \eqref{fn 1} on $I_2$, we have   \[\begin{aligned}
    &|\langle (L_2 - L_{\tmop{ap}}) f, f \rangle |\leqslant (K_5+\frac{K_6}{(1-\varepsilon_4)^2})\| f\|_{E}^2+(\varepsilon_3\| \lambda_{\max} (Q) \|_{L^\infty}+K_7)\|f\|_{\Omega}^2.
\end{aligned}\]
Here $\varepsilon_3$ is the ''Gram mismatch'' constant from Lemma \ref{lem:quasi ort}, and $K_5,\varepsilon_4,K_6,K_7$ are the constants from Section \ref{sec:analysis}.
We thus conclude the proof of the lemma.
 \end{proof}

\section{Derivation of Gradient}\label{sec:nabla lambda}

We derive the gradient of $\lambda (\widetilde{U})$ by differentiating the
eigenvalue equation with respect to $\widetilde{U}$:
\[ \frac{\delta \mathcal{L} (\widetilde{U})}{\delta \widetilde{U}} g v^r (\widetilde{U})
   +\mathcal{L} (\widetilde{U})  \frac{\delta v^r (\widetilde{U})}{\delta \widetilde{U}} g
   = \frac{\delta \lambda (\widetilde{U})}{\delta \widetilde{U}} g v^r (\widetilde{U}) +
   \lambda (\widetilde{U})  \frac{\delta v^r (\widetilde{U})}{\delta \widetilde{U}} g, \]
where $g$ represents the descent direction. Taking the inner product with $v^l
(\widetilde{U})$, we have:
\[ \left\langle v^l, \frac{\delta \mathcal{L} (\widetilde{U})}{\delta \widetilde{U}} g
   v^r \right\rangle + \left\langle v^l, \mathcal{L} (\widetilde{U}) \frac{\delta
   v^r (\widetilde{U})}{\delta \widetilde{U}} g \right\rangle = \langle v^l, v^r
   \rangle \frac{\delta \lambda (\widetilde{U})}{\delta \widetilde{U}} g + \lambda
   (\widetilde{U})  \left\langle v^l, \frac{\delta v^r (\widetilde{U})}{\delta
   \widetilde{U}} g \right\rangle . \]
Since
\[ \left\langle v^l, \mathcal{L} (\widetilde{U})  \frac{\delta v^r
   (\widetilde{U})}{\delta \widetilde{U}} g \right\rangle = \left\langle \mathcal{L}
   (\widetilde{U})^{\ast} v^l, \frac{\delta v^r (\widetilde{U})}{\delta \widetilde{U}} g
   \right\rangle = \lambda (\widetilde{U})  \left\langle v^l, \frac{\delta v^r
   (\widetilde{U})}{\delta \widetilde{U}} g \right\rangle, \]
we simplify the expression by assuming $ \langle v^l, v^r \rangle \neq 0$:
\[ \frac{\delta \lambda (\widetilde{U})}{\delta \widetilde{U}} g = \frac{\left\langle
   v^l, \frac{\delta \mathcal{L} (\widetilde{U})}{\delta \widetilde{U}} g v^r
   \right\rangle}{\langle v^l, v^r \rangle} . \]
   
Additionally, we note that
\[ \frac{\delta \mathcal{L} (\widetilde{U})}{\delta \widetilde{U}} g v^r = \Pi (g
   \cdot \nabla v^r + v^r \cdot \nabla g) . \]
It follows via an integration by parts that
\[ \frac{\delta \lambda (\widetilde{U})}{\delta \widetilde{U}} g = \frac{\langle g
   \cdot \nabla v^r + v^r \cdot \nabla g, v^l \rangle}{\langle v^l, v^r
   \rangle} = \frac{\langle g \cdot \nabla v^r, v^l \rangle - \langle v^r
   \cdot \nabla v^l, g \rangle}{\langle v^l, v^r \rangle} . \]
   We thus conclude the proof of \eqref{eq:gradient lambda}.
\section{Expression of Operators in Spherical
Coordinates}\label{sec sphdel}
For axisymmetric scalar $f$, our velocity vector $\widetilde{U}$, and pressure $\widetilde{P}$, we know we have the following operators in the spherical coordinates:
\label{sec:ops}
\begin{eqnarray*}
\Delta_{\tmop{sph}} f & = & \frac{1}{r^2} \partial_r  (r^2 
  \partial_r f) + \frac{1}{r^2 \sin \theta} \partial_{\theta} 
  (\sin \theta\partial_{\theta}f ) ,\\
  \nabla_{\tmop{sph}}  \widetilde{U} & = & \left(\begin{array}{ccc}
    \partial_r  \widetilde{U}_r & \partial_r  \widetilde{U}_{\theta} & \partial_r 
    \widetilde{U}_{\phi}\\
    \frac{1}{r} \partial_{\theta}  \widetilde{U}_r - \frac{\widetilde{U}_{\theta}}{r}
    & \frac{1}{r} \partial_{\theta}  \widetilde{U}_{\theta} +
    \frac{\widetilde{U}_r}{r} & \frac{1}{r} \partial_{\theta}  \widetilde{U}_{\phi}\\
    - \frac{\widetilde{U}_{\phi}}{r} & - \frac{\widetilde{U}_{\phi} \cot \theta}{r} &
    \frac{\widetilde{U}_{\theta} \cot \theta}{r} + \frac{\widetilde{U}_r}{r}
  \end{array}\right),\\
  \Delta_{\tmop{sph}} \widetilde{U} & = & \left( \Delta \widetilde{U}_r - \frac{2
  \widetilde{U}_r}{r^2} - \frac{2 \partial_{\theta}  \widetilde{U}_{\theta}}{r^2} -
  \frac{2 \widetilde{U}_{\theta} \cot \theta}{r^2}, \Delta \widetilde{U}_{\theta} +
  \frac{2 \partial_{\theta}  \widetilde{U}_r}{r^2} - \frac{\widetilde{U}_{\theta}}{r^2
  \sin^2 \theta}, \Delta \widetilde{U}_{\phi} - \frac{\widetilde{U}_{\phi}}{r^2 \sin^2
  \theta} \right),\\
  \nabla_{\tmop{sph}} \widetilde{P} & = & \left( \partial_r \widetilde{P}, \frac{1}{r}
  \partial_{\theta} \widetilde{P}, 0 \right),\\
  \tmop{div}_{\tmop{sph}}  \widetilde{U} & = & \frac{1}{r^2} \partial_r  (r^2 
  \widetilde{U}_r) + \frac{1}{r \sin \theta} \partial_{\theta} 
  (\widetilde{U}_{\theta} \sin \theta) .
\end{eqnarray*}
And in variables of Section \ref{sec:ext}, we compute: 
\begin{eqnarray*}
  \widehat{\partial}_{\beta} f & \assign & \sin \beta (\cos \beta
  \partial_{\beta} f - \sin \beta f),\\
  \widehat{\nabla}_{\tmop{sph}} \widehat{U}  & \assign & \frac{\cos^2 \beta}{\sin \beta}
  \left(\begin{array}{ccc}
    \widehat{\partial}_{\beta} \widehat{U} _r & \widehat{\partial}_{\beta} \widehat{U} _{\theta} &
    \widehat{\partial}_{\beta} \widehat{U} _{\phi}\\
    \partial_{\theta} \widehat{U} _r - \widehat{U} _{\theta} & \partial_{\theta} \widehat{U} _{\theta} + \widehat{U} _r &
    \partial_{\theta} \widehat{U} _{\phi}\\
    - \widehat{U} _{\phi} & - \widehat{U} _{\phi} \cot \theta & \widehat{U} _{\theta} \cot \theta + \widehat{U} _r
  \end{array}\right),\\
  \widehat{\Delta}_{\tmop{sph}} \widehat{U} & \assign & \cot^2 \beta \cos \beta \left(
  \cot \theta \partial_{\theta} \widehat{U} + \frac{1}{4}  (2 \sin 4 \beta
  \partial_{\beta} \widehat{U} + \sin^2 2 \beta (\partial_{\beta \beta} \widehat{U} - 3 \widehat{U})) +
  \partial_{\theta \theta}  \widehat{U} \right)\\
  & + & \cot^2 \beta \cos \beta \left( - 2 \widehat{U}_r - 2 (\cot \theta \widehat{U}_{\theta} +
  \partial_{\theta} \widehat{U}_{\theta}), 2 \partial_{\theta} \widehat{U}_r -
  \frac{\widehat{U}_{\theta}}{\sin^2 \theta}, - \frac{\widehat{U}_{\phi}}{\sin^2 \theta}
  \right),\\
  \widehat{\nabla}_{\tmop{sph}} \widehat{P} & \assign & \frac{\cos^2 \beta}{\sin \beta}
  (\widehat{\partial}_{\beta} \widehat{P}, \partial_{\theta} \widehat{P}, 0),\\
  \widehat{\tmop{div}}_{\tmop{sph}} \widehat{U} & \assign & \frac{\cos^2 \beta}{\sin
  \beta} \left( \frac{1}{2} \sin 2 \beta \partial_{\beta} \widehat{U}_r + \frac{1}{2} (3
  + \cos 2 \beta) \widehat{U}_r + \frac{1}{\sin \theta} \partial_{\theta}  (\widehat{U}_{\theta}
  \sin \theta) \right) .
\end{eqnarray*}

\section{Eigenfunction in the Ball}\label{eig cons}

In this section, we aim to prove Lemma \ref{lem:lap eig} and Lemma
\ref{lem:zeros}.

\begin{proof}[Proof of Lemma \ref{lem:lap eig}]
  \paragraph{Step 1: Variational formulation.}
  
  The problem \eqref{rayleigh} can be equivalently written as
  \[ \lambda_n^s = \sup_{\dim (V_n) = n - 1} \inf_{\|u\|^2_{\Omega} = 1, u \in
     V, u \bot V_n}  \| \nabla u\|^2_{\Omega} . \]
  We apply a variational approach to reformulate it as an eigenvalue problem.
  Consider the Lagrangian functional with multiplier $p$ and $\lambda$:
  \[ \mathfrak{L} (u, p, \lambda) = \frac{1}{2} \| \nabla u\|^2_{\Omega} -
     \frac{\lambda}{2} (\|u\|^2_{\Omega} - 1) - \langle p, \tmop{div} u
     \rangle_{\Omega} . \]
  Taking variations gives
  \[ \delta \mathfrak{L}= \langle \nabla u, \nabla \delta u \rangle_{\Omega} -
     \lambda \langle u, \delta u \rangle_{\Omega} - \langle p, \tmop{div}
     \delta u \rangle_{\Omega} = \langle - \Delta u + \nabla p - \lambda u,
     \delta u \rangle_{\Omega} + \langle \partial_{\vec{n}} u^b - p^b 
     \vec{n}, \delta u \rangle_{\partial \Omega}, \]
  where we integrate by parts in the second step. The corresponding
  eigenvalue problem reads:
  \begin{equation}
    - \Delta_{\tmop{sph}} u^b + \nabla_{\tmop{sph}} p^b = \lambda^b u^b, \quad
    \tmop{div}_{\tmop{sph}} u^b = 0, \label{eq:sph eig U}
  \end{equation}
  subject to the homogeneous Neumann boundary condition on $\partial \Omega$
  with the unit vector $\vec{n}$:
  \[ \partial_{\vec{n}} u^b - p^b  \vec{n} = 0. \]
  Here $u$ is constrained to the divergence-free subspace of axisymmetric
  vector fields.
  
  \paragraph{Step 2: Expansion in Spherical Harmonics.}
  
  We expand $u^b$ and $p^b$ in the vector spherical harmonics as follows:
  \[ u^b = \sum_{l = 0}^{+ \infty} (u_l^Y (r) Y_l (\theta) + u_l^X (r) X_l
     (\theta) + u_l^Z (r) Z_l (\theta)), \quad p^b = \sum_{l = 0}^{+ \infty}
     p_l (r) S_l (\theta) . \]
  From \eqref{eq:sph eig U}, taking the divergence yields $\Delta_{\tmop{sph}}
  p = 0$. Thanks to \eqref{eq:lap p}, we obtain for each $l$,
  \[ \Delta_r p_l (r) - \frac{\mu_l}{r^2} p_l (r) = 0. \]
  The regular solution is $p_l (r) = C_l r^l$, while the singular branch $r^{-
  l - 1}$ is discarded. Substituting into \eqref{eq:grad p}\eqref{eq:lap u},
  the eigenvalue problem becomes
  \begin{equation}
    \begin{aligned}
      & - \Delta_r u_l^Y - \frac{1}{r^2}  (2 \mu_l u_l^X - (\mu_l + 2) u_l^Y)
      + lC_l r^{l - 1} = \lambda u_l^Y,\\
      & - \Delta_r u_l^X - \frac{1}{r^2}  (2 u_l^Y - \mu_l u_l^X) + C_l r^{l
      - 1} = \lambda u_l^X, \quad - \Delta_r u_l^Z + \frac{\mu_l}{r^2} u_l^Z =
      \lambda u_l^Z . \label{eq:eig UY}
    \end{aligned}
  \end{equation}
  Using \eqref{eq:div u}, the divergence-free condition reduces to
  \begin{equation}
    \frac{\mathd}{\mathd r} u_l^Y + \frac{2}{r} u_l^Y - \frac{\mu_l}{r} u_l^X
    = 0. \label{eq:div free UX}
  \end{equation}
  And the boundary condition is
  \[ \frac{\mathd}{\mathd r} u_l^X (1) = \frac{\mathd}{\mathd r} u_l^Y (1) -
     p_l (1) = \frac{\mathd}{\mathd r} u_l^Z (1) = 0. \]
  Hence the system decouples into an eigenvalue problem for $u_l^Z$ and an
  eigenvalue system for $u_l^Y$ and $u_l^X$.
  
  \paragraph{Step 3: Solve $u_l^Y$ and $u_l^X$.}
  
  We first solve for $u_l^Y$ and $u_l^X$. When $l = 0$, the divergence-free
  condition \eqref{eq:div free UX} forces $u^Y_l = 0$. Thus, we only consider
  $l \geqslant 1$. Plugging \eqref{eq:div free UX} into \eqref{eq:eig UY}, we
  obtain:
  \begin{equation}
    - \frac{\mathd^2}{\mathd r^2} u_l^Y - \frac{4}{r}  \frac{\mathd}{\mathd r}
    u_l^Y + \frac{1}{r^2}  (l (l + 1) - 2) u_l^Y + lC_l r^{l - 1} = \lambda
    u_l^Y . \label{eq:eq UY}
  \end{equation}
  A particular solution for this equation is
  \[ u_l^Y = lA_l r^{l - 1}, \quad C_l = \lambda A_l . \]
  The homogeneous equation is related to the Bessel ODE. Its solution is:
  \[ u_l^Y = B_l  \tilde{j}_l  (kr), \quad \lambda = k^2, \quad \tilde{j}_l
     (r) \assign \frac{J_{l + \frac{1}{2}} (r)}{r^{\frac{3}{2}}}, \]
  where $J_{l + \frac{1}{2}}$ is the first kind of Bessel function. Hence the
  general solution reads
  \[ u_l^Y = B_l  \tilde{j}_l  (kr) + lA_l r^{l - 1} . \]
  It follows that
  \[ u_l^X = \frac{B_l}{\mu_l}  \left( r \frac{\mathd}{\mathd r} + 2 \right) 
     \tilde{j}_l  (kr) + A_l r^{l - 1} = \frac{B_l}{\mu_l}  (rk \tilde{j}_l'
     (kr) + 2 \tilde{j}_l (kr)) + A_l r^{l - 1} . \]
  It is not hard to check that the pair indeed solves \eqref{eq:eig UY}.
  
  Finally, we use the boundary condition to decide $k$. The boundary
  condition for $u_l^Y$ is:
  \begin{equation}
    \frac{\mathd}{\mathd r} u_l^Y (1) - p_l (1) = B_l k \tilde{j}_l' (k) -
    (k^2 - l (l - 1)) A_l = 0. \label{eq:bc UY}
  \end{equation}
  For $u_l^X$, we simplify the condition $\frac{\mathd}{\mathd r} u_l^X (1) =
  0$ into
  \[ \begin{aligned}
       \frac{\mathd}{\mathd r} u_l^X (1) & = \frac{B_l}{\mu_l}  \left( r
       \frac{\mathd^2}{\mathd r^2} + 3 \frac{\mathd}{\mathd r} \right) 
       \tilde{j}_l  (kr) |_{r = 1} + (l - 1) A_l\\
       & = - \frac{B_l}{\mu_l}  ((k^2 - (l - 1) (l + 2)) \tilde{j}_l (k) + k
       \tilde{j}_l' (k)) + (l - 1) A_l,
     \end{aligned} \]
  where we use that $\tilde{j}_l  (kr)$ is a homogeneous solution to
  \eqref{eq:eq UY}. By \eqref{eq:bc UY}, this becomes
  \[ - B_l  (k^2 - (l - 1) (l + 2))  \tilde{j}_l (k) - (k^2 - l (l - 1) (l +
     2)) A_l = 0. \]
  To ensure nontrivial solutions with $A_l$ and $B_l$, we require that
  \eqref{eq:alphaY} holds:
  \begin{equation}
    (k^2 - l (l - 1) (l + 2)) k \tilde{j}_l' (k) + (k^2 - l (l - 1))  (k^2 -
    (l - 1) (l + 2))  \tilde{j}_l (k) = 0.
  \end{equation}
  This condition determines all eigenvalues $\lambda = k^2$ and the
  corresponding coefficients $A_l$ and $B_l$ are uniquely determined up to
  normalization, when $k>0$:
  \[ A_l = - (k^2 - (l - 1) (l + 2))  \tilde{j}_l (k), \quad B_l = k^2 - l (l
     - 1)  (l + 2) . \]
  
  \subparagraph{Step 4: $k = 0$ case.}
  
  When $k = 0$ and $l > 1$, we have $A_l = 0$. In this case, both $u^X_l$ and
  $u^Y_l$ vanish, since
  \[ u_l^X = \frac{B_l}{\mu_l}  \left( r \frac{\mathd}{\mathd r} + 2 \right) 
     \tilde{j}_l  (kr), \quad u_l^Y = B_l  \tilde{j}_l  (kr) . \]
  When $k = 0$ and $l = 1$, $u^X_l$ and $u^Y_l$ reduce to the constant
  functions:
  \[ u_l^X = u_l^Y = 1. \]
  Since these solutions are nontrivial, we conclude that $\lambda_0^s = 0$ is
  also an eigenvalue with the corresponding eigenvector:
  \[ \psi_0^s = (\cos \theta, - \sin \theta, 0) . \]
  
  \paragraph{Step 5: Solve $u_l^Z$.}
  
  We now turn to the component $u_l^Z$. The corresponding eigenpairs are given
  explicitly by
  \[ u_l^Z = \frac{J_{l + \frac{1}{2}}  (kr)}{\sqrt{r}}, \quad \lambda = k^2 .
  \]
  The boundary condition yields
  \begin{equation}
    \frac{\mathd}{\mathd r} u_l^Z (1) = kJ_{l + \frac{1}{2}}' (k) -
    \frac{1}{2} J_{l + \frac{1}{2}} (k) = 0,
  \end{equation}
  which characterizes all admissible values of $k$, and therefore determines
  the corresponding eigenvalues $\lambda = k^2$. In particular, $k = 0$ is
  excluded since $u_l^Z$ vanishes, so $\lambda = 0$ is not an eigenvalue in
  this case.
  
  \paragraph{Summary}
  
  In summary, let $\alpha^Y_{ml}$ be the $m$-th nonzero root of equation
  \eqref{eq:alphaY} and $\alpha^Z_{ml}$ be the $m$-th root of equation
  \eqref{eq:alphaZ}. Then the eigenpairs are:
  \[ \lambda_{ml}^Y = (\alpha^Y_{ml})^2, \quad A_{ml} = - ((\alpha^Y_{ml})^2 -
     (l - 1) (l + 2)) J_{l + \frac{1}{2}} (\alpha^Y_{ml}), \quad B_{ml} =
     (\alpha^Y_{ml})^2 - l (l - 1)  (l + 2), \]
  \[ \psi_{ml} = (B_{ml}  \tilde{j}_l (\alpha^Y_{ml} r) + lA_{ml} r^{l - 1})
     Y_l (\theta) + \left( \frac{B_l}{\mu_l} (r \alpha^Y_{ml}  \tilde{j}_l'
     (\alpha^Y_{ml} r) + 2 \tilde{j}_l (\alpha^Y_{ml} r)) + A_l r^{l - 1}
     \right) X_l (\theta), \]
  and
  \[ \lambda_{ml}^Z = (\alpha^Z_{ml})^2, \quad \psi_{ml}^Z = \frac{J_{l +
     \frac{1}{2}}  (\alpha^Z_{ml} r)}{\sqrt{r}} Z_l (\theta) . \]
  Plus a zero eigenmode $(\lambda_0, \psi_0) = (0, (\cos \theta, - \sin
  \theta, 0))$.
\end{proof}

\begin{remark}
  In practice, it is often inconvenient to evaluate the derivative $J_{l +
  \frac{1}{2}}' (r)$ directly. Instead, we make use of the recurrence identity
  \[ J_{l + \frac{1}{2}}' (r) = J_{l - \frac{1}{2}} (r) - \frac{l +
     \frac{1}{2}}{r} J_{l + \frac{1}{2}} (r) . \]
  Substituting this relation, equation \eqref{eq:alphaY} can be rewritten as:
  \[ k (k^2 - l (l - 1) (l + 2)) J_{l - \frac{1}{2}} (k) + (k^4 - l (2 l + 1)
     k^2 + l (l - 1) (l + 2) (2 l + 1)) J_{l + \frac{1}{2}} (k) = 0, \]
  while equation \eqref{eq:alphaZ} becomes:
  \[ kJ_{l - \frac{1}{2}} (k) - (l + 1) J_{l + \frac{1}{2}} (k) = 0. \]
  Similarly, the expression for $u_l^X$ takes the form:
  \[ u_l^X = \frac{B_l}{\mu_l  \sqrt{r}}  \left( \alpha^Y_l J_{l -
     \frac{1}{2}} (kr) - \frac{lJ_{l + \frac{1}{2}} (kr)}{r} \right) + A_l
     r^{l - 1} . \]
\end{remark}

\begin{proof}[Proof of Lemma \ref{lem:zeros}]
  First we consider the equation \eqref{eq:alphaY}. Since $J_{l + \frac{1}{2}}
  (\gamma) \neq 0$ for $l \geqslant 2$, it can be written in the form:
  \[ \frac{kJ_{l + \frac{1}{2}}' (k)}{J_{l + \frac{1}{2}} (k)} - \frac{3}{2} +
     \frac{(k^2 - l (l - 1))  (k^2 - (l - 1) (l + 2))}{(k^2 - l (l - 1) (l +
     2))} = 0. \]
  If $l = 1$, the equation becomes:
  \[ \frac{kJ_{l + \frac{1}{2}}' (k)}{J_{l + \frac{1}{2}} (k)} - \frac{3}{2} +
     k^2 = 0. \]
  Define:
  \[ R_l (k) = \frac{kJ_{l + \frac{1}{2}}' (k)}{J_{l + \frac{1}{2}} (k)},
     \quad T_l (k) = \frac{(k^2 - l (l - 1))  (k^2 - (l - 1) (l + 2))}{(k^2 -
     l (l - 1) (l + 2))}, \quad F_l (k) = R_l (k) - \frac{3}{2} + T_l (k) . \]
  We observe that $F_l (k)$ is continuous except at $\tilde{\beta}_{l, j}$ ($j
  \geqslant 1$). If $\tilde{\beta}_{l, j}$ is a zero of $J_{l + \frac{1}{2}}
  (x)$, then
  \[ \lim_{x \rightarrow \tilde{\beta}_{l, j} +} R_l (x) = + \infty, \quad
     \lim_{x \rightarrow \tilde{\beta}_{l, j} -} R_l (x) = - \infty, \]
  and $T_l (\tilde{\beta}_{l, j})$ is finite. If $\tilde{\beta}_{l, j}$ is
  $\gamma$, we have
  \[ \lim_{x \rightarrow \tilde{\beta}_{l, j} +} T_l (x) = + \infty, \quad
     \lim_{x \rightarrow \tilde{\beta}_{l, j} -} T_l (x) = - \infty, \]
  and $R_l (\tilde{\beta}_{l, j})$ is finite. Therefore, for every
  $\tilde{\beta}_{l, j}$, in both cases we obtain 
  \[ \lim_{x \rightarrow \tilde{\beta}_{l, j} +} F_l (x) = + \infty, \quad
     \lim_{x \rightarrow \tilde{\beta}_{l, j} -} F_l (x) = - \infty . \]
  At $\tilde{\beta}_{l, 0} = 0$, the power series
  expansion
  \[ J_{l + \frac{1}{2}} (x) = \frac{(x / 2)^{l + \frac{1}{2}}}{\Gamma (l +
     \frac{3}{2})}  (1 - \frac{x^2}{4 (l + \frac{3}{2})} + O (x^4)) \]
  shows that
  \[ F_l (k) = \frac{3 (1 + l)^2}{l (2 + l)  (3 + 2 l)} k^2 + o (k^2) . \]
  Therefore, we obtain $F_l (k) > 0$ for sufficiently small $k$. We conclude that $F_l
  (k)$ has at least one root on every interval $(\tilde{\beta}_{l, j},
  \tilde{\beta}_{l, j + 1})$ with $j \geqslant 0$. The case $l = 1$ follows by
  a similar argument.
  
  Next we derive a lower bound for the first root, and prove that assuming two
  zeros in the same interval leads to a contradiction. Since $F_l 
  (\tilde{\beta}_{l, j} +) = \infty$ and $F_l  (\tilde{\beta}_{l + 1, j} -) =
  - \infty$, we denote $k_2 > k_1$ as the first two roots in
  $(\tilde{\beta}_{l, j}, \tilde{\beta}_{l, j + 1})$, which implies that $F_l'
  (k_1) \leqslant 0$.
  
  Straightforward calculation shows that
  \[ R_l' (k) = - \frac{R_l (k)^2 + k^2 - \left( l + \frac{1}{2}
     \right)^2}{k}, \quad T'_l (k) = 2 k \left( 1 - \frac{l (l - 1)^3 (l + 1)
     (l + 2)}{(k^2 - l (l - 1) (l + 2))^2} \right) . \]
  When $F_l (k) = 0$, we have $R_l (k) = \frac{3}{2} - T_l (k)$. Thus $F_l'$
  simplifies into:
  \[ F_l' (k) = - \frac{\left( \frac{3}{2} - T_l (k) \right)^2 + k^2 - \left(
     l + \frac{1}{2} \right)^2}{k} + T'_l (k) = - \frac{kQ_l (k^2)}{(k^2 - l
     (l - 1) (l + 2))^2}, \]
  where
  \[ Q_l (s) = s^3 - 4 l^2 s^2 + l (l - 1)  (6 l^2 + 11 l + 7) s - 3 l (l -
     1)^2  (l + 1)^2  (l + 2) . \]
  It is easy to verify that for $l \geqslant 2$, we have by quadratic
  equations that
  \[ Q_l' (s) = 3 s^2 - 8 l^2 s + l (l - 1)  (3 l + 7)  (2 l + 1) > 0. \]
  Therefore, $Q_l (s)$ has a unique root $s_l > 0$ such that $Q_l (s) < 0$ for
  $s < s_l$, and $Q_l (s)$ is positive and strictly increasing for $s > s_l$.
  
  For $l = 1$, we have $Q_1 (s) = s^3 - 4 s^2$, which satisfies the same
  property.
  
  Since $F_l' (k_1) \leqslant 0$, it follows that
  \[ k_1 \geqslant \sqrt{s_l}, \]
  and that $0 \leqslant Q_l (k_1^2) < Q_l (k_2^2)$. Hence, we obtain $F_l'
  (k_2) < 0$. Thus $F_l$ cannot cross the axis at $k_1$, and it has to hold
  that $F_l (k_1) = F_l' (k_1) = 0$. We verify that $F_l (\sqrt{s_l}) \neq 0$.
  Hence, we reach a contradiction and conclude the proof of the lemma.
  
  By a similar but simpler argument, the equation \eqref{eq:alphaZ} has
  exactly one root in every interval $(\beta_{l, j}, \beta_{l, j + 1})$ for
  all $l \geqslant 1, j \geqslant 0$. Moreover the first root satisfies the
  lower bound
  \[ \beta_{l, 1} \geqslant \sqrt{l (l + 1)}, \]
  provided that $\sqrt{l (l + 1)}$ is not a root.
\end{proof}

\section{Proof of Propositions \ref{prop smooth} and \ref{prop decay}}\label{App-Smooth-Decay}

\begin{proof}[Proof of Proposition \ref{prop smooth}]
Recall that we will use induction to prove $U_{\mathrm{near}}\in H^\infty$. As in the proof sketch, we have the equation for $U_{\mathrm{near}}$
  \begin{equation}
     -\tfrac12 U_{\mathrm{near}}-\tfrac12\,\xi\cdot\nabla U_{\mathrm{near}}
+\Pi(\widetilde U\cdot\nabla\widetilde U)-\Delta U_{\mathrm{near}}=F,
\quad
\nabla\cdot U_{\mathrm{near}}=0,
     \label{eqn for unear}
  \end{equation} 
  with the right-hand side
  \begin{equation}
       \label{Definition-F}F:=\tfrac12U_{\mathrm{far}}+\tfrac12\,\xi\cdot\nabla U_{\mathrm{far}}+\Delta U_{\mathrm{far}},
\quad
\nabla^k F(\xi)=O(|\xi|^{-3-k}) \ \ (k\geqslant0). 
  \end{equation} 
  
 Fix a standard mollifier $\rho_\epsilon$ with $\epsilon \in (0,1)$ and denote \begin{equation}
     \label{defintion-ueps-Feps}
     u^\epsilon=\rho_\epsilon * U_{\tmop{near}}, \quad F^\epsilon=\rho_\epsilon * F .
     \end{equation} We know that $ u^\epsilon\in H^\infty$ is divergence-free, and satisfies the pointwise equation 
     \begin{equation}
- \frac{1}{2} u^\epsilon - \frac{1}{2} \xi \cdummy \nabla
       u^\epsilon -\frac{1}{2}[\rho_\epsilon *,\xi\cdot\nabla]U_{\tmop{near}}+ \rho_\epsilon *\Pi (\widetilde{U} \cdot \nabla \widetilde{U}) - \Delta
       u^\epsilon = F^\epsilon,
     \label{pre:mol}
  \end{equation} 
 where the commutator is computed as \[[\rho_\epsilon *,\xi\cdot\nabla]U_{\tmop{near}}=\rho_\epsilon *(\xi\cdot\nabla U_{\tmop{near}})-\xi\cdot\nabla(\rho_\epsilon *U_{\tmop{near}})=- \int\rho_\epsilon(y)y \cdummy \nabla
       U_{\tmop{near}}(\xi-y)dy.\]
      The commutator vanishes as $\epsilon\to0$.
       More precisely, we have the estimate of its $H^{k-1}$-seminorm by Young's inequality as \begin{equation}
           \label{commute est}
           \|\nabla^{k-1}[\rho_\epsilon *,\xi\cdot\nabla]U_{\tmop{near}}\|_{L^2}\leqslant\|\rho_\epsilon(y)y\|_{L^1}\|\nabla^{k}U_{\tmop{near}}\|_{L^2}\lesssim\epsilon\|U_{\mathrm{near}}\|_{H^{k}}<\|U_{\mathrm{near}}\|_{H^{k}},\quad \forall k>0.
       \end{equation}
        We recall two basic facts that will be used repeatedly: the boundedness of the Leray projection in $L^2$, and that 
 mollification is bounded in $H^k$ via
   Young's inequality: for any integer $k \geqslant 0$,
   \begin{equation}
   \label{Basic-tool}
   \|\nabla^k u^\epsilon\|_{L^2}
   = \|\rho_\epsilon * \nabla^k U_{\mathrm{near}}\|_{L^2}
   \leqslant\|\rho_\epsilon\|_{L^1}  \|\nabla^k U_{\mathrm{near}}\|_{L^2}
   = \|\nabla^k U_{\mathrm{near}}\|_{L^2}.
   \end{equation}

       Next, we will obtain an estimate of the $H^{k-1}$-seminorm of the nonlinear term. Recall $U_{\tmop{far}}\in W^{\infty,\infty}$ and $\nabla U_{\tmop{far}}\in H^\infty$. A canonical term in the expansion of $\nabla^{k-1}(\rho_\epsilon *\Pi (\widetilde{U} \cdot \nabla \widetilde{U}))$ has the form $\rho_\epsilon *\Pi (\nabla^{i}U_{\tmop{s}}\otimes\nabla^{k-i}U_{\tmop{t}})$, where $\tmop{s},\tmop{t}\in\{\tmop{far},\tmop{near}\}$, $0\leqslant i<k$. By Young's inequality, all of the terms where at least one of $\tmop{s},\tmop{t}$ is $\tmop{far}$ have $L^2$-norm bounded by $1+\|U_{\tmop{near}}\|_{H^k}$. When  $\tmop{s},\tmop{t}=\tmop{near}$ and $i>0$, we have by Young's inequality, Holder's inequality, and the Sobolev embedding that \begin{equation}
\label{Derivative-bound}
\|\rho_\epsilon *\Pi (\nabla^{i}U_{\tmop{s}}\otimes\nabla^{k-i}U_{\tmop{t}})\|_{L^2}\leqslant\|\nabla^{i}U_{\tmop{near}}\|_{L^4}\|\nabla^{k-i}U_{\tmop{near}}\|_{L^4}\lesssim\|U_{\tmop{near}}\|_{H^k}^2.
\end{equation}
       Finally, when $\tmop{s},\tmop{t}=\tmop{near}$ and $i=0$, we have, via a commutator representation, that  \[\begin{aligned}   
       &\|\rho_\epsilon *\Pi (\nabla^{i}U_{\tmop{s}}\otimes\nabla^{k-i}U_{\tmop{t}})\|_{L^2}=\|\Pi(u^\epsilon\nabla^{k}u^\epsilon+\int\rho_\epsilon(y)(U_{\tmop{near}}(x-y)-u^\epsilon(x))\nabla^{k}U_{\tmop{near}}(x-y)dy)\|_{L^2}\\\leqslant&\|u^\epsilon\|_{L^\infty}\|U_{\tmop{near}}\|_{H^k}+(\iint\rho_\epsilon(y)(U_{\tmop{near}}(x-y)-u^\epsilon(x))^2dydx)^{1/2}(\iint\rho_\epsilon(y)(\nabla^{k}U_{\tmop{near}}(x-y))^2dydx)^{1/2}\\=&\|u^\epsilon\|_{L^\infty}\|U_{\tmop{near}}\|_{H^k}+(\frac12\iiint\rho_\epsilon(y)\rho_\epsilon(z)(U_{\tmop{near}}(x-y)-U_{\tmop{near}}(x-z))^2dydzdx)^{1/2}\|U_{\tmop{near}}\|_{H^k}\\\leqslant &(\|u^\epsilon\|_{L^\infty}+(\frac12\iint\rho_\epsilon(y)\rho_\epsilon(z)(y-z)^2dydz)^{1/2}\|\nabla U_{\tmop{near}}\|_{L^2})\| U_{\tmop{near}}\|_{H^k}\lesssim
\|u^\epsilon\|_{L^\infty}\|U_{\tmop{near}}\|_{H^k}+\|U_{\tmop{near}}\|_{H^k}^2,\end{aligned}\]where we use the Cauchy-Schwarz inquality and \eqref{Basic-tool} in the first inequality, the variance identity $E_y(y-E_y)^2=1/2E_{y,z}(y-z)^2$ in the second equality, and the difference-quotient bound $\|u(\cdot+h)-u(\cdot)\|\leqslant|h|\|\nabla u\|$ (via the Lebiniz's rule and Jensen's inequality) in the second inequality.

Combined with \eqref{Derivative-bound}, we already collect a good nonlinear estimate for $k>1$ by the Sobolev embedding and Young's inequality:\begin{equation}
    \label{non est}
    \|\nabla^{k-1}\rho_\epsilon *\Pi (\widetilde{U} \cdot \nabla \widetilde{U})\|_{L^2}\lesssim1+\|U_{\tmop{near}}\|_{H^k}^2.
\end{equation}
When $k=1$, we need to treat the term $\|u^\epsilon\|_{L^\infty}$ with special care using the Sobolev embedding as \[\|u^\epsilon\|_{L^\infty}\lesssim\|u^\epsilon\|_{H^{7/4}}\lesssim\|u^\epsilon\|_{H^{2}}^{3/4}\|u^\epsilon\|_{H^{1}}^{1/4}\lesssim\|u^\epsilon\|_{H^{2}}^{3/4}\|U_{\tmop{near}}\|_{H^1}^{1/4}, \]
where we use the interpolation inequality and \eqref{Basic-tool} again.
Hence, we conclude\begin{equation}
    \label{non est0}
    \|\nabla^{k-1}\rho_\epsilon *\Pi (\widetilde{U} \cdot \nabla \widetilde{U})\|_{L^2}\lesssim1+\|U_{\tmop{near}}\|_{H^k}^2+\|\nabla^{k+1}u^\epsilon\|_{L^2}^{3/4}\|U_{\tmop{near}}\|_{H^k}^{5/4}, \quad k=1.
\end{equation}

We are now ready to perform an induction on $k$ to show that $U_{\tmop{near}}\in H^k$. It holds for $k=0,1$, and suppose it holds for any index no greater than $k$, at level $k>0$ we take derivatives of \eqref{pre:mol}:
  \begin{equation}
      - \frac{1+k}{2} \nabla^{k} u^\epsilon - \frac{1}{2} (\xi \cdummy\nabla)
       \nabla^{k} u^\epsilon -\frac{1}{2}\nabla^k([\rho_\epsilon *,\xi\cdot\nabla]U_{\tmop{near}})+ \nabla^{k}(\rho_\epsilon *\Pi (\widetilde{U} \cdot \nabla \widetilde{U})) - \Delta \nabla^{k}
       u^\epsilon = \nabla^{k} F^\epsilon. \label{uneareq}
  \end{equation}  
       Take inner products with a smooth function $\chi_R\nabla^{k} u^\epsilon$, where $\chi_R$ is a standard cutoff function with radius $R$, for example $1-\widetilde{\chi}(\xi/R)$, we get the following estimate as we take $R\to\infty$.

       The drift term has the estimate via an integration by parts as\[\langle(\xi \cdummy\nabla)
       \nabla^{k} u^\epsilon,\chi_R\nabla^{k} u^\epsilon\rangle=-\frac12\langle\nabla\cdot(\chi_R\xi )
       \nabla^{k} u^\epsilon,\nabla^{k} u^\epsilon\rangle=-\frac 32\langle\chi_R
       \nabla^{k} u^\epsilon,\nabla^{k} u^\epsilon\rangle+O(\int_{R<|\xi|<2R}|\nabla^{k} u^\epsilon|^2),\]
       where we use the fact that $|\nabla\chi_R|\lesssim1/R$. The diffusion term similarly yields\[\langle- \Delta \nabla^{k}
       u^\epsilon,\chi_R\nabla^{k} u^\epsilon\rangle=\langle
       \nabla^{k+1} u^\epsilon,\chi_R\nabla^{k+1} u^\epsilon\rangle+\langle
       \nabla^{k+1} u^\epsilon,\nabla\chi_R\nabla^{k} u^\epsilon\rangle=\langle
       \nabla^{k+1} u^\epsilon,\chi_R\nabla^{k+1} u^\epsilon\rangle+O(\frac1R).\]
       In the same fashion, the commutator term enjoys the estimate by \eqref{commute est} \[|\langle\nabla^k([\rho_\epsilon *,\xi\cdot\nabla]U_{\tmop{near}}),\chi_R\nabla^{k} u^\epsilon\rangle|=|\langle\nabla^{k-1}([\rho_\epsilon *,\xi\cdot\nabla]U_{\tmop{near}}),\chi_R\nabla^{k+1}u^\epsilon\rangle|+O(\frac1R)\lesssim\|\nabla^{k+1}u^\epsilon\|_{L^2}+O(\frac1R),\]
       and the nonlinear term enjoys a similar estimate by \eqref{non est} and \eqref{non est0} \[|\langle\nabla^k(\rho_\epsilon *\Pi (\widetilde{U} \cdot \nabla \widetilde{U})),\chi_R\nabla^{k} u^\epsilon\rangle\leqslant\|\nabla^{k+1}u^\epsilon\|_{L^2}(1+\|\nabla^{k+1}u^\epsilon\|_{L^2}^{3/4})+O(\frac1R).\]
       Again, by Young's inequality,
       the right hand side of \eqref{uneareq} has the trivial estimate \[\langle\nabla^kF^\epsilon,\chi_R\nabla^{k} u^\epsilon\rangle\leqslant\|\nabla^kF^\epsilon\|_{L^2}\|\nabla^{k} u^\epsilon\|_{L^2}\leqslant\|F^\epsilon\|_{L^2}\|\nabla^{k} U_{\tmop{near}}\|_{L^2}\lesssim1.\]
       
       We can take the limit $R\to\infty$ and collect the estimates, treating $\|U_{\tmop{near}}\|_{H^k}$ as a known constant:\begin{equation*}
           \label{unear col}
           \|\nabla^{k+1}u^\epsilon\|_{L^2}^2\lesssim1+\|\nabla^{k+1}u^\epsilon\|_{L^2}^{7/4},
       \end{equation*}
       and we conclude a uniform bound on $\|u^\epsilon\|_{H^{k+1}}$ independent of $\epsilon$. Thus, we conclude that $U_{\tmop{near}}\in H^{k+1}$ by the theory of mollifiers. We can take a weakly convergent subsequence of $u^\epsilon$ in $H^{k+1}$. Since they converge to $U_{\tmop{near}}$ in distributions, we know the limits are identical and conclude $U_{\tmop{near}}\in H^{k+1}$.

 We conclude by induction
  that $U_{\tmop{near}}\in H^\infty$.
\end{proof}

\begin{proof}[Proof of Proposition \ref{prop decay}]
  As in the proof sketch, we only need to elaborate on the procedure to show that $|\xi|U_{\tmop{near}}, |\xi|\nabla U_{\tmop{near}}, |\xi|^2\nabla^2 U_{\tmop{near}}\in L^2$. 
  
 \smallskip
\noindent\textbf{Step 1: $L^2$-level estimate, that $|\xi|U_{\tmop{near}}, |\xi|\nabla U_{\tmop{near}}\in L^2$.} Recall that $U_{\tmop{near}}$ is smooth, and \eqref{eqn for unear} holds in a strong sense with $F\in H^\infty$, with decay at the infinity $\nabla^k F=O(|\xi|^{-3-k})$. We rewrite \eqref{eqn for unear} in the velocity-pressure formation \begin{equation}
- \frac{1}{2} U_{\tmop{near}} - \frac{1}{2} \xi \cdummy \nabla
       U_{\tmop{near}} + \widetilde{U} \cdot \nabla \widetilde{U}+\nabla P - \Delta
       U_{\tmop{near}} = F,
    \label{pre unear}
\end{equation}
where we can represent the pressure term by taking divergence and using Riesz transform $R_i$ as \begin{equation}
\Delta P = -\tmop{tr}((\nabla \widetilde{U})^2),\quad P=\sum_{i,j}R_iR_j(\widetilde{U}_i\widetilde{U}_j).
    \label{pressure unear}
\end{equation}

We introduce the following cutoff version of the radial weight $|\xi|^2$ as $\phi_{L}$:
\[\phi_{L}=|\xi|^2\chi_L+(2L)^2(1-\chi_L).\]
It holds that $\phi_{L}=|\xi|^2$ if $|\xi|<L$, $\phi_{L}=|2L|^2$ if $|\xi|>2L$, $\phi_{L}$ is radially increasing since $\chi_L$ is radially decreasing, and that $|\Delta\phi_{L}|\lesssim 1$ is uniformly bounded in $L$ due to the estimates on derivatives of $\chi_L$. Now we take inner products of \eqref{pre unear} with $\chi_R\phi_{L} U_{\tmop{near}}$, where the cutoff radius $R$ is much larger than $L$, and we get the following estimates as we take $R\to\infty$.

The drift term has the estimate via an integration by parts as \[\begin{aligned}
&\langle - \frac{1}{2} \xi \cdummy \nabla
       U_{\tmop{near}},\chi_R\phi_{L}U_{\tmop{near}}\rangle=\frac34\langle 
       U_{\tmop{near}},\chi_R\phi_{L}U_{\tmop{near}}\rangle+\frac{1}{4}\langle 
       U_{\tmop{near}},\xi\cdot\nabla(\chi_R\phi_{L})U_{\tmop{near}}\rangle \\\geqslant&  \frac34\langle 
       U_{\tmop{near}},\chi_R\phi_{L}U_{\tmop{near}}\rangle+\frac{L^2}{4}O(\int_{R<|\xi|<2R}|U_{\tmop{near}}|^2), 
\end{aligned}\]where we use again the fact that $|\nabla\chi_R|\lesssim1/R$, and the radial increasing property of $\phi_{L}$. To see this, 
since $\phi_L=\phi_L(|\xi|)$ is radially increasing, we have
$\xi\!\cdot\!\nabla\phi_L=|\xi|\,\phi_L'(|\xi|)\geqslant 0$, hence
\[
\xi\!\cdot\!\nabla(\chi_R\phi_L)=\phi_L\,\xi\!\cdot\!\nabla\chi_R
+\chi_R\,\xi\!\cdot\!\nabla\phi_L \geqslant  \phi_L\,\xi\!\cdot\!\nabla\chi_R.
\]
Therefore, for a lower bound, we may discard the nonnegative contribution
$\chi_R\,\xi\!\cdot\!\nabla\phi_L$ and bound the remaining term on
$\mathrm{supp}\,\nabla\chi_R=\{R<|\xi|<2R\}$ using
$\phi_L=(2L)^2$ and $|\xi\!\cdot\!\nabla\chi_R|\lesssim 1$.

The diffusion term has a similar estimate via an integration by parts twice as \[\begin{aligned}
&\langle - \Delta
       U_{\tmop{near}},\chi_R\phi_{L}U_{\tmop{near}}\rangle=\langle \nabla
       U_{\tmop{near}},\chi_R\phi_{L}\nabla U_{\tmop{near}}\rangle-\frac12\langle 
       U_{\tmop{near}},\Delta(\chi_R \phi_{L})U_{\tmop{near}}\rangle\\\geqslant&\langle \nabla
       U_{\tmop{near}},\chi_R\phi_{L}\nabla U_{\tmop{near}}\rangle+O(\frac{L^2}{R^2}+\frac{L}{R})+O(\langle 
       U_{\tmop{near}}, U_{\tmop{near}}\rangle),\end{aligned}\]
where we use the uniform bound on the Laplacian of $\phi_L$. The nonlinear term splits into 
\[\begin{aligned}
&|\langle \widetilde{U} \cdot \nabla \widetilde{U},\chi_R\phi_{L} U_{\tmop{near}}\rangle|=|\langle \widetilde{U} \cdot \nabla U_{\tmop{far}},\chi_R\phi_{L} U_{\tmop{near}}\rangle-\frac12\langle \widetilde{U}  \cdot\nabla (\chi_R\phi_{L}), |U_{\tmop{near}}|^2\rangle|\\\lesssim&|\langle |\xi|\widetilde{U} \cdot \nabla U_{\tmop{far}},\sqrt{\chi_R\phi_{L}} U_{\tmop{near}}\rangle|+\frac {L^2}{R}+\langle 
       \sqrt{\chi_R\phi_{L}}U_{\tmop{near}}, U_{\tmop{near}}\rangle\lesssim \frac {L^2}{R}+\sqrt{\langle 
       U_{\tmop{near}},\chi_R\phi_{L}U_{\tmop{near}}\rangle},&\end{aligned}\]
where we use the divergence-free of $\widetilde{U}$ in the first equality, $\widetilde{U}\in L^\infty$ and the estimate $|\nabla\phi_L|\lesssim\sqrt{\phi_L}$ in the first inequality. For the second inequality, we use the fact by \eqref{higher boundary ufar} and \eqref{l4u} that $|\xi|\nabla U_{\tmop{far}}, \widetilde{U}\in L^4\cap L^\infty$, and then
apply the Cauchy--Schwarz inequality.
       
The pressure term is controlled by \eqref{pressure unear} using Calderón–Zygmund theory $\|P\|\lesssim \|\widetilde{U}\|^2_{L^4}\lesssim1$:
\[|\langle \nabla P,\chi_R\phi_{L} U_{\tmop{near}}\rangle|=|-\langle P\nabla (\chi_R\phi_{L}), U_{\tmop{near}}\rangle|\lesssim  \frac {L^2}{R}+\sqrt{\langle 
       U_{\tmop{near}},\chi_R\phi_{L}U_{\tmop{near}}\rangle}.\]
The forcing term has the trivial estimate by the Cauchy--Schwarz inequality \[\langle F,\chi_R\phi_{L} U_{\tmop{near}}\rangle\lesssim \langle |\xi|F, \sqrt{\chi_R\phi_{L}}U_{\tmop{near}}\rangle\lesssim\sqrt{\langle 
       U_{\tmop{near}},\chi_R\phi_{L}U_{\tmop{near}}\rangle}.\]

       We can take $R\to\infty$ and collect the estimates that \[\frac14\langle 
       U_{\tmop{near}},\phi_{L}U_{\tmop{near}}\rangle+\langle \nabla
       U_{\tmop{near}},\phi_{L}\nabla U_{\tmop{near}}\rangle\lesssim1+\sqrt{\langle 
       U_{\tmop{near}},\phi_{L}U_{\tmop{near}}\rangle}.\]
       As a consequence, $\langle 
       U_{\tmop{near}},\phi_{L}U_{\tmop{near}}\rangle+\langle \nabla
       U_{\tmop{near}},\phi_{L}\nabla U_{\tmop{near}}\rangle$ has a uniform bound in $L$ and we can take the limit $L \rightarrow \infty$ to conclude that  
       \begin{equation}
       \label{First-bound}
       |\xi|U_{\tmop{near}} \in L^2, \quad \quad|\xi|\nabla U_{\tmop{near}}\in L^2.
       \end{equation}
\smallskip
\noindent\textbf{Step 2: $H^1$-level estimate, that $|\xi|^2\nabla U_{\tmop{near}}, |\xi|^2\nabla^2 U_{\tmop{near}}\in L^2$.} Take derivatives of \eqref{pre unear} to get \begin{equation}
- \nabla U_{\tmop{near}} - \frac{1}{2} \xi \cdummy \nabla^2
       U_{\tmop{near}} + (\nabla\widetilde{U})^2+\widetilde{U} \cdot \nabla^2 \widetilde{U}+\nabla^2 P - \Delta
       \nabla U_{\tmop{near}} = \nabla F.
    \label{pre unear1}
\end{equation}
Now we take inner products of \eqref{pre unear1} with $\chi_R\phi_{L}|\xi|^\alpha\nabla U_{\tmop{near}}$, where the cutoff radius $R$ is much larger than $L$, and we get the following estimates as we take $R\to\infty$. We will first take $\alpha=0$ to derive an intermediate result $|\xi|\nabla^2 U_{\tmop{near}}\in L^2$, and then take $\alpha=2$ to conclude. 

For $\alpha=0,2$, the drift term has a similar estimate as that in Step 1:
\[\begin{aligned}
&\langle - \frac{1}{2} \xi \cdummy \nabla^2
       U_{\tmop{near}},\chi_R\phi_{L}|\xi|^\alpha\nabla U_{\tmop{near}}\rangle=\frac{\alpha+3}4\langle 
       \nabla U_{\tmop{near}},\chi_R\phi_{L}|\xi|^\alpha\nabla U_{\tmop{near}}\rangle+\frac{1}{4}\langle 
       \nabla U_{\tmop{near}},|\xi|^\alpha\xi\cdot\nabla(\chi_R\phi_{L})\nabla U_{\tmop{near}}\rangle \\&\geqslant  \frac{\alpha+3}4\langle 
       \nabla U_{\tmop{near}},\chi_R\phi_{L}|\xi|^\alpha\nabla U_{\tmop{near}}\rangle+\frac{L^2}{4}O(\int_{R<|\xi|<2R}|\xi|^\alpha|\nabla U_{\tmop{near}}|^2).
\end{aligned}\]The diffusion term has a similar estimate\[\begin{aligned}
&\langle - \Delta\nabla U_{\tmop{near}},\chi_R\phi_{L}|\xi|^\alpha\nabla U_{\tmop{near}}\rangle=\langle \nabla^2
       U_{\tmop{near}},\chi_R\phi_{L}|\xi|^\alpha\nabla^2 U_{\tmop{near}}\rangle-\frac12\langle 
       \nabla U_{\tmop{near}},\Delta(\chi_R \phi_{L}|\xi|^\alpha)\nabla U_{\tmop{near}}\rangle\\\geqslant&\langle \nabla^2
       U_{\tmop{near}},\chi_R\phi_{L}|\xi|^\alpha\nabla^2 U_{\tmop{near}}\rangle+O(\frac{L^2}{R^2}+\frac{L+L^2}{R})+O(\langle 
       \nabla U_{\tmop{near}}, |\xi|^\alpha\nabla U_{\tmop{near}}\rangle),\end{aligned}\]
       where we use the bound $\Delta(\phi_{L}|\xi|^\alpha)\lesssim|\xi|^\alpha$ uniformly in $L$. The first nonlinear term has the estimate\[|\langle ( \nabla \widetilde{U})^2,\chi_R\phi_{L} |\xi|^\alpha\nabla U_{\tmop{near}}\rangle|\lesssim\||\xi|^{\frac{2+\alpha}{2}}(\nabla \widetilde{U})^2\|_{L^2}\sqrt{\langle \nabla
       U_{\tmop{near}},\chi_R\phi_{L}|\xi|^\alpha\nabla U_{\tmop{near}}\rangle}.\]
       When $\alpha=0$, we have the estimate via \eqref{First-bound} and the decay of $U_{\tmop{far}}$:
       \[\||\xi|^{\frac{2+\alpha}{2}}(\nabla \widetilde{U})^2\|_{L^2}\lesssim\||\xi|^{\frac12}\nabla U_{\tmop{far}}\|_{L^4}^2+\||\xi|\nabla U_{\tmop{near}}\|_{L^2}\|\nabla U_{\tmop{near}}\|_{L^\infty}\lesssim1,\]
       and when $\alpha=2$, we have the estimate by the Gagliardo–Nirenberg inequality that \[\||\xi|^{\frac{2+\alpha}{2}}(\nabla \widetilde{U})^2\|_{L^2}\lesssim\||\xi|\nabla U_{\tmop{far}}\|_{L^4}^2+\||\xi|\nabla U_{\tmop{near}}\|_{L^2}^{1/2}\|\nabla(|\xi|\nabla U_{\tmop{near}})\|_{L^2}^{3/2}\lesssim1+\||\xi|\nabla^2 U_{\tmop{near}}\|_{L^2}^{3/2}.\]
       We collect the useful estimate for the first nonlinear term as\begin{equation}
           \label{non 111}\||\xi|^{\frac{2+\alpha}{2}}(\nabla \widetilde{U})^2\|_{L^2}\lesssim1+\||\xi|^{\alpha/2}\nabla^2 U_{\tmop{near}}\|_{L^2}^{3/2}.
       \end{equation}
       The second nonlinear term splits, similarly as in step 1, into \[\begin{aligned}
&|\langle \widetilde{U} \cdot \nabla^2 \widetilde{U},\chi_R\phi_{L} |\xi|^\alpha\nabla U_{\tmop{near}}\rangle|=|\langle \widetilde{U} \cdot \nabla^2 U_{\tmop{far}},\chi_R\phi_{L} |\xi|^\alpha\nabla U_{\tmop{near}}\rangle-\frac12\langle \widetilde{U}  \cdot\nabla (\chi_R\phi_{L}|\xi|^\alpha), |\nabla U_{\tmop{near}}|^2\rangle|\\&\lesssim|\langle |\xi|^2\widetilde{U} \cdot \nabla^2 U_{\tmop{far}},\sqrt{\chi_R\phi_{L}|\xi|^\alpha}\nabla U_{\tmop{near}}\rangle|+\frac {L^2}{R}+\langle 
       \sqrt{\chi_R\phi_{L}|\xi|^\alpha}\nabla U_{\tmop{near}}, \sqrt{|\xi|^\alpha}\nabla U_{\tmop{near}}\rangle\\&\lesssim \frac {L^2}{R}+\sqrt{\langle 
       \nabla U_{\tmop{near}},\chi_R\phi_{L}|\xi|^\alpha \nabla U_{\tmop{near}}\rangle}.&\end{aligned}\]
       The pressure term is controlled by \eqref{pressure unear} using the Calderón–Zygmund theory and the fact that $|\xi|^\alpha$ is an $A_2$-weight for $-3<\alpha<3$ (see  Thm.~5.24 in \cite{grafakos2014}), so that for $\alpha=0,2$, we have by \eqref{non 111} that $$\|\nabla P|\xi|^{\alpha/2}\|_{L^2}\lesssim \|\nabla (\widetilde{U}\otimes\widetilde{U})|\xi|^{\alpha/2}\|_{L^2}\lesssim\sqrt{\||\xi|^{\frac{2+\alpha}{2}}(\nabla \widetilde{U})^2\|_{L^2}\| \widetilde{U}^2\|_{L^2}}\lesssim1+\||\xi|^{\alpha/2}\nabla^2 U_{\tmop{near}}\|_{L^2}^{3/4},$$ As a consequence we have \[\begin{aligned}
           &|\langle \nabla^2 P,\chi_R\phi_{L}|\xi|^\alpha\nabla U_{\tmop{near}}\rangle|=|-\langle \nabla  P, \nabla U_{\tmop{near}}^T\nabla (\chi_R\phi_{L}|\xi|^\alpha)\rangle|\\\lesssim & \frac {L^2}{R}+(1+\||\xi|^{\alpha/2}\nabla^2 U_{\tmop{near}}\|_{L^2}^{3/4})\sqrt{\langle 
       \nabla U_{\tmop{near}},\chi_R\phi_{L}|\xi|^\alpha\nabla U_{\tmop{near}}\rangle}.
       \end{aligned}\]
       Finally the forcing term has the trivial estimate by the Cauchy-Schwarz inequality\[\langle \nabla F,\chi_R\phi_{L}|\xi|^\alpha\nabla U_{\tmop{near}}\rangle\lesssim \langle |\xi|^2\nabla F, \sqrt{\chi_R\phi_{L}|\xi|^\alpha}\nabla U_{\tmop{near}}\rangle\lesssim\sqrt{\langle 
       \nabla U_{\tmop{near}},\chi_R\phi_{L}|\xi|^\alpha\nabla U_{\tmop{near}}\rangle}.\]

       We first choose $\alpha=0$ and take $R\to\infty$ to conclude a uniform bound on $\langle \nabla^2
       U_{\tmop{near}},\phi_{L}\nabla^2 U_{\tmop{near}}\rangle$. Taking the limit $L\rightarrow \infty$ yields $|\xi|\nabla^2 U_{\tmop{near}}\in L^2$.
       Now we choose $\alpha=2$ and take $R\to\infty$ to get \[\frac14\langle 
       \nabla U_{\tmop{near}},\phi_{L}|\xi|^2\nabla U_{\tmop{near}}\rangle+\langle \nabla^2
       U_{\tmop{near}},\phi_{L}|\xi|^2\nabla^2 U_{\tmop{near}}\rangle\lesssim1+\sqrt{\langle 
       \nabla U_{\tmop{near}},\phi_{L}|\xi|^2\nabla U_{\tmop{near}}\rangle}.\]
       As a consequence, $\langle 
       \nabla U_{\tmop{near}},\phi_{L}|\xi|^2\nabla U_{\tmop{near}}\rangle+\langle \nabla^2
       U_{\tmop{near}},\phi_{L}|\xi|^2\nabla^2 U_{\tmop{near}}\rangle$ has a uniform bound in $L$ and we can take the limit $L\rightarrow \infty$ to conclude that \begin{equation}
       \label{Second-bound}
       |\xi|^2\nabla U_{\tmop{near}} \in L^2, \quad \quad |\xi|^2\nabla^2 U_{\tmop{near}}\in L^2.
       \end{equation}

We have hence conclude the proof.
We remark that by the same argument, we can moreover show that $|\xi|^l\nabla^k U_{\tmop{near}}$ for any $l\leqslant k+1$ so that we can extract a decay of $O(|\xi|^{-5/2-k})$ for $\nabla^k U_{\tmop{near}}$ using the dyadic argument. This is not necessary for our purpose.
\end{proof}
\bibliographystyle{abbrv} 
\bibliography{ref}

\end{document}